\numberwithin{equation}{section}
\theoremstyle{plain}
\newtheorem{thm}{Theorem}[section]
\newtheorem{prop}[thm]{Proposition}
\newtheorem{defi}[thm]{Definition}
\newtheorem{lem}[thm]{Lemma}
\newtheorem{cor}[thm]{Corollary}
\newtheorem{eg}[thm]{Example}
\theoremstyle{remark}
\newtheorem{rema}[thm]{Remark}
\begin{document}
\title[Quantum Calogero-Moser spin chains]{Asymptotic boundary KZB operators and quantum Calogero-Moser spin chains}
\author{N. Reshetikhin}
\address{N.R.: Yau Center for Mathematical Sciences, Tsinghua University, Bejing, China \& ITMO University, Kronverskii Ave. 49, Saint Petersburg, 197101, Russia \& KdV Institute for Mathematics, University of Amsterdam, Science Park 105-107, 1098 XG Amsterdam, The Netherlands}
\email{reshetik@math.berkeley.edu}
\thanks{Both authors are supported by the Netherlands Organization for Scientific Research (NWO).  N.R. was partially supported by the grants NSF DMS-1902226 and RFBR No. 18-01-00916. He is also grateful to ETH-ITS Zurich for the hospitality during 2019-2020 academic year. We thank Hadewijch de Clercq, Edward Berengoltz and two referees
for comments and for pointing out a few typos.}
\author{J.V. Stokman}
\address{J.S.: KdV Institute for Mathematics, University of Amsterdam,
Science Park 105-107, 1098 XG Amsterdam, The Netherlands.}
\email{j.v.stokman@uva.nl}
\thanks{}
\subjclass[2010]{17B80, 43A90}
\begin{abstract}
Asymptotic boundary KZB equations describe
the consistency conditions of degenerations of correlation functions for boundary Wess-Zumino-Witten-Novikov conformal field theory on a cylinder. In the first part of the paper we define asymptotic boundary KZB operators for connected real semisimple Lie groups $G$ with finite center. We prove their main properties algebraically using coordinate versions of Harish-Chandra's radial component map. We show that their commutativity is governed by a system of equations involving coupled versions of classical dynamical Yang-Baxter equations and reflection equations. 

We use the coordinate radial components maps to introduce a new class of quantum superintegrable systems, called quantum Calogero-Moser spin chains. A quantum Calogero-Moser spin chain is a mixture of a quantum spin Calogero-Moser system associated to the restricted root system of $G$ and a one-dimensional spin chain with two-sided reflecting boundaries. 
The asymptotic boundary KZB operators provide explicit expressions for its first-order quantum Hamiltonians. We also explicitly describe the Schr{\"o}dinger operator. 
 \end{abstract}

\maketitle

\section{Introduction}
This paper is part two of a series aimed at connecting harmonic analysis on affine symmetric spaces to boundary Wess-Zumino-Witten-Novikov (WZWN) conformal field theory (see \cite{SR} for part one). 

\subsection{} Let $G$ be a noncompact real connected semisimple Lie group of real rank $r$ with finite center, $K=G^\Theta$ the compact subgroup of fixed points of a Cartan involution $\Theta$ of $G$, and $\mathfrak{h}_{\mathbb{R}}$ a corresponding maximally noncompact Cartan subalgebra of the Lie algebra $\mathfrak{g}_{\mathbb{R}}$ of $G$. We write $\mathfrak{g}$ and $\mathfrak{h}$ for the complexifications of $\mathfrak{g}_{\mathbb{R}}$ and $\mathfrak{h}_{\mathbb{R}}$. Let $\mathcal{M}_{1,n}$ be the moduli space of elliptic curves with $n$ marked points. 

In \cite{FW} Felder and Wieczerkowski introduced the twisted WZWN conformal block bundle over $\mathcal{M}_{1,n}\times \mathfrak{h}$ with flat connection. The explicit realisation of the flat connection leads to  
explicit commuting first-order differential operators, called the Knizhnik-Zamolodchikov-Bernard (KZB) operators \cite{B,FW}. In the limit when the period and marked points go to infinity  the KZB flat connection over $\mathcal{M}_{1,n}\times \mathfrak{h}$ becomes a flat connection over $\mathfrak{h}$. We call the corresponding differential operators the {\it asymptotic KZB operators}. They were studied in \cite{ES}.

In a similar way WZWN conformal field theory on a disc (or on a half plane, or on a strip) with $n$ marked points and Cardy type conformal boundary conditions \cite{C,Z} leads to commuting {\it asymptotic boundary KZB operators} on $\mathfrak{h}$, see \cite{SR}. In this case $G$ is real split, and $K=G^{\Theta}$ is 
the symmetry group at the boundary.
One of the main goals of this paper is to introduce the {\it asymptotic boundary KZB operators for all noncompact real connected semisimple Lie groups $G$ with finite center}.

The resulting asymptotic boundary KZB operators (see Section \ref{SectionabKZB}) are first-order commuting differential operators on $\mathfrak{a}$, where $\mathfrak{a}$ is the complexification of the $(-1)$-eigenspace of $\mathfrak{h}_{\mathbb{R}}$ with respect to the Cartan involution $\theta_{\mathbb{R}}$ of $\mathfrak{g}_{\mathbb{R}}$ associated to $\Theta$.
We construct these asymptotic boundary KZB operators and derive their commutativity 
using generalised Harish-Chandra \cite{HC,CM} radial component maps with respect to the Cartan
decomposition $G=KAK$ of $G$, where $A:=\exp(\mathfrak{a}_{\mathbb{R}})$ (see Section \ref{S4}  for 
the description of the radial component maps). Furthermore, we  show that the asymptotic
boundary KZB operators are particular first-order quantum Hamiltonians for a quantum superintegrable 
system whose algebra of Hamiltonians is isomorphic to a quotient of $Z(U(\mathfrak{g}))^{\otimes (n+1)}$, where $Z(U(\mathfrak{g}))$ denotes the center of the universal enveloping algebra $U(\mathfrak{g})$ (see, e.g., \cite{MPW} for a survey on classical and quantum integrability).
These quantum superintegrable systems are parametrized by a choice of $n$ representations of $G$ and two representations of $K$. We  call them {\it quantum Calogero-Moser spin chains} because  they can be regarded both as a system of interacting "spin" particles and as
a version of a spin chain of Gaudin type with reflecting boundary conditions. 

One of the surprises is the fact that the commutativity of the asymptotic boundary KZB operators is equivalent to a hierarchy of {\it coupled} classical dynamical Yang-Baxter and reflection type equations for the local factors of the operators. These local factors are folded and contracted versions of a natural generalisation \eqref{rmatrix} of Felder's \cite{F} trigonometric dynamical $r$-matrix.

For $G$ real split the coupled integrability equations decouple, reducing them to the mixed classical dynamical Yang-Baxter equations and associated classical dynamical reflection equations from \cite{SR}. This decoupling is caused by the fact that the centraliser $M=Z_K(A)$ of $A$ in $K$ is finite and discrete for real split $G$. In this case the mixed classical dynamical Yang-Baxter equations and the associated classical dynamical reflection equation are a direct consequence of the well-known fact that Felder's trigonometric $r$-matrix satisfies the classical dynamical Yang-Baxter equation
(this will be explained in \cite{St}).

\subsection{} We now describe the results in a bit more detail from the viewpoint of harmonic analysis. Harmonic analysis on symmetric spaces provides a natural representation theoretical context
for a class of quantum superintegrable one-dimensional particle systems called quantum spin Calogero-Moser systems \cite{OP, HO, HS, SR,Re2}. The main observation is as follows.
For two finite-dimensional $K$-representations
$(\sigma_\ell,V_\ell)$, $(\sigma_r,V_r)$ the space $C_{\sigma_\ell,\sigma_r}^\infty(G)$ of spherical functions consists of the smooth functions
$f: G\rightarrow\textup{Hom}(V_r,V_\ell)$ satisfying the equivariance property
\[
f(k_\ell g k_r^{-1})=
\sigma_\ell(k_\ell)f(g)\sigma_r(k_r^{-1})\qquad \forall\, g\in G,\,\,\, \forall\, k_\ell, k_r\in K
\]
(see, e.g., \cite{HC,CM,W}).  A spherical function $f\in C_{\sigma_\ell,\sigma_r}^\infty(G)$ is uniquely
determined by its radial component $f\rvert_A$.
The radial components 
of the biinvariant differential operators acting on $C_{\sigma_\ell,\sigma_r}^\infty(G)$
become vector-valued differential operators on $A$. They form an algebra of commuting differential operators acting on the space $C_{\sigma_\ell,\sigma_r}^\infty(G)\rvert_A$.
This algebra is the algebra of quantum Hamiltonians of the associated quantum spin Calogero-Moser system.
The Schr{\"o}dinger operator of this system corresponds to the quadratic Casimir element 
$\Omega\in Z(U(\mathfrak{g}))$, regarded as a biinvariant differential operator on $C_{\sigma_\ell,\sigma_r}^\infty(G)$. The quantum integrals are the radial components of the action of $U(\mathfrak{g})^{K,\textup{opp}}\otimes_{Z(U(\mathfrak{g}))}U(\mathfrak{g})^K$ on $C_{\sigma_\ell,\sigma_r}^\infty(G)$ by left and right $G$-invariant differential operators, where $U(\mathfrak{g})^K$ is the algebra of $\textup{Ad}(K)$-invariant elements in $U(\mathfrak{g})$ and $U(\mathfrak{g})^{K,\textup{opp}}$ is the associated opposite algebra.

The connection to asymptotic boundary WZWN conformal field theory arises when the left $K$-representation $\sigma_\ell$ is of the form
$(\sigma_{\ell;n},V_\ell\otimes \underline{U})$ with $\underline{U}=U_1\otimes\cdots\otimes U_n$
the tensor product of $n$ finite-dimensional $G$-representations $(\tau_i,U_i)$ and 
$\sigma_{\ell;n}$ the diagonal $K$-action on $V_\ell\otimes\underline{U}$. The analytic Weyl group $W:=N_K(A)/M$ 
naturally acts on $A$, as well as on the space
$(V_\ell\otimes\underline{U}\otimes V_r^*)^M$ of $M$-invariant vectors in 
$V_\ell\otimes\underline{U}\otimes V_r^*$. Restriction to $A$ now defines a linear isomorphism
\begin{equation}\label{ResIso}
\rvert_A: C_{\sigma_{\ell;n},\sigma_r}^\infty(G)\overset{\sim}{\longrightarrow}
C^\infty\bigl(A;(V_\ell\otimes\underline{U}\otimes V_r^*)^M\bigr)^W.
\end{equation}
The space $C^\infty\bigl(A;(V_\ell\otimes\underline{U}\otimes V_r^*)^M\bigr)^W$ is the smooth analogue of the space of asymptotic twisted conformal blocks for boundary WZWN conformal field theory on a 
cylinder \cite{C,Z,FW,ES,SR}, asymptotic in the sense that the insertion points have been sent to infinity within an appropriate asymptotic sector. 
Typical examples of spherical functions in $C_{\sigma_{\ell;n},\sigma_r}^\infty(G)$ are
\begin{equation}\label{Npf}
g\mapsto (\phi_\ell\otimes\textup{id}_{\underline{U}})(\Psi_1\otimes\textup{id}_{U_2\otimes\cdots\otimes U_n})
\cdots (\Psi_{n-1}\otimes\textup{id}_{U_n})\Psi_n\pi_n(g)\phi_r,
\end{equation}
where $(\pi_i,\mathcal{H}_i)$ ($0\leq i\leq n$) 
are smooth $G$-representations, 
$\Psi_i\in\textup{Hom}_G(\mathcal{H}_i, \mathcal{H}_{i-1}\otimes U_i)$ ($1\leq i\leq n$) 
are $G$-intertwiners (asymptotic vertex operators) and 
$\phi_\ell\in\textup{Hom}_K(\mathcal{H}_0,V_\ell)$, 
$\phi_r\in\textup{Hom}_K(V_r,\mathcal{H}_n)$ are $K$-intertwiners (asymptotic 
boundary states). Functions of the form \eqref{Npf} appear naturally as limits of $n$-point functions in twisted conformal blocks for boundary WZWN conformal field theory on a 
cylinder, cf. \cite{ES,FW}. The twisting corresponds to the insertion of $\pi_n(g)$ in \eqref{Npf}. 

\subsection{} Replacing the $\Psi_i$ in \eqref{Npf} by secondary asymptotic field operators $(\pi_{i-1}(g_{i-1})\otimes\textup{id}_{U_i})\Psi_i$ is providing smooth $V_\ell\otimes\underline{U}\otimes V_r^*$-valued functions on $G^{\times (n+1)}$, equivariant with respect to the $K\times G^{\times n}\times K$-action \eqref{gaugeaction} on $G^{\times (n+1)}$. The corresponding vector space $C_{\sigma_\ell,\underline{\tau},\sigma_r}(G^{\times (n+1)})$ of $K\times G^{\times n}\times K$-equivariant functions is naturally isomorphic to $C_{\sigma_{\ell;n},\sigma_r}^\infty(G)$,
\begin{equation}\label{isointro}
C_{\sigma_\ell,\underline{\tau},\sigma_r}^\infty(G^{\times (n+1)})\overset{\sim}{\longrightarrow}
C_{\sigma_{\ell;n},\sigma_r}^\infty(G),\qquad f\mapsto f^\flat
\end{equation}
with $f^\flat(g):=f(1,\ldots,1,g)$. The upshot of this observation is that $C_{\sigma_\ell,\underline{\tau},\sigma_r}^\infty(G^{\times (n+1)})$ admits a natural 
$U(\mathfrak{g})^{K,\textup{opp}}\otimes Z(U(\mathfrak{g}))^{\otimes (n-1)}\otimes U(\mathfrak{g})^K$-action in terms of coordinate-wise
 invariant differential operators. 
For the commutative subalgebra $Z(U(\mathfrak{g}))^{\otimes (n+1)}$ the action is given in terms of  coordinate-wise
biinvariant differential operators. 
Pushing this action through the two isomorphisms \eqref{isointro} and 
\eqref{ResIso} provides algebras $A^{\sigma_\ell,\underline{\tau},\sigma_r}\subseteq B^{\sigma_\ell,\underline{\tau},\sigma_r}$ 
of $W$-invariant differential operators on $A$ with coefficients in $\textup{End}((V_\ell\otimes\underline{U}\otimes V_r^*)^M)$, with $A^{\sigma_\ell,\underline{\tau},\sigma_r}$
commutative. They serve, after an appropriate gauge, as the algebras of quantum Hamiltonians and quantum integrals for a quantum superintegrable system, which we call the quantum Calogero-Moser spin chain.

Gauged radial components of the action of the Casimir element $\Omega\in Z(U(\mathfrak{g}))$ as biinvariant differential operator on the $j$th coordinate of $C_{\sigma_\ell,\underline{\tau},\sigma_r}^\infty(G^{\times (n+1)})$ ($0\leq j\leq n$) are providing commuting second-order quantum Hamiltonians $\widetilde{D}_{\Omega,j;M}^{\sigma_\ell,\underline{\tau},\sigma_r}$ ($0\leq j\leq n$). 
We will derive explicit expressions for $\widetilde{D}_{\Omega,j;M}^{\sigma_\ell,\underline{\tau},\sigma_r}$ by computing $\widetilde{D}_{\Omega,n;M}^{\sigma_\ell,\underline{\tau},\sigma_r}$, which serves as the Schr{\"o}dinger operator of the quantum Calogero-Moser spin chain, 
 and the differences 
\[
\mathcal{\widetilde{D}}_{i;M}^{\sigma_\ell,\underline{\tau},\sigma_r}:=
\widetilde{D}_{\Omega,i;M}^{\sigma_\ell,\underline{\tau},\sigma_r}-
\widetilde{D}_{\Omega,i-1;M}^{\sigma_\ell,\underline{\tau},\sigma_r}
\qquad (1\leq i\leq n).
\]
The vector-valued potential $V^{\sigma_\ell,\underline{\tau},\sigma_r}$ (see \eqref{potential})
of the Schr{\"o}dinger operator $\widetilde{D}_{\Omega,n;M}^{\sigma_\ell,\underline{\tau},\sigma_r}$, which we will compute explicitly using standard techniques \cite{HC,CM,HO} from harmonic analysis on symmetric spaces, has the typical pairwise $\textup{sinh}^{-2}$-interaction terms associated to the roots of the restricted root system of $G$.
 The first-order commuting differential operators $\mathcal{\widetilde{D}}_{i;M}^{\sigma_\ell,\underline{\tau},\sigma_r}$ are the asymptotic boundary KZB operators.
They are of the form
\[
\widetilde{\mathcal{D}}_{i;M}^{\sigma_\ell,\underline{\tau},\sigma_r}=\sum_{j=1}^r\tau_i(x_j)\partial_{x_j}-\widehat{\kappa}_i-\sum_{k=1}^{i-1}r_{ki}^+-
\sum_{k=i+1}^Nr_{ik}^-
\]  
with $\{x_i\}_{i=1}^r$ an orthonormal basis of 
$\mathfrak{a}_{\mathbb{R}}$. The local factors $\kappa_i, r_{ki}^+, r_{ik}^-$ are $\textup{End}((V_\ell\otimes\underline{U}\otimes V_r^*)^M)$-valued functions on $\mathfrak{a}_{\mathbb{R}}$, acting
nontrivially on the tensor components $V_\ell\otimes U_i\otimes V_r^*$, $U_k\otimes U_i$ and $U_i\otimes U_k$ within
$(V_\ell\otimes\underline{U}\otimes V_r^*)^M$, respectively.
We will show that the operators $r^{\pm}_{ik}(a)$ are given by the action on $U_i\otimes U_k$ of $\theta$-(anti)symmetrised versions of the non-split analogue of Felder's \cite{F} classical dynamical $r$-matrix $r(a)$, and the core component $\widehat{\kappa}^{\textup{core}}_i(a)$ of $\widehat{\kappa}_i(a)$ is the action on $U_i$ of a $\theta$-twisted contraction of $r(a)$  (see Proposition \ref{folding} and \eqref{kappacorealt}). 

As mentioned earlier, the operators $\widehat{\kappa}_i,r_{ki}^+$ and $r_{ik}^-$ are solutions of a hierarchy of integrability equations 
(see Theorem \ref{inteq}). It contains a classical dynamical reflection type equation for $\widehat{\kappa}$ relative to $(r^+,r^-)$ (see \eqref{cdRe}) and three coupled classical dynamical Yang-Baxter-reflection type equations for the triple $(r^+,r^-,\widehat{\kappa})$ (see \eqref{cdYBRe}).

In \cite{SR} formal $n$-point spherical functions were introduced when $G$ is real split. 
They provide common eigenfunctions for all the quantum Hamiltonians  after a suitable gauge. In \cite{SR} this approach was used to arrive at the explicit form of the asymptotic boundary KZB operators, and to prove their commutativity.
The theory of formal $n$-point spherical functions for non-split $G$ is yet to be developed, but we
do shortly discuss global $n$-point spherical functions.

The results and techniques in this paper are closely related to the Etingof-Schiffmann-Varchenko theory on generalised trace functions, see, e.g., \cite{ES,EV00,ESV,EL} (our current trigonometric, non-affine level of the theory was discussed for generalised trace functions in \cite{ES}). {}From the harmonic analysis point of view the Etingof-Schiffmann-Varchenko theory is related to the symmetric pair $(G\times G,\textup{diag}(G))$, with $\textup{diag}(G)$ the diagonal embedding of $G$ into $G\times G$. The extension of the results in the current paper to the level of (quantum) affine symmetric pairs is currently under investigation.\\

\noindent
The content of the paper is as follows. 

In Section 2 we introduce the space of global $n$-point spherical functions and study the coordinate-wise action of (bi)invariant differential operators on $C_{\sigma_{\ell},\underline{\tau},\sigma_r}^\infty(G^{\times (n+1)})\simeq C_{\sigma_{\ell;n},\sigma_r}^\infty(G)$.
This part of the paper, which does not yet require radial component maps,
will be developed for the more general context involving an arbitrary real connected Lie group $G$ and two closed Lie subgroups $K_\ell$ and $K_r$. 

Section 3 is a short section in which we recall some basic facts about the structure theory of real semisimple Lie groups. This section is mainly meant to fix notations.

In Section 4 we 
introduce the coordinate radial component maps and prove their main algebraic properties. 

In Section 5 we use the coordinate radial component maps to define the algebras of quantum Hamiltonians and quantum integrals for the quantum Calogero-Moser spin chain. We furthermore derive the explicit expression for the Schr{\"o}dinger operator of the quantum Calogero-Moser spin chain. 

In Section 6 we derive the explicit expressions for the asymptotic boundary KZB operators and we
show that the local factors of the asymptotic boundary KZB operators satisfy coupled classical dynamical Yang-Baxter-reflection equations. 

Finally, in Section 7 we explicitly compute the main structural ingredients in case $G=\textup{SU}(p,r)$ with $1\leq r\leq p$ (in this case the underlying restricted root system is of type $\textup{BC}_r$).\\

\noindent
{\bf Conventions:} We write $\otimes_{\mathcal{A}}$ for the tensor product over a complex associative algebra $\mathcal{A}$. For $\mathcal{A}=\mathbb{C}$ we simply denote it by $\otimes$. A similar convention is used for hom-spaces; $\textup{Hom}_{\mathcal{A}}(E,F)$ denotes the space of $\mathcal{A}$-linear maps, and $\textup{Hom}(E,F)$ the space of complex linear maps.

\section{$n$-Point spherical functions}\label{S3}
Let $G$ be a real connected Lie group, and $K_\ell, K_r\subseteq G$ two closed Lie subgroups.
Write for $n\in\mathbb{Z}_{\geq 1}$,
\[
\mathcal{G}_n:=K_\ell\times G^{\times n}\times K_r.
\]
Elements in $G^{\times n}$ will be denoted by $\underline{h}=(h_1,\ldots,h_n)$, and elements
in $G^{\times (n+1)}$ by $\mathbf{g}=(g_0,\ldots,g_n)$.
Consider the left $\mathcal{G}_n$-action 
on $G^{\times (n+1)}$ defined by
\begin{equation}\label{gaugeaction}
(k_\ell,\underline{h},k_r)\cdot \mathbf{g}:=(k_\ell g_0h_1^{-1},h_1g_1h_2^{-1},
\ldots, h_ng_nk_r^{-1}).
\end{equation}
For finite-dimensional smooth $G$-representations $(\tau_i, U_i)$ ($1\leq i\leq n$) write
$\underline{\tau}=\tau_1\otimes\cdots\otimes\tau_n$ for the tensor product representation of
$G^{\times n}$ on $\underline{U}:=U_1\otimes\cdots\otimes U_n$. 
Furthermore, fix finite-dimensional smooth representations $(\sigma_\ell, V_\ell)$, $(\sigma_r, V_r)$ of $K_\ell$ and $K_r$ respectively, and write $(\sigma_r^*,V_r^*)$ for
the $K_r$-representation dual to $(\sigma_r,V_r)$.
\begin{defi}\label{sN}
Let $C_{\sigma_\ell,\underline{\tau},\sigma_r}^\infty(G^{\times (n+1)})$ be the space of
smooth functions 
\[
f: G^{\times (n+1)}\rightarrow V_\ell\otimes\underline{U}\otimes V_r^*\]
satisfying 
\[
f((k_\ell,\underline{h},k_r)\cdot\mathbf{g})=
(\sigma_\ell(k_\ell)\otimes\underline{\tau}(\underline{h})\otimes\sigma_r^*(k_r))
f(\mathbf{g})\qquad \forall\, (k_\ell,\underline{h},k_r)\in \mathcal{G}_n,\,\,\, \forall\, \mathbf{g}\in
G^{\times (n+1)}.
\]
\end{defi}
\begin{rema}
For $n=0$ the space $C_{\sigma_\ell,\sigma_r}^{\infty}(G)$ consists of the
smooth functions $f: G\rightarrow V_\ell\otimes V_r^*$ satisfying
\[
f(k_\ell gk_r^{-1})=(\sigma_\ell(k_\ell)\otimes\sigma_r^*(k_r))f(g)
\qquad \forall\, k_\ell\in K_\ell,\,\, \forall\, k_r\in K_r,\,\, \forall\, g\in G.
\]
When $K_\ell=K_r$, the space $C_{\sigma_\ell,\sigma_r}^{\infty}(G)$ is called the space of $\sigma_\ell\otimes\sigma_r^*$-spherical functions on $G$ with respect to the Lie subgroup $K_\ell$
of $G$.
\end{rema}
\begin{rema}\label{conv}
We will use standard tensor-leg notations.
For example, for $v_\ell\in V_\ell$, $\underline{u}\in\underline{U}$, $\phi_r\in V_r^*$ and $g_1,g_2\in G$ the tensor
\[
(\textup{id}_{V_\ell}\otimes\tau_1(g_1)\otimes\tau_2(g_2)\otimes\textup{id}_{U_3}\otimes
\cdots\otimes\textup{id}_{U_n}\otimes\textup{id}_{V_r^*})(v_\ell\otimes\underline{u}\otimes\phi_r)
\]
in $V_\ell\otimes\underline{U}\otimes V_r^*$
will be denoted by $(\tau_1(g_1)\otimes \tau_2(g_2))(v_\ell\otimes\underline{u}\otimes\phi_r)$, or
$\tau_1(g_1)\tau_2(g_2)(v_\ell\otimes\underline{u}\otimes\phi_r)$.
We will write omitted components in vectors with a cap. For instance, 
$(g_0,\ldots,\hat{g}_j,\ldots,g_n)$ stands for the $n$-vector 
$(g_0,\ldots,g_{j-1},g_{j+1},\ldots,g_n)$ in $G^{\times n}$.
\end{rema}

We write $(\sigma_{\ell;n}, V_\ell\otimes\underline{U})$ for the finite-dimensional 
$K_\ell$-representation defined by
\[
\sigma_{\ell;n}(k_\ell):=\sigma_\ell(k_\ell)\otimes\tau_1(k_\ell)\otimes\cdots\otimes\tau_n(k_\ell)
\qquad \forall\, k_\ell\in K_\ell.
\]

For a smooth function $f: G^{\times (n+1)}\rightarrow V_\ell\otimes\underline{U}\otimes V_r^*$
we define $f^\flat: G\rightarrow V_\ell\otimes\underline{U}\otimes V_r^*$ by
\begin{equation}\label{tildef}
f^\flat(g):=f(1,\ldots,1,g)\qquad (g\in G).
\end{equation}
\begin{lem}\label{Nto0}
The linear map $f\mapsto f^\flat$ restricts to a linear isomorphism 
\[
C_{\sigma_\ell,\underline{\tau},\sigma_r}^\infty(G^{\times (n+1)})
\overset{\sim}{\longrightarrow} C_{\sigma_{\ell;n},\sigma_r}^\infty(G)
\]
The preimage of $f^\flat\in C_{\sigma_{\ell;n},\sigma_r}^\infty(G)$ is the function
$f\in C_{\sigma_\ell,\underline{\tau},\sigma_r}^\infty(G^{\times (n+1)})$ defined by
\begin{equation}\label{inverse}
f(\mathbf{g}):=(\tau_1(g_0^{-1})\otimes\tau_2(g_1^{-1}g_0^{-1})\otimes\cdots
\otimes\tau_n(g_{n-1}^{-1}\cdots g_0^{-1}))f^\flat(g_0g_1\cdots g_n)
\end{equation} 
for $\mathbf{g}\in G^{\times (n+1)}$.
\end{lem}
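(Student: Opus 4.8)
The plan is to verify directly that the map $f \mapsto f^\flat$ is a well-defined linear isomorphism by exhibiting its inverse explicitly, namely the formula \eqref{inverse}, and checking that the two constructions are mutually inverse and land in the correct equivariant function spaces. Concretely, I would break the argument into three parts: (1) if $f \in C_{\sigma_\ell,\underline{\tau},\sigma_r}^\infty(G^{\times(n+1)})$, then $f^\flat \in C_{\sigma_{\ell;n},\sigma_r}^\infty(G)$; (2) if $f^\flat$ is given and $f$ is defined by \eqref{inverse}, then $f \in C_{\sigma_\ell,\underline{\tau},\sigma_r}^\infty(G^{\times(n+1)})$; (3) the two assignments compose to the identity in both directions.

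For part (1), I would compute $f^\flat(k_\ell g k_r^{-1}) = f(1,\ldots,1,k_\ell g k_r^{-1})$ and observe that the tuple $(1,\ldots,1,k_\ell g k_r^{-1})$ can be obtained from $(1,\ldots,1,g)$ by acting with a suitable element of $\mathcal{G}_n$. The natural candidate is $(k_\ell, \underline{h}, k_r)$ with $\underline{h} = (k_\ell, k_\ell, \ldots, k_\ell)$: then \eqref{gaugeaction} gives the $i$th middle slot $h_i g_i h_{i+1}^{-1} = k_\ell \cdot 1 \cdot k_\ell^{-1} = 1$ for $1 \le i \le n-1$, the zeroth slot $k_\ell \cdot 1 \cdot k_\ell^{-1} = 1$, and the last slot $k_\ell g k_r^{-1}$, as required. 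Applying the equivariance of $f$ and using that on $V_\ell \otimes \underline{U} \otimes V_r^*$ the operator $\sigma_\ell(k_\ell)\otimes\underline{\tau}(k_\ell,\ldots,k_\ell)\otimes\sigma_r^*(k_r)$ is exactly $\sigma_{\ell;n}(k_\ell)\otimes\sigma_r^*(k_r)$ yields the desired transformation law. For part (2), one checks the equivariance of $f$ defined by \eqref{inverse} against a general $(k_\ell,\underline{h},k_r)$; this is a bookkeeping computation tracking how the arguments $g_0, g_1g_0, \ldots, g_{n-1}\cdots g_0$ and the product $g_0g_1\cdots g_n$ transform, then using the $K_\ell$- and $K_r$-equivariance of $f^\flat$ and the homomorphism property of the $\tau_i$ to reassemble the prefactor.

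For part (3), one direction is immediate: setting $\mathbf{g} = (1,\ldots,1,g)$ in \eqref{inverse} collapses all the $\tau_i(\cdots)$ factors to the identity (since every $g_j = 1$ for $j < n$) and gives $f(1,\ldots,1,g) = f^\flat(g)$. The other direction requires showing that an arbitrary $f \in C_{\sigma_\ell,\underline{\tau},\sigma_r}^\infty(G^{\times(n+1)})$ is recovered from its $f^\flat$ via \eqref{inverse}; here the key point is that any tuple $\mathbf{g} = (g_0,\ldots,g_n)$ lies in the $G^{\times n}$-orbit (the middle factor of $\mathcal{G}_n$, with $k_\ell = k_r = 1$) of the tuple $(1,\ldots,1,g_0g_1\cdots g_n)$ — explicitly, take $\underline{h} = (g_0, g_0 g_1, \ldots, g_0 g_1 \cdots g_{n-1})$ and verify via \eqref{gaugeaction} that $(k_\ell,\underline{h},k_r)\cdot(1,\ldots,1,g_0\cdots g_n)$ telescopes to $\mathbf{g}$ — and then apply the $G^{\times n}$-equivariance of $f$, noting $\underline{\tau}(\underline{h})^{-1}$ produces precisely the prefactor in \eqref{inverse}. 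Smoothness is preserved in both directions since \eqref{inverse} is built from smooth operations (multiplication in $G$, the smooth representations $\tau_i$, and composition).

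The main obstacle is purely organisational rather than conceptual: keeping the index conventions and the order of multiplication straight in the telescoping identity, since the action \eqref{gaugeaction} intertwines adjacent coordinates and the inverse formula \eqref{inverse} involves the partial products $g_{j-1}^{-1}\cdots g_0^{-1}$ acting on the $j$th tensor leg. I would do this most cleanly by first establishing the orbit claim ``every $\mathbf{g}$ is $\mathcal{G}_n$-equivalent to $(1,\ldots,1,g_0\cdots g_n)$ via the explicit $\underline{h}$ above,'' and then deriving both the well-definedness of $f^\flat$ and the inversion formula as consequences, so that the telescoping computation is carried out only once.
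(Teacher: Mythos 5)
Your proposal is correct and follows essentially the same route as the paper: equivariance of $f^\flat$ via the gauge element $(k_\ell,(k_\ell,\ldots,k_\ell),k_r)$, and the inversion via the gauge element with middle entry $\underline{h}=(g_0,g_0g_1,\ldots,g_0\cdots g_{n-1})$, which normalises every tuple to $(1,\ldots,1,g_0\cdots g_n)$. One small directional slip: this $\underline{h}$ satisfies $(1,\underline{h},1)\cdot\mathbf{g}=(1,\ldots,1,g_0\cdots g_n)$, not $(1,\underline{h},1)\cdot(1,\ldots,1,g_0\cdots g_n)=\mathbf{g}$ as you wrote; your final reassembly using $\underline{\tau}(\underline{h})^{-1}$ is consistent with the correct direction, so the argument goes through unchanged.
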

\begin{proof}
Note that $f^\flat\in C_{\sigma_{\ell;n},\sigma_r}^\infty(G)$ for
$f\in C_{\sigma_\ell,\underline{\tau},\sigma_r}^\infty(G^{\times (n+1)})$ since
\[
(k_\ell,(k_\ell,\ldots,k_\ell),k_r)\cdot (1,\ldots,1,g)=(1,\ldots,1,k_\ell g k_r^{-1})
\]
in $G^{\times (n+1)}$ for $k_\ell\in K_\ell$, $k_r\in K_r$ and $g\in G$.
Conversely, for
$f^\flat\in C_{\sigma_{\ell;n},\sigma_r}^\infty(G)$, it follows from the formula
\[
(1,g_0,g_0g_1,\ldots,g_0g_1\cdots g_{n-1},1)\cdot \mathbf{g}=
(1,\ldots,1,g_0g_1\cdots g_n)\qquad (\mathbf{g}\in G^{\times (n+1)})
\]
that $f$, defined by \eqref{inverse}, lies in $C_{\sigma_\ell,\underline{\tau},\sigma_r}^\infty(G^{\times (n+1)})$. The lemma now follows easily.
\end{proof}
For a complex Lie algebra $L$ we write $U(L)$ for its universal enveloping algebra. It is a Hopf algebra, with counit $\epsilon: U(L)\rightarrow\mathbb{C}$ the unital algebra homomorphism
satisfying $\epsilon(y)=0$ ($y\in L$), comultiplication
$\Delta: U(L)\rightarrow U(L)\otimes U(L)$ the unital algebra homomorphism satisfying $\Delta(y)=y\otimes 1+1\otimes y$ ($y\in L$), and with antipode $S: U(L)\rightarrow U(L)$ the unital algebra anti-isomorphism satisfying $S(y)=-y$ ($y\in L$). For $k\in\mathbb{Z}_{>0}$ let 
$\Delta^{(k)}: U(L)\rightarrow U(L)^{\otimes (k+1)}$ be the $k$th iterated comultiplication, defined
inductively by $\Delta^{(1)}=\Delta$ and $\Delta^{(k)}=(\Delta\otimes\textup{id}_{U(L)}^{\otimes (k-1)})\Delta^{(k-1)}$ for $k>1$. We will use Sweedler's notation for the expression of $\Delta^{(k)}(u)$ ($u\in U(L)$) as sum of pure tensors,
\[
\Delta^{(k)}(u)=\sum_{(u)}u_{(1)}\otimes\cdots\otimes u_{(k+1)}.
\]
For a finite-dimensional $U(L)$-module $(\sigma,V)$ we write $(\sigma^*,V^*)$ for the dual $U(L)$-module, defined
by $(\sigma^*(u)f)(v)=-f(\sigma(u)v)$ for $u\in U(L)$, $v\in V$ and $f\in V^*$. We write $V^{L}$ for the space of $L$-invariant elements in $V$. 

Write $\mathfrak{g}_{\mathbb{R}}$, $\mathfrak{k}_{\ell,\mathbb{R}}$ and $\mathfrak{k}_{r,\mathbb{R}}$ for the Lie algebras
of $G$, $K_\ell$ and $K_r$ respectively. Their complexifications are denoted by 
$\mathfrak{g}$, $\mathfrak{k}_\ell$ and $\mathfrak{k}_r$. Differentiating the
representations $(\sigma_\ell,V_\ell)$, $(\sigma_r,V_r)$ and $(\tau_i,U_i)$ 
turns $V_\ell$ into a left $U(\mathfrak{k}_\ell)$-module, $V_r$ into a left 
$U(\mathfrak{k}_r)$-module and the $U_i$ into left
$U(\mathfrak{g})$-modules. The corresponding representation maps will again be denoted by
$\sigma_\ell, \sigma_r$ and $\tau_i$. 

Let $V$ be a finite-dimensional complex vector space. We denote by $C^\infty(G^{\times (n+1)};V)$
the space of smooth $V$-valued functions on $G^{\times (n+1)}$. We write $C^\infty(G^{\times (n+1)})$ when $V=\mathbb{C}$.

We have a left $U(\mathfrak{g})^{\otimes (n+1)}$-action on $C^\infty(G^{\times (n+1)};V)$,
defined by
\[
u_0\otimes\cdots\otimes u_n\mapsto u_0[0]u_1[1]\cdots u_n[n]\qquad (u_j\in U(\mathfrak{g}))
\]
with $u\mapsto u[j]$ ($u\in U(\mathfrak{g})$) the action of $U(\mathfrak{g})$ on the $j^{th}$ coordinate of
$f\in C^\infty(G^{\times (n+1)};V)$ by left $G$-invariant differential operators. Concretely, the action $u\mapsto u[j]$ of $U(\mathfrak{g})$ on
$C^\infty(G^{\times (n+1)};V)$ is determined by
\[
(y[j]f)(\mathbf{g}):=\frac{d}{dt}\biggr\rvert_{t=0}f(g_0,\ldots,g_{j-1},g_j\exp(ty),g_{j+1},\ldots,g_n)
\qquad (y\in\mathfrak{g}_{\mathbb{R}}),
\]
where $\exp: \mathfrak{g}_{\mathbb{R}}\rightarrow G$ is the exponential map. 
\begin{rema}\label{useful}
We write $u\mapsto u\langle j\rangle$ for the coordinate-wise action of $U(\mathfrak{g})$
on $C^\infty(G^{\times (n+1)};V)$ by right $G$-invariant differential operators, so that
\[
(y\langle j\rangle f)(\mathbf{g}):=\frac{d}{dt}\biggr\rvert_{t=0}f(g_0,\ldots,g_{j-1},\exp(-ty)g_j,g_{j+1},\ldots,g_n)
\qquad (y\in\mathfrak{g}_{\mathbb{R}}).
\]
Note that 
\begin{equation*}
\bigl(u\langle j\rangle f\bigr)(\mathbf{g})=\bigl(\textup{Ad}_{g_j^{-1}}(S(u))[j]f\bigr)(\mathbf{g})
\end{equation*}
for $u\in U(\mathfrak{g})$. In particular, $u\langle j\rangle f=S(u)[j]f$ for $u\in Z(U(\mathfrak{g}))$,
with $Z(U(\mathfrak{g}))$ the center of $U(\mathfrak{g})$.
\end{rema}

Let $C_g$ be the conjugation action of $g\in G$ on $G$, defined by
$C_g(h):=ghg^{-1}$ for $g,h\in G$. Differentiating $C_g$ defines the adjoint action 
$\textup{Ad}_g\in\textup{Aut}(\mathfrak{g}_{\mathbb{R}})$ of $g\in G$ on $\mathfrak{g}_{\mathbb{R}}$. It extends to an action of $G$ on $U(\mathfrak{g})$ by unital algebra automorphisms, which will also be denoted by $g\mapsto \textup{Ad}_g$.
\begin{prop}\label{transfogen}
For $f\in C_{\sigma_\ell,\underline{\tau},\sigma_r}^\infty(G^{\times (n+1)})$
and $0\leq j<n$ we have
\begin{equation*}
\bigl(u[j]f\bigr)(\mathbf{g})=
\sum_{(u)}A_{j,(u)}(\mathbf{g})\bigl(\textup{Ad}_{g_n^{-1}\cdots g_{j+1}^{-1}}(u_{(n-j+1)})[n]f\bigr)(
g_0,\ldots,\hat{g}_{j},\ldots,g_n,C_{g_n^{-1}\cdots g_{j+1}^{-1}}(g_j)\bigr)
\end{equation*}
for $\mathbf{g}\in G^{\times (n+1)}$ and $u\in U(\mathfrak{g})$, with
$A_{j,(u)}(\mathbf{g})\in\textup{End}(V_\ell\otimes\underline{U}\otimes V_r^*)$ given by
\begin{equation*}
\begin{split}
A_{j,(u)}(\mathbf{g}):=\tau_{j+1}(S(u_{(1)}))\tau_{j+1}(g_j^{-1}&g_{j+1})
\otimes\tau_{j+2}(g_{j+1}^{-1})\tau_{j+2}(S(u_{(2)}))\tau_{j+2}(g_j^{-1}g_{j+1}g_{j+2})\otimes\cdots\\
&\,\,\,\,\,\,\cdots\otimes\tau_n(g_{n-1}^{-1}\cdots g_{j+1}^{-1})\tau_n(S(u_{(n-j)}))
\tau_n(g_j^{-1}g_{j+1}\cdots g_n).
\end{split}
\end{equation*}
\end{prop}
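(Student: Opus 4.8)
The plan is to reduce the general statement to the case $j = n-1$ by an iteration, and to handle $j=n-1$ by a direct computation using the equivariance property from Definition \ref{sN} together with the translation formula $(1,g_0,g_0g_1,\ldots,g_0g_1\cdots g_{n-1},1)\cdot\mathbf{g}=(1,\ldots,1,g_0g_1\cdots g_n)$ already recorded in the proof of Lemma \ref{Nto0}. First I would observe that it suffices to prove the formula for $u=y\in\mathfrak{g}_{\mathbb{R}}$ and then extend to all of $U(\mathfrak{g})$ by multiplicativity: the operators $u\mapsto u[j]$ are algebra homomorphisms into differential operators, and the right-hand side, rewritten via Sweedler notation, is compatible with the coproduct $\Delta^{(n-j)}$. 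So the core is a first-order (Lie algebra) computation.

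The key computational step for a single element $y\in\mathfrak{g}_{\mathbb{R}}$ and $j<n$: apply the defining derivative $(y[j]f)(\mathbf{g})=\frac{d}{dt}\big|_{t=0}f(g_0,\ldots,g_j\exp(ty),\ldots,g_n)$, then use the $\mathcal{G}_n$-equivariance of $f$ to "move" the perturbation of the $j$th coordinate through to the $n$th coordinate. Concretely, I would act by a suitable element $(k_\ell,\underline{h},k_r)\in\mathcal{G}_n$ (with $k_\ell=1$, $k_r=1$, and $\underline{h}$ chosen exactly as in the translation identity used for Lemma \ref{Nto0}, namely built from products $g_0g_1\cdots$ with the perturbed $g_j\exp(ty)$ inserted) so that the perturbed $(n+1)$-tuple is sent to a tuple whose first $j$ coordinates are trivialized, the perturbation is pushed into the last slot as $\textup{Ad}_{g_n^{-1}\cdots g_{j+1}^{-1}}(y)[n]$ acting at a shifted point $C_{g_n^{-1}\cdots g_{j+1}^{-1}}(g_j)$, and the "cost" of this move is a product of representation operators $\tau_{j+1},\ldots,\tau_n$ evaluated at the relevant group elements — this is precisely the factor $A_{j,(u)}(\mathbf{g})$. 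Differentiating at $t=0$ then distributes via the Leibniz rule across the $\tau_i$-factors and the final $f$-term, and re-collecting with Sweedler notation (the derivative of a product of $n-j$ conjugated/translated copies of $\tau_i(\exp(ty))$-type factors produces exactly $\sum_{(u)}$ with $n-j+1$ legs) yields the stated formula. For general $u\in U(\mathfrak{g})$ one either repeats this with $\exp(ty)$ replaced by a one-parameter argument and uses that $U(\mathfrak{g})$ is generated by $\mathfrak{g}_{\mathbb{R}}$, or, cleaner, checks that both sides are algebra homomorphisms in $u$ and agree on generators.

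The main obstacle I anticipate is purely bookkeeping: tracking the exact group elements appearing in each $\tau_i$-slot of $A_{j,(u)}(\mathbf{g})$ and matching the antipodes $S(u_{(k)})$ to the correct Sweedler legs, including getting the conjugation base point $C_{g_n^{-1}\cdots g_{j+1}^{-1}}(g_j)$ and the argument reshuffle $(g_0,\ldots,\hat g_j,\ldots,g_n,\cdot)$ right. It is easy to make sign or ordering errors in the antipode because the perturbation sits on the right of $g_j$ but, after being transported, the accumulated products $g_j^{-1}g_{j+1}\cdots$ appear on both sides of each $\tau_i$-factor; the telescoping structure $\tau_{i}(g_{i-1}^{-1}\cdots g_{j+1}^{-1})\tau_i(S(u_{(i-j)}))\tau_i(g_j^{-1}g_{j+1}\cdots g_i)$ is what one must verify survives the $\Delta^{(n-j)}$ bookkeeping. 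A convenient way to organize this is to first establish the $n-j=1$ case (i.e. $j=n-1$) by the direct equivariance argument above, and then induct downward in $j$, using the already-proven one-step formula together with the coassociativity of $\Delta^{(k)}$ to assemble the general identity; the inductive step is then itself just the one-step computation applied to the pushed-forward function, so the "hard part" is confined to the base case and to the one compatibility check between the one-step move and the iterated coproduct.
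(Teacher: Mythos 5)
Your first-order computation is sound and is essentially the paper's base mechanism (transport the perturbation of the $j$th slot to the last slot via the $\mathcal{G}_n$-equivariance, at the cost of the $\tau_{j+1},\ldots,\tau_n$-factors, then differentiate). The genuine gap is in your passage from $y\in\mathfrak{g}_{\mathbb{R}}$ to general $u\in U(\mathfrak{g})$. Your ``cleaner'' route --- check that both sides are algebra homomorphisms in $u$ and agree on generators --- does not work: the stated identity holds only for $f$ in the equivariant space $C_{\sigma_\ell,\underline{\tau},\sigma_r}^\infty(G^{\times (n+1)})$, and that space is \emph{not} preserved by $u[j]$ (the operator $u[j]$ commutes with the left translations in slot $j$ but not with the right ones), nor by the operator defined by the right-hand side. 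So even if one could show the right-hand side is multiplicative in $u$ as an operator on all smooth functions (it is not: already for $u=1$ the identity $f(\mathbf{g})=A_{j,(1)}(\mathbf{g})f^{(j)}(\mathbf{g})$ fails off the equivariant subspace), one cannot compose the identity with itself, because the intermediate function $v[j]f$ is no longer equivariant. Likewise ``$U(\mathfrak{g})$ is generated by $\mathfrak{g}_{\mathbb{R}}$'' is not by itself an argument; the whole content is how products behave.

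The paper closes exactly this gap by inducting on the standard filtration $U_k(\mathfrak{g})$: assuming the formula for $u\in U_k(\mathfrak{g})$ as an identity of smooth $V_\ell\otimes\underline{U}\otimes V_r^*$-valued functions of $\mathbf{g}$ (no equivariance of intermediate objects is needed), one applies $y[j]$ to both sides, uses the product rule on the coefficient $A_{j,(u)}(\mathbf{g})$ (producing the terms $\tau_s(S(yu_{(s-j)}))$), and uses the transport identity $\bigl(y[j]F^{(j)}\bigr)(\mathbf{g})=\bigl(\textup{Ad}_{g_n^{-1}\cdots g_{j+1}^{-1}}(y)[n]F\bigr)^{(j)}(\mathbf{g})$, which is valid for \emph{arbitrary} smooth $F$; the resulting terms reassemble into the Sweedler legs of $yu$. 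If you prefer your ``one-step, then induct downward in $j$'' variant, it can be made to work, but then you must prove the one-step formula for \emph{general} $u$, which requires the coproduct form of the Leibniz rule, $u[j](F\cdot G)=\sum_{(u)}(u_{(1)}[j]F)(u_{(2)}[j]G)$ for an operator-valued factor times a vector-valued factor, together with $u[j]\bigl(\tau_{j+1}((\cdot)^{-1})\bigr)=\tau_{j+1}(S(u))\tau_{j+1}((\cdot)^{-1})$; you only justify the primitive case of these facts, so the same gap reappears there. Either supply the filtration induction as in the paper, or state and prove the coproduct Leibniz rule and carry out the iteration with coassociativity explicitly.
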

\begin{proof}
For the duration of the proof we use the shorthand notation
\[
F^{(j)}(\mathbf{g}):=F(g_0,\ldots,\hat{g}_j,\ldots,g_n,C_{g_n^{-1}\cdots g_{j+1}^{-1}}(g_j))
\]
for $F\in C^\infty(G^{\times (n+1)};
V_\ell\otimes\underline{U}\otimes V_r^*)$ and $0\leq j<n$. One then easily checks that for $y\in\mathfrak{g}_{\mathbb{R}}$,
\begin{equation}\label{help}
\bigl(y[j]F^{(j)}\bigr)(\mathbf{g})=
\bigl(\textup{Ad}_{g_n^{-1}\cdots g_{j+1}^{-1}}(y)[n]F\bigr)^{(j)}(\mathbf{g}).
\end{equation}

Consider now the standard filtration $U(\mathfrak{g})=
\bigcup_{k\in\mathbb{Z}_{\geq 0}}U_k(\mathfrak{g})$ of $U(\mathfrak{g})$.
We prove the proposition for $u\in U_k(\mathfrak{g})$ by induction in $k$.
For $k=0$ the result follows from the formula
\[
f(\mathbf{g})=\bigl(\tau_{j+1}(g_j^{-1}g_{j+1})\otimes\cdots\otimes\tau_n(g_{n-1}^{-1}\cdots g_j^{-1}g_{j+1}\cdots g_n)\bigr)f^{(j)}(\mathbf{g}),
\] 
which holds true since $f\in C_{\sigma_\ell,\underline{\tau},\sigma_r}^\infty(G^{\times (n+1)})$.
For the induction step, suppose the proposition is correct for $u\in U_k(\mathfrak{g})$, so
\[
\bigl(u[j]f\bigr)(\mathbf{g})=
\sum_{(u)}A_{j,(u)}(\mathbf{g})\bigl(\textup{Ad}_{g_n^{-1}\cdots g_{j+1}^{-1}}(u_{(n-j+1)})[n]f\bigr)^{(j)}(\mathbf{g}).
\]
Then for $y\in\mathfrak{g}_{\mathbb{R}}$ we have by \eqref{help},
\begin{equation}\label{help2}
\begin{split}
\bigl((yu)[j]f\bigr)(\mathbf{g})&=
\sum_{(u)}\bigl(y[j]A_{j,(u)}\bigr)(\mathbf{g})\bigl(\textup{Ad}_{g_n^{-1}\cdots g_{j+1}^{-1}}(u_{(n-j+1)})[n]f\bigr)^{(j)}(\mathbf{g})\\
&+\sum_{(u)}A_{j,(u)}(\mathbf{g})\bigl(\textup{Ad}_{g_n^{-1}\cdots g_{j+1}^{-1}}(yu_{(n-j+1)})[n]f\bigr)^{(j)}(\mathbf{g}).
\end{split}
\end{equation}
By the explicit expression for $A_{j,(u)}(\mathbf{g})$ and the product rule we have
\[
\bigl(y[j]A_{j,(u)}\bigr)(\mathbf{g})=
\sum_{s=j+1}^n\Bigl(
\cdots\otimes
\tau_s(g_{s-1}^{-1}\dots g_{j+1}^{-1})\tau_s(S(yu_{(s-j)}))\tau_s(g_j^{-1}g_{j+1}\cdots g_s)\otimes
\cdots\Bigr)
\]
where we specified only the tensor component that differs from the corresponding tensor component of $A_{j,(u)}(\mathbf{g})$. Substituting into \eqref{help2} proves the proposition for
$yu\in U_{k+1}(\mathfrak{g})$. This completes the proof.
\end{proof}
\begin{cor}\label{corflat}
For $f\in C_{\sigma_\ell,\underline{\tau},\sigma_r}^\infty(G^{\times (n+1)})$
and $0\leq j<n$ we have
\[
(u[j]f)^{\flat}(g)=\sum_{(u)}\tau_{j+1}(S(u_{(1)}))\cdots\tau_n(S(u_{(n-j)}))
\bigl(S(u_{(n-j+1)})\langle n\rangle f\bigr)^{\flat}(g)
\]
for $g\in G$ and $u\in U(\mathfrak{g})$.
\end{cor}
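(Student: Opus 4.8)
The plan is to obtain Corollary \ref{corflat} as a specialization of Proposition \ref{transfogen} at the point $\mathbf{g}=(1,\ldots,1,g)$, i.e.\ $g_0=\cdots=g_{n-1}=1$ and $g_n=g$. Substituting this point into the formula of Proposition \ref{transfogen}, every inserted group element of the form $g_j^{-1}g_{j+1}\cdots g_s$ or $g_{s-1}^{-1}\cdots g_{j+1}^{-1}$ occurring in the coefficient $A_{j,(u)}(\mathbf{g})$ collapses to $1$, except for the single factor $\tau_n(g_j^{-1}g_{j+1}\cdots g_n)=\tau_n(g)$ in the last tensor leg; hence, in tensor-leg notation,
\[
A_{j,(u)}(1,\ldots,1,g)=\tau_{j+1}(S(u_{(1)}))\cdots\tau_{n-1}(S(u_{(n-j-1)}))\,\tau_n(S(u_{(n-j)}))\,\tau_n(g)
\]
(the product over the first $n-1-j$ tensor legs being empty when $j=n-1$). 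Moreover $g_n^{-1}\cdots g_{j+1}^{-1}=g^{-1}$ and $C_{g^{-1}}(g_j)=C_{g^{-1}}(1)=1$, so the shifted evaluation point $(g_0,\ldots,\hat{g}_j,\ldots,g_n,C_{g_n^{-1}\cdots g_{j+1}^{-1}}(g_j))$ reduces to $(1,\ldots,1,g,1)$, with $g$ in the $(n-1)$-st slot. Proposition \ref{transfogen} then gives
\[
(u[j]f)^{\flat}(g)=\sum_{(u)}\tau_{j+1}(S(u_{(1)}))\cdots\tau_n(S(u_{(n-j)}))\Bigl(\tau_n(g)\bigl(\textup{Ad}_{g^{-1}}(u_{(n-j+1)})[n]f\bigr)(1,\ldots,1,g,1)\Bigr),
\]
so the corollary follows once we show that for every $w\in U(\mathfrak{g})$
\[
\tau_n(g)\bigl(\textup{Ad}_{g^{-1}}(w)[n]f\bigr)(1,\ldots,1,g,1)=\bigl(S(w)\langle n\rangle f\bigr)^{\flat}(g).
\]

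To prove this identity I would first record the special case of the inversion formula \eqref{inverse} of Lemma \ref{Nto0}: $f(1,\ldots,1,g,h)=\tau_n(g^{-1})f^{\flat}(gh)$, with $g$ in the $(n-1)$-st slot and $h$ in the last slot. Writing $v:=\textup{Ad}_{g^{-1}}(w)$ and $\lambda_g(h):=gh$, and using that the $[n]$-action differentiates only in the last coordinate while $\tau_n(g^{-1})$ is constant in that coordinate, one obtains $\bigl(v[n]f\bigr)(1,\ldots,1,g,1)=\tau_n(g^{-1})\,\bigl(v\cdot(f^{\flat}\circ\lambda_g)\bigr)(1)$, where $v\cdot\psi$ denotes the left-invariant $U(\mathfrak{g})$-action on $\psi\in C^{\infty}(G;V_\ell\otimes\underline{U}\otimes V_r^{*})$ (which is just the restriction of the coordinate-wise action $v[n]$ to the last coordinate). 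Since this action commutes with left translations of functions, $\bigl(v\cdot(f^{\flat}\circ\lambda_g)\bigr)(1)=(v\cdot f^{\flat})(g)=\bigl(v[n]f\bigr)^{\flat}(g)$, so multiplying by $\tau_n(g)$ cancels the $\tau_n(g^{-1})$ and yields $\tau_n(g)\bigl(v[n]f\bigr)(1,\ldots,1,g,1)=\bigl(\textup{Ad}_{g^{-1}}(w)[n]f\bigr)^{\flat}(g)$. Finally, Remark \ref{useful} applied with $j=n$, $g_n=g$, and with $u$ replaced by $S(w)$ (using $S^{2}=\textup{id}$ on $U(\mathfrak{g})$) identifies $\bigl(\textup{Ad}_{g^{-1}}(w)[n]f\bigr)^{\flat}(g)$ with $\bigl(S(w)\langle n\rangle f\bigr)^{\flat}(g)$. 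Substituting $w=u_{(n-j+1)}$ and summing over $(u)$ finishes the argument.

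The step that I expect to require the most care is the bookkeeping of the ``extra'' factor $\tau_n(g)$: one has to check that it is exactly the factor produced by $A_{j,(u)}$ at the specialized point, that it is precisely cancelled by the $\tau_n(g^{-1})$ coming from the inversion formula, and that the leftover $\textup{Ad}_{g^{-1}}$-conjugation is precisely what Remark \ref{useful} turns into the antipode $S$. One should also be attentive to left- versus right-invariance here, since it is the left translation $h\mapsto gh$ (and not $h\mapsto hg$) that commutes with the left-invariant $[n]$-action. Everything else is a routine substitution into Proposition \ref{transfogen} and Lemma \ref{Nto0}.
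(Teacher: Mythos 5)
Your proposal is correct and follows essentially the same route as the paper: specialize Proposition \ref{transfogen} at $\mathbf{g}=(1,\ldots,1,g)$, then use the equivariance of $f$ (your appeal to the inversion formula \eqref{inverse} of Lemma \ref{Nto0} is equivalent to the paper's observation $\tau_n(g)f(g_0,\ldots,g_{n-2},g,g_n)=f(g_0,\ldots,g_{n-2},1,gg_n)$) together with Remark \ref{useful} and $S^2=\textup{id}$ to convert $\textup{Ad}_{g^{-1}}(u_{(n-j+1)})[n]$ into $S(u_{(n-j+1)})\langle n\rangle$. You merely spell out the bookkeeping of the $\tau_n(g)$ factor and the left-translation argument that the paper leaves implicit.
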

\begin{proof}
By Proposition \ref{transfogen},
\[
(u[j]f)^\flat(g)=\sum_{(u)}\tau_{j+1}(S(u_{(1)}))\cdots\tau_n(S(u_{(n-j)}))
\tau_n(g)\bigl(\textup{Ad}_{g^{-1}}(u_{(n-j+1)})[n]f\bigr)(1,\ldots,1,g,1).
\]
The result now follows from Remark \ref{useful} and the observation that
\[
\tau_n(g)f(g_0,\ldots,g_{n-2},g,g_n)=f(g_0,\ldots,g_{n-2},1,gg_n)
\]
since  $f\in C_{\sigma_\ell,\underline{\tau},\sigma_r}^\infty(G^{\times (n+1)})$.
\end{proof}
\begin{rema}\label{remflat}
A similar analysis can be done for the coordinate-wise action of $U(\mathfrak{g})$ 
by right $G$-invariant differential operators. It leads to the following analogue of Corollary
\ref{corflat}. Suppose that $f\in C_{\sigma_\ell,\underline{\tau},\sigma_r}^\infty(G^{\times (n+1)})$
and $0\leq j<n$, then
\[
(u\langle j\rangle f)^\flat(g)=\sum_{(u)}\tau_{j+1}(u_{(1)})\cdots\tau_n(u_{(n-j)})
\bigl(u_{(n-j+1)}\langle n\rangle f\bigr)^{\flat}(g)
\]
for $g\in G$ and $u\in U(\mathfrak{g})$.
\end{rema}

For a closed Lie subgroup $K\subseteq G$ write $U(\mathfrak{g})^{K}$
for the subalgebra of $\textup{Ad}(K)$-invariant elements in $U(\mathfrak{g})$,
and $U(\mathfrak{g})^{K,\textup{opp}}$ for the opposite algebra. Note that $S: U(\mathfrak{g})^K\rightarrow U(\mathfrak{g})^{K,\textup{opp}}$ is an isomorphism of algebras.
\begin{lem}\label{qiaction}
The space $C_{\sigma_\ell,\underline{\tau},\sigma_r}^\infty(G^{\times (n+1)})$ is a 
$U(\mathfrak{g})^{K_\ell,\textup{opp}}\otimes Z(U(\mathfrak{g}))^{\otimes (n-1)}\otimes
U(\mathfrak{g})^{K_r}$-module, with the action defined by 
\begin{equation}\label{actionoverall}
\bigl(v_\ell\otimes (u_1\otimes\cdots\otimes u_{n-1})\otimes v_r\bigr)\ast f:= 
S(v_\ell)\langle 0\rangle u_1[1]\cdots u_{n-1}[n-1]v_r[n]f
\end{equation}
for $v_\ell\in U(\mathfrak{g})^{K_\ell}$, $u_i\in Z(U(\mathfrak{g}))$ \textup{(}$1\leq i<n$\textup{)}, $v_r\in U(\mathfrak{g})^{K_r}$ and $f\in C_{\sigma_\ell,\underline{\tau},\sigma_r}^\infty(G^{\times (n+1)})$. Furthermore, for $u_0\otimes\cdots\otimes u_n\in Z(U(\mathfrak{g}))^{\otimes (n+1)}$ we have
the action
\[
(u_0\otimes\cdots\otimes u_n)\ast f=u_0[0]u_1[1]\cdots u_n[n]f\qquad 
(f\in C_{\sigma_\ell,\underline{\tau},\sigma_r}^\infty(G^{\times (n+1)}))
\]
by coordinate-wise biinvariant differential operators.
\end{lem}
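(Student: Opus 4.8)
The plan is to verify that the formula \eqref{actionoverall} indeed defines a well-defined module structure, and then to deduce the statement about $Z(U(\mathfrak{g}))^{\otimes(n+1)}$ as a special case. First I would check that each operator appearing on the right-hand side preserves $C_{\sigma_\ell,\underline{\tau},\sigma_r}^\infty(G^{\times(n+1)})$. For the middle factors $u_i[i]$ with $u_i\in Z(U(\mathfrak{g}))$ and $1\le i<n$, one uses that left $G$-invariant differential operators on the $i$th coordinate commute with the left translation action $h_ig_ih_{i+1}^{-1}$ in \eqref{gaugeaction} only up to the adjoint action, but since $u_i$ is central, $u_i[i]$ equals both a left-invariant and (by Remark \ref{useful}, $u_i\langle i\rangle = S(u_i)[i]$) essentially a right-invariant differential operator, hence it commutes with both the $h_i$- and $h_{i+1}$-translations; it visibly commutes with the $h_j$-translations for $j\ne i,i+1$ and with the $k_\ell,k_r$-translations as well. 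For the outer factors $S(v_\ell)\langle 0\rangle$ and $v_r[n]$, one uses that $v_\ell\in U(\mathfrak{g})^{K_\ell}$ so $S(v_\ell)\langle 0\rangle$ commutes with left translation by $K_\ell$ in the zeroth slot up to $\textup{Ad}(k_\ell)$-invariance (which holds by hypothesis), and similarly $v_r[n]$ commutes with the right $K_r$-translation in the last slot; both clearly commute with the interior $G^{\times n}$-translations, which act on different slots or differ from these by the appropriate sides.

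Next I would check that the assignment is an algebra homomorphism. The operators $S(v_\ell)\langle 0\rangle$, $u_i[i]$ ($1\le i<n$) and $v_r[n]$ all act on distinct coordinates, hence they pairwise commute; and on each fixed coordinate the map $v_\ell\mapsto S(v_\ell)\langle 0\rangle$ is an algebra homomorphism from $U(\mathfrak{g})^{K_\ell,\textup{opp}}$ because $\langle 0\rangle$ is an anti-homomorphism for the right-invariant action composed with the anti-automorphism $S$, while $u\mapsto u[i]$ and $v_r\mapsto v_r[n]$ are algebra homomorphisms for the left-invariant action. Compatibility of these with the tensor product structure of $U(\mathfrak{g})^{K_\ell,\textup{opp}}\otimes Z(U(\mathfrak{g}))^{\otimes(n-1)}\otimes U(\mathfrak{g})^{K_r}$ is then automatic.

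For the final assertion, note that $Z(U(\mathfrak{g}))\subseteq U(\mathfrak{g})^K$ for any closed subgroup $K$, so $Z(U(\mathfrak{g}))^{\otimes(n+1)}$ embeds into $U(\mathfrak{g})^{K_\ell,\textup{opp}}\otimes Z(U(\mathfrak{g}))^{\otimes(n-1)}\otimes U(\mathfrak{g})^{K_r}$ (using that $S$ restricts to the identity up to the canonical identification on the center, or rather that $Z(U(\mathfrak{g}))^{\textup{opp}}=Z(U(\mathfrak{g}))$). Restricting the $\ast$-action to this subalgebra, formula \eqref{actionoverall} gives $(u_0\otimes\cdots\otimes u_n)\ast f = S(u_0)\langle 0\rangle u_1[1]\cdots u_{n-1}[n-1]u_n[n]f$, and by Remark \ref{useful} applied with $u_0\in Z(U(\mathfrak{g}))$ one has $S(u_0)\langle 0\rangle f = S(S(u_0))[0]f = u_0[0]f$; this yields the stated formula $(u_0\otimes\cdots\otimes u_n)\ast f = u_0[0]u_1[1]\cdots u_n[n]f$. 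Finally, each $u_j[j]$ with $u_j$ central is simultaneously a left and right $G$-invariant differential operator in the $j$th coordinate (again by Remark \ref{useful}), hence biinvariant, proving the last sentence.

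I expect the main obstacle to be the bookkeeping in the first step: carefully checking that $u_i[i]$ for central $u_i$ commutes with the two-sided translations $\underline h\mapsto (h_1g_1h_2^{-1},\ldots)$ built into the action \eqref{gaugeaction}, since the $i$th slot is acted on from the left by $h_i$ and from the right by $h_{i+1}^{-1}$ simultaneously — this is exactly where centrality (rather than mere $\textup{Ad}(K)$-invariance) is essential, and it is worth isolating as a small lemma that a central element acts by a biinvariant differential operator in each slot.
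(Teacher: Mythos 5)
Your verification is correct and is precisely the ``straightforward proof left to the reader'' in the paper: check slot-by-slot that the equivariance \eqref{gaugeaction} is preserved (centrality, i.e.\ $\textup{Ad}(G)$-invariance by connectedness of $G$, doing the work in the middle slots, and $\textup{Ad}(K_\ell)$-, $\textup{Ad}(K_r)$-invariance in the outer ones), then the homomorphism property from commutation of operators in distinct slots, then the reduction of the $Z(U(\mathfrak{g}))^{\otimes(n+1)}$-statement via Remark \ref{useful} and $S^2=\textup{id}$. One clause should be repaired: with the paper's sign convention $u\mapsto u\langle 0\rangle$ is an algebra homomorphism (a genuine left action), and it is precomposition with the antipode $S$ that makes $v_\ell\mapsto S(v_\ell)\langle 0\rangle$ an anti-homomorphism of $U(\mathfrak{g})^{K_\ell}$, hence a homomorphism of $U(\mathfrak{g})^{K_\ell,\textup{opp}}$; also drop the aside that $S$ restricts to the identity on $Z(U(\mathfrak{g}))$ (false in general) --- commutativity of the center, so that $Z(U(\mathfrak{g}))^{\textup{opp}}=Z(U(\mathfrak{g}))$, together with Remark \ref{useful}, is all that is needed.
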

\begin{proof}
The straightforward proof is left to the reader.
\end{proof}
\begin{rema}\label{qiactionrem}
For $n=0$ the appropriate analogue of Lemma \ref{qiaction} is the statement that
$U(\mathfrak{g})^{K_\ell,\textup{opp}}\otimes_{Z(U(\mathfrak{g}))}U(\mathfrak{g})^{K_r}$ acts on $C_{\sigma_\ell,\sigma_r}^\infty(G)$ by 
\[
(v_\ell\otimes_{Z(U(\mathfrak{g}))}v_r)\ast f=S(v_\ell)\langle 1\rangle v_r[1]f\qquad
(v_\ell\in U(\mathfrak{g})^{K_\ell}, v_r\in U(\mathfrak{g})^{K_r})
\]
for $f\in C_{\sigma_\ell,\sigma_r}^{\infty}(G)$, where $u\langle 1\rangle f$ and $u[1]f$ now denote the action of $u\in U(\mathfrak{g})$ on $f\in C^\infty(G)$ as right and left $G$-invariant differential operator,
respectively. This is a well-defined action since $S(u)\langle 1\rangle f=u[1]f$ for $u\in Z(U(\mathfrak{g}))$, cf. Remark \ref{useful}
(it is for this reason that we have taken the opposite algebra $U(\mathfrak{g})^{K_\ell,\textup{opp}}$
in Lemma \ref{qiaction}).
\end{rema}
We can now extend the definition of $n$-point spherical functions from \cite{SR} to the current context. 
\begin{defi}\label{defnpoint}
We call $f\in C_{\sigma_\ell,\underline{\tau},\sigma_r}^\infty(G^{\times (n+1)})$
an $n$-point spherical function if there exist algebra homomorphisms $\chi_j: Z(U(\mathfrak{g}))\rightarrow\mathbb{C}$ \textup{(}$0\leq j\leq n$\textup{)} such that
\begin{equation}\label{DOj}
u[j]f=\chi_j(u)f\qquad \forall\, u\in Z(U(\mathfrak{g})),\,\, \forall\, j\in\{0,\ldots,n\}.
\end{equation}
We write
\[
C_{\sigma_\ell,\underline{\tau},
\sigma_r}^\infty(G^{\times (n+1)};\pmb{\chi})\subseteq
C_{\sigma_\ell,\underline{\tau},
\sigma_r}^\infty(G^{\times (n+1)})
\]
for the $U(\mathfrak{g})^{K_\ell}\otimes Z(U(\mathfrak{g}))^{\otimes (n-1)}\otimes U(\mathfrak{g})^{K_r}$-module
of $n$-point spherical functions
satisfying \eqref{DOj} with respect to the $(n+1)$-tuple $\pmb{\chi}:=(\chi_0,\ldots,\chi_n)$
of characters of $Z(U(\mathfrak{g}))$.
\end{defi}

Examples of $n$-point spherical functions $f\in C_{\sigma_\ell,\underline{\tau},\sigma_r}^\infty(G^{\times (n+1)};\pmb{\chi})$ can be constructed using 
$G$-intertwiners $\Psi_i: \mathcal{H}_i\rightarrow \mathcal{H}_{i-1}\otimes U_i$ ($1\leq i\leq n$), with $(\pi_j,\mathcal{H}_j)$ ($0\leq j\leq n$) quasi-simple smooth $G$-representations with central character $\chi_j$,
together with a $K_\ell$-intertwiner $\phi_\ell: \mathcal{H}_0\rightarrow V_\ell$ and a
$K_r$-intertwiner $\phi_r: V_r\rightarrow \mathcal{H}_n$, via the formula
\[
f(\mathbf{g}):=(\phi_\ell\pi_0(g_0)\otimes\textup{id}_{\underline{U}})
(\Psi_1\pi_1(g_1)\otimes\textup{id}_{U_2\otimes\cdots\otimes U_n})
\cdots (\Psi_{n-1}\pi_{n-1}(g_{n-1})\otimes\textup{id}_{U_n})
\Psi_n\pi_n(g_n)\phi_r.
\]
 See \cite{SR} for important explicit classes of such examples.
  
 For special choices of $K_\ell$ and $K_r$, the action of the commuting differential operators
 $x[j]$ ($x\in Z(U(\mathfrak{g}))$, $0\leq j\leq N$) on
 $C_{\sigma_\ell,\underline{\tau},\sigma_r}^\infty(G^{\times (n+1)})$
 gives rise to commuting vector-valued differential operators 
  on a suitable abelian Lie subgroup
 $A\subseteq G$ by pulling the invariant differential operators through the restriction 
 map $f\mapsto f^\flat|_A$ 
 ($f\in C_{\sigma_\ell,\underline{\tau},\sigma_r}^\infty(G^{\times (n+1)})$). Typically, one needs
a natural parametrisation of the double $(K_\ell,K_r)$-cosets in $G$ in terms of orbits in $A$
with respect to a discrete group action on $A$.
In the remainder of this paper we give a detailed analysis when $K:=K_\ell=K_r$ is a maximal compact Lie subgroup of a real connected semisimple Lie group $G$ with finite center. 
 
\section{Structure theory of real semisimple Lie groups}\label{S2}
We introduce standard facts and notations regarding the structure theory of real semisimple Lie groups. For more information see, e.g., \cite[Chpt. VI]{Kn}, \cite[Chpt. 9]{W} and \cite{CM}. As a concrete example we will work out the structure theory in detail for $G=\textup{SU}(r,p)$ in Section \ref{SUrp}.

We fix from now on a real connected semisimple Lie group $G$ with finite center and Lie algebra $\mathfrak{g}_{\mathbb{R}}$.
Fix a Cartan involution $\theta_{\mathbb{R}}\in\textup{Aut}(\mathfrak{g}_{\mathbb{R}})$ and write 
\[
\mathfrak{g}_{\mathbb{R}}=\mathfrak{k}_{\mathbb{R}}\oplus\mathfrak{p}_{\mathbb{R}}
\]
for the resulting Cartan decomposition, with $\mathfrak{k}_{\mathbb{R}}=\mathfrak{g}_{\mathbb{R}}^{\theta_{\mathbb{R}}}$ the fix-point Lie subalgebra with respect to $\theta_{\mathbb{R}}$ and $\mathfrak{p}_{\mathbb{R}}$ the $(-1)$-eigenspace of $\theta_{\mathbb{R}}$.  
We fix the scalar product
\[
(x,y):=-K_{\mathfrak{g}_{\mathbb{R}}}(x,\theta_{\mathbb{R}}(y)),\qquad x,y\in\mathfrak{g}_{\mathbb{R}}
\]
on $\mathfrak{g}_{\mathbb{R}}$, with $K_{\mathfrak{g}_{\mathbb{R}}}$ the Killing form of $\mathfrak{g}_{\mathbb{R}}$.

Choose a maximal abelian subspace $\mathfrak{a}_{\mathbb{R}}$ of $\mathfrak{p}_{\mathbb{R}}$, which we view as Euclidean space with scalar product inherited from $(\cdot,\cdot)$. We endow the
linear dual $\mathfrak{a}_{\mathbb{R}}^*$ with the scalar product
such that the linear isomorphism $\mathfrak{a}_{\mathbb{R}}\overset{\sim}{\longrightarrow}
\mathfrak{a}_{\mathbb{R}}^*$, $h\mapsto (h,\cdot)$, becomes an isomorphism of Euclidean spaces.
We denote the scalar product on $\mathfrak{a}_{\mathbb{R}}^*$ again by $(\cdot,\cdot)$. 

For $\lambda\in\mathfrak{a}_{\mathbb{R}}^*$ set
\begin{equation*}
\begin{split}
\mathfrak{g}_{\mathbb{R}}^\lambda&:=\{x\in\mathfrak{g}_{\mathbb{R}}\,\, | \,\, [h,x]=\lambda(h)x\quad \forall\, h\in\mathfrak{a}_{\mathbb{R}}\},\\
\Sigma&:=\{\lambda\in\mathfrak{a}_{\mathbb{R}}^*\setminus\{0\}\,\, | \,\, \mathfrak{g}_{\mathbb{R}}^\lambda\not=\{0\}\}.
\end{split}
\end{equation*}
Then $\Sigma\subset \mathfrak{a}_{\mathbb{R}}^*$ is a (possibly non-reduced) root system, called the restricted root system. Furthermore,
\begin{equation}\label{rrsd}
\mathfrak{g}_{\mathbb{R}}=\mathfrak{g}_{\mathbb{R}}^0\oplus\bigoplus_{\lambda\in\Sigma}\mathfrak{g}_{\mathbb{R}}^\lambda
\end{equation}
and $\mathfrak{g}_{\mathbb{R}}^0=Z_{\mathfrak{g}_{\mathbb{R}}}(\mathfrak{a}_{\mathbb{R}})=\mathfrak{m}_{\mathbb{R}}\oplus\mathfrak{a}_{\mathbb{R}}$,
with $\mathfrak{m}_{\mathbb{R}}:=Z_{\mathfrak{k}_{\mathbb{R}}}(\mathfrak{a}_{\mathbb{R}})$ the centraliser of $\mathfrak{a}_{\mathbb{R}}$ in $\mathfrak{k}_{\mathbb{R}}$. 
The decomposition \eqref{rrsd} is orthogonal with respect to $\bigl(\cdot,\cdot\bigr)$,
and $\theta(\mathfrak{g}_{\mathbb{R}}^\lambda)=\mathfrak{g}_{\mathbb{R}}^{-\lambda}$.

Fix a maximal abelian subspace $\mathfrak{t}_{\mathbb{R}}$ of $\mathfrak{m}_{\mathbb{R}}$. Then 
\[
\mathfrak{h}_{\mathbb{R}}:=\mathfrak{t}_{\mathbb{R}}\oplus\mathfrak{a}_{\mathbb{R}}
\]
is a maximally noncompact $\theta_{\mathbb{R}}$-stable Cartan subalgebra of $\mathfrak{g}_{\mathbb{R}}$. Write $\mathfrak{g},\mathfrak{g}^\lambda,\mathfrak{k},\mathfrak{m},\mathfrak{h},\mathfrak{a},\mathfrak{t},...$ for the complexifications of the Lie algebras
$\mathfrak{g}_{\mathbb{R}},\mathfrak{g}_{\mathbb{R}}^{\lambda},\mathfrak{k}_{\mathbb{R}},\mathfrak{m}_{\mathbb{R}},\mathfrak{h}_{\mathbb{R}},\mathfrak{a}_{\mathbb{R}},\mathfrak{t}_{\mathbb{R}},....$.
The root space decomposition of $\mathfrak{g}$ with respect to $\mathfrak{h}$ is denoted by
\[
\mathfrak{g}=\mathfrak{h}\oplus\bigoplus_{\alpha\in R}\mathfrak{g}_\alpha
\]
with 
\begin{equation*}
\mathfrak{g}_\beta:=\{x\in\mathfrak{g} \,\, | \,\, [h,x]=\beta(h)x\quad \forall\, h\in\mathfrak{h}\}
\qquad (\beta\in\mathfrak{h}^*)
\end{equation*}
and root system $R:=\{\alpha\in\mathfrak{h}^*\setminus\{0\} \,\, | \,\, \mathfrak{g}_\alpha\not=\{0\}\}$. We write $\theta\in\textup{Aut}(\mathfrak{g})$ for the complex linear
extension of $\theta_{\mathbb{R}}$ to an involution of $\mathfrak{g}$. Then $\mathfrak{g}=\mathfrak{k}\oplus\mathfrak{p}$ is the decomposition of $\mathfrak{g}$ in $(+1)$--and $(-1)$-eigenspaces of $\theta$.
The involution $\theta$ restricts to an automorphism of $\mathfrak{h}$. The transpose of
$\theta|_{\mathfrak{h}}$ is a linear involution of $\mathfrak{h}^*$, which we will also denote by $\theta$. It
restricts to an involution of $R$ satisfying ${}\theta\alpha|_{\mathfrak{a}_{\mathbb{R}}}=
-\alpha|_{\mathfrak{a}_{\mathbb{R}}}$ for all $\alpha\in R$. We then have
$\theta(\mathfrak{g}_\alpha)=\mathfrak{g}_{\theta\alpha}$.

For $\lambda\in\Sigma\cup\{0\}$ we write
\[
R_\lambda:=\{\alpha\in R \,\,\, | \,\,\, \alpha|_{\mathfrak{a}_{\mathbb{R}}}=\lambda\}.
\]
Roots in $R_0$ are called imaginary roots (they are the roots in $R$ that take 
purely imaginary values on $\mathfrak{h}_{\mathbb{R}}$).
Note that $\theta$ fixes $R_0$ point-wise and maps $R_\lambda$ to $R_{-\lambda}$ 
for $\lambda\in\Sigma$. Furthermore,
\begin{equation*}
\mathfrak{m}=\mathfrak{t}\oplus\bigoplus_{\alpha\in R_0}\mathfrak{g}_\alpha,\qquad
\mathfrak{g}^\lambda=\bigoplus_{\alpha\in R_\lambda}\mathfrak{g}_\alpha\quad (\lambda\in\Sigma).
\end{equation*}
In particular, $R$ is the disjoint union of the $R_\lambda$ ($\lambda\in\Sigma\cup\{0\}$), $R_\lambda$ is nonempty for $\lambda\in\Sigma$, and the restriction map
\begin{equation}\label{rootrestrictionmap}
R\setminus R_0\twoheadrightarrow \Sigma,\quad
\alpha\mapsto \alpha|_{\mathfrak{a}_{\mathbb{R}}}
\end{equation}
is surjective. The strictly positive numbers
\[
\textup{mtp}(\lambda):=\#R_\lambda,\qquad \lambda\in\Sigma
\]
are called the restricted root multiplicities.
The Lie subalgebra $\mathfrak{m}$ of $\mathfrak{g}$ is reductive. The root system of the semisimple part $[\mathfrak{m},\mathfrak{m}]$ of $\mathfrak{m}$ with respect to the Cartan subalgebra $\mathfrak{t}_{\textup{ss}}:=\mathfrak{t}\cap [\mathfrak{m},\mathfrak{m}]$ is $\{\alpha|_{\mathfrak{t}_{\textup{ss}}}\}_{\alpha\in R_0}$.

Choose a decomposition $R=R^+\cup R^-$ of the root system $R$ into positive and negative roots such that 
\begin{enumerate}
\item $R_0=R_0^+\cup R_0^-$ with $R_0^{\pm}:=
(R^\pm\cap R_0)$ is a decomposition of $R_0$ in positive and negative roots,
\item $\Sigma=\Sigma^+\cup\Sigma^-$ with $\Sigma^\pm:=\{\alpha|_{\mathfrak{a}_{\mathbb{R}}} \,\, | \,\,
\alpha\in R^{\pm}\setminus R_0^{\pm}\}$ is a decomposition of $\Sigma$ in positive and negative roots
\end{enumerate}
(see, e.g., \cite[11.1.16]{Di}).
\begin{rema}
Restricted root systems appear more generally for so-called normal $\sigma$-systems of roots, see, e.g., \cite[\S 1.1.3]{W0}. A normal $\sigma$-system of roots consists of a root system $R$ and an involutive isometry $\sigma$ of the ambient Euclidean space, satisfying the additional properties that $\sigma$ stabilises $R$ and $\sigma\alpha-\alpha\not\in R$ for all $\alpha\in R$.

In our current setup, $R$ is a normal $(-\theta)$-system of roots. 
Indeed, 
suppose that $\alpha\in R$ satisfies $\alpha+\theta\alpha\in R$. Then 
$[e_\alpha,\theta(e_\alpha)]\in\mathfrak{p}\setminus\{0\}$.
On the other hand, $\alpha+\theta\alpha\in R_0$ since the root $\alpha+\theta\alpha$ vanishes on $\mathfrak{a}_{\mathbb{R}}$, and hence $[e_\alpha,\theta(e_\alpha)]\in\mathfrak{k}$. This is a contradiction. 
\end{rema}

Consider the nilpotent Lie subalgebra
\[
\mathfrak{n}_{\mathbb{R}}:=\bigoplus_{\lambda\in\Sigma^+}\mathfrak{g}^\lambda_{\mathbb{R}}
\]
 of $\mathfrak{g}_{\mathbb{R}}$. Its complexification within $\mathfrak{g}$
 is the nilpotent Lie subalgebra $\mathfrak{n}=\bigoplus_{\alpha\in R^+\setminus R_0^+}
 \mathfrak{g}_\alpha$. 
We then have the direct sum decompositions 
\[
\mathfrak{g}_{\mathbb{R}}=\mathfrak{k}_{\mathbb{R}}\oplus\mathfrak{a}_{\mathbb{R}}\oplus
\mathfrak{n}_{\mathbb{R}},\qquad \mathfrak{g}=\mathfrak{k}\oplus\mathfrak{a}\oplus \mathfrak{n}
\]
as vector spaces (infinitesimal Iwasawa decomposition).

Let $K,A,N\subset G$ be the connected Lie subgroups with Lie algebra $\mathfrak{k}_{\mathbb{R}}$, $\mathfrak{a}_{\mathbb{R}}$ and $\mathfrak{n}_{\mathbb{R}}$, respectively. Then $K\subset G$ is maximal compact, the multiplication map
\[
K\times A\times N\rightarrow G,\qquad (k,a,n)\mapsto kan
\]
is a diffeomorphism (the Iwasawa decomposition), and the exponential map $\exp$ of $G$
restricts to an isomorphism $\exp: \mathfrak{a}_{\mathbb{R}}\overset{\sim}{\longrightarrow} A$ of Lie groups. We write $\log: A\rightarrow \mathfrak{a}_{\mathbb{R}}$ for its inverse.

Write $M:=Z_K(A)$ for the centraliser of $A$ in $K$. It is a closed, but not necessarily connected, Lie subgroup of $K$ with Lie algebra $\mathfrak{m}_{\mathbb{R}}$. Note that $M=Z_K(\mathfrak{a})$
(the centraliser of $\mathfrak{a}$ in $K$ with respect to the adjoint action). Note that $\theta_{\mathbb{R}}$ and $\theta$ are $\textup{Ad}(K)$-linear, and
$\mathfrak{g}^\lambda$ is $\textup{Ad}(M)$-stable ($\lambda\in\Sigma\cup\{0\}$). 
In particular, $\mathfrak{n}_{\mathbb{R}}$ and $\mathfrak{n}$ are $\textup{Ad}(M)$-stable. 

The analytic Weyl group of $G$ is $W:=N_K(A)/M$, with $N_K(A)$ the normaliser of $A$ in $K$.
It acts naturally on $A$ and $\mathfrak{a}_{\mathbb{R}}$. The resulting contragredient $W$-action on $\mathfrak{a}_{\mathbb{R}}^*$ restricts to an action on the set $\Sigma$ of restricted roots. 

Denote by $W\backslash A$ the set of $W$-orbits in $A$, and write $K\backslash G/K$ for the double $(K,K)$-coset space of $G$. 
The map
\[
W\backslash A\overset{\sim}{\longrightarrow} K\backslash G/K,\qquad Wa\mapsto KaK
\]
is a bijection (see, e.g., \cite[\S VII.3]{Kn}). The decomposition $G=KAK$ is sometimes called the Cartan decomposition of $G$ with respect to $K$.

For $\nu\in\mathfrak{h}^*$ write $h_\nu\in\mathfrak{h}$ for the unique element such that
$K_{\mathfrak{g}}(h,h_\nu)=\nu(h)$ for all $h\in\mathfrak{h}$, with $K_{\mathfrak{g}}$ the Killing form of $\mathfrak{g}$. Choose root vectors $e_\alpha\in\mathfrak{g}_\alpha$  
such that $[e_\alpha,e_{-\alpha}]=h_\alpha$ ($\alpha\in R$).
For $\alpha\in R$ we have
\[
\theta(e_\alpha)=c_\alpha e_{\theta\alpha}
\]
for a unique nonzero scalar $c_\alpha\in\mathbb{C}^*$. Note that $c_\alpha=1$ for $\alpha\in R_0$.
Define for $\alpha\in R\setminus R_0$,
\[
y_\alpha:=e_\alpha+\theta(e_\alpha)=e_\alpha+c_\alpha e_{\theta\alpha}\in\mathfrak{k}.
\]

\begin{lem}\label{clem}
For $\alpha\in R\setminus R_0$ we have $c_{\theta\alpha}=c_\alpha^{-1}=c_{-\alpha}$. Furthermore,
$y_\alpha=c_\alpha y_{\theta\alpha}$.
\end{lem}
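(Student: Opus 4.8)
The plan is to exploit two structural facts: that $\theta$ is an \emph{involutive} Lie algebra automorphism of $\mathfrak{g}$ (so $\theta^2=\textup{id}$ and $\theta$ preserves the Killing form $K_{\mathfrak{g}}$), and the chosen normalisation $[e_\alpha,e_{-\alpha}]=h_\alpha$ of the root vectors.

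First I would obtain $c_{\theta\alpha}=c_\alpha^{-1}$ by applying $\theta$ to the defining relation $\theta(e_\alpha)=c_\alpha e_{\theta\alpha}$. Since $\theta^2\alpha=\alpha$, this gives $e_\alpha=\theta(\theta(e_\alpha))=c_\alpha\theta(e_{\theta\alpha})=c_\alpha c_{\theta\alpha}e_{\theta^2\alpha}=c_\alpha c_{\theta\alpha}e_\alpha$, and since $e_\alpha\neq 0$ we get $c_\alpha c_{\theta\alpha}=1$. Next I would establish the auxiliary identity $\theta(h_\alpha)=h_{\theta\alpha}$: using that $K_{\mathfrak{g}}$ is $\theta$-invariant and $\theta$ is an involution, for all $h\in\mathfrak{h}$ we have $K_{\mathfrak{g}}(h,\theta(h_\alpha))=K_{\mathfrak{g}}(\theta(h),h_\alpha)=\alpha(\theta(h))=(\theta\alpha)(h)=K_{\mathfrak{g}}(h,h_{\theta\alpha})$, and nondegeneracy of $K_{\mathfrak{g}}$ on $\mathfrak{h}$ forces $\theta(h_\alpha)=h_{\theta\alpha}$. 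Then applying $\theta$ (as an algebra homomorphism) to $[e_\alpha,e_{-\alpha}]=h_\alpha$ yields $c_\alpha c_{-\alpha}[e_{\theta\alpha},e_{-\theta\alpha}]=c_\alpha c_{-\alpha}h_{\theta\alpha}=h_{\theta\alpha}$; since $h_{\theta\alpha}\neq 0$ we conclude $c_\alpha c_{-\alpha}=1$, hence $c_{-\alpha}=c_\alpha^{-1}=c_{\theta\alpha}$.

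Finally, the relation $y_\alpha=c_\alpha y_{\theta\alpha}$ is a direct substitution: from $y_{\theta\alpha}=e_{\theta\alpha}+\theta(e_{\theta\alpha})=e_{\theta\alpha}+c_{\theta\alpha}e_\alpha=e_{\theta\alpha}+c_\alpha^{-1}e_\alpha$, multiplying by $c_\alpha$ gives $c_\alpha y_{\theta\alpha}=c_\alpha e_{\theta\alpha}+e_\alpha=e_\alpha+c_\alpha e_{\theta\alpha}=y_\alpha$.

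I do not expect a genuine obstacle here; everything is a short manipulation. The only point needing a small argument (rather than pure substitution) is the identity $\theta(h_\alpha)=h_{\theta\alpha}$, which rests on $\theta$-invariance and nondegeneracy of the Killing form restricted to $\mathfrak{h}$.
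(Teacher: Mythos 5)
Your proposal is correct and follows essentially the same route as the paper: $c_\alpha c_{\theta\alpha}=1$ from $\theta^2=\textup{id}$, $c_\alpha c_{-\alpha}=1$ by applying $\theta$ to $[e_\alpha,e_{-\alpha}]=h_\alpha$ together with $\theta(h_\alpha)=h_{\theta\alpha}$ (which the paper also justifies by $\theta$-invariance of $K_{\mathfrak{g}}$, just less explicitly than your nondegeneracy argument), and the final identity by direct substitution. No gaps.
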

\begin{proof}
We have $e_\alpha=\theta^2(e_\alpha)=c_\alpha c_{\theta\alpha}e_\alpha$, hence 
$c_{\theta\alpha}=c_\alpha^{-1}$. 
Furthermore,
\[
\theta(h_\alpha)=\theta([e_\alpha,e_{-\alpha}])=[\theta(e_\alpha),\theta(e_{-\alpha})]=
c_\alpha c_{-\alpha}[e_{\theta\alpha},e_{-\theta\alpha}]=c_\alpha c_{-\alpha}
h_{\theta\alpha}.
\]
But $\theta(h_\alpha)=h_{\theta\alpha}$ since $K_{\mathfrak{g}}(\theta(x),\theta(y))=K_{\mathfrak{g}}(x,y)$ for $x,y\in\mathfrak{g}$, hence $c_{-\alpha}=c_\alpha^{-1}$.
Finally,
\[
y_\alpha=e_\alpha+\theta(e_\alpha)=\theta^2(e_\alpha)+c_\alpha e_{\theta\alpha}=
c_\alpha (e_{\theta\alpha}+\theta(e_{\theta\alpha}))=c_\alpha y_{\theta\alpha}.
\]
\end{proof}
Consider the direct sum decomposition
\[
\mathfrak{k}=\mathfrak{m}\oplus\mathfrak{q},\qquad \mathfrak{q}:=\bigoplus_{\alpha\in R^+\setminus R^+_0}\mathbb{C}y_\alpha
\]
as vector spaces.
Note that $\mathfrak{q}$ is the complex linear span of the orthogonal complement $\mathfrak{q}_{\mathbb{R}}$ of $\mathfrak{m}_{\mathbb{R}}$ in $\mathfrak{k}_{\mathbb{R}}$ with respect to $\bigl(\cdot,\cdot\bigr)$. Another description of $\mathfrak{q}$ is as follows.
Let $\textup{pr}_{\mathfrak{k}}: \mathfrak{g}\twoheadrightarrow\mathfrak{k}$ and 
$\textup{pr}_{\mathfrak{p}}: \mathfrak{g}\twoheadrightarrow\mathfrak{p}$ be the projections
along the direct sum decomposition $\mathfrak{g}=\mathfrak{k}\oplus\mathfrak{p}$. Then
$\textup{pr}_{\mathfrak{k}}=\frac{1}{2}(\textup{Id}_{\mathfrak{g}}+\theta)$ and $\textup{pr}_{\mathfrak{p}}=\frac{1}{2}(\textup{Id}_{\mathfrak{g}}-\theta)$, both projections are
$\textup{Ad}(K)$-equivariant, and $\mathfrak{q}$ is the image of $\mathfrak{n}$ under the projection $\textup{pr}_{\mathfrak{k}}$. In particular, $\mathfrak{q}$ is $\textup{Ad}(M)$-stable.

For $\lambda\in\mathfrak{a}^*$ 
the map $\xi_\lambda: A\rightarrow\mathbb{C}^*$, $a\mapsto a^\lambda:=e^{\lambda(\log a)}$, defines a complex-valued multiplicative character of $A$. Note that $\xi_\lambda\xi_\nu=\xi_{\lambda+\nu}$ 
($\lambda,\nu\in\mathfrak{a}^*$) and $\xi_0\equiv 1$. 
Furthermore, for $\lambda\in\Sigma\cup \{0\}$ we have
\[
\textup{Ad}_a(x)=a^\lambda x\qquad (x\in \mathfrak{g}^\lambda, a\in A).
\]
Write $A_{\textup{reg}}:=\{a\in A \,\,\, | \,\,\, a^\lambda\not=1\,\,\,\,\, \forall\, \lambda\in\Sigma\}$. 
For $a\in A_{\textup{reg}}$ and $\alpha\in R_\lambda$ ($\lambda\in\Sigma$) a direct computation shows that 
\begin{equation}\label{edecom}
\begin{split}
e_\alpha&=\frac{a^\lambda\textup{Ad}_{a^{-1}}(y_\alpha)-a^{2\lambda}y_\alpha}{1-a^{2\lambda}},\\
\theta(e_\alpha)&=\frac{a^\lambda\textup{Ad}_{a^{-1}}(y_\alpha)-y_\alpha}{a^{2\lambda}-1}.
\end{split}
\end{equation}
This leads to the direct sum decomposition
\begin{equation}\label{infinitesimalKAK}
\mathfrak{g}=\mathfrak{a}\oplus\textup{Ad}_{a^{-1}}(\mathfrak{q})\oplus\mathfrak{k}
\qquad (a\in A_{\textup{reg}})
\end{equation}
as vector spaces. It is this infinitesimal analogue of the Cartan decomposition that plays a crucial role in computing radial components of invariant differential operators, see \cite{HC,CM} and Section \ref{S4}.
\section{Generalised radial component maps}\label{S4}
\subsection{Differential operators}
The conjugation action of $N_K(A)$ on $A$ descends to an action of the analytic Weyl group $W=N_K(A)/M$. Its contragredient action on $C^\infty(A)$ is explicitly given by $(wf)(a):=f(g^{-1}ag)$ for $w=gM\in W$, $f\in C^\infty(A)$ and
$a\in A$. Note that the $W$-action on $A$ preserves $A_{\textup{reg}}$, and hence $C^\infty(A_{\textup{reg}})$ admits the same $W$-module algebra structure.

Note furthermore that $N_K(A)$ acts canonically on $\mathfrak{a}^*$, and the action descends to an action of $W$. Then $w\xi_\lambda:=\xi_{w\lambda}$ for
$w\in W$ and $\lambda\in\mathfrak{a}^*$. 
\begin{defi}\label{defiR}
We write $\mathcal{R}$ for the unital subalgebra of $C^\infty(A_{\textup{reg}})$ generated by $(1\pm \xi_{\lambda})^{-1}$ for $\lambda\in\Sigma$. 
\end{defi}
Note that $\mathcal{R}$ is a $W$-module subalgebra of $C^\infty(A_{\textup{reg}})$.

The action of $N_K(A)$ and $W$ on $C^\infty(A)$ has the following twisted vector-valued analogue.
Let $(\sigma,V)$ be a finite-dimensional $N_K(A)$-representation. The space $C^\infty(A;V)$ of smooth $V$-valued functions on $A$
is a left $N_K(A)$-module with action
\begin{equation}\label{fNKA}
(g\cdot f)(a):=\sigma(g)f(g^{-1}ag)\qquad (g\in N_K(A))
\end{equation}
for $f\in C^\infty(A;V)$ and $a\in A$.
Let $V^M\subseteq V$ be the subspace of $M$-invariant vectors in $V$. It is a $N_K(A)$-subrepresentation of $V$, and hence $C^\infty(A;V^M)$ is a $N_K(A)$-submodule of $C^\infty(A;V)$. The action of $N_K(A)$ on $C^\infty(A;V^M)$ descends
to an action of $W$. We write
$C^\infty(A;V^M)^W$ for the associated subspace of $W$-invariant functions. 

The results from the previous paragraph also hold true for smooth $V$-valued functions on $A_{\textup{reg}}$. Then $\mathcal{R}\otimes V$ is a $N_K(A)$-submodule of $C^\infty(A_{\textup{reg}};V)$, and $\mathcal{R}\otimes V^M$ is a $W$-submodule of 
$C^\infty(A_{\textup{reg}};V^M)$.

We write $h\mapsto \partial(h)$ ($h\in U(\mathfrak{a})$) for the action of $U(\mathfrak{a})$
on $C^\infty(A)$ by constant coefficient differential operators. Concretely, for $h\in\mathfrak{a}_{\mathbb{R}}$ the differential operator $\partial(h)$ is the directional derivative $\partial_h$ in direction $h$, 
\[
\bigl(\partial_hf\bigr)(a):=\frac{d}{dt}\biggr\rvert_{t=0}f(a\exp(th))\qquad
(f\in C^\infty(A),\,\, a\in A).
\]
We fix an orthonormal basis $\{x_1,\ldots,x_r\}$
of $\mathfrak{a}_{\mathbb{R}}$ with respect to $(\cdot,\cdot)$ once and for all, and
we write 
\[
\partial^{\mathbf{m}}:=\partial(x_1^{m_1}\cdots x_r^{m_r})=
\partial_{x_1}^{m_1}\cdots
\partial_{x_r}^{m_r}
\]
for $\mathbf{m}=(m_1,\ldots,m_r)\in\mathbb{Z}_{\geq 0}^r$. 

We now take $V=\textup{Hom}(E,F)$ with $E$ and $F$ two finite-dimensional $N_K(A)$-re\-pre\-sen\-ta\-tions, viewed as $N_K(A)$-representation with respect to the conjugation action. Then $V$ is canonically isomorphic to the tensor product $N_K(A)$-representation $F\otimes E^*$.
\begin{defi}
We write $\mathbb{D}(E,F)$ for the $N_K(A)$-module consisting of differential operators
$D=\sum_{\mathbf{m}}q_{\mathbf{m}}\partial^{\mathbf{m}}$ on $A_{\textup{reg}}$ with coefficients
$q_{\mathbf{m}}\in\mathcal{R}\otimes\textup{Hom}(E,F)$. 
\end{defi}
Concretely, the $N_K(A)$-module structure on $\mathbb{D}(E,F)$ is given by
\begin{equation}\label{mDO}
g\cdot\Bigl(\sum_{i}q_{i}\partial(h_i)\Bigr)=\sum_i(g\cdot q_i)\partial(\textup{Ad}_g(h_i))
\end{equation}
for $g\in N_K(A)$, $q_i\in\mathcal{R}\otimes\textup{Hom}(E,F)$ and $h_i\in U(\mathfrak{a})$.
The $N_K(A)$-action on differential operators $D\in\mathbb{D}(E,F)$, viewed as linear operators $D: C^\infty(A_{\textup{reg}};E)\rightarrow C^\infty(A_{\textup{reg}};F)$, is compatible with the $N_K(A)$-actions on $C^\infty(A_{\textup{reg}};E)$ and
$C^\infty(A_{\textup{reg}};F)$, 
\[
g\cdot \bigl(Df\bigr)=(g\cdot D)(g\cdot f)
\]
for $g\in N_K(A)$, $D\in\mathbb{D}(E,F)$ and $f\in C^\infty(A_{\textup{reg}};E)$.

Note that the $M$-action on $\mathbb{D}(E,F)$ is the trivial extension to $\mathbb{D}(E,F)$ of the conjugation $M$-action on $\textup{Hom}(E,F)$.

\begin{defi}
We write $\mathbb{D}_M(E,F)$ for the subspace of $M$-invariant differential operators in $\mathbb{D}(E,F)$. Concretely, it is the $N_K(A)$-module of differential operators $D=\sum_{\mathbf{m}}q_{\mathbf{m}}\partial^{\mathbf{m}}$ with $q_{\mathbf{m}}\in\mathcal{R}\otimes\textup{Hom}_M(E,F)$.
\end{defi}
The $N_K(A)$-action on $\mathbb{D}_M(E,F)$ descends to a $W$-action. We write
$\mathbb{D}_M(E,F)^W$ for the subspace of $W$-invariant differential operators in 
$\mathbb{D}_M(E,F)$ (equivalently, it is the subspace of $N_K(A)$-invariant differential operators
in $\mathbb{D}(E,F)$).

If $E=F$ then we write $\mathbb{D}(E)$, $\mathbb{D}_M(E)$ and $\mathbb{D}_M(E)^W$
for the algebras $\mathbb{D}(E,E)$, $\mathbb{D}_M(E,E)$ and $\mathbb{D}_M(E,E)^W$, respectively. Furthermore, for $D=\sum_{\mathbf{m}}q_{\mathbf{m}}\partial^{\mathbf{m}}\in\mathbb{D}(E,F)$ we call 
\[
D_M:=\sum_{\mathbf{m}}q_{\mathbf{m}}(\cdot)|_{E^M}\partial^{\mathbf{m}}\in
\mathbb{D}(E^M,F)
\]
the $M$-restriction of $D$.
It satisfies
\begin{equation}\label{redequation}
Df=D_Mf\qquad (D\in\mathbb{D}(E,F),\, f\in C^\infty(A_{\textup{reg}};E^M)).
\end{equation}
Note that the assignment $\mathbb{D}(E,F)\rightarrow \mathbb{D}(E^M,F)$, $D\mapsto D_M$ restricts to a linear map
\[
\mathbb{D}_M(E,F)\rightarrow \mathbb{D}(E^M,F^M),
\]
which is an algebra map $\mathbb{D}_M(E)\rightarrow\mathbb{D}(E^M)$ when $F=E$. 

Typically $E$ and $F$ are going to be $N_K(A)$-subrepresentations of the tensor product representation $V_\ell\otimes V_r^*$, with $(\sigma_\ell,V_\ell)$ and $(\sigma_r,V_r)$ finite-dimensional $K$-representations.

\subsection{Radial component maps}
This section recalls some well-known facts about Harish-Chandra's radial component maps. We follow closely the reference \cite{CM}.
By \cite[Thm. 2.4]{CM}
there exist for $u\in U(\mathfrak{g})$ unique elements
\begin{equation}\label{decompPi}
\begin{split}
\Pi(u)&=\sum_if_i\otimes h_i\otimes (u_i\otimes_{U(\mathfrak{m})}v_i)\in \mathcal{R}\otimes U(\mathfrak{a})\otimes
(U(\mathfrak{k})\otimes_{U(\mathfrak{m})}U(\mathfrak{k})),\\
\widetilde{\Pi}(u)&=\sum_i\widetilde{f}_i\otimes \widetilde{h}_i\otimes (\widetilde{u}_i\otimes_{U(\mathfrak{m})}\widetilde{v}_i)\in \mathcal{R}\otimes U(\mathfrak{a})\otimes
(U(\mathfrak{k})\otimes_{U(\mathfrak{m})}U(\mathfrak{k})),
\end{split}
\end{equation}
such that 
\begin{equation}\label{iCD}
\sum_if_i(a)\textup{Ad}_{a^{-1}}(u_i)h_iv_i=u=
\sum_i\widetilde{f}_i(a)\widetilde{u}_i\widetilde{h}_i\textup{Ad}_a(\widetilde{v}_i)\qquad \forall\, a\in A_{\textup{reg}}.
\end{equation}
The two decompositions \eqref{iCD} of $u\in U(\mathfrak{g})$ are called infinitesimal Cartan decompositions of $u$ relative to $a\in A_{\textup{reg}}$.
The maps $\Pi, \widetilde{\Pi}: U(\mathfrak{g})\rightarrow \mathcal{R}\otimes U(\mathfrak{a})
\otimes (U(\mathfrak{k})\otimes_{U(\mathfrak{m})}U(\mathfrak{k}))$ are called radial component maps. They provide the algebraic description of the radial components of the action of $u\in U(\mathfrak{g})$
as left and right $G$-invariant differential operators on $C^\infty(G)$ (see \cite{HC,CM} and Theorem \ref{DOwithrad}). 

\begin{rema}
Our present choice of coefficient algebra $\mathcal{R}$ (see Definition \ref{defiR}), which is different from the one used in \cite{CM}, is more convenient for our purposes since 
$\mathcal{R}\subset C^\infty(A_{\textup{reg}})$ is $W$-stable.
The results in \cite[\S2--3]{CM} are also valid in the present setup, with the same proofs.
\end{rema}

\begin{eg}
For $\alpha\in R_\lambda$ with $\lambda\in\Sigma$,
\begin{equation}\label{Pie}
\begin{split}
\Pi(e_\alpha)&=\frac{\xi_\lambda}{1-\xi_{2\lambda}}\otimes 1\otimes (y_\alpha\otimes_{U(\mathfrak{m})}1)-
\frac{\xi_{2\lambda}}{1-\xi_{2\lambda}}\otimes 1\otimes (1\otimes_{U(\mathfrak{m})}y_\alpha),\\
\widetilde{\Pi}(e_\alpha)&=\frac{1}{1-\xi_{2\lambda}}\otimes 1\otimes (y_\alpha\otimes_{U(\mathfrak{m})}1)-
\frac{\xi_{\lambda}}{1-\xi_{2\lambda}}\otimes 1\otimes (1\otimes_{U(\mathfrak{m})}y_\alpha)
\end{split}
\end{equation}
in view of the first line of \eqref{edecom}. 
\end{eg}
Note that
\begin{equation}\label{equal}
\Pi(u)=\widetilde{\Pi}(u)\qquad \forall\, u\in U(\mathfrak{g})^A,
\end{equation}
with $U(\mathfrak{g})^A\subseteq U(\mathfrak{g})$ the subalgebra of $\textup{Ad}(A)$-invariant elements in $U(\mathfrak{g})$.

Fix two finite-dimensional $K$-representations $(\sigma_\ell,V_\ell)$
and $(\sigma_r,V_r)$. Write $(V_\ell\otimes V_r^*)^{\mathfrak{m}}$ for the subspace
of $\mathfrak{m}$-invariant elements in the tensor product $\mathfrak{k}$-module $V_\ell\otimes V_r^*$, i.e.,
\[
\bigl(V_\ell\otimes V_r^*\bigr)^{\mathfrak{m}}:=\{T\in V_\ell\otimes V_r^* \,\, | \,\,
(\sigma_\ell(y)\otimes\textup{id}_{V_r^*})T+(\textup{id}_{V_\ell}\otimes\sigma_r^*(y))T=0
\quad \forall\, y\in\mathfrak{m}\}.
\]
Under the natural vector space
identification $V_\ell\otimes V_r^*\simeq\textup{Hom}(V_r,V_\ell)$, the invariant subspace
$(V_\ell\otimes V_r^*)^{\mathfrak{m}}$ corresponds to the subspace $\textup{Hom}_{\mathfrak{m}}(V_r,V_\ell)$ of $\mathfrak{m}$-intertwiners $V_r\rightarrow V_\ell$. Note that 
$(V_\ell\otimes V_r^*)^{\mathfrak{m}}$ is an $N_K(A)$-subrepresentation of $V_\ell\otimes V_r^*$
containing $(V_\ell\otimes V_r^*)^M$. It may be a strict inclusion since $M$ is not necessarily connected. 

Consider now the linear map 
\[
\zeta_{\sigma_\ell,\sigma_r}: U(\mathfrak{k})\otimes_{U(\mathfrak{m})}U(\mathfrak{k})\rightarrow
\textup{Hom}\bigl((V_\ell\otimes V_r^*)^{\mathfrak{m}},V_\ell\otimes V_r^*)
\]
defined by 
\[
\zeta_{\sigma_\ell,\sigma_r}(u\otimes_{U(\mathfrak{m})} v)T:=(\sigma_\ell(u)\otimes\sigma_r^*(S(v)))T
\qquad (u,v\in U(\mathfrak{k}), T\in (V_\ell\otimes V_r^*)^{\mathfrak{m}}).
\]
The map is clearly well-defined.
\begin{defi}\label{defDz}
Let 
\[
\mathcal{R}\otimes U(\mathfrak{a})\otimes (U(\mathfrak{k})\otimes_{U(\mathfrak{m})}U(\mathfrak{k}))
\rightarrow \mathbb{D}((V_\ell\otimes V_r^*)^{\mathfrak{m}}, V_\ell\otimes V_r^*),
\qquad z\mapsto L_z^{\sigma_\ell, \sigma_r}
\]
be the linear map such that
\[
L_z^{\sigma_\ell,\sigma_r}:=\zeta_{\sigma_\ell,\sigma_r}(u\otimes_{U(\mathfrak{m})} v)f\partial(h)
\]
for a pure tensor $z=f\otimes h\otimes (u\otimes_{U(\mathfrak{m})} v)\in\mathcal{R}\otimes U(\mathfrak{a})\otimes (U(\mathfrak{k})\otimes_{U(\mathfrak{m})}U(\mathfrak{k}))$.
\end{defi}

Write
$f\rvert_A\in C^\infty(A;V_\ell\otimes V_r^*)$ for the restriction 
of $f\in C^\infty(G;V_\ell\otimes V_r^*)$ 
to $A$.
Note that $\rvert_A$ restricts to an injective linear map
\[
\rvert_A: C_{\sigma_\ell,\sigma_r}^\infty(G)\hookrightarrow
C^\infty(A;(V_\ell\otimes V_r^*)^M)^W.
\]
Recall from Section \ref{S3} the actions $u[1]$ and $u\langle 1\rangle$ of $u\in U(\mathfrak{g})$
on 
$C^\infty(G;V)$ as $G$-invariant differential operators. Recall furthermore that
\[
L_{z;M}^{\sigma_\ell,\sigma_r}\in\mathbb{D}((V_\ell\otimes V_r^*)^M,V_\ell\otimes V_r^*)
\]
denotes the restriction of $L_z^{\sigma_\ell,\sigma_r}$ to $\mathbb{D}((V_\ell\otimes V_r^*)^M,
V_\ell\otimes V_r^*)$.
We now have the following theorem, which
is essentially \cite[Thm. 3.1]{CM}.
\begin{thm}\label{DOwithrad}
Let $u\in U(\mathfrak{g})$, $f\in C_{\sigma_\ell,\sigma_r}^\infty(G)$ and $a\in A_{\textup{reg}}$.
Then 
\begin{equation}\label{refDOrad}
\begin{split}
(u[1]f)(a)&=\bigl(L_{\Pi(u);M}^{\sigma_\ell,\sigma_r}f\rvert_A\bigr)(a),\\
(u\langle 1\rangle f)(a)&=\bigl(L_{\widetilde{\Pi}(S(u));M}^{\sigma_\ell,\sigma_r}f\rvert_A\bigr)(a).
\end{split}
\end{equation}
\end{thm}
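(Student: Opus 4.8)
The plan is to prove the two identities in \eqref{refDOrad} by unwinding the definitions and using the infinitesimal Cartan decompositions \eqref{iCD} together with the equivariance property of $f\in C^\infty_{\sigma_\ell,\sigma_r}(G)$. I treat the first identity; the second is entirely analogous with the roles of left and right invariant differential operators (and $\widetilde\Pi$, $S$) interchanged, so I would remark on this rather than repeat the argument.

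First I would reduce to pure tensors. By linearity of $u\mapsto\Pi(u)$, of $z\mapsto L_z^{\sigma_\ell,\sigma_r}$, and of the $M$-restriction, it suffices to understand how a single summand $f_i\otimes h_i\otimes(u_i\otimes_{U(\mathfrak{m})}v_i)$ of $\Pi(u)$ acts. Fix $a\in A_{\textup{reg}}$. The key computation is that for $y\in\mathfrak{g}_{\mathbb R}$ and $f\in C^\infty_{\sigma_\ell,\sigma_r}(G)$, the function $g\mapsto (y[1]f)(g)$ restricted to $A$ can be re-expressed using the equivariance $f(k_\ell g k_r^{-1})=(\sigma_\ell(k_\ell)\otimes\sigma_r^*(k_r))f(g)$. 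Concretely: write $u = \sum_i f_i(a)\,\textup{Ad}_{a^{-1}}(u_i)\,h_i\,v_i$ as in \eqref{iCD}. Since $u_i\in U(\mathfrak{k})$, the term $\textup{Ad}_{a^{-1}}(u_i)$ acts, when we differentiate $f$ at $a$ along $U(\mathfrak{g})$ from the right and then use biinvariance to move $\textup{Ad}_{a^{-1}}(u_i)=a^{-1}u_i a$ (as a group-like manipulation at the level of the enveloping algebra) to a left action of $u_i$; this produces $\sigma_\ell(u_i)$ by the left $K$-equivariance. Similarly $v_i\in U(\mathfrak{k})$ acting from the far right on $f$ produces, via right $K$-equivariance and $\sigma_r^*(u)f = \cdots$, the factor $\sigma_r^*(S(v_i))$; the antipode enters because moving a right-regular action of $v_i$ to the $\sigma_r^*$-action flips the sign on Lie algebra generators. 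The middle factor $h_i\in U(\mathfrak{a})$ acts as the constant-coefficient differential operator $\partial(h_i)$ along $A$. Assembling, $(u[1]f)(a) = \sum_i f_i(a)\,(\sigma_\ell(u_i)\otimes\sigma_r^*(S(v_i)))(\partial(h_i) f|_A)(a)$, which is precisely $\bigl(L_{\Pi(u)}^{\sigma_\ell,\sigma_r} f|_A\bigr)(a)$ by Definition \ref{defDz} and the definition of $\zeta_{\sigma_\ell,\sigma_r}$.

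The passage from $L_{\Pi(u)}^{\sigma_\ell,\sigma_r}$ to $L_{\Pi(u);M}^{\sigma_\ell,\sigma_r}$ is then immediate from \eqref{redequation}, since $f|_A$ takes values in $(V_\ell\otimes V_r^*)^M$: the $M$-restriction of a differential operator agrees with the operator itself on $(V_\ell\otimes V_r^*)^M$-valued functions. I would also note that the right-hand side is well defined, i.e. independent of the choice of representative $\Pi(u)$ of the class in $U(\mathfrak{k})\otimes_{U(\mathfrak{m})}U(\mathfrak{k})$, because $\zeta_{\sigma_\ell,\sigma_r}$ is well defined on the balanced tensor product (as already observed after its definition) — this is where the $\mathfrak{m}$-invariance of the vectors we apply it to is used.

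The main obstacle is the careful bookkeeping in the central step: justifying rigorously that differentiating $f$ along the right-invariant action by $\textup{Ad}_{a^{-1}}(u_i)h_i v_i$ at the point $a\in A_{\textup{reg}}$, and then invoking the left $K$-equivariance for $u_i$, the right $K$-equivariance for $v_i$ (with the antipode), and the flatness along $A$ for $h_i$, produces exactly the claimed combination — in particular getting the order of operators and the appearance of $S(v_i)$ correct, and handling the non-commutativity of $U(\mathfrak{g})$ when one splits $u$ according to \eqref{iCD}. Since this is \cite[Thm. 3.1]{CM} transcribed to the vector-valued, two-sided-equivariant setting with the modified coefficient ring $\mathcal R$, I would carry out the argument by reduction to the scalar case treated in \cite{CM}: apply the matrix coefficients, i.e. pair $f$ with vectors to reduce to scalar spherical functions, invoke \cite[Thm. 3.1]{CM}, and then reassemble. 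This keeps the proof short and transparent.
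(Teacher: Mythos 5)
Your proposal is correct and follows essentially the same route as the paper: both reduce \eqref{refDOrad}, via the infinitesimal Cartan decompositions \eqref{iCD} and the two-sided $K$-equivariance of $f$, to the pointwise identity $\bigl((\textup{Ad}_{a^{-1}}(u)hv)f\bigr)(a)=\zeta_{\sigma_\ell,\sigma_r}(u\otimes_{U(\mathfrak{m})}v)\bigl(\partial(h)f\bigr)(a)$ for $u,v\in U(\mathfrak{k})$, $h\in U(\mathfrak{a})$, handle the $M$-restriction through \eqref{redequation} because $f\rvert_A$ takes values in $(V_\ell\otimes V_r^*)^M$, and delegate the bookkeeping to the proof of \cite[Thm.~3.1]{CM}, with the second identity obtained exactly as you indicate via Remark \ref{useful}, i.e. $u\langle 1\rangle f=\textup{Ad}_{g^{-1}}(S(u))[1]f$, so that the $\widetilde{\Pi}$-decomposition of $S(u)$ puts it in the same form. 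The only cosmetic difference is your suggested pairing with vectors to reduce to scalar functions, which is not needed (and the paired functions are not spherical), since the computation in \cite{CM} is already carried out in the two-sided $K$-equivariant setting.
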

\begin{proof}
By Remark \ref{useful}
the second equality of \eqref{refDOrad} is equivalent to
\begin{equation}\label{refDOradalt}
\bigl(\textup{Ad}_{a^{-1}}(u)[1]f\bigr)(a)=\bigl(L_{\widetilde{\Pi}(u);M}^{\sigma_\ell,\sigma_r}f\rvert_A\bigr)(a).
\end{equation}
By the definition of $\Pi(u)$ and $\widetilde{\Pi}(u)$ as well as the definition of $L_z^{\sigma_\ell,\sigma_r}$ (see Definition \ref{defDz}) it then suffices to prove that
\[
\bigl((\textup{Ad}_{a^{-1}}(u)hv)f\bigr)(a)=\zeta_{\sigma_\ell,\sigma_r}(u\otimes_{U(\mathfrak{m})} v)
\bigl(\partial(h)f\bigr)(a)
\]
for $u,v\in U(\mathfrak{k})$, $h\in U(\mathfrak{a})$ and $a\in A_{\textup{reg}}$. This is shown in the proof of \cite[Thm. 3.1]{CM}. 
\end{proof}
Note that 
\begin{equation}\label{lrsame}
L_{\widetilde{\Pi}(u)}^{\sigma_\ell,\sigma_r}=L_{\Pi(u)}^{\sigma_\ell,\sigma_r}
\qquad (u\in U(\mathfrak{g})^A)
\end{equation}
in view of \eqref{equal}.

The following proposition shows that the differential operators $L_{\Pi(u);M}^{\sigma_\ell,\sigma_r}$ and $L_{\widetilde{\Pi}(u);M}^{\sigma_\ell,\sigma_r}$ 
are determined by the property \eqref{refDOrad} for all $f\in C_{\sigma_\ell,\sigma_r}^\infty(G)$.
 
\begin{prop}\label{zeroproperty}
Suppose that $D\in\mathbb{D}((V_\ell\otimes V_r^*)^M,V_\ell\otimes V_r^*)$ satisfies
$D(f\rvert_A)=0$ for all $f\in C_{\sigma_\ell,\sigma_r}^{\infty}(G)$. Then $D=0$.
\end{prop}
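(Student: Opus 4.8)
The plan is to show that a differential operator $D = \sum_{\mathbf{m}} q_{\mathbf{m}} \partial^{\mathbf{m}} \in \mathbb{D}((V_\ell \otimes V_r^*)^M, V_\ell \otimes V_r^*)$ annihilating every $f\rvert_A$ (for $f \in C_{\sigma_\ell,\sigma_r}^\infty(G)$) must have all coefficients $q_{\mathbf{m}}$ identically zero. The key point is that the restriction map $\rvert_A$ has "large enough" image: the germs at a fixed regular point $a_0 \in A_{\textup{reg}}$ of functions $f\rvert_A$, together with their derivatives, must span enough of $(V_\ell \otimes V_r^*)^M$-valued jets to force $q_{\mathbf{m}}(a_0) = 0$ for all $\mathbf{m}$ and all $a_0$.

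\textbf{Step 1: Reduce to a jet statement.} Fix $a_0 \in A_{\textup{reg}}$. It suffices to prove that for every $T \in (V_\ell \otimes V_r^*)^M$ and every multi-index $\mathbf{n} \in \mathbb{Z}_{\geq 0}^r$, there exists $f \in C_{\sigma_\ell,\sigma_r}^\infty(G)$ with $(\partial^{\mathbf{m}} (f\rvert_A))(a_0) = \delta_{\mathbf{m},\mathbf{n}} T$ for all $\mathbf{m}$ up to the order of $D$; applying $D(f\rvert_A) = 0$ at $a_0$ then yields $q_{\mathbf{n}}(a_0) T = 0$, and since $T$ and $a_0$ are arbitrary and each $q_{\mathbf{n}}(a_0) \in \textup{Hom}((V_\ell\otimes V_r^*)^M, V_\ell\otimes V_r^*)$, this gives $q_{\mathbf{n}} \equiv 0$.

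\textbf{Step 2: Construct spherical functions with prescribed jets.} This is the heart of the argument and the main obstacle. The idea is to build $f \in C_{\sigma_\ell,\sigma_r}^\infty(G)$ by averaging: start from an arbitrary smooth function $\phi: G \to V_\ell \otimes V_r^*$ supported in a small bi-$K$-invariant neighborhood of a chosen point, and set
\[
f(g) := \int_{K\times K} (\sigma_\ell(k_\ell) \otimes \sigma_r^*(k_r)) \, \phi(k_\ell^{-1} g k_r) \, dk_\ell \, dk_r,
\]
which automatically lies in $C_{\sigma_\ell,\sigma_r}^\infty(G)$. Because the multiplication map $K \times A_{\textup{reg}} \times K \to G$ is a submersion onto an open subset (the infinitesimal Cartan decomposition \eqref{infinitesimalKAK} shows $d(kak')$ is surjective at regular $a$), one can realize $A_{\textup{reg}}$-germs and jets of $f$ by choosing $\phi$ appropriately; the equivariance constraint costs only finitely many dimensions because, near a regular $a_0$, the fiber of the $K\times K$-action through $a_0$ is transverse to $A$. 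Concretely, one shows that the tangent map of $\rvert_A \circ (\text{averaging})$ hits all of $(V_\ell\otimes V_r^*)^M$ in the $0$-jet and, iterating with higher-order bump functions (or using that the jet map $C^\infty(G) \to \text{jets at } a_0$ is surjective before averaging and the averaging is a projection onto the $M$-invariants), one obtains prescribed jets of all orders.

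\textbf{Step 3: Conclude.} Once Step 2 provides, for each $\mathbf{n}$ and each $T \in (V_\ell\otimes V_r^*)^M$, a spherical function whose jet at $a_0$ is concentrated in order $\mathbf{n}$ with value $T$, the hypothesis $D(f\rvert_A) = 0$ evaluated at $a_0$ collapses to $q_{\mathbf{n}}(a_0)T = 0$. Varying $T$ over $(V_\ell\otimes V_r^*)^M$ and $a_0$ over the dense open set $A_{\textup{reg}}$, and using that $q_{\mathbf{n}} \in \mathcal{R} \otimes \textup{Hom}_M((V_\ell\otimes V_r^*)^M, V_\ell\otimes V_r^*)$ is (real-)analytic on $A_{\textup{reg}}$, we get $q_{\mathbf{n}} = 0$ for every $\mathbf{n}$, hence $D = 0$. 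The main subtlety to handle carefully is the transversality in Step 2 and making sure the $M$-invariance of the coefficients matches the $M$-invariance one obtains automatically from averaging over $K\times K$ — this is exactly why the target of the jets is $(V_\ell\otimes V_r^*)^M$ and not all of $V_\ell\otimes V_r^*$.
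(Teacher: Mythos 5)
Your overall strategy is the right one, and in fact it is essentially the argument hiding behind the paper's one-line proof, which simply defers to the proof of \cite[Thm.~3.3]{CM}: reduce to showing that the jets at a fixed $a_0\in A_{\textup{reg}}$ of restrictions $f\rvert_A$, $f\in C^\infty_{\sigma_\ell,\sigma_r}(G)$, exhaust all $(V_\ell\otimes V_r^*)^M$-valued jets, and then kill each coefficient $q_{\mathbf m}(a_0)$. Steps 1 and 3 are fine (in Step 3 you do not even need analyticity or density: once $q_{\mathbf n}(a_0)=0$ for every $a_0\in A_{\textup{reg}}$ and every $\mathbf n$ you are done, since the coefficients are functions on $A_{\textup{reg}}$).

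The genuine gap is in Step 2, which you rightly call the heart of the matter but do not actually carry out. Averaging an arbitrary $\phi$ over $K\times K$ does land in $C^\infty_{\sigma_\ell,\sigma_r}(G)$, but the jet of the averaged function $f\rvert_A$ at $a_0$ is \emph{not} a function of the jet of $\phi$ at $a_0$: the integral sees $\phi$ along the whole double coset $Ka_0K$, so the parenthetical claim that ``the averaging is a projection onto the $M$-invariants'' of jets is not meaningful as stated, and the proposed iteration with higher-order bump functions would require controlling Jacobians and cross terms that you have not addressed. The clean way to close the gap (and essentially what the cited proof does) is to dispense with averaging and construct the spherical function directly from its restriction: choose a neighborhood $U\subseteq A_{\textup{reg}}$ of $a_0$ whose $W$-translates are pairwise disjoint, take any $\psi\in C_c^\infty(U;(V_\ell\otimes V_r^*)^M)$, and define $f(k_1ak_2^{-1}):=(\sigma_\ell(k_1)\otimes\sigma_r^*(k_2))\psi(a)$ on the open set $KUK$, extended by zero. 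Well-definedness uses exactly that the stabilizer of a regular $a$ under the action $(k_1,k_2)\cdot g=k_1gk_2^{-1}$ is $\textup{diag}(M)$ (together with the disjointness of the $W$-translates of $U$) and that $\psi$ takes values in $(V_\ell\otimes V_r^*)^M$; smoothness follows because $K\times U\times K\to KUK$ is a submersion, which is the content of \eqref{infinitesimalKAK}. Since then $f\rvert_U=\psi$, you get every germ, hence every jet, and your Steps 1 and 3 finish the proof. So the skeleton of your argument is sound, but as written Step 2 asserts rather than proves the key surjectivity, and the specific mechanism you offer for it does not work.
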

\begin{proof}
This is shown as part of the proof of \cite[Thm. 3.3]{CM}.
\end{proof}
The radial components satisfy the following equivariance property.
\begin{lem}\label{invNKA}
For $u\in U(\mathfrak{g})$ and $g\in N_K(A)$ we have
\begin{equation}\label{Nequivariance2}
L_{\Pi(\textup{Ad}_g(u))}^{\sigma_\ell,\sigma_r}=g\cdot L_{\Pi(u)}^{\sigma_\ell,\sigma_r},\qquad
L_{\widetilde{\Pi}(\textup{Ad}_g(u))}^{\sigma_\ell,\sigma_r}=
g\cdot L_{\widetilde{\Pi}(u)}^{\sigma_\ell,\sigma_r}
\end{equation}
in $\mathbb{D}((V_\ell\otimes V_r^*)^{\mathfrak{m}},V_\ell\otimes V_r^*)$.
\end{lem}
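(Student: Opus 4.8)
The statement is an $N_K(A)$-equivariance of the radial component maps, so the natural strategy is to reduce it to the defining characterisation of the differential operators $L^{\sigma_\ell,\sigma_r}_{\Pi(u);M}$ and $L^{\sigma_\ell,\sigma_r}_{\widetilde\Pi(u);M}$, namely Theorem \ref{DOwithrad} together with the uniqueness Proposition \ref{zeroproperty}. The idea is: both sides of each identity in \eqref{Nequivariance2} are elements of $\mathbb{D}((V_\ell\otimes V_r^*)^{\mathfrak{m}},V_\ell\otimes V_r^*)$, and it suffices to check they agree after $M$-restriction (so we can apply Proposition \ref{zeroproperty}) and after evaluating on all restrictions $f|_A$ with $f\in C^\infty_{\sigma_\ell,\sigma_r}(G)$. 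So I would first establish the identity applied to such an $f|_A$, and then invoke Proposition \ref{zeroproperty} to conclude equality of the operators themselves.

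\textbf{The key computation.} Fix $g\in N_K(A)$ and $f\in C^\infty_{\sigma_\ell,\sigma_r}(G)$. The two $N_K(A)$-actions in play are the action on $C^\infty(A;V_\ell\otimes V_r^*)$ from \eqref{fNKA} and the action on differential operators $\mathbb{D}(E,F)$ from \eqref{mDO}–\eqref{redequation}, which are compatible: $g\cdot(Df)=(g\cdot D)(g\cdot f)$. So I would compute, using the first line of \eqref{refDOrad} and the compatibility of the $N_K(A)$-action of $g^{-1}$,
\[
\bigl((g\cdot L^{\sigma_\ell,\sigma_r}_{\Pi(u);M})(f|_A)\bigr)(a)
=\bigl(g\cdot\bigl(L^{\sigma_\ell,\sigma_r}_{\Pi(u);M}(g^{-1}\cdot f|_A)\bigr)\bigr)(a).
\]
Now $g^{-1}\cdot f|_A$ is the restriction to $A$ of the left-and-right $K$-translate $h\mapsto \sigma_\ell(g^{-1})f(ghg)\sigma_r(\ldots)$ — more precisely, writing $f^{g}(h):=\sigma_\ell(g)f(g^{-1}hg)\sigma_r(g)^{-1}$ one checks $f^g\in C^\infty_{\sigma_\ell,\sigma_r}(G)$ and $(f^g)|_A=g\cdot(f|_A)$, using that $g\in N_K(A)$ normalises $A$. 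Feeding this through \eqref{refDOrad} converts $L^{\sigma_\ell,\sigma_r}_{\Pi(u);M}$ acting on the translated function into $u[1]$ acting on a translate of $f$, which by invariance of $u[1]$ under left/right translation and the definition of $\textup{Ad}_g$ equals $(\textup{Ad}_g(u))[1]f$ up to the bookkeeping of the $\sigma_\ell,\sigma_r$-factors. Re-applying \eqref{refDOrad} in the other direction identifies the result with $(L^{\sigma_\ell,\sigma_r}_{\Pi(\textup{Ad}_g(u));M}(f|_A))(a)$. The same argument with $u\langle 1\rangle$ in place of $u[1]$ and $\widetilde\Pi$ in place of $\Pi$ gives the second identity; here one uses that $u\langle 1\rangle$ intertwines with $\textup{Ad}_g$ via $S$ in the expected way, which is already recorded in Remark \ref{useful}.

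\textbf{Conclusion and main obstacle.} Having shown $\bigl((g\cdot L^{\sigma_\ell,\sigma_r}_{\Pi(u)})_M - L^{\sigma_\ell,\sigma_r}_{\Pi(\textup{Ad}_g(u));M}\bigr)(f|_A)=0$ for all $f\in C^\infty_{\sigma_\ell,\sigma_r}(G)$, Proposition \ref{zeroproperty} forces $(g\cdot L^{\sigma_\ell,\sigma_r}_{\Pi(u)})_M=L^{\sigma_\ell,\sigma_r}_{\Pi(\textup{Ad}_g(u));M}$; since the $M$-restriction map $\mathbb{D}(E,F)\to\mathbb{D}(E^M,F)$ is injective on the relevant space (an operator with coefficients in $\textup{Hom}(E,F)$ is determined by its restriction to $E^M$ only if $E=E^M$, so strictly one needs a little care), I would instead argue directly that the full (un-$M$-restricted) operators coincide by noting both $L^{\sigma_\ell,\sigma_r}_{\Pi(\textup{Ad}_g(u))}$ and $g\cdot L^{\sigma_\ell,\sigma_r}_{\Pi(u)}$ are defined by the \emph{same} explicit formula (Definition \ref{defDz}) applied to $\Pi(\textup{Ad}_g(u))$ versus the $N_K(A)$-transform of $\Pi(u)$, so it is enough to check $\Pi(\textup{Ad}_g(u))$ and $g\cdot\Pi(u)$ act identically through $\zeta_{\sigma_\ell,\sigma_r}$ and $\partial$. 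The cleanest route, and the one I expect to be the main subtlety, is verifying that $\Pi(\textup{Ad}_g(u))$ equals the image of $\Pi(u)$ under the natural $N_K(A)$-action on $\mathcal{R}\otimes U(\mathfrak{a})\otimes(U(\mathfrak{k})\otimes_{U(\mathfrak{m})}U(\mathfrak{k}))$ — this comes from applying $\textup{Ad}_g$ to the infinitesimal Cartan decomposition \eqref{iCD} of $u$ relative to $a\in A_{\textup{reg}}$, using that $\textup{Ad}_g$ preserves $\mathfrak{k}$ and $\mathfrak{a}$, permutes $A_{\textup{reg}}$, and that $\textup{Ad}_g(a^{-1}xa)=(\textup{Ad}_g a)^{-1}(\textup{Ad}_g x)(\textup{Ad}_g a)$ with $\textup{Ad}_g a = gag^{-1}$ again in $A_{\textup{reg}}$; then uniqueness of the decomposition \eqref{decompPi} identifies the transformed decomposition with $\Pi(\textup{Ad}_g(u))$. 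Tracking the $W$-equivariance of $\mathcal{R}$ (guaranteed by the remark following \eqref{decompPi}) and the substitution $a\mapsto gag^{-1}$ carefully is where the real work lies; the rest is formal.
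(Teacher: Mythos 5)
Your final argument---applying $\textup{Ad}_g$ to the infinitesimal Cartan decomposition \eqref{iCD}, using that $\textup{Ad}_g$ preserves $\mathfrak{k}$, $\mathfrak{a}$ (and $\mathfrak{m}$) and permutes $A_{\textup{reg}}$, invoking the uniqueness in \eqref{decompPi} to identify $\Pi(\textup{Ad}_g(u))$ with the $N_K(A)$-transform of $\Pi(u)$, and then passing through Definition \ref{defDz} together with the conjugation-equivariance of $\zeta_{\sigma_\ell,\sigma_r}$---is exactly the proof given in the paper. The analytic detour via Theorem \ref{DOwithrad} and Proposition \ref{zeroproperty} with which you begin would indeed only yield the weaker $M$-restricted identity, and you correctly abandon it in favour of the algebraic argument.
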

\begin{proof}
We only prove the first equality, the second is proved in a similar manner.
Fix $g\in N_K(A)$ and write $w:=gM\in W$.  Using the notation \eqref{decompPi} we have for $a\in A_{\textup{reg}}$,
\begin{equation}\label{Nequivariance}
\textup{Ad}_g(u)=\sum_i(w\cdot f_i)(wa)\textup{Ad}_{(wa)^{-1}}(\textup{Ad}_g(u_i))\textup{Ad}_g(h_i)
\textup{Ad}_g(v_i).
\end{equation}
Since $wf_i\in\mathcal{R}$, $\textup{Ad}_g(h_i)\in U(\mathfrak{a})$ and $\textup{Ad}_g(u_i),\textup{Ad}_g(v_i)\in U(\mathfrak{k})$, formula \eqref{Nequivariance} is an infinitesimal Cartan decomposition of $\textup{Ad}_g(u)$ relative to
$wa\in A_{\textup{reg}}$ for each $a\in A_{\textup{reg}}$. Hence
\[
\Pi(\textup{Ad}_g(u))=\sum_iw\cdot f_i\otimes\textup{Ad}_g(h_i)\otimes (\textup{Ad}_g(u_i)\otimes_{U(\mathfrak{m})}\textup{Ad}_g(v_i)).
\]
The lemma then follows from the definition of $L_{z}^{\sigma_\ell,\sigma_r}$ (Definition \ref{defDz}) and the fact that
\[
\zeta_{\sigma_\ell,\sigma_r}(\textup{Ad}_g(u_i)\otimes_{U(\mathfrak{m})}\textup{Ad}_g(v_i))=
(\sigma_\ell(g)\otimes\sigma_r^*(g))\circ\zeta_{\sigma_\ell,\sigma_r}(u_i\otimes v_i)\circ
(\sigma_\ell(g^{-1})\otimes\sigma_r^*(g^{-1}))
\]
in $\textup{Hom}((V_\ell\otimes V_r^*)^{\mathfrak{m}},V_\ell\otimes V_r^*)$.
\end{proof} 
\subsection{Coordinate radial component maps}\label{Scrcm}
Fix finite-dimensional $G$-representations $(\tau_i,U_i)$ ($1\leq i\leq n$) and two finite-dimensional $K$-representations $(\sigma_\ell,V_\ell)$ and $(\sigma_r, V_r)$. Recall that $(\sigma_{\ell;n},V_\ell\otimes\underline{U})$ is the $K$-representation with $K$ acting diagonally on $V_\ell\otimes\underline{U}$.

\begin{defi}\label{Dcoor}
For $u\in U(\mathfrak{g})$ and $0\leq j\leq n$ define differential operators
\[
D_{u,j}^{\sigma_\ell,\underline{\tau},\sigma_r}\in
\mathbb{D}((V_\ell\otimes\underline{U}\otimes V_r^*)^{\mathfrak{m}},V_\ell\otimes\underline{U}\otimes V_r^*)
\]
by the formulas
\begin{equation*}
\begin{split}
D_{u,j}^{\sigma_\ell,\underline{\tau},\sigma_r}&:=\sum_{(u)}\tau_{j+1}(S(u_{(1)}))\cdots
\tau_n(S(u_{(n-j)}))L_{\widetilde{\Pi}(u_{(n-j+1)})}^{\sigma_{\ell;n},\sigma_r}\qquad
(0\leq j<n),\\
D_{u,n}^{\sigma_\ell,\underline{\tau},\sigma_r}&:=L_{\Pi(u)}^{\sigma_{\ell;n},\sigma_r}.
\end{split}
\end{equation*}
\end{defi}
The following lemma follows directly from Lemma \ref{invNKA}.
\begin{lem}\label{Mlem}
For $u\in U(\mathfrak{g})$, $0\leq j\leq n$ and $g\in N_K(A)$ we have
\[
D_{\textup{Ad}_g(u),j}^{\sigma_\ell,\underline{\tau},\sigma_r}=g\cdot D_{u,j}^{\sigma_\ell,\underline{\tau},\sigma_r}
\]
in $\mathbb{D}((V_\ell\otimes\underline{U}\otimes V_r^*)^{\mathfrak{m}},V_\ell\otimes\underline{U}\otimes V_r^*)$.
\end{lem}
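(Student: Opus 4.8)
The plan is to reduce the statement directly to Lemma \ref{invNKA} by unwinding Definition \ref{Dcoor}. First I would treat the two cases $j=n$ and $0\le j<n$ separately. The case $j=n$ is immediate: by definition $D_{u,n}^{\sigma_\ell,\underline{\tau},\sigma_r}=L_{\Pi(u)}^{\sigma_{\ell;n},\sigma_r}$, so $D_{\textup{Ad}_g(u),n}^{\sigma_\ell,\underline{\tau},\sigma_r}=L_{\Pi(\textup{Ad}_g(u))}^{\sigma_{\ell;n},\sigma_r}=g\cdot L_{\Pi(u)}^{\sigma_{\ell;n},\sigma_r}=g\cdot D_{u,n}^{\sigma_\ell,\underline{\tau},\sigma_r}$, where the middle equality is the first identity of \eqref{Nequivariance2} applied with the $K$-representation $(\sigma_{\ell;n},V_\ell\otimes\underline{U})$ in place of $(\sigma_\ell,V_\ell)$.

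For $0\le j<n$ the extra ingredient is the interaction of $\textup{Ad}_g$ with the comultiplication and with the finite-dimensional factors $\tau_{j+1}(S(u_{(1)}))\cdots\tau_n(S(u_{(n-j)}))$. The key observation is that $\textup{Ad}_g\colon U(\mathfrak{g})\to U(\mathfrak{g})$ is a Hopf-algebra automorphism, so it is cocommutative-compatible: $\Delta^{(n-j)}(\textup{Ad}_g(u))=(\textup{Ad}_g^{\otimes(n-j+1)})\Delta^{(n-j)}(u)$, i.e. in Sweedler notation $\sum_{(\textup{Ad}_g(u))}(\textup{Ad}_g(u))_{(1)}\otimes\cdots\otimes(\textup{Ad}_g(u))_{(n-j+1)}=\sum_{(u)}\textup{Ad}_g(u_{(1)})\otimes\cdots\otimes\textup{Ad}_g(u_{(n-j+1)})$. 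Also $S$ commutes with $\textup{Ad}_g$. Therefore
\[
D_{\textup{Ad}_g(u),j}^{\sigma_\ell,\underline{\tau},\sigma_r}=\sum_{(u)}\tau_{j+1}(S(\textup{Ad}_g(u_{(1)})))\cdots\tau_n(S(\textup{Ad}_g(u_{(n-j)})))\,L_{\widetilde{\Pi}(\textup{Ad}_g(u_{(n-j+1)}))}^{\sigma_{\ell;n},\sigma_r}.
\]
Now apply the second identity of \eqref{Nequivariance2}, $L_{\widetilde{\Pi}(\textup{Ad}_g(v))}^{\sigma_{\ell;n},\sigma_r}=g\cdot L_{\widetilde{\Pi}(v)}^{\sigma_{\ell;n},\sigma_r}$, to each summand.

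The final step is to check that conjugating each $\tau_i(S(\textup{Ad}_g(u_{(i-j)})))$ and combining with the $W$-action $g\cdot$ on $L_{\widetilde{\Pi}(u_{(n-j+1)})}^{\sigma_{\ell;n},\sigma_r}$ reassembles into $g\cdot D_{u,j}^{\sigma_\ell,\underline{\tau},\sigma_r}$. Here one uses that the $N_K(A)$-action on $\mathbb{D}((V_\ell\otimes\underline{U}\otimes V_r^*)^{\mathfrak{m}},V_\ell\otimes\underline{U}\otimes V_r^*)$ given by \eqref{mDO} acts by conjugation by $\sigma_{\ell;n}(g)\otimes\sigma_r^*(g)$ on the matrix coefficients (together with the translation of the argument by $w=gM$ and the $\textup{Ad}_g$-twist on the $U(\mathfrak a)$-part), and that $\sigma_{\ell;n}(g)=\sigma_\ell(g)\otimes\tau_1(g)\otimes\cdots\otimes\tau_n(g)$ by definition of $\sigma_{\ell;n}$; since $\tau_i(g)\tau_i(S(\textup{Ad}_g(v)))\tau_i(g)^{-1}=\tau_i(\textup{Ad}_g S(\textup{Ad}_g(v)))=\tau_i(S(v))$, the conjugation by the $\tau_i(g)$-factors in $g\cdot(-)$ exactly cancels the $\textup{Ad}_g$-twist in $\tau_i(S(\textup{Ad}_g(u_{(i-j)})))$, leaving $\tau_i(S(u_{(i-j)}))$. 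I do not expect a genuine obstacle here; the only thing requiring care is bookkeeping with the tensor-leg placement — making sure the conjugation by $g$ on the $V_\ell\otimes\underline U\otimes V_r^*$-valued coefficients distributes correctly over the product $\tau_{j+1}(S(u_{(1)}))\cdots\tau_n(S(u_{(n-j)}))L_{\widetilde\Pi(\cdot)}$ and that the $W$-action twists the $\partial(h)$-part consistently via $\textup{Ad}_g$. For clarity I would state explicitly that $\textup{Ad}_g$ is a Hopf automorphism and record the identity $\tau_i\circ\textup{Ad}_g=\textup{Ad}(\tau_i(g))\circ\tau_i$, then let the computation run; the result is a one-line consequence of \eqref{Nequivariance2} once these compatibilities are in place.
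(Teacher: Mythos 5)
Your route is exactly the paper's: the paper offers no argument beyond "follows directly from Lemma \ref{invNKA}", and your reduction — the case $j=n$ via the first identity of \eqref{Nequivariance2}, and for $j<n$ using that $\textup{Ad}_g$ is a Hopf algebra automorphism (compatible with $\Delta^{(n-j)}$ and $S$) together with the second identity of \eqref{Nequivariance2} and the fact that the $N_K(A)$-action \eqref{mDO} conjugates the constant left factors by $\sigma_{\ell;n}(g)\otimes\sigma_r^*(g)$ — is the intended one and works. One slip in the final verification: the identity you display, $\tau_i(g)\tau_i(S(\textup{Ad}_g(v)))\tau_i(g)^{-1}=\tau_i(S(v))$, is false (the left side equals $\tau_i(\textup{Ad}_{g^2}(S(v)))$); what you need, and what your own recorded relation $\tau_i\circ\textup{Ad}_g=\textup{Ad}(\tau_i(g))\circ\tau_i$ gives, is the reverse direction $\tau_i(g)\tau_i(S(v))\tau_i(g)^{-1}=\tau_i(S(\textup{Ad}_g(v)))$: in $g\cdot D_{u,j}^{\sigma_\ell,\underline{\tau},\sigma_r}$ the untwisted factors $\tau_i(S(u_{(i-j)}))$ of $D_{u,j}^{\sigma_\ell,\underline{\tau},\sigma_r}$ get conjugated by $\tau_i(g)$ and thereby acquire the $\textup{Ad}_g$-twist, matching the factors of $D_{\textup{Ad}_g(u),j}^{\sigma_\ell,\underline{\tau},\sigma_r}$; nothing needs to be "cancelled". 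With that direction corrected the proof is complete.
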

Let $U(\mathfrak{g})^{\mathfrak{m}}$ be the centraliser of $\mathfrak{m}$ in $U(\mathfrak{g})$.
It contains $U(\mathfrak{g})^M$ as a subalgebra. The following is a direct consequence of Lemma \ref{Mlem}.
\begin{cor}\label{Mcor}
The differential operator $D_{u,j}^{\sigma_\ell,
\underline{\tau},\sigma_r}\in
\mathbb{D}((V_\ell\otimes\underline{U}\otimes V_r^*)^{\mathfrak{m}},
V_\ell\otimes\underline{U}\otimes V_r^*)$ lies in 
\begin{equation*}
\begin{split}
&\mathbb{D}((V_\ell\otimes\underline{U}\otimes V_r^*)^{\mathfrak{m}})\quad\quad\,\,\,\, \hbox{ if }\,\,\,
u\in U(\mathfrak{g})^{\mathfrak{m}},\\
&\mathbb{D}_M((V_\ell\otimes\underline{U}\otimes V_r^*)^{\mathfrak{m}})\quad\,\,\,\,\, \hbox{ if }\,\,\,
u\in U(\mathfrak{g})^M,\\
&\mathbb{D}_M((V_\ell\otimes\underline{U}\otimes V_r^*)^{\mathfrak{m}})^W\quad \hbox{ if }\,\,\,
u\in U(\mathfrak{g})^{N_K(A)}.
\end{split}
\end{equation*}
In particular, $D_{u,j;M}^{\sigma_\ell,\underline{\tau},\sigma_r}\in
\mathbb{D}((V_\ell\otimes\underline{U}\otimes V_r^*)^M,
V_\ell\otimes\underline{U}\otimes V_r^*)$ lies in
\begin{equation*}
\begin{split}
&\mathbb{D}((V_\ell\otimes\underline{U}\otimes V_r^*)^M)\quad\quad \hbox{ if }\,\,\, u\in 
U(\mathfrak{g})^M,\\
&\mathbb{D}((V_\ell\otimes\underline{U}\otimes V_r^*)^M)^W\quad\, \hbox{ if }\,\,\,
u\in U(\mathfrak{g})^{N_K(A)}.
\end{split}
\end{equation*}
\end{cor}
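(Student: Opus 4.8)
The plan is to derive all four parts from Lemma \ref{Mlem} by identifying the appropriate invariant subspaces of $\mathbb{D}\bigl((V_\ell\otimes\underline{U}\otimes V_r^*)^{\mathfrak{m}},V_\ell\otimes\underline{U}\otimes V_r^*\bigr)$; write $E:=V_\ell\otimes\underline{U}\otimes V_r^*$ for brevity, viewed as $K$-module via $\sigma_{\ell;n}\otimes\sigma_r^*$. First I would record how the $N_K(A)$-action \eqref{mDO} on $\mathbb{D}(E^{\mathfrak{m}},E)$ behaves on elements of $M$: for $m\in M$ it fixes the $\mathcal{R}$-part of the coefficients (as $m$ maps to the identity of $W$) and the $U(\mathfrak{a})$-part (as $\textup{Ad}_m|_{\mathfrak{a}}=\textup{id}$), acting only by conjugation with $\sigma_{\ell;n}(m)\otimes\sigma_r^*(m)$ on the $\textup{Hom}(E^{\mathfrak{m}},E)$-part. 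Consequently the $M$-fixed (resp.\ $M^{\circ}$-fixed, $M^{\circ}$ the identity component of $M$) elements of $\mathbb{D}(E^{\mathfrak{m}},E)$ are exactly those with coefficients in $\mathcal{R}\otimes\textup{Hom}_M(E^{\mathfrak{m}},E)$ (resp.\ $\mathcal{R}\otimes\textup{Hom}_{M^{\circ}}(E^{\mathfrak{m}},E)=\mathcal{R}\otimes\textup{Hom}_{\mathfrak{m}}(E^{\mathfrak{m}},E)$, since $M^{\circ}$ is connected with Lie algebra $\mathfrak{m}_{\mathbb{R}}$), and I would note the elementary identity $\textup{Hom}_{\mathfrak{m}}(E^{\mathfrak{m}},E)=\textup{Hom}(E^{\mathfrak{m}},E^{\mathfrak{m}})$, coming from the triviality of the $\mathfrak{m}$-action on $E^{\mathfrak{m}}$.

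The key observation is then that $U(\mathfrak{g})^{\mathfrak{m}}=U(\mathfrak{g})^{M^{\circ}}$ (invariance under the connected group $M^{\circ}$ is the same as annihilation by $\mathfrak{m}$), so that the Lie-algebra hypothesis on $u$ can be fed into the group-level statement of Lemma \ref{Mlem}. With this, the three cases run as follows. For $u\in U(\mathfrak{g})^{\mathfrak{m}}=U(\mathfrak{g})^{M^{\circ}}$ and $m\in M^{\circ}$, Lemma \ref{Mlem} gives $m\cdot D_{u,j}^{\sigma_\ell,\underline{\tau},\sigma_r}=D_{\textup{Ad}_m(u),j}^{\sigma_\ell,\underline{\tau},\sigma_r}=D_{u,j}^{\sigma_\ell,\underline{\tau},\sigma_r}$, so $D_{u,j}^{\sigma_\ell,\underline{\tau},\sigma_r}$ is $M^{\circ}$-fixed and hence lies in $\mathbb{D}(E^{\mathfrak{m}})$ by the coefficient description above. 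If moreover $u\in U(\mathfrak{g})^M\subseteq U(\mathfrak{g})^{\mathfrak{m}}$, the same computation with arbitrary $m\in M$ shows $D_{u,j}^{\sigma_\ell,\underline{\tau},\sigma_r}$ is also $M$-fixed, so its coefficients lie in $\textup{Hom}(E^{\mathfrak{m}},E^{\mathfrak{m}})\cap\textup{Hom}_M(E^{\mathfrak{m}},E)=\textup{Hom}_M(E^{\mathfrak{m}},E^{\mathfrak{m}})$, i.e.\ $D_{u,j}^{\sigma_\ell,\underline{\tau},\sigma_r}\in\mathbb{D}_M(E^{\mathfrak{m}})$. If finally $u\in U(\mathfrak{g})^{N_K(A)}\subseteq U(\mathfrak{g})^M$, applying Lemma \ref{Mlem} to an arbitrary $g\in N_K(A)$ shows $D_{u,j}^{\sigma_\ell,\underline{\tau},\sigma_r}$ is $N_K(A)$-fixed, hence $W$-fixed, so it lies in $\mathbb{D}_M(E^{\mathfrak{m}})^W$.

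Finally, to obtain the ``in particular'' statements I would apply the $M$-restriction $D\mapsto D_M$, which (as recalled in the text) restricts to $\mathbb{D}_M(E^{\mathfrak{m}})\to\mathbb{D}\bigl((E^{\mathfrak{m}})^M\bigr)=\mathbb{D}(E^M)$, using $(E^{\mathfrak{m}})^M=E^M$, and which is $N_K(A)$-equivariant because restricting coefficients to $E^M$ commutes with conjugation by $\sigma_{\ell;n}(g)\otimes\sigma_r^*(g)$ (this makes sense since $g\in N_K(A)$ normalises $M$, hence preserves $E^M$); consequently it is also $W$-equivariant. This yields $D_{u,j;M}^{\sigma_\ell,\underline{\tau},\sigma_r}\in\mathbb{D}(E^M)$ for $u\in U(\mathfrak{g})^M$ and $D_{u,j;M}^{\sigma_\ell,\underline{\tau},\sigma_r}\in\mathbb{D}(E^M)^W$ for $u\in U(\mathfrak{g})^{N_K(A)}$. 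The only step that is not pure bookkeeping is the identification $U(\mathfrak{g})^{\mathfrak{m}}=U(\mathfrak{g})^{M^{\circ}}$ combined with the remark that $M$ acts trivially on the $\mathcal{R}$- and $U(\mathfrak{a})$-parts of the coefficients; after that I do not expect a genuine obstacle, only careful matching of the invariant subspaces $\textup{Hom}_{\mathfrak{m}}$, $\textup{Hom}_M$, and their intersections.
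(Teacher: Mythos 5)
Your proof is correct and follows the same route as the paper, which simply declares the corollary a direct consequence of Lemma \ref{Mlem}: you apply the equivariance $D_{\textup{Ad}_g(u),j}^{\sigma_\ell,\underline{\tau},\sigma_r}=g\cdot D_{u,j}^{\sigma_\ell,\underline{\tau},\sigma_r}$ to $g$ in $M^{\circ}$, $M$, and $N_K(A)$ respectively and translate invariance of the operator into invariance of its $\textup{Hom}$-coefficients, exactly as the paper's definitions of $\mathbb{D}_M$ and $\mathbb{D}_M(\cdot)^W$ intend. The only detail you add beyond the paper's terse statement is the (correct) identification $U(\mathfrak{g})^{\mathfrak{m}}=U(\mathfrak{g})^{M^{\circ}}$ together with $\textup{Hom}_{\mathfrak{m}}(E^{\mathfrak{m}},E)=\textup{End}(E^{\mathfrak{m}})$ for the first case, which is precisely the bookkeeping the authors leave implicit.
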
 
Next we show that the differential operator
$D_{u,j}^{\sigma_\ell,\underline{\tau},\sigma_r}$ arises as the radial component of 
the action $u[j]$ on $C_{\sigma_\ell,\underline{\tau},\sigma_r}^\infty(G^{\times (n+1)})$. The relevant restriction map is defined as follows.
\begin{defi}
Define the linear map
\[
\textup{Res}^\flat: C^\infty(G^{\times (n+1)}; V_\ell\otimes\underline{U}\otimes V_r^*)
\rightarrow C^\infty(A;V_\ell\otimes\underline{U}\otimes V_r^*)
\]
by $\textup{Res}^\flat(f):=f^\flat\rvert_A$. Concretely, for $f\in C^\infty(G^{\times (n+1)}; V_\ell\otimes\underline{U}\otimes V_r^*)$ and $a\in A$,
\[
\textup{Res}^\flat(f)(a):=f(1,\ldots,1,a).
\]
\end{defi}
By Lemma \ref{Nto0} the restriction of $\textup{Res}^\flat$ to $C_{\sigma_\ell,\underline{\tau},\sigma_r}^\infty(G^{\times (n+1)})$ is an injective linear map
\[
\textup{Res}^\flat: C_{\sigma_\ell,\underline{\tau},\sigma_r}^\infty(G^{\times (n+1)})
\hookrightarrow C^\infty(A; (V_\ell\otimes \underline{U}\otimes V_r^*)^M)^W.
\]

Theorem \ref{DOwithrad} now generalises as follows.
\begin{thm}\label{DOwithradcomp}
For $u\in U(\mathfrak{g})$, $0\leq j\leq n$,
$f\in C_{\sigma_\ell,\underline{\tau},\sigma_r}^\infty(G^{\times (n+1)})$ and $a\in A_{\textup{reg}}$
we have
\begin{equation}\label{keyeqn}
\textup{Res}^\flat\bigl(u[j]f\bigr)(a)=\bigl(D_{u,j;M}^{\sigma_\ell,\underline{\tau},\sigma_r}\textup{Res}^\flat(f)\bigr)(a).
\end{equation}
\end{thm}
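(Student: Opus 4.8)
The strategy is to reduce the statement to the already-established case $n=0$ (Theorem \ref{DOwithrad}), using Corollary \ref{corflat} to rewrite $u[j]f$ in terms of $u\langle n\rangle$-actions on $f^\flat$, and then applying the radial component formulas. First I would treat the case $j=n$ separately: here $D_{u,n}^{\sigma_\ell,\underline\tau,\sigma_r}=L_{\Pi(u)}^{\sigma_{\ell;n},\sigma_r}$, and since $f^\flat\in C_{\sigma_{\ell;n},\sigma_r}^\infty(G)$ by Lemma \ref{Nto0}, equation \eqref{keyeqn} is literally the first line of \eqref{refDOrad} in Theorem \ref{DOwithrad} applied to $f^\flat$, together with the observation that $\textup{Res}^\flat(u[n]f)=(u[n]f)^\flat\rvert_A=(u[1]f^\flat)\rvert_A$ (the $n$th left-invariant derivative commutes with setting the first $n$ coordinates to $1$). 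So this case is immediate.

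For $0\le j<n$, the key input is Corollary \ref{corflat}, which gives
\[
(u[j]f)^\flat(g)=\sum_{(u)}\tau_{j+1}(S(u_{(1)}))\cdots\tau_n(S(u_{(n-j)}))\bigl(S(u_{(n-j+1)})\langle n\rangle f\bigr)^\flat(g).
\]
Restricting to $a\in A_{\textup{reg}}$ and applying the second line of \eqref{refDOrad} of Theorem \ref{DOwithrad} to the function $(S(u_{(n-j+1)})\langle n\rangle f)^\flat=S(u_{(n-j+1)})\langle 1\rangle f^\flat\in C_{\sigma_{\ell;n},\sigma_r}^\infty(G)$ — noting $f^\flat$ has values in $(V_\ell\otimes\underline U\otimes V_r^*)^M$ so the $M$-restriction is in force — yields
\[
\bigl(S(u_{(n-j+1)})\langle n\rangle f\bigr)^\flat\rvert_A=L_{\widetilde\Pi(S(S(u_{(n-j+1)})));M}^{\sigma_{\ell;n},\sigma_r}(f^\flat\rvert_A)=L_{\widetilde\Pi(u_{(n-j+1)});M}^{\sigma_{\ell;n},\sigma_r}\,\textup{Res}^\flat(f),
\]
using $S^2=\textup{id}$. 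Since the $\tau_i(S(u_{(i-j)}))$ are constant coefficients acting on the tensor legs $U_i$, they pull inside and, summed over Sweedler indices, reassemble precisely into $D_{u,j;M}^{\sigma_\ell,\underline\tau,\sigma_r}$ acting on $\textup{Res}^\flat(f)$, as in Definition \ref{Dcoor}.

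The only genuine subtlety — and the step I'd expect to need the most care — is the bookkeeping of the $M$-restriction and the identification $L_{z;M}^{\sigma_{\ell;n},\sigma_r}\textup{Res}^\flat(f)=L_z^{\sigma_{\ell;n},\sigma_r}\textup{Res}^\flat(f)$, which holds by \eqref{redequation} because $\textup{Res}^\flat(f)$ takes values in $(V_\ell\otimes\underline U\otimes V_r^*)^M$; combined with the fact that the prefactors $\tau_i(S(u_{(i-j)}))$ commute with the differential-operator part and act $M$-equivariantly (so that applying $D_{u,j}^{\sigma_\ell,\underline\tau,\sigma_r}$ and then restricting to the $M$-invariants agrees with $D_{u,j;M}^{\sigma_\ell,\underline\tau,\sigma_r}$), this closes the argument. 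Everything else is a direct concatenation of Corollary \ref{corflat}, Theorem \ref{DOwithrad}, and the definitions.
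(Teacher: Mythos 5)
Your proof is correct and follows essentially the same route as the paper's: the case $j=n$ via Lemma \ref{Nto0} and the first line of \eqref{refDOrad}, and the case $0\leq j<n$ via Corollary \ref{corflat}, the identity $(u\langle n\rangle f)^\flat=u\langle 1\rangle f^\flat$, and the second line of \eqref{refDOrad} with $S^2=\textup{id}$. The $M$-restriction bookkeeping you flag is handled in the paper simply by \eqref{redequation} together with the fact that Theorem \ref{DOwithrad} is already stated for the $M$-restricted operators, exactly as you say.
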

\begin{proof}
Let $u\in U(\mathfrak{g})$, $f\in C_{\sigma_\ell,\underline{\tau},\sigma_r}^\infty(G^{\times (n+1)})$ and $a\in A_{\textup{reg}}$. Then $f^\flat\in C_{\sigma_{\ell;n},\sigma_r}^\infty(G)$ by Lemma \ref{Nto0}, and we have
\[
\textup{Res}^\flat\bigl(u[n]f\bigr)(a)=\bigl(u[n]f\bigr)^\flat(a)=\bigl(u[1]f^\flat\bigr)(a)=\bigl(L_{\Pi(u);M}^{\sigma_{\ell;n},\sigma_r}
f^\flat\rvert_A\bigr)(a)=\bigl(D_{u,n:M}^{\sigma_\ell,\underline{\tau},\sigma_r}
\textup{Res}^\flat(f)\bigr)(a),
\]
where the third equality follows from the first line of \eqref{refDOrad}. This proves \eqref{keyeqn}
for $j=n$. 

Suppose that $0\leq j<n$. By Corollary \ref{corflat} we have
\[
\bigl(u[j]f\bigr)^\flat(a)
=\sum_{(u)}\tau_{j+1}(S(u_{(1)}))\cdots\tau_n(S(u_{(n-j)}))
\bigl(S(u_{(n-j+1)})\langle n\rangle f\bigr)^\flat(a).
\]
Formula \eqref{keyeqn} then follows from the fact that 
\[
\bigl(S(u_{(n-j+1)})\langle n\rangle f\bigr)^\flat(a)=
\bigl(S(u_{(n-j+1)})\langle 1\rangle f^\flat\bigr)(a)=
\bigl(L_{\widetilde{\Pi}(u_{(n-j+1)});M}^{\sigma_{\ell;n},\sigma_r}\textup{Res}^\flat(f)\bigr)(a),
\]
where the last equality is due to the second equation of \eqref{refDOrad}.
\end{proof}
\begin{rema}\label{lefttoright}
Replacing the role of left $G$-invariant differential operators by right $G$-invariant differential operators gives 
\[
\textup{Res}^\flat\bigl(u\langle j\rangle f\bigr)(a)=\bigl(\widehat{D}_{S(u),j;M}^{\sigma_\ell,
\underline{\tau},\sigma_r}\textup{Res}^\flat(f)\bigr)(a)
\]
for $u\in U(\mathfrak{g})$, $0\leq j\leq n$, $f\in C_{\sigma_\ell,\underline{\tau},\sigma_r}^\infty(G^{\times (n+1)})$ and $a\in A_{\textup{reg}}$, with the differential operators
$\widehat{D}_{u,j}^{\sigma_\ell,
\underline{\tau},\sigma_r}\in \mathbb{D}((V_\ell\otimes\underline{U}\otimes V_r^*)^{\mathfrak{m}},
V_\ell\otimes\underline{U}\otimes V_r^*)$ given by
\begin{equation}
\widehat{D}_{u,j}^{\sigma_\ell,
\underline{\tau},\sigma_r}:=
\sum_{(u)}\tau_{j+1}(S(u_{(n-j+1)}))\cdots\tau_n(S(u_{(2)}))
L_{\widetilde{\Pi}(u_{(1)})}^{\sigma_{\ell;n},\sigma_r}.
\end{equation}
Lemma \ref{Mlem} and Corollary \ref{Mcor} also apply for 
$\widehat{D}_{u,j}^{\sigma_\ell,\underline{\tau},\sigma_r}$.
\end{rema}
Define the linear map
\begin{equation}\label{globaldef}
\bigl(U(\mathfrak{g})^M\bigr)^{\otimes (n+1)}\rightarrow \mathbb{D}_M((V_\ell\otimes\underline{U}\otimes V_r^*)^{\mathfrak{m}}),\qquad
X\mapsto D_X^{\sigma_\ell,\underline{\tau},\sigma_r}
\end{equation}
by
\[
D_X^{\sigma_\ell,\underline{\tau},\sigma_r}:=
\widehat{D}_{u_0,0}^{\sigma_\ell,\underline{\tau},\sigma_r}\circ\Bigl(
D_{u_1,1}^{\sigma_\ell,\underline{\tau},\sigma_r}\circ\cdots\circ
D_{u_{n-1},n-1}^{\sigma_\ell,\underline{\tau},\sigma_r}\Bigr)\circ
D_{u_n,n}^{\sigma_\ell,\underline{\tau},\sigma_r}
\qquad (X=u_0\otimes\cdots\otimes u_n).
\]
Note that its restriction $D_{X;M}^{\sigma_\ell,\underline{\tau},\sigma_r}$ to $\mathbb{D}((V_\ell\otimes\underline{U}\otimes V_r^*)^M)$ is the composition of the restricted coordinate-wise radial components.
\begin{thm}\label{mainthm}
The assignment $X\mapsto D_{X;M}^{\sigma_\ell,\underline{\tau},\sigma_r}$ restricts to
an algebra homomorphism
\begin{equation*}
U(\mathfrak{g})^{K,\textup{opp}}\otimes Z(U(\mathfrak{g}))^{\otimes (n-1)}
\otimes U(\mathfrak{g})^K\rightarrow
\mathbb{D}((V_\ell\otimes \underline{U}\otimes V_r^*)^M)^W.
\end{equation*}
\end{thm}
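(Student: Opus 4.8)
The plan is to transport the module structure from Lemma~\ref{qiaction} through the injective restriction map $\textup{Res}^\flat$, using Theorem~\ref{DOwithradcomp} (and its right-invariant counterpart in Remark~\ref{lefttoright}) to identify the transported operators with the coordinate radial components $D_{u_j,j}^{\sigma_\ell,\underline{\tau},\sigma_r}$. Concretely, for $X=v_\ell\otimes u_1\otimes\cdots\otimes u_{n-1}\otimes v_r$ with $v_\ell\in U(\mathfrak{g})^K$, $u_i\in Z(U(\mathfrak{g}))$, $v_r\in U(\mathfrak{g})^K$, Lemma~\ref{qiaction} gives $(v_\ell\otimes(u_1\otimes\cdots\otimes u_{n-1})\otimes v_r)\ast f=S(v_\ell)\langle 0\rangle u_1[1]\cdots u_{n-1}[n-1]v_r[n]f$. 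First I would apply Theorem~\ref{DOwithradcomp} repeatedly to the $u_i[i]$ and $v_r[n]$ factors and Remark~\ref{lefttoright} to the $S(v_\ell)\langle 0\rangle$ factor, peeling off one coordinate-wise invariant differential operator at a time and checking that the intermediate functions still lie in $C_{\sigma_\ell,\underline{\tau},\sigma_r}^\infty(G^{\times(n+1)})$ so that the theorem applies again at the next stage. This yields
\[
\textup{Res}^\flat\bigl((X\ast f)\bigr)=D_{X;M}^{\sigma_\ell,\underline{\tau},\sigma_r}\,\textup{Res}^\flat(f)
\]
on $A_{\textup{reg}}$, where on the right the operator $D_{X;M}^{\sigma_\ell,\underline{\tau},\sigma_r}=\widehat D_{v_\ell,0}\circ D_{u_1,1}\circ\cdots\circ D_{u_{n-1},n-1}\circ D_{v_r,n}$ (with superscripts suppressed), noting that $\widehat D_{S(S(v_\ell)),0}=\widehat D_{v_\ell,0}$ recovers the stated form of \eqref{globaldef}.

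The second step is to show this assignment is multiplicative. Since $\ast$ is an algebra action (Lemma~\ref{qiaction}), for $X,Y$ in the algebra we have $\textup{Res}^\flat\bigl((XY)\ast f\bigr)=\textup{Res}^\flat\bigl(X\ast(Y\ast f)\bigr)=D_{X;M}^{\sigma_\ell,\underline{\tau},\sigma_r}\textup{Res}^\flat(Y\ast f)=D_{X;M}^{\sigma_\ell,\underline{\tau},\sigma_r}D_{Y;M}^{\sigma_\ell,\underline{\tau},\sigma_r}\textup{Res}^\flat(f)$ on $A_{\textup{reg}}$, valid for every $f\in C_{\sigma_\ell,\underline{\tau},\sigma_r}^\infty(G^{\times(n+1)})$. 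On the other hand $\textup{Res}^\flat\bigl((XY)\ast f\bigr)=D_{XY;M}^{\sigma_\ell,\underline{\tau},\sigma_r}\textup{Res}^\flat(f)$. Hence the differential operator $D_{XY;M}^{\sigma_\ell,\underline{\tau},\sigma_r}-D_{X;M}^{\sigma_\ell,\underline{\tau},\sigma_r}D_{Y;M}^{\sigma_\ell,\underline{\tau},\sigma_r}$ annihilates $\textup{Res}^\flat(f)=f^\flat\rvert_A$ for all such $f$; by Lemma~\ref{Nto0} the functions $f^\flat$ range over all of $C_{\sigma_{\ell;n},\sigma_r}^\infty(G)$, so Proposition~\ref{zeroproperty} (applied with the pair $(V_\ell\otimes\underline U, V_r)$, i.e.\ to $\sigma_{\ell;n},\sigma_r$) forces this difference to vanish. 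This gives $D_{XY;M}^{\sigma_\ell,\underline{\tau},\sigma_r}=D_{X;M}^{\sigma_\ell,\underline{\tau},\sigma_r}D_{Y;M}^{\sigma_\ell,\underline{\tau},\sigma_r}$, i.e.\ the homomorphism property; the unit goes to the identity operator for the same reason.

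Finally I would check the codomain: each generator lands in $\mathbb{D}_M((V_\ell\otimes\underline U\otimes V_r^*)^{\mathfrak m})$ and, after $M$-restriction, in $\mathbb{D}((V_\ell\otimes\underline U\otimes V_r^*)^M)^W$. For the intermediate factors $D_{u_i,i}$ with $u_i\in Z(U(\mathfrak g))\subseteq U(\mathfrak g)^{N_K(A)}$ this is the third case of Corollary~\ref{Mcor}; for the outer factors $\widehat D_{v_\ell,0}$ and $D_{v_r,n}$ with $v_\ell,v_r\in U(\mathfrak g)^K\subseteq U(\mathfrak g)^M$ it is the second case of Corollary~\ref{Mcor} (and Remark~\ref{lefttoright} for $\widehat D$); $W$-invariance of the full composite follows since the $W$-action on $\mathbb{D}_M$ is by algebra automorphisms and the outer factors are already $M$-invariant while the inner ones are $N_K(A)$-invariant — more carefully, one uses that $\mathbb{D}((V_\ell\otimes\underline U\otimes V_r^*)^M)^W$ is a subalgebra, so it suffices that each composite $\widehat D_{v_\ell,0}\circ(\cdots)\circ D_{v_r,n}$, being a product of $W$-invariant (inner) and $M$-invariant (outer) pieces, is $N_K(A)$-invariant; here one invokes Lemma~\ref{Mlem} together with $\textup{Ad}_g$-invariance of the relevant $u_j$. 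The main obstacle is bookkeeping in the first step: one must track the Sweedler sums produced by Theorem~\ref{DOwithradcomp}/Corollary~\ref{corflat} through successive applications and verify that the nested compositions telescope exactly into the definition \eqref{globaldef} of $D_X^{\sigma_\ell,\underline{\tau},\sigma_r}$, including the correct placement of antipodes coming from the opposite-algebra convention on the left $K$-factor.
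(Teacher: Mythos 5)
Your proposal is correct and follows essentially the same route as the paper: transport the $\ast$-action of Lemma \ref{qiaction} through $\textup{Res}^\flat$ via Theorem \ref{DOwithradcomp} and Remark \ref{lefttoright} to get $\textup{Res}^\flat(X\ast f)=D_{X;M}^{\sigma_\ell,\underline{\tau},\sigma_r}\textup{Res}^\flat(f)$, then deduce multiplicativity from Lemma \ref{Nto0} and Proposition \ref{zeroproperty} applied to the pair $(\sigma_{\ell;n},\sigma_r)$, with Corollary \ref{Mcor} and Remark \ref{lefttoright} giving the codomain. One small simplification: since $N_K(A)\subseteq K$, the outer factors $v_\ell,v_r\in U(\mathfrak{g})^K$ already lie in $U(\mathfrak{g})^{N_K(A)}$, so the third case of Corollary \ref{Mcor} yields $W$-invariance of every factor directly and your more roundabout invariance argument for the composite is not needed.
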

\begin{proof}
Fix an element $X\in U(\mathfrak{g})^{K,\textup{opp}}\otimes Z(U(\mathfrak{g}))^{\otimes (n-1)}\otimes
U(\mathfrak{g})^K$ and $f\in C_{\sigma_\ell,\underline{\tau},\sigma_r}^\infty(G^{\times (n+1)})$.
We have $D_{X;M}^{\sigma_\ell,\underline{\tau},\sigma_r}\in\mathbb{D}((V_\ell\otimes\underline{U}\otimes V_r^*)^M)^W$ by Corollary \ref{Mcor} and Remark \ref{lefttoright}. Furthermore,
$X\ast f\in C_{\sigma_\ell,\underline{\tau},\sigma_r}^{\infty}(G^{\times (n+1)})$
by Lemma \ref{qiaction}. By Theorem \ref{DOwithradcomp}, Remark \ref{lefttoright} and \eqref{actionoverall} we then have
\[
\textup{Res}^\flat(X\ast f)=D_{X;M}^{\sigma_\ell,\underline{\tau},\sigma_r}\bigl(
\textup{Res}^\flat f\bigr).
\]
Choosing a second element 
$Y\in U(\mathfrak{g})^{K,\textup{opp}}\otimes Z(U(\mathfrak{g}))^{\otimes (n-1)}\otimes
U(\mathfrak{g})^K$ we get
\begin{equation}\label{hhhh}
D_{XY;M}^{\sigma_\ell,\underline{\tau},\sigma_r}\bigl(\textup{Res}^\flat f\bigr)=
\textup{Res}^\flat(X\ast(Y\ast f))=D_{X;M}^{\sigma_\ell,\underline{\tau},\sigma_r}
\bigl(D_{Y;M}^{\sigma_\ell,\underline{\tau},\sigma_r}\bigl(\textup{Res}^\flat f\bigr)\bigr).
\end{equation}
Lemma \ref{Nto0} and \eqref{hhhh} now imply that
\[
D_{XY;M}^{\sigma_\ell,\underline{\tau},\sigma_r}\bigl(F\rvert_A\bigr)=
\bigl(D_{X;M}^{\sigma_\ell,\underline{\tau},\sigma_r}\circ
D_{Y;M}^{\sigma_\ell,\underline{\tau},\sigma_r}\bigr)\bigl(F\rvert_A\bigr)
\qquad \forall\, F\in C_{\sigma_{\ell;n},\sigma_r}^\infty(G).
\]
applying Proposition \ref{zeroproperty} with respect to the two $K$-representations 
$(\sigma_{\ell;n},V_\ell\otimes\underline{U})$ and $(\sigma_r,V_r)$ we conclude that
\[
D_{XY;M}^{\sigma_\ell,\underline{\tau},\sigma_r}=
D_{X;M}^{\sigma_\ell,\underline{\tau},\sigma_r}\circ D_{Y;M}^{\sigma_\ell,\underline{\tau},\sigma_r}
\]
in $\mathbb{D}((V_\ell\otimes\underline{U}\otimes V_r^*)^M)^W$.
\end{proof}
\begin{rema}
For $n=0$, Theorem \ref{mainthm} should be read as follows: the assignment
\[
u\otimes_{Z(U(\mathfrak{g}))}v\mapsto L_{\widetilde{\Pi}(u);M}^{\sigma_\ell,\sigma_r}\circ
L_{\Pi(v);M}^{\sigma_\ell,\sigma_r}\qquad (u,v\in U(\mathfrak{g})^K)
\]
defines an algebra homomorphism
$U(\mathfrak{g})^{K,\textup{opp}}\otimes_{Z(U(\mathfrak{g}))}U(\mathfrak{g})^K\rightarrow
\mathbb{D}((V_\ell\otimes V_r^*)^M)^W$ (cf. \cite[Thm. 3.3]{CM}). The balancing condition of the tensor product over the center is justified by \eqref{lrsame}, see also Remark \ref{qiactionrem}.
\end{rema}
By Theorem \ref{mainthm} we have the inclusion of algebras
\begin{equation}\label{incalgebra}
A^{\sigma_\ell,\underline{\tau},\sigma_r}\subseteq
B^{\sigma_\ell,\underline{\tau},\sigma_r}\subseteq
\mathbb{D}((V_\ell\otimes\underline{U}\otimes V_r^*)^M)^W
\end{equation}
with 
\[
B^{\sigma_\ell,\underline{\tau},\sigma_r}:=\{D_{X;M}^{\sigma_\ell,\underline{\tau},\sigma_r}\,\, 
| \,\, X\in U(\mathfrak{g})^{K,\textup{opp}}\otimes Z(U(\mathfrak{g}))^{\otimes (n-1)}
\otimes U(\mathfrak{g})^K\}
\]
and $A^{\sigma_\ell,\underline{\tau},\sigma_r}$ the commutative subalgebra
\[
A^{\sigma_\ell,\underline{\tau},\sigma_r}:=
\{D_{X;M}^{\sigma_\ell,\underline{\tau},\sigma_r}\,\, 
| \,\, X\in Z(U(\mathfrak{g}))^{\otimes (n+1)}\}.
\]
Furthermore, we have
\[
D_{X;M}^{\sigma_\ell,\underline{\tau},\sigma_r}=
D_{u_0,0;M}^{\sigma_\ell,\underline{\tau},\sigma_r}\circ
\cdots\circ D_{u_{n-1},n-1;M}^{\sigma_\ell,\underline{\tau},\sigma_r}\circ
D_{u_n,n;M}^{\sigma_\ell,\underline{\tau},\sigma_r}
\qquad (X=u_0\otimes\cdots\otimes u_n\in Z(U(\mathfrak{g}))^{\otimes (n+1)})
\]
due to the following lemma.
\begin{lem}\label{samelemma}
For $u\in Z(U(\mathfrak{g}))$ and $0\leq j\leq n$ we have
\[
\widehat{D}_{u,j;M}^{\sigma_\ell,\underline{\tau},\sigma_r}=
D_{u,j;M}^{\sigma_\ell,\underline{\tau},\sigma_r}
\]
in $\mathbb{D}((V_\ell\otimes\underline{U}\otimes V_r^*)^M)^W$.
\end{lem}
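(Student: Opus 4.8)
The plan is to avoid comparing the two coproduct expansions defining $D_{u,j;M}^{\sigma_\ell,\underline{\tau},\sigma_r}$ and $\widehat{D}_{u,j;M}^{\sigma_\ell,\underline{\tau},\sigma_r}$ directly, and instead to exploit the fact that, for central $u$, both operators are characterised by Proposition \ref{zeroproperty} as radial components of one and the same invariant differential operator acting on $C_{\sigma_\ell,\underline{\tau},\sigma_r}^\infty(G^{\times (n+1)})$. The key input is that a central element acts the same way as a left-invariant and (after applying the antipode) as a right-invariant differential operator.

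First I would fix $u\in Z(U(\mathfrak{g}))$ and record the two relevant radial-component identities, valid on $A_{\textup{reg}}$ for every $f\in C_{\sigma_\ell,\underline{\tau},\sigma_r}^\infty(G^{\times (n+1)})$. By Theorem \ref{DOwithradcomp},
\[
\textup{Res}^\flat\bigl(u[j]f\bigr)=D_{u,j;M}^{\sigma_\ell,\underline{\tau},\sigma_r}\,\textup{Res}^\flat(f),
\]
while Remark \ref{lefttoright} gives $\textup{Res}^\flat\bigl(v\langle j\rangle f\bigr)=\widehat{D}_{S(v),j;M}^{\sigma_\ell,\underline{\tau},\sigma_r}\,\textup{Res}^\flat(f)$; taking $v=S(u)$ and using $S^2=\textup{id}$ turns this into
\[
\textup{Res}^\flat\bigl(S(u)\langle j\rangle f\bigr)=\widehat{D}_{u,j;M}^{\sigma_\ell,\underline{\tau},\sigma_r}\,\textup{Res}^\flat(f).
\]
Then I would invoke Remark \ref{useful}: for a central element one has $u'\langle j\rangle f=S(u')[j]f$, so with $u'=S(u)$ we get $S(u)\langle j\rangle f=u[j]f$. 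Hence the left-hand sides of the last two displays coincide, and
\[
\bigl(D_{u,j;M}^{\sigma_\ell,\underline{\tau},\sigma_r}-\widehat{D}_{u,j;M}^{\sigma_\ell,\underline{\tau},\sigma_r}\bigr)\textup{Res}^\flat(f)=0\qquad\forall\,f\in C_{\sigma_\ell,\underline{\tau},\sigma_r}^\infty(G^{\times (n+1)}).
\]

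Finally I would conclude exactly as at the end of the proof of Theorem \ref{mainthm}. Since $Z(U(\mathfrak{g}))\subseteq U(\mathfrak{g})^{N_K(A)}$, Corollary \ref{Mcor} (together with its counterpart for $\widehat{D}$ noted in Remark \ref{lefttoright}) places both operators in $\mathbb{D}((V_\ell\otimes\underline{U}\otimes V_r^*)^M)^W$, so their difference is in particular an element of $\mathbb{D}((V_\ell\otimes\underline{U}\otimes V_r^*)^M,V_\ell\otimes\underline{U}\otimes V_r^*)$. By Lemma \ref{Nto0} the functions $\textup{Res}^\flat(f)=f^\flat\rvert_A$ range precisely over the restrictions $F\rvert_A$ of spherical functions $F\in C_{\sigma_{\ell;n},\sigma_r}^\infty(G)$, so the difference operator annihilates every such $F\rvert_A$; Proposition \ref{zeroproperty}, applied to the pair of $K$-representations $(\sigma_{\ell;n},V_\ell\otimes\underline{U})$ and $(\sigma_r,V_r)$, then forces it to vanish. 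No separate treatment of $j=n$ is needed, the argument being uniform in $j$; for $j=n$ it merely reproduces \eqref{lrsame}. The only point requiring a little care is to match target spaces when passing to $M$-restrictions, so that Proposition \ref{zeroproperty} applies verbatim — but this is entirely routine and is the sole, very mild, obstacle.
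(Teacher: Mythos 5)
Your argument is correct and is essentially the paper's own proof: both identify $\widehat{D}_{u,j;M}^{\sigma_\ell,\underline{\tau},\sigma_r}\textup{Res}^\flat(f)$ with $\textup{Res}^\flat(S(u)\langle j\rangle f)$ via Remark \ref{lefttoright}, use Remark \ref{useful} (centrality of $u$) to rewrite this as $\textup{Res}^\flat(u[j]f)=D_{u,j;M}^{\sigma_\ell,\underline{\tau},\sigma_r}\textup{Res}^\flat(f)$ by Theorem \ref{DOwithradcomp}, and then conclude via Lemma \ref{Nto0} and Proposition \ref{zeroproperty} exactly as in the proof of Theorem \ref{mainthm}. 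The only difference is that you spell out the harmless bookkeeping with $S^2=\textup{id}$, which the paper leaves implicit.
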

\begin{proof}
Let $u\in Z(U(\mathfrak{g}))$ and $f\in C_{\sigma_\ell,\underline{\tau},\sigma_r}^\infty(G^{\times (n+1)})$. Then we have for $a\in A_{\textup{reg}}$,
\begin{equation*}
\begin{split}
\bigl(\widehat{D}_{u,j;M}^{\sigma_\ell,\underline{\tau},\sigma_r}\textup{Res}^\flat(f)\bigr)(a)&=
\textup{Res}^\flat\bigl(S(u)\langle j\rangle f\bigr)(a)\\
&=\textup{Res}^\flat(u[j]f\bigr)(a)=\bigl(D_{u,j;M}^{\sigma_\ell,\underline{\tau},\sigma_r}
\textup{Res}^\flat(f)\bigr)(a)
\end{split}
\end{equation*}
with the first equality by Remark \ref{lefttoright}, the second by Remark \ref{useful}, and the third by Theorem \ref{DOwithradcomp}. Now the conclusion follows by Lemma \ref{Nto0} and Proposition
\ref{zeroproperty}, cf. the proof of Theorem \ref{mainthm}.
\end{proof}

Suitable gauged versions of the subalgebras $B^{\sigma_\ell,\underline{\tau},\sigma_r}$
and $A^{\sigma_\ell,\underline{\tau},\sigma_r}$ will serve as the algebras of quantum integrals
and quantum Hamiltonians of a quantum superintegrable system, see Subsection \ref{Further}.

The following corollary of Theorem \ref{mainthm} and Lemma \ref{samelemma} is now immediate.
\begin{cor}\label{commcoord}
For $u,v\in Z(U(\mathfrak{g}))$ and $0\leq i,j\leq n$ we have
\[
\lbrack D_{u,i;M}^{\sigma_\ell,\underline{\tau},\sigma_r},
D_{v,j;M}^{\sigma_\ell,\underline{\tau},\sigma_r}\rbrack=0
\]
in $\mathbb{D}((V_\ell\otimes\underline{U}\otimes V_r^*)^M)^W$.
\end{cor}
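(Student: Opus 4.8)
The plan is to deduce the corollary directly from Theorem \ref{mainthm} together with Lemma \ref{samelemma}, with essentially no new computation. First I would observe that for each fixed $j$ with $0\leq j\leq n$ and each $u\in Z(U(\mathfrak{g}))$, the differential operator $D_{u,j;M}^{\sigma_\ell,\underline{\tau},\sigma_r}$ is of the form $D_{X;M}^{\sigma_\ell,\underline{\tau},\sigma_r}$ for a suitable element $X\in Z(U(\mathfrak{g}))^{\otimes(n+1)}$. Indeed, for $0<j<n$ one takes $X=1\otimes\cdots\otimes 1\otimes u\otimes 1\otimes\cdots\otimes 1$ with $u$ in the $j$th tensor slot; since $\widehat{D}_{1,0}^{\sigma_\ell,\underline{\tau},\sigma_r}=\mathrm{id}$ and $D_{1,k}^{\sigma_\ell,\underline{\tau},\sigma_r}=\mathrm{id}$ for $k\neq j$ (as $\Delta^{(m)}(1)=1\otimes\cdots\otimes 1$, so $\widetilde\Pi(1)=1\otimes 1\otimes(1\otimes_{U(\mathfrak m)}1)$ and $L_{\widetilde\Pi(1)}^{\sigma_{\ell;n},\sigma_r}=\mathrm{id}$, likewise $L_{\Pi(1)}^{\sigma_{\ell;n},\sigma_r}=\mathrm{id}$), the definition of $D_X^{\sigma_\ell,\underline{\tau},\sigma_r}$ collapses to a single factor $D_{u,j}^{\sigma_\ell,\underline{\tau},\sigma_r}$. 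For $j=0$ one uses $X=u\otimes 1\otimes\cdots\otimes 1$, noting that $\widehat{D}_{u,0;M}^{\sigma_\ell,\underline{\tau},\sigma_r}=D_{u,0;M}^{\sigma_\ell,\underline{\tau},\sigma_r}$ by Lemma \ref{samelemma}; for $j=n$ one uses $X=1\otimes\cdots\otimes 1\otimes u$ directly. In all cases $X\in Z(U(\mathfrak{g}))^{\otimes(n+1)}$, hence $D_{u,j;M}^{\sigma_\ell,\underline{\tau},\sigma_r}\in A^{\sigma_\ell,\underline{\tau},\sigma_r}$.

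Next, since $Z(U(\mathfrak{g}))$ is commutative and embeds into $U(\mathfrak{g})^K$ (every central element is in particular $\mathrm{Ad}(K)$-invariant), the algebra $Z(U(\mathfrak{g}))^{\otimes(n+1)}$ is a commutative subalgebra of $U(\mathfrak{g})^{K,\textup{opp}}\otimes Z(U(\mathfrak{g}))^{\otimes(n-1)}\otimes U(\mathfrak{g})^K$. Theorem \ref{mainthm} asserts that $X\mapsto D_{X;M}^{\sigma_\ell,\underline{\tau},\sigma_r}$ is an algebra homomorphism on this larger algebra, so its restriction to $Z(U(\mathfrak{g}))^{\otimes(n+1)}$ is an algebra homomorphism from a commutative algebra, whence the image $A^{\sigma_\ell,\underline{\tau},\sigma_r}$ is commutative. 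In particular any two operators in $A^{\sigma_\ell,\underline{\tau},\sigma_r}$ commute.

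Finally I would combine the two observations: given $u,v\in Z(U(\mathfrak{g}))$ and $0\leq i,j\leq n$, both $D_{u,i;M}^{\sigma_\ell,\underline{\tau},\sigma_r}$ and $D_{v,j;M}^{\sigma_\ell,\underline{\tau},\sigma_r}$ lie in the commutative algebra $A^{\sigma_\ell,\underline{\tau},\sigma_r}$, so their commutator vanishes in $\mathbb{D}((V_\ell\otimes\underline{U}\otimes V_r^*)^M)^W$. The only point requiring a little care — and thus the main (minor) obstacle — is the bookkeeping that $D_X^{\sigma_\ell,\underline{\tau},\sigma_r}$ genuinely reduces to a single $D_{u,j}$ when all other tensor slots of $X$ are the unit, which hinges on the normalizations $L_{\Pi(1)}^{\sigma_{\ell;n},\sigma_r}=L_{\widetilde\Pi(1)}^{\sigma_{\ell;n},\sigma_r}=\mathrm{id}$; this is immediate from the definitions since $\Pi(1)=\widetilde\Pi(1)=1\otimes 1\otimes(1\otimes_{U(\mathfrak m)}1)$ and $\zeta_{\sigma_{\ell;n},\sigma_r}(1\otimes_{U(\mathfrak m)}1)=\mathrm{id}$. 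With that in hand the proof is complete.
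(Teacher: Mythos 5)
Your proposal is correct and follows essentially the same route as the paper: the corollary is deduced from the algebra homomorphism of Theorem \ref{mainthm} together with Lemma \ref{samelemma}, by realising each $D_{u,i;M}^{\sigma_\ell,\underline{\tau},\sigma_r}$ as $D_{X;M}^{\sigma_\ell,\underline{\tau},\sigma_r}$ for $X\in Z(U(\mathfrak{g}))^{\otimes(n+1)}$ with unit entries in all other slots, so that commutativity of $Z(U(\mathfrak{g}))^{\otimes(n+1)}$ transfers to the operators. Your explicit check that the unit slots contribute identity factors (via $\Pi(1)=\widetilde\Pi(1)=1\otimes 1\otimes(1\otimes_{U(\mathfrak m)}1)$) is exactly the bookkeeping the paper leaves implicit.
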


\begin{rema}\label{SplitRemark2}
For finite-dimensional $\mathfrak{k}$-modules $(\sigma_\ell, V_\ell)$ and
$(\sigma_r,V_r)$ we expect that  
\[
\lbrack D_{u,i}^{\sigma_\ell,\underline{\tau},\sigma_r},
D_{v,j}^{\sigma_\ell,\underline{\tau},\sigma_r}\rbrack =0
\]
in $\mathbb{D}_M((V_\ell\otimes\underline{U}\otimes V_r^*)^{\mathfrak{m}})$ for $u,v\in Z(U(\mathfrak{g}))$ and $0\leq i,j\leq n$.
This follows from the theory of formal $n$-point spherical functions when $G$ is real split, 
see \cite{SR}. 
\end{rema}
\section{The quantum Calogero-Moser spin chain}\label{Further}
Fix throughout this section finite-dimensional smooth $K$-repesentations $(\sigma_\ell,V_\ell)$, $(\sigma_r,V_r)$ and finite-dimensional smooth $G$-representations $(\tau_i,U_i)$ ($1\leq i\leq n$).

\subsection{A dynamical factorisation of the Casimir element}
Denote the multiplication map of $U(\mathfrak{g})$ by $\mu\in\textup{Hom}(U(\mathfrak{g})\otimes U(\mathfrak{g}),U(\mathfrak{g}))$. 
\begin{defi}
The Casimir
element $\Omega\in Z(U(\mathfrak{g}))$ is 
\[
\Omega:=\mu(\varpi)
\]
with $\varpi\in S^2(\mathfrak{g})^{G}$ the 
$G$-invariant symmetric tensor  associated to $K_{\mathfrak{g}}$.
\end{defi}
Explicitly, for a basis $\{b_s\}_s$ of $\mathfrak{g}$ 
we have $\varpi=\sum_sb_s\otimes b_s^\prime$ and $\Omega=\sum_sb_sb_s^\prime$,
where $\{b_s^\prime\}_s$ is the basis of $\mathfrak{g}$ dual to $\{b_s\}_s$ with respect to
$K_{\mathfrak{g}}$.

In the computations we will use a basis of $\mathfrak{g}$ compatible with the root space decomposition of $\mathfrak{g}$. It contains a basis $\{z_j\}_{j=1}^{\textup{dim}(\mathfrak{h})}$ of $\mathfrak{h}$ such that
\begin{enumerate}
\item $K_{\mathfrak{g}}(z_j,z_{j^\prime})=\delta_{j,j^\prime}$,
\item $x_j:=z_j$ ($1\leq j\leq r$) forms a basis of $\mathfrak{a}_{\mathbb{R}}$,
\item $iz_j$ ($r+1\leq j\leq\textup{dim}(\mathfrak{h})$) forms a basis of $\mathfrak{t}_{\mathbb{R}}$.
\end{enumerate}
Recall that $e_\alpha\in\mathfrak{g}_\alpha$ are root vectors such that $[e_\alpha,e_{-\alpha}]=h_\alpha$. Then $\{z_j\}_{j=1}^{\textup{dim}(\mathfrak{h})}\cup \{e_{-\alpha}\}_{\alpha\in R}$ is the basis of $\mathfrak{g}$ dual to $\{z_j\}_{j=1}^{\textup{dim}(\mathfrak{h})}\cup\{e_\alpha\}_{\alpha\in R}$ with respect to $K_\mathfrak{g}$ (cf. \cite[Prop. 8.3\,c]{Hu}), hence
\begin{equation}\label{varpi}
\varpi=\sum_{j=1}^{\textup{dim}(\mathfrak{h})}z_j\otimes z_j+\sum_{\alpha\in R}e_{-\alpha}\otimes e_\alpha
\end{equation}
and
\begin{equation}\label{Omega}
\Omega=\sum_{j=1}^{\textup{dim}(\mathfrak{h})}z_j^2+\sum_{\alpha\in R}e_{-\alpha}e_{\alpha}.
\end{equation}
Note that $\varpi$ can be recovered from $\Omega$ by
\[
\varpi=\frac{1}{2}\bigl(\Delta(\Omega)-\Omega\otimes 1-1\otimes \Omega\bigr).
\]

Write $\Omega_{\mathfrak{m}}:=\mu(\varpi_{\mathfrak{m}})\in Z(U(\mathfrak{m}))$, with $\varpi_{\mathfrak{m}}\in S^2(\mathfrak{m})^M$ the symmetric tensor associated to 
the 
nondegenerate bilinear form $K_{\mathfrak{g}}(\cdot,\cdot)|_{\mathfrak{m}\times\mathfrak{m}}$,
\begin{equation}\label{varpim}
\varpi_{\mathfrak{m}}:=\sum_{j=r+1}^{\textup{dim}(\mathfrak{h})}z_j\otimes z_j+
\sum_{\alpha\in R_0}e_{-\alpha}\otimes e_\alpha.
\end{equation}

Finally, $\varpi^\prime:=\varpi-\varpi_{\mathfrak{m}}\in S^2(\mathfrak{g})^M$ and
$\Omega^\prime:=\mu(\varpi^\prime)\in U(\mathfrak{g})^M$ admit the explicit expressions
\begin{equation}\label{Omegam}
\varpi^\prime=\sum_{j=1}^rx_j\otimes x_j+\sum_{\lambda\in\Sigma}\varpi_\lambda,\qquad
\Omega^\prime=\sum_{j=1}^rx_j^2+\sum_{\lambda\in\Sigma}\Omega_\lambda
\end{equation}
with, for a restricted root $\lambda\in\Sigma$,
\begin{equation}\label{lambdaversions}
\varpi_\lambda:=\sum_{\alpha\in R_\lambda}e_{-\alpha}\otimes e_\alpha\in\mathfrak{g}^{-\lambda}\otimes\mathfrak{g}^\lambda,\qquad \Omega_\lambda:=\mu(\varpi_\lambda)=\sum_{\alpha\in R_\lambda}e_{-\alpha}e_\alpha\in U(\mathfrak{g}).
\end{equation}
\begin{lem}\label{Mfactors}
We have $\varpi_\lambda\in (\mathfrak{g}^{-\lambda}\otimes\mathfrak{g}^\lambda)^M$ and $\Omega_\lambda\in U(\mathfrak{g})^M$.
\end{lem}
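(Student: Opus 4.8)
The plan is to prove the two invariance claims in tandem, using the fact that $\mathfrak{m}=Z_{\mathfrak{k}_{\mathbb{R}}}(\mathfrak{a}_{\mathbb{R}})$ acts on $\mathfrak{g}$ by $\textup{ad}$ in a way that preserves each restricted root space $\mathfrak{g}^\lambda$ ($\lambda\in\Sigma\cup\{0\}$), as recorded in Section \ref{S2}. First I would observe that it suffices to prove $\varpi_\lambda\in(\mathfrak{g}^{-\lambda}\otimes\mathfrak{g}^\lambda)^M$: the statement $\Omega_\lambda\in U(\mathfrak{g})^M$ then follows immediately by applying the multiplication map $\mu$, since $\mu$ is $\textup{Ad}(M)$-equivariant (it is even $\textup{Ad}(G)$-equivariant) and $\Omega_\lambda=\mu(\varpi_\lambda)$. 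So the whole content is the $M$-invariance of the tensor $\varpi_\lambda=\sum_{\alpha\in R_\lambda}e_{-\alpha}\otimes e_\alpha$.

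Next I would identify $\varpi_\lambda$ intrinsically, independently of the choice of root vectors. The key point is that $\varpi_\lambda$ is the canonical element of $\mathfrak{g}^{-\lambda}\otimes\mathfrak{g}^\lambda$ determined by the pairing $K_{\mathfrak{g}}(\cdot,\cdot)\colon \mathfrak{g}^{-\lambda}\times\mathfrak{g}^\lambda\to\mathbb{C}$, which is nondegenerate because the root space decomposition \eqref{rrsd} is $K_{\mathfrak{g}}$-orthogonal and $K_{\mathfrak{g}}$ is nondegenerate on $\mathfrak{g}$; concretely $\{e_\alpha\}_{\alpha\in R_\lambda}$ and $\{e_{-\alpha}\}_{\alpha\in R_\lambda}$ are dual bases of $\mathfrak{g}^\lambda$ and $\mathfrak{g}^{-\lambda}$ under this pairing (cf.\ the computation of \eqref{varpi}). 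Under the identification $\mathfrak{g}^{-\lambda}\otimes\mathfrak{g}^\lambda\simeq\textup{Hom}(\mathfrak{g}^\lambda,\mathfrak{g}^\lambda)$ afforded by this same pairing, $\varpi_\lambda$ corresponds to the identity operator $\textup{id}_{\mathfrak{g}^\lambda}$.

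Then the invariance is a formal consequence of equivariance. Since $\textup{Ad}(M)$ preserves $\mathfrak{g}^\lambda$ and $\mathfrak{g}^{-\lambda}$ and acts by isometries for $K_{\mathfrak{g}}$ (because $K_{\mathfrak{g}}$ is $\textup{Ad}(G)$-invariant), the canonical-element construction commutes with the $M$-action; equivalently, under the identification with $\textup{Hom}(\mathfrak{g}^\lambda,\mathfrak{g}^\lambda)$ the $M$-action is conjugation, and $\textup{id}_{\mathfrak{g}^\lambda}$ is visibly conjugation-invariant. Hence $\textup{Ad}_m^{\otimes 2}(\varpi_\lambda)=\varpi_\lambda$ for all $m\in M$, which is exactly $\varpi_\lambda\in(\mathfrak{g}^{-\lambda}\otimes\mathfrak{g}^\lambda)^M$. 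Differentiating gives $\mathfrak{m}$-invariance as well. Applying $\mu$ then yields $\Omega_\lambda\in U(\mathfrak{g})^M$, completing the proof.

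I do not anticipate a genuine obstacle here; the only thing to be slightly careful about is making the "canonical element'' argument clean rather than basis-dependent, i.e.\ checking that for an $\textup{Ad}(m)$-equivariant nondegenerate pairing the dual-basis tensor transforms trivially. Alternatively, one can give the fully elementary version: for $m\in M$ write $\textup{Ad}_m(e_\alpha)=\sum_{\beta\in R_\lambda}c_{\alpha\beta}(m)e_\beta$, note $\textup{Ad}_m(e_{-\alpha})=\sum_{\beta}d_{\alpha\beta}(m)e_{-\beta}$, and use $K_{\mathfrak{g}}(\textup{Ad}_m(e_{-\alpha}),\textup{Ad}_m(e_\beta))=K_{\mathfrak{g}}(e_{-\alpha},e_\beta)=\delta_{\alpha\beta}$ to deduce that the matrices $(c_{\alpha\beta}(m))$ and $(d_{\alpha\beta}(m))$ are inverse-transpose of each other, so that $\sum_\alpha \textup{Ad}_m(e_{-\alpha})\otimes\textup{Ad}_m(e_\alpha)=\sum_{\alpha}e_{-\alpha}\otimes e_\alpha$; this is the same computation that produced \eqref{varpi} in the first place.
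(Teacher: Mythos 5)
Your argument is correct and is essentially the paper's own proof: both identify $\varpi_\lambda$ as the canonical tensor of the nondegenerate $\textup{Ad}(M)$-invariant pairing $K_{\mathfrak{g}}(\cdot,\cdot)|_{\mathfrak{g}^{-\lambda}\times\mathfrak{g}^{\lambda}}$ between the $\textup{Ad}(M)$-stable spaces $\mathfrak{g}^{-\lambda}$ and $\mathfrak{g}^{\lambda}$, so that $\textup{Ad}_g$ maps dual bases to dual bases and leaves the tensor unchanged, with $\Omega_\lambda=\mu(\varpi_\lambda)$ then invariant by equivariance of $\mu$.
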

\begin{proof}
The restricted root spaces $\mathfrak{g}^\lambda$ and $\mathfrak{g}^{-\lambda}$ are $\textup{Ad}(M)$-stable since $M=Z_K(\mathfrak{a})$. Let $g\in M$. The basis 
$\{\textup{Ad}_g(e_{-\alpha})\}_{\alpha\in R_\lambda}$ of $\mathfrak{g}^{-\lambda}$ is dual to the basis
$\{\textup{Ad}_g(e_\alpha)\}_{\alpha\in R_\lambda}$ of $\mathfrak{g}^\lambda$ with respect to the perfect $\textup{Ad}(M)$-invariant pairing
$K_{\mathfrak{g}}(\cdot,\cdot)|_{\mathfrak{g}^{-\lambda}\times\mathfrak{g}^{\lambda}}:
\mathfrak{g}^{-\lambda}\times\mathfrak{g}^{\lambda}\rightarrow\mathbb{C}$. Hence the associated $2$-tensor $\sum_{\alpha\in R_\lambda}\textup{Ad}_g(e_{-\alpha})\otimes\textup{Ad}_g(e_\alpha)$
does not depend on $g\in M$.
 \end{proof}
The adjoint action of $N_K(A)$ on 
$(\mathfrak{g}\otimes\mathfrak{g})^M$ and $U(\mathfrak{g})^M$ factorises to a $W$-action.
We write $w\cdot x$ for the action of $w\in W$ on $x\in (\mathfrak{g}\otimes\mathfrak{g})^M$
or $x\in U(\mathfrak{g})^M$. 
\begin{lem}\label{Nfactors}
We have
\begin{enumerate}
\item $w\cdot\varpi_\lambda=\varpi_{w\lambda}$ and $w\cdot\Omega_\lambda=
\Omega_{w\lambda}$ for $w\in W$ and $\lambda\in\Sigma$.
\item $\varpi_{\mathfrak{m}}\in S^2(\mathfrak{m})^{N_K(A)}$ and $\Omega_{\mathfrak{m}}\in
U(\mathfrak{m})^{N_K(A)}$.
\item $\sum_{i=1}^rx_i\otimes x_i\in S^2(\mathfrak{a})^{N_K(A)}$
and $\sum_{i=1}^rx_i^2\in U(\mathfrak{a})^{N_K(A)}$.
\end{enumerate}
\end{lem}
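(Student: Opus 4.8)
The plan is to derive all three statements from the elementary principle that, for a finite dimensional vector space $V$ with a nondegenerate symmetric bilinear form $B$, the canonical tensor $\sum_s b_s\otimes b_s'\in V\otimes V$ attached to a basis $\{b_s\}$ and its $B$-dual basis $\{b_s'\}$ does not depend on the choice of $\{b_s\}$, hence is fixed by every $B$-isometry of $V$; this is exactly the argument already used in the proof of Lemma \ref{Mfactors}. Since the Killing form $K_{\mathfrak g}$ is $\textup{Ad}(G)$-invariant, each $\textup{Ad}_g$ with $g\in N_K(A)$ is a $K_{\mathfrak g}$-isometry of $\mathfrak g$; as $\textup{Ad}_g$ is moreover an algebra automorphism of $U(\mathfrak g)$ it commutes with $\mu$, so in each case it will suffice to treat the symmetric $2$-tensor and then apply $\mu$.

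First I would record the structural facts that make the constituent tensors transform correctly. Because $N_K(A)$ normalises $A$ and lies in $K$, the automorphism $\textup{Ad}_g$ ($g\in N_K(A)$) stabilises $\mathfrak a$, $\mathfrak k$ and hence $\mathfrak m=Z_{\mathfrak k}(\mathfrak a)$; and for $x\in\mathfrak g^\lambda$ and $h\in\mathfrak a_{\mathbb R}$ the computation $[h,\textup{Ad}_g(x)]=\textup{Ad}_g\bigl([\textup{Ad}_{g^{-1}}(h),x]\bigr)=\lambda\bigl(\textup{Ad}_{g^{-1}}(h)\bigr)\textup{Ad}_g(x)=(w\lambda)(h)\,\textup{Ad}_g(x)$, with $w=gM\in W$, shows $\textup{Ad}_g(\mathfrak g^\lambda)=\mathfrak g^{w\lambda}$ for $\lambda\in\Sigma$. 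I would also observe that on $\mathfrak a_{\mathbb R}$ the inner product $(\cdot,\cdot)$ agrees with $K_{\mathfrak g}$ (since $\theta=-\textup{id}$ there), so the fixed basis $\{x_i\}_{i=1}^r$ is $K_{\mathfrak g}$-orthonormal.

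With these in hand, for (3) I would note that $\textup{Ad}_g$ restricts to a $K_{\mathfrak g}$-isometry of $\mathfrak a_{\mathbb R}$ and that $\sum_{i=1}^r x_i\otimes x_i$ is the canonical tensor of $(\mathfrak a_{\mathbb R},K_{\mathfrak g})$, so it is $N_K(A)$-invariant; applying $\mu$ gives $\sum_i x_i^2\in U(\mathfrak a)^{N_K(A)}$. For (1), Lemma \ref{Mfactors} identifies $\varpi_\lambda$ as the canonical tensor of the perfect pairing $K_{\mathfrak g}|_{\mathfrak g^{-\lambda}\times\mathfrak g^\lambda}$; since $\textup{Ad}_g$ carries $\mathfrak g^{-\lambda}$ and $\mathfrak g^{\lambda}$ isometrically onto $\mathfrak g^{-w\lambda}$ and $\mathfrak g^{w\lambda}$, the same basis-independence argument shows it carries $\varpi_\lambda$ onto $\varpi_{w\lambda}$, and $\mu$ then yields $w\cdot\Omega_\lambda=\Omega_{w\lambda}$. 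For (2) I would either argue directly, since $\varpi_{\mathfrak m}$ is the canonical tensor of the nondegenerate form $K_{\mathfrak g}|_{\mathfrak m\times\mathfrak m}$ and $\textup{Ad}_g$ is a $K_{\mathfrak g}$-isometry stabilising $\mathfrak m$, or deduce it from (1) and (3): writing $\varpi=\varpi_{\mathfrak m}+\varpi'$ with $\varpi'=\sum_i x_i\otimes x_i+\sum_{\lambda\in\Sigma}\varpi_\lambda$ by \eqref{Omegam}, the $N_K(A)$-invariance of $\varpi\in S^2(\mathfrak g)^G$, the $W$-invariance of $\sum_i x_i\otimes x_i$ from (3) and the identity $w\cdot\sum_{\lambda\in\Sigma}\varpi_\lambda=\sum_{\lambda\in\Sigma}\varpi_{w\lambda}=\sum_{\lambda\in\Sigma}\varpi_\lambda$ from (1) force $\varpi_{\mathfrak m}$ to be $N_K(A)$-invariant; since $\varpi_{\mathfrak m}\in S^2(\mathfrak m)$ this gives $\varpi_{\mathfrak m}\in S^2(\mathfrak m)^{N_K(A)}$, and $\mu$ delivers $\Omega_{\mathfrak m}\in U(\mathfrak m)^{N_K(A)}$.

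\textbf{Main obstacle.} There is no serious obstacle here; the content is bookkeeping organised around the single ``isometry fixes the canonical tensor'' principle. The one point requiring genuine care is the compatibility of the various bilinear forms: one must check that the $x_i$ are orthonormal for $K_{\mathfrak g}$ rather than merely for $(\cdot,\cdot)$, that $\textup{Ad}_g$ really stabilises $\mathfrak a$ and $\mathfrak m$, and that it permutes the restricted root spaces $\mathfrak g^\lambda$ according to the $W$-action on $\Sigma$, so that the principle applies verbatim in each case.
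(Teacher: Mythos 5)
Your proposal is correct and follows essentially the paper's own argument: all three parts are reduced to the basis-independence of the canonical tensor attached to a nondegenerate $\textup{Ad}(N_K(A))$-invariant pairing (on $\mathfrak{g}^{-\lambda}\times\mathfrak{g}^{\lambda}$, $\mathfrak{m}\times\mathfrak{m}$ and $\mathfrak{a}\times\mathfrak{a}$ respectively), exactly as in the proof of Lemma \ref{Mfactors}, followed by applying $\mu$. The extra verifications you record ($\textup{Ad}_g(\mathfrak{g}^\lambda)=\mathfrak{g}^{w\lambda}$ and that $(\cdot,\cdot)=K_{\mathfrak{g}}$ on $\mathfrak{a}_{\mathbb{R}}$) are accurate and only make explicit what the paper leaves implicit.
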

\begin{proof}
(1) For $w=gM\in W$ ($g\in N_K(A)$) the $\textup{Ad}_g$-invariance of $K_{\mathfrak{g}}$ implies
that the basis $\{\textup{Ad}_g(e_{-\alpha})\}_{\alpha\in R_\lambda}$ of $\mathfrak{g}^{-w\lambda}$ is dual to the basis
$\{\textup{Ad}_g(e_{\alpha})\}_{\alpha\in R_\lambda}$ of $\mathfrak{g}^{w\lambda}$ with respect to the perfect pairing
$K_{\mathfrak{g}}(\cdot,\cdot)|_{\mathfrak{g}^{-w\lambda}\times\mathfrak{g}^{w\lambda}}$. The result now follows immediately, cf. the proof of Lemma \ref{Mfactors}.\\
(2) This follows from the $\textup{Ad}(N_K(A))$-invariance of $\mathfrak{m}$ and
$K_{\mathfrak{g}}(\cdot,\cdot)|_{\mathfrak{m}\times\mathfrak{m}}$.\\
(3) The proof is similar to the proof of (2), now considering the $\textup{Ad}(N_K(A))$-invariant nondegenerate symmetric bilinear form
$K_{\mathfrak{g}}(\cdot,\cdot)|_{\mathfrak{a}\times\mathfrak{a}}$ on $\mathfrak{a}$.
\end{proof}

\begin{rema}\label{ONBexpression}
For $\lambda\in\Sigma^+$ let $\{b_{\lambda,i}\}_{i=1}^{\textup{mtp}(\lambda)}$ be an orthonormal basis of $\mathfrak{g}_{\mathbb{R}}^\lambda$ with respect to 
the restriction of the scalar product $(\cdot,\cdot)$ to $\mathfrak{g}_{\mathbb{R}}^\lambda$. Define $b_{-\lambda,i}:=-\theta_{\mathbb{R}}(b_{\lambda,i})$ for $i=1,\ldots,\textup{mtp}(\lambda)$,
then $\{b_{-\lambda,i}\}_{i=1}^{\textup{mtp}(\lambda)}$ is an orthonormal basis of $\mathfrak{g}_{\mathbb{R}}^{-\lambda}$ with respect to the restriction of $(\cdot,\cdot)$ to $\mathfrak{g}_{\mathbb{R}}^{-\lambda}$. In particular,
$\{b_{-\lambda,i}\}_{i=1}^{\textup{mtp}(\lambda)}$
and $\{b_{\lambda,i}\}_{i=1}^{\textup{mtp}(\lambda)}$ are bases of $\mathfrak{g}^{-\lambda}$ and $\mathfrak{g}^\lambda$ respectively, dual with respect to 
$K_{\mathfrak{g}}(\cdot,\cdot)|_{\mathfrak{g}^{-\lambda}\times\mathfrak{g}^{\lambda}}$. 
We conclude that
\begin{equation}\label{HAexpression}
\varpi_\lambda=\sum_{i=1}^{\textup{mtp}(\lambda)}b_{-\lambda,i}\otimes b_{\lambda,i}
\end{equation}
for $\lambda\in\Sigma$. This expression of $\varpi_\lambda$ is commonly used in the radial component computation of the Casimir element,
see, e.g., \cite[\S 9.1.2]{W}. Note that by \eqref{HAexpression},
\[
(\theta\otimes\textup{id})(\varpi_\lambda)=-\sum_{i=1}^{\textup{mtp}(\lambda)}b_{\lambda,i}\otimes
b_{\lambda,i}\in S^2(\mathfrak{g}^\lambda)
\]
and $(\theta\otimes\textup{id})(\varpi_\lambda)=(\textup{id}\otimes\theta)(\varpi_{-\lambda})$.
\end{rema}
For $\nu\in\mathfrak{a}^*$ we denote by $t_\nu$ the unique element in $\mathfrak{a}$
such that $K_{\mathfrak{g}}(t_\nu,h)=\nu(h)$ for all $h\in\mathfrak{a}$. Note that $w\cdot t_\nu=t_{w\nu}$ for $w\in W$, where 
the $W$-action on $\mathfrak{a}_{\mathbb{R}}$ and $\mathfrak{a}_{\mathbb{R}}^*$ is extended complex linearly to $\mathfrak{a}$ and $\mathfrak{a}^*$, respectively.

\begin{lem}\label{elh}\label{Hcases}
Let $\lambda\in\Sigma$.
\begin{enumerate}
\item If $\alpha\in R_0$ then $h_\alpha\in\mathfrak{t}$.
\item If $\alpha\in R_\lambda$ then $\textup{pr}_{\mathfrak{p}}(h_\alpha)=t_\lambda$.
\item 
$\sum_{\alpha\in R_\lambda}h_\alpha\in \mathfrak{g}^M$. 
\end{enumerate}
\end{lem}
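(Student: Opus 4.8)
The three statements concern the elements $h_\alpha \in \mathfrak{h}$ for $\alpha$ in the restricted root space $R_\lambda$, and their behaviour under $\theta$ and under $\operatorname{Ad}(M)$.

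For part (1): if $\alpha \in R_0$ then $\alpha$ vanishes on $\mathfrak{a}_{\mathbb{R}}$, so $\theta\alpha = \alpha$ (since $\theta$ acts as $-1$ on the $\mathfrak{a}$-part of $\mathfrak{h}^*$ and as $+1$ on the $\mathfrak{t}$-part). Then $\theta(h_\alpha) = h_{\theta\alpha} = h_\alpha$, so $h_\alpha$ lies in the $(+1)$-eigenspace $\mathfrak{k}$ of $\theta$; combined with $h_\alpha \in \mathfrak{h} = \mathfrak{t} \oplus \mathfrak{a}$ and the fact that $\mathfrak{k} \cap \mathfrak{h} = \mathfrak{t}$, we get $h_\alpha \in \mathfrak{t}$.

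For part (2): for $\alpha \in R_\lambda$ we have $\theta\alpha \in R_{-\lambda}$, and by the earlier observation $\theta\alpha|_{\mathfrak{a}_{\mathbb{R}}} = -\alpha|_{\mathfrak{a}_{\mathbb{R}}}$. The projection onto $\mathfrak{p}$ is $\operatorname{pr}_{\mathfrak{p}} = \tfrac{1}{2}(\operatorname{Id}_{\mathfrak{g}} - \theta)$, so $\operatorname{pr}_{\mathfrak{p}}(h_\alpha) = \tfrac{1}{2}(h_\alpha - h_{\theta\alpha})$. I would then show this element of $\mathfrak{a}$ equals $t_\lambda$: for any $h \in \mathfrak{a}$, compute $K_{\mathfrak{g}}(\tfrac{1}{2}(h_\alpha - h_{\theta\alpha}), h) = \tfrac{1}{2}(\alpha(h) - \theta\alpha(h)) = \tfrac{1}{2}(\alpha(h) - (-\alpha(h))) = \alpha(h) = \lambda(h)$, using $\alpha|_{\mathfrak{a}_{\mathbb{R}}} = \lambda$ and $\mathbb{C}$-linearity. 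Since $K_{\mathfrak{g}}$ restricted to $\mathfrak{a} \times \mathfrak{a}$ is nondegenerate and $t_\lambda$ is by definition the element with $K_{\mathfrak{g}}(t_\lambda, h) = \lambda(h)$, this gives $\operatorname{pr}_{\mathfrak{p}}(h_\alpha) = t_\lambda$.

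For part (3): write $s_\lambda := \sum_{\alpha \in R_\lambda} h_\alpha$. The cleanest route is to decompose $s_\lambda = \operatorname{pr}_{\mathfrak{k}}(s_\lambda) + \operatorname{pr}_{\mathfrak{p}}(s_\lambda)$ and show both pieces are $\operatorname{Ad}(M)$-fixed. By part (2), $\operatorname{pr}_{\mathfrak{p}}(s_\lambda) = \operatorname{mtp}(\lambda)\, t_\lambda \in \mathfrak{a}$, and $M = Z_K(\mathfrak{a})$ acts trivially on $\mathfrak{a}$, so this piece is fixed. For $\operatorname{pr}_{\mathfrak{k}}(s_\lambda) = \tfrac{1}{2}\sum_{\alpha \in R_\lambda}(h_\alpha + h_{\theta\alpha})$: one shows $\sum_{\alpha \in R_\lambda} h_\alpha$ transforms correctly by relating it to $\mu(\varpi_\lambda)$-type quantities, or more directly by observing that $\operatorname{Ad}(M)$ permutes the roots within $R_\lambda$ in a way compatible with the structure constants. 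Concretely, for $g \in M$ and $\alpha \in R_\lambda$, $\operatorname{Ad}_g$ maps $\mathfrak{g}_\alpha$ into $\mathfrak{g}^\lambda = \bigoplus_{\beta \in R_\lambda}\mathfrak{g}_\beta$; using that $h_\alpha = [e_\alpha, e_{-\alpha}]$ and that the pairing $K_{\mathfrak{g}}(\cdot,\cdot)|_{\mathfrak{g}^{-\lambda}\times\mathfrak{g}^\lambda}$ is $\operatorname{Ad}(M)$-invariant, the argument from the proof of Lemma \ref{Mfactors} shows $\sum_{\alpha\in R_\lambda}\operatorname{Ad}_g(e_{-\alpha})\otimes\operatorname{Ad}_g(e_\alpha) = \varpi_\lambda$, hence applying $[\cdot,\cdot]$ (equivalently $\mu$ after identifying $e_{-\alpha}\otimes e_\alpha \mapsto [e_{-\alpha},e_\alpha]$, noting $[e_{-\alpha},e_\alpha] = -h_\alpha$) gives $\operatorname{Ad}_g(s_\lambda) = s_\lambda$. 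So $s_\lambda \in \mathfrak{g}^M$.

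The main obstacle is part (3): one must be careful that $\operatorname{Ad}(M)$ does not simply permute $\{h_\alpha\}_{\alpha\in R_\lambda}$ (it need not, since $M$ is not the restricted Weyl group), so the permutation argument used for purely combinatorial sums does not apply directly. The fix is exactly to package $s_\lambda$ as the image under a bracket/multiplication map of the $\operatorname{Ad}(M)$-invariant tensor $\varpi_\lambda$ (already shown invariant in Lemma \ref{Mfactors}), which sidesteps the need for $M$ to act by permutations. I would present part (3) in that form.
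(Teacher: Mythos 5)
Your proposal is correct and, in its essentials, follows the paper's own proof: part (2) is the same Killing-form computation, and for part (3) you end up exactly where the paper does, namely applying the equivariant bracket map to the $\operatorname{Ad}(M)$-invariant tensor $\varpi_\lambda$ from Lemma \ref{Mfactors} (your caution that $\operatorname{Ad}(M)$ need not permute the $h_\alpha$ is well placed, and your chosen fix is the paper's argument). Part (1) is a harmless variant: you use $\theta(h_\alpha)=h_{\theta\alpha}=h_\alpha$ and $\mathfrak{k}\cap\mathfrak{h}=\mathfrak{t}$, whereas the paper uses $\mathfrak{t}=\{h\in\mathfrak{h}: K_{\mathfrak{g}}(h,\mathfrak{a})=0\}$; both are one-line arguments.
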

\begin{proof}
(1) This follows from the observation that $\mathfrak{t}=\{h\in\mathfrak{h} \,\,\, | \,\,\, K_{\mathfrak{g}}(h,\mathfrak{a})=0\}$.\\
(2) Let $\alpha\in R_\lambda$. Then $\textup{pr}_{\mathfrak{p}}(h_\alpha)\in\mathfrak{a}$ satisfies for all $h\in\mathfrak{a}$,
\[
K_{\mathfrak{g}}(\textup{pr}_{\mathfrak{p}}(h_\alpha),h)=K_{\mathfrak{g}}(h_\alpha,h)=
\alpha(h)=\lambda(h),
\]
hence $\textup{pr}_{\mathfrak{p}}(h_\alpha)=t_\lambda$.\\
(3) Note that the $G$-equivariant linear map $\mathfrak{g}\otimes\mathfrak{g}\rightarrow \mathfrak{g}$, $x\otimes y\mapsto [x,y]$ 
maps $\varpi_\lambda$ to $-\sum_{\alpha\in R_\lambda}h_\alpha$. The result now follows from
Lemma \ref{Mfactors}.
\end{proof}
\begin{cor}\label{lempropertiescor}
We have 
\begin{equation}\label{division}
\sum_{\alpha\in R_\lambda}h_\alpha=
\textup{mtp}(\lambda)t_\lambda.
\end{equation}
\end{cor}
\begin{proof}
Recall that $\mathfrak{h}=\mathfrak{t}\oplus\mathfrak{a}$ is the decomposition of the $\theta$-stable Cartan subalgebra
$\mathfrak{h}$ in $(\pm 1)$-eigenspaces of $\theta\vert_{\mathfrak{h}}$. The resulting projections $\mathfrak{h}\twoheadrightarrow\mathfrak{t}$
and $\mathfrak{h}\twoheadrightarrow\mathfrak{a}$ are $\textup{pr}_{\mathfrak{k}}\vert_{\mathfrak{h}}$ and $\textup{pr}_{\mathfrak{p}}\vert_{\mathfrak{h}}$,
respectively. In view of Lemma \ref{elh}(2) it then suffices to show that
\[
\sum_{\alpha\in R_\lambda}h_\alpha\in\mathfrak{p},
\]
for which we resort to Remark \ref{ONBexpression}.
By \eqref{HAexpression} we have
\[
-\varpi_\lambda=\sum_{i=1}^{\textup{mtp}(\lambda)}\theta(b_{\lambda,i})\otimes b_{\lambda,i}
\]
and hence, by the proof of Lemma \ref{elh}(3),
\[
\sum_{\alpha\in R_\lambda}h_\alpha=\sum_{i=1}^{\textup{mtp}(\lambda)}[\theta(b_{\lambda,i}),b_{\lambda,i}].
\]
The right hand side of this expression manifestly lies in $\mathfrak{p}$.
\end{proof}

Consider the tensor product $W$-module algebra $\mathcal{R}\otimes U(\mathfrak{g})^M$. Its subalgebra $(\mathcal{R}\otimes U(\mathfrak{g})^M)^W$ of $W$-invariant elements contains  $U(\mathfrak{g})^{N_K(A)}$ as a subalgebra. The Casimir $\Omega$ decomposes as $\Omega=\Omega_{\mathfrak{m}}+\Omega^\prime$ with $\Omega_{\mathfrak{m}}, \Omega^\prime\in U(\mathfrak{g})^{N_K(A)}$. The second term $\Omega^\prime\in U(\mathfrak{g})^{N_K(A)}$ admits the following dynamical factorisation  
within $\bigl(\mathcal{R}\otimes U(\mathfrak{g})^M\bigr)^W$.
\begin{prop}\label{dynCasimirlemma}
We have 
\begin{equation}\label{dynC}
\Omega^\prime=
\sum_{j=1}^rx_j^2+\frac{1}{2}\sum_{\lambda\in\Sigma}\Bigl(
\frac{1+\xi_{2\lambda}}{1-\xi_{2\lambda}}\Bigr)\textup{mtp}(\lambda)t_\lambda
+2\sum_{\lambda\in\Sigma}\frac{\Omega_\lambda}{1-\xi_{2\lambda}}
\end{equation}
in $(\mathcal{R}\otimes U(\mathfrak{g})^M)^W$.
\end{prop}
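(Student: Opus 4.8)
The plan is to compute the radial decomposition of $\Omega'=\sum_{j=1}^r x_j^2+\sum_{\lambda\in\Sigma}\Omega_\lambda$ by treating the abelian part and each restricted root-space contribution separately. The term $\sum_{j=1}^r x_j^2\in U(\mathfrak a)^{N_K(A)}$ already lies in $\mathfrak a$, so it contributes itself to the right-hand side of \eqref{dynC} with no further work. The real content is to rewrite each $\Omega_\lambda=\sum_{\alpha\in R_\lambda}e_{-\alpha}e_\alpha$ modulo the radial equivalence, i.e.\ to replace the root vectors $e_{\pm\alpha}\in\mathfrak g^{\pm\lambda}$ using the decomposition \eqref{edecom} that expresses $e_\alpha$ and $\theta(e_\alpha)$ in terms of $y_\alpha\in\mathfrak q$ and $\textup{Ad}_{a^{-1}}(y_\alpha)$. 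Since we are working inside $(\mathcal R\otimes U(\mathfrak g)^M)^W$ — which by the earlier discussion is the correct target for radial components — we may freely move $\textup{Ad}_{a^{-1}}$-factors around modulo the left ideal generated by $\mathfrak k$ and the right ideal... more precisely, the statement is really an identity about the radial component of $\Omega'$ acting on $C^\infty_{\sigma_\ell,\sigma_r}(G)$, so I would first restate it as: the infinitesimal Cartan decomposition of $\Omega'$ relative to $a\in A_{\textup{reg}}$ has the asserted form, with $\mathfrak k$-terms on the right acting trivially.

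The key computation is the following. For $\alpha\in R_\lambda$ write $e_\alpha=\frac{a^\lambda\,\textup{Ad}_{a^{-1}}(y_\alpha)-a^{2\lambda}y_\alpha}{1-a^{2\lambda}}$ from the first line of \eqref{edecom}, and similarly $e_{-\alpha}$ using that $-\alpha\in R_{-\lambda}$ together with $y_{-\alpha}$. One then forms the product $e_{-\alpha}e_\alpha$, expands, and sums over $\alpha\in R_\lambda$. Here I would exploit Remark~\ref{ONBexpression}: choosing an orthonormal basis $\{b_{\lambda,i}\}$ of $\mathfrak g_{\mathbb R}^\lambda$ and setting $b_{-\lambda,i}=-\theta_{\mathbb R}(b_{\lambda,i})$, we get $\varpi_\lambda=\sum_i b_{-\lambda,i}\otimes b_{\lambda,i}$, and the corresponding $y$-vectors $y_i:=b_{\lambda,i}+\theta(b_{\lambda,i})=b_{\lambda,i}-b_{-\lambda,i}$ span (a real form of) $\mathfrak q_\lambda$. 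Substituting the $a$-dependent expressions for $b_{\pm\lambda,i}$ in terms of $y_i$ and $\textup{Ad}_{a^{-1}}(y_i)$, the sum $\sum_i b_{-\lambda,i}b_{\lambda,i}$ becomes a sum of four types of terms: $\sum_i y_i^2$, $\sum_i \textup{Ad}_{a^{-1}}(y_i)^2$, and the two cross terms $\sum_i y_i\textup{Ad}_{a^{-1}}(y_i)$ and $\sum_i \textup{Ad}_{a^{-1}}(y_i)y_i$, each multiplied by an appropriate rational function of $a^{2\lambda}$ built from $\frac{1}{1-a^{2\lambda}}$. Modulo the radial equivalence the pure $\mathfrak k\cdot\mathfrak k$ term $\sum_i y_i^2$ drops, and the $\textup{Ad}_{a^{-1}}(\mathfrak q)$-pieces recombine; the noncommutativity of $y_i$ and $\textup{Ad}_{a^{-1}}(y_i)$ produces precisely the commutator $\sum_i[\textup{Ad}_{a^{-1}}(y_i),y_i]$, which — by the Killing-form duality and Lemma~\ref{elh} — evaluates to a multiple of $h_\lambda$-type elements, hence to $z_\lambda+\textup{mtp}(\lambda)t_\lambda$ via Corollary~\ref{lempropertiescor}, with coefficient $\frac{a^\lambda}{1-a^{2\lambda}}$ (or its symmetrisation giving $\tfrac12\frac{1+a^{2\lambda}}{1-a^{2\lambda}}$ after collecting the $\lambda$ and $-\lambda$ contributions). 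The surviving $\textup{Ad}_{a^{-1}}(\mathfrak q)\cdot\textup{Ad}_{a^{-1}}(\mathfrak q)$-free remainder, after using $\textup{Ad}_a(\varpi_\lambda)$ and the identity $(\theta\otimes\textup{id})(\varpi_\lambda)=(\textup{id}\otimes\theta)(\varpi_{-\lambda})$ from Remark~\ref{ONBexpression}, reassembles into $\frac{2}{1-a^{2\lambda}}\Omega_\lambda$.

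Concretely, the steps in order: (i) reduce to an identity of infinitesimal Cartan decompositions relative to $a\in A_{\textup{reg}}$, discarding terms in the left ideal $U(\mathfrak g)\mathfrak k$; (ii) handle $\sum_j x_j^2$ trivially; (iii) for fixed $\lambda\in\Sigma^+$, combine the $R_\lambda$ and $R_{-\lambda}=\theta R_\lambda$ contributions, substitute \eqref{edecom} (equivalently the orthonormal-basis version from Remark~\ref{ONBexpression}), and expand $e_{-\alpha}e_\alpha + e_\alpha e_{-\alpha}$-type sums; (iv) separate the resulting expression into an "$\textup{Ad}_{a^{-1}}$-diagonal" part proportional to $\Omega_\lambda/(1-\xi_{2\lambda})$ and a commutator part; (v) evaluate the commutator part using $[x\otimes y\mapsto[x,y]](\varpi_\lambda)=-\sum_{\alpha\in R_\lambda}h_\alpha$, Lemma~\ref{elh}, and Corollary~\ref{lempropertiescor} to get the $z_\lambda$ and $\textup{mtp}(\lambda)t_\lambda$ terms with the coefficient $\tfrac12\frac{1+\xi_{2\lambda}}{1-\xi_{2\lambda}}$; (vi) check $W$-invariance of each piece using Lemma~\ref{Nfactors} and Corollary~\ref{lempropertiescor}, so the identity indeed holds in $(\mathcal R\otimes U(\mathfrak g)^M)^W$; (vii) observe that every term on the right lies in $U(\mathfrak g)^M$ by Lemma~\ref{Mfactors} and Corollary~\ref{lempropertiescor}, and that the coefficients lie in $\mathcal R$.

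The main obstacle I anticipate is step (iv)--(v): carefully tracking the noncommutativity when passing from $e_{-\alpha}e_\alpha$ (a product in $U(\mathfrak g)$) to the $y_\alpha$/$\textup{Ad}_{a^{-1}}(y_\alpha)$ variables, since the rational coefficients $\frac{a^\lambda}{1-a^{2\lambda}}$, $\frac{a^{2\lambda}}{1-a^{2\lambda}}$ etc.\ are themselves $a$-dependent but \emph{commute} with everything (they are scalars), so the only genuine reordering happens between $\mathfrak q$ and $\textup{Ad}_{a^{-1}}(\mathfrak q)$; getting the symmetrisation right so that the bare $a^\lambda/(1-a^{2\lambda})$ coefficient of the commutator term turns into $\tfrac12(1+\xi_{2\lambda})/(1-\xi_{2\lambda})$ after summing $\lambda$ with $-\lambda$ (using $z_{-\lambda}=-z_\lambda$, $t_{-\lambda}=-t_\lambda$, $\Omega_{-\lambda}=\mu((\tau\otimes\tau)\varpi_{-\lambda})$) is the delicate bookkeeping. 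This is exactly the standard Harish-Chandra radial-component computation of the Casimir (as in \cite[\S9.1.2]{W} or \cite{CM}), adapted to keep the $\mathfrak m$-valued remainder $z_\lambda$ that is usually absorbed, so I would follow that reference closely and merely isolate the $Z(\mathfrak m)$-part explicitly.
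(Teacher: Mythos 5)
There is a genuine gap, and it is one of interpretation before it is one of computation. Proposition \ref{dynCasimirlemma} asserts an \emph{exact} identity in $(\mathcal{R}\otimes U(\mathfrak{g})^M)^W$, i.e.\ an equality of elements of $\mathcal{R}\otimes U(\mathfrak{g})$. You propose to "restate it as" a statement about the infinitesimal Cartan decomposition of $\Omega^\prime$ relative to $a\in A_{\textup{reg}}$, discarding terms in the left ideal $U(\mathfrak{g})\mathfrak{k}$ ("modulo the radial equivalence the pure $\mathfrak{k}\cdot\mathfrak{k}$ term $\sum_i y_i^2$ drops"). That proves at best a congruence, which is strictly weaker than \eqref{dynC} and does not suffice for the way the proposition is used later: it is invoked as an exact identity to obtain the expression \eqref{kappacorealt} for $\kappa^{\textup{core}}$ in Proposition \ref{folding}, where $\kappa^{\textup{core}}$ is an honest element of $\mathcal{R}\otimes U(\mathfrak{g})^M$, not a radial component acting on spherical functions. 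Note also that the right-hand side of \eqref{dynC} still contains $\Omega_\lambda=\sum_{\alpha\in R_\lambda}e_{-\alpha}e_\alpha$ itself, so it is not a Cartan-decomposed expression at all; substituting \eqref{edecom} and working with $y_\alpha$ and $\textup{Ad}_{a^{-1}}(y_\alpha)$ is the machinery of the radial-component computation of the Casimir (Theorem \ref{Hexplicit}), not of this proposition, and your final step ("the remainder reassembles into $\frac{2}{1-a^{2\lambda}}\Omega_\lambda$") is left unsubstantiated and would essentially amount to undoing the substitution you just made.

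The actual argument is much more elementary and purely formal: from \eqref{division} one has $\Omega_{-\lambda}=\Omega_\lambda+z_\lambda+\textup{mtp}(\lambda)t_\lambda$; pairing $\lambda$ with $-\lambda$ in $\sum_{\lambda\in\Sigma}\frac{\Omega_\lambda}{1-\xi_{2\lambda}}$ and using the rational identities $\frac{1}{1-\xi_{2\lambda}}+\frac{1}{1-\xi_{-2\lambda}}=1$ and $\frac{1+\xi_{-2\lambda}}{1-\xi_{-2\lambda}}=-\frac{1+\xi_{2\lambda}}{1-\xi_{2\lambda}}$, together with $z_{-\lambda}=-z_\lambda$ and $t_{-\lambda}=-t_\lambda$, converts the right-hand side of \eqref{dynC} directly into $\sum_{j=1}^rx_j^2+\sum_{\lambda\in\Sigma}\Omega_\lambda=\Omega^\prime$; the membership in $(\mathcal{R}\otimes U(\mathfrak{g})^M)^W$ then follows from Lemma \ref{Nfactors} and Corollary \ref{lempropertiescor}, exactly as in your steps (vi)--(vii). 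You do have the right ingredients in hand (the map $x\otimes y\mapsto[x,y]$ applied to $\varpi_\lambda$, Lemma \ref{elh}, Corollary \ref{lempropertiescor}), but they should be used to produce the commutation identity $\Omega_{-\lambda}-\Omega_\lambda=z_\lambda+\textup{mtp}(\lambda)t_\lambda$ directly, not buried inside an $a$-dependent rewriting in terms of $y_\alpha$ and $\textup{Ad}_{a^{-1}}(y_\alpha)$; as written, your route proves the wrong (weaker) statement.
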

\begin{proof}
By Lemma \ref{Nfactors} 
it is clear that the right-hand side of \eqref{dynC} lies in $(\mathcal{R}\otimes U(\mathfrak{g})^M)^W$.
By \eqref{division} we have
\begin{equation}\label{hdyn}
\Omega_{-\lambda}=\Omega_\lambda
+\textup{mtp}(\lambda)t_\lambda
\end{equation}
for $\lambda\in\Sigma$. Hence
\begin{equation*}
\begin{split}
\sum_{\lambda\in\Sigma}\frac{\Omega_\lambda}{1-\xi_{2\lambda}}=&
\sum_{\lambda\in\Sigma^+}\frac{
\textup{mtp}(\lambda)t_\lambda}{1-\xi_{-2\lambda}}+\sum_{\lambda\in\Sigma^+}\Bigl(\frac{1}{1-\xi_{2\lambda}}+\frac{1}{1-\xi_{-2\lambda}}\Bigr)
\Omega_\lambda\\
=&\sum_{\lambda\in\Sigma^+}\frac{
textup{mtp}(\lambda)t_\lambda}{1-\xi_{-2\lambda}}+\sum_{\lambda\in\Sigma^+}\Omega_\lambda\\
=&\sum_{\lambda\in\Sigma^+}\Bigl(\frac{1}{1-\xi_{-2\lambda}}-\frac{1}{2}\Bigr)
\textup{mtp}(\lambda)t_\lambda+\frac{1}{2}\sum_{\lambda\in\Sigma}\Omega_\lambda\\
=&-\frac{1}{4}\sum_{\lambda\in\Sigma}\Bigl(
\frac{1+\xi_{2\lambda}}{1-\xi_{2\lambda}}\Bigr)
\textup{mtp}(\lambda)t_\lambda
+\frac{1}{2}\sum_{\lambda\in\Sigma}\Omega_\lambda,
\end{split}
\end{equation*}
where we have used \eqref{hdyn} in the first and third equality. Substituting this identity in the 
right-hand side of the dynamical expression \eqref{dynC} reduces it to the expression \eqref{Omegam} for $\Omega^\prime$.
\end{proof}
\begin{rema} 
There is no dynamical expression for $\varpi^\prime$ in 
$(\mathcal{R}\otimes (\mathfrak{g}\otimes\mathfrak{g})^M)^W$ that leads to \eqref{dynC} 
by applying the multiplication map $\mu$ of $U(\mathfrak{g})$. We can however write
\[
\Omega^\prime=
-\Omega_{\mathfrak{m}}
+\frac{1}{2}\sum_{\lambda\in\Sigma}\Bigl(\frac{1+\xi_{2\lambda}}{1-\xi_{2\lambda}}\Bigr)\textup{mtp}(\lambda)t_\lambda-2\mu(r_{21})
\]
involving the dynamical element
\begin{equation}\label{rmatrix}
r=-\frac{1}{2}\varpi_{\mathfrak{m}}-\frac{1}{2}\sum_{j=1}^rx_j\otimes x_j
-\sum_{\lambda\in\Sigma}\frac{\varpi_\lambda}{1-\xi_{-2\lambda}}\in (\mathcal{R}\otimes (\mathfrak{g}\otimes\mathfrak{g})^M)^W
\end{equation}
(here $r_{21}$  is obtained from $r$ by interchanging its tensor components). The $2$-tensor $r$ will play an important role in the explicit description of the asymptotic boundary KZB operators,
see Theorem \ref{mainbKZB} and Proposition \ref{folding}. Note that $r$ satisfies the quasi-unitarity condition
\[
r+r_{21}=-\varpi\in S^2(\mathfrak{g})^G.
\]
In the real split case $r$ is Felder's \cite{F} trigonometric solution of the classical dynamical Yang-Baxter equation. 
We will show in Subsection \ref{Further} that for arbitrary $G$, folded versions of $r$ satisfy coupled classical dynamical Yang-Baxter-reflection equations.
\end{rema}

\subsection{The radial component of the action of the Casimir element}
We first define an explicit
second order differential operator $H^{\sigma_\ell,\underline{\tau},\sigma_r}\in\mathbb{D}_M((V_\ell\otimes\underline{U}\otimes V_r^*)^{\mathfrak{m}})^W$ and then show that the differential operator $D_{\Omega,n}^{\sigma_\ell,\underline{\tau},\sigma_r}$ is equal to $H^{\sigma_{\ell},\underline{\tau},\sigma_r}$. The technique to compute $D_{\Omega,n}^{\sigma_\ell,\underline{\tau},\sigma_r}$ goes back to Harish-Chandra, see, e.g., \cite[\S 9.1.2]{W}. 

Write for $\lambda\in\Sigma$,
\[
\upsilon_\lambda:=4(\textup{pr}_{\mathfrak{k}}\otimes\textup{pr}_{\mathfrak{k}})(\varpi_\lambda)=
\sum_{\alpha\in R_\lambda}y_{-\alpha}\otimes y_{\alpha}\in \mathfrak{q}\otimes\mathfrak{q}
\]
and set $\Upsilon_\lambda:=\mu(\upsilon_\lambda)=\sum_{\alpha\in R_\lambda}y_{-\alpha}y_{\alpha}\in U(\mathfrak{k})$. 
\begin{lem}\label{elemproperties}
Let $\lambda\in\Sigma$.
\begin{enumerate}
\item $\upsilon_\lambda=\upsilon_{-\lambda}$ and $\Upsilon_\lambda=\Upsilon_{-\lambda}$.
\item $\upsilon_\lambda\in (\mathfrak{q}\otimes\mathfrak{q})^M$ and
$\Upsilon_\lambda\in U(\mathfrak{k})^M$.
\item $w\cdot \upsilon_\lambda=\upsilon_{w\lambda}$ and $w\cdot\Upsilon_\lambda=
\Upsilon_{w\lambda}$ for $w\in W$.
\end{enumerate}
\end{lem}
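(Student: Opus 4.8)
The plan is to reduce all six assertions to the corresponding facts about the two-tensor $\varpi_\lambda$, which are already available: $\varpi_\lambda\in(\mathfrak{g}^{-\lambda}\otimes\mathfrak{g}^\lambda)^M$ from Lemma \ref{Mfactors}, $w\cdot\varpi_\lambda=\varpi_{w\lambda}$ from Lemma \ref{Nfactors}(1), and the symmetry relation $(\theta\otimes\textup{id})(\varpi_\lambda)=(\textup{id}\otimes\theta)(\varpi_{-\lambda})$ from Remark \ref{ONBexpression}. The one extra elementary input is the identity $\textup{pr}_{\mathfrak{k}}\circ\theta=\textup{pr}_{\mathfrak{k}}$, immediate from $\textup{pr}_{\mathfrak{k}}=\frac{1}{2}(\textup{Id}_{\mathfrak{g}}+\theta)$ and $\theta^2=\textup{Id}_{\mathfrak{g}}$. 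Once the three claims are proved for $\upsilon_\lambda$, those for $\Upsilon_\lambda=\mu(\upsilon_\lambda)$ follow by applying the multiplication map $\mu$: equality of the $\upsilon$'s gives equality of the $\Upsilon$'s directly, while the $M$-invariance and $W$-equivariance are preserved because $\mu$ intertwines the relevant $\textup{Ad}$-actions; so I will only argue for $\upsilon_\lambda$.

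For part (1) I would first record that $y_{-\alpha}\otimes y_\alpha=y_{-\theta\alpha}\otimes y_{\theta\alpha}$ for every $\alpha\in R\setminus R_0$: by Lemma \ref{clem}, $y_\alpha=c_\alpha y_{\theta\alpha}$ and $y_{-\alpha}=c_{-\alpha}y_{-\theta\alpha}=c_\alpha^{-1}y_{-\theta\alpha}$, so the scalars cancel. Since $\theta$ maps $R_{-\lambda}$ bijectively onto $R_\lambda$, reindexing the sum defining $\upsilon_{-\lambda}$ by $\alpha\mapsto\theta\alpha$ and invoking this identity gives $\upsilon_{-\lambda}=\sum_{\alpha\in R_{-\lambda}}y_{-\alpha}\otimes y_\alpha=\sum_{\beta\in R_{\lambda}}y_{-\beta}\otimes y_\beta=\upsilon_\lambda$. (Alternatively one can read off the same equality from $\upsilon_\lambda=4(\textup{pr}_{\mathfrak{k}}\otimes\textup{pr}_{\mathfrak{k}})(\varpi_\lambda)$ using $(\theta\otimes\textup{id})(\varpi_\lambda)=(\textup{id}\otimes\theta)(\varpi_{-\lambda})$ and $\textup{pr}_{\mathfrak{k}}\circ\theta=\textup{pr}_{\mathfrak{k}}$; either route is a couple of lines.)

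For parts (2) and (3) I would work with $\upsilon_\lambda=4(\textup{pr}_{\mathfrak{k}}\otimes\textup{pr}_{\mathfrak{k}})(\varpi_\lambda)$. First, $\textup{pr}_{\mathfrak{k}}(\mathfrak{g}^{\pm\lambda})\subseteq\mathfrak{q}$ for every $\lambda\in\Sigma$: when $\lambda\in\Sigma^+$ this holds because $\mathfrak{g}^\lambda\subseteq\mathfrak{n}$ and $\mathfrak{q}=\textup{pr}_{\mathfrak{k}}(\mathfrak{n})$, and the remaining cases follow from $\textup{pr}_{\mathfrak{k}}\circ\theta=\textup{pr}_{\mathfrak{k}}$ together with $\theta(\mathfrak{g}^{-\lambda})=\mathfrak{g}^\lambda$; since $\varpi_\lambda\in\mathfrak{g}^{-\lambda}\otimes\mathfrak{g}^\lambda$ this places $\upsilon_\lambda$ in $\mathfrak{q}\otimes\mathfrak{q}$. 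The $M$-invariance of $\upsilon_\lambda$ is then immediate from $\varpi_\lambda\in(\mathfrak{g}^{-\lambda}\otimes\mathfrak{g}^\lambda)^M$ and the $\textup{Ad}(M)$-equivariance of $\textup{pr}_{\mathfrak{k}}\otimes\textup{pr}_{\mathfrak{k}}$ (a consequence of the $\textup{Ad}(K)$-equivariance of $\textup{pr}_{\mathfrak{k}}$ and $M\subseteq K$); and part (3) follows in exactly the same way from $w\cdot\varpi_\lambda=\varpi_{w\lambda}$ and the $\textup{Ad}(N_K(A))$-equivariance of $\textup{pr}_{\mathfrak{k}}\otimes\textup{pr}_{\mathfrak{k}}$, noting that $\mathfrak{q}$ is $\textup{Ad}(N_K(A))$-stable (being the orthogonal complement in $\mathfrak{k}$ of the $\textup{Ad}(N_K(A))$-stable subalgebra $\mathfrak{m}$), so that the $W$-action on $(\mathfrak{q}\otimes\mathfrak{q})^M$ is the restriction of the one on $(\mathfrak{g}\otimes\mathfrak{g})^M$. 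I do not anticipate a real obstacle; the only point needing a moment's care is the scalar bookkeeping in the reindexing step of part (1), which is exactly what Lemma \ref{clem} supplies.
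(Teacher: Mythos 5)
Your proposal is correct and follows essentially the same route as the paper: parts (2) and (3) are obtained, exactly as in the paper, from Lemma \ref{Mfactors}, Lemma \ref{Nfactors}(1) and the $\textup{Ad}(K)$-equivariance of $\textup{pr}_{\mathfrak{k}}$. The only (minor) variation is in part (1), where the paper deduces $\upsilon_\lambda=\upsilon_{-\lambda}$ from the symmetry of $(\theta\otimes\textup{id})(\varpi_\lambda)$ recorded in Remark \ref{ONBexpression} together with $\textup{pr}_{\mathfrak{k}}\circ\theta=\textup{pr}_{\mathfrak{k}}$, whereas you reindex the sum $\sum_{\alpha\in R_{-\lambda}}y_{-\alpha}\otimes y_\alpha$ via $\alpha\mapsto\theta\alpha$ using the scalar identities of Lemma \ref{clem} — an equally valid two-line argument, and you note the paper's route as an alternative anyway.
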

\begin{proof}
(1) Observe that $(\textup{pr}_{\mathfrak{k}}\otimes\textup{pr}_{\mathfrak{k}})(\varpi_\lambda)=
(\textup{pr}_{\mathfrak{k}}\otimes\textup{pr}_{\mathfrak{k}})((\theta\otimes\textup{id})(\varpi_\lambda))$
is a symmetric tensor by Remark \ref{ONBexpression}. Hence $\upsilon_{\lambda}=\upsilon_{-\lambda}$, and consequently $\Upsilon_\lambda=\Upsilon_{-\lambda}$.\\
(2) The $\textup{Ad}(M)$-invariance of $\upsilon_\lambda$ follows from Lemma \ref{Mfactors} and the fact that $\textup{pr}_{\mathfrak{k}}$ is $\textup{Ad}(K)$-equivariant. The $\textup{Ad}(M)$-invariance of $\Upsilon_\lambda\in U(\mathfrak{k})$ now follows immediately.\\
(3) This follows from Lemma \ref{Nfactors}(1).
\end{proof}
Note that $S(\Upsilon_\lambda)=\Upsilon_\lambda$ in view of 
Lemma \ref{elemproperties}(1).
\begin{defi}
Define $H^{\sigma_\ell,\underline{\tau},\sigma_r}\in\mathbb{D}_M((V_\ell\otimes\underline{U}\otimes V_r^*)^{\mathfrak{m}})^W$ by 
\begin{equation*}
\begin{split}
H^{\sigma_\ell,\underline{\tau},\sigma_r}:=&
\sum_{j=1}^r\partial_{x_j}^2+\frac{1}{2}\sum_{\lambda\in\Sigma}\textup{mtp}(\lambda)\Bigl(\frac{\xi_\lambda+
\xi_{-\lambda}}{\xi_\lambda-\xi_{-\lambda}}\Bigr)\partial_{t_\lambda}+\sigma_r^*(\Omega_{\mathfrak{m}})\\
&\quad-
\sum_{\lambda\in\Sigma}\frac{\sigma_{\ell;n}(\Upsilon_\lambda)+
\sigma_r^*(\Upsilon_\lambda)+
(\xi_\lambda+\xi_{-\lambda})(\sigma_{\ell;n}\otimes\sigma_r^*)(\upsilon_\lambda)}{(\xi_\lambda-\xi_{-\lambda})^2}.
\end{split}
\end{equation*}
\end{defi}
The linear operators $\sigma_r^*(\Omega_{\mathfrak{m}})$,
$\sigma_r^*(z_\lambda)$, $\sigma_{\ell;n}(\Upsilon_\lambda)$,
$\sigma_r^*(\Upsilon_\lambda)$ and $(\sigma_{\ell;n}\otimes\sigma_r^*)(\upsilon_\lambda)$ occurring in the definition of $H^{\sigma_\ell,\underline{\tau},\sigma_r}$
are viewed as linear operators on $(V_\ell\otimes\underline{U}\otimes V_r^*)^{\mathfrak{m}}$.
Note that this is well-defined and that $H^{\sigma_\ell,\underline{\tau},\sigma_r}$ lies in
$\mathbb{D}_M((V_\ell\otimes\underline{U}\otimes V_r^*)^{\mathfrak{m}})^W$  in view of Lemma \ref{elemproperties} and
Lemma \ref{Nfactors}.
We now have the following result,
cf. \cite[Prop. 9.1.2.11]{W}.
\begin{thm}\label{Hexplicit}
We have
\begin{equation}\label{eee}
D_{\Omega,n}^{\sigma_\ell,\underline{\tau},\sigma_r}=H^{\sigma_\ell,\underline{\tau},\sigma_r}_{M_\circ}
\end{equation}
in $\mathbb{D}_M((V_\ell\otimes\underline{U}\otimes V_r^*)^{\mathfrak{m}})^W$. 
\end{thm}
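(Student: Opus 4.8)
The plan is to compute $D_{\Omega,n}^{\sigma_\ell,\underline{\tau},\sigma_r}=L_{\Pi(\Omega)}^{\sigma_{\ell;n},\sigma_r}$ directly, following the classical Harish-Chandra technique as presented in \cite[\S 9.1.2]{W}. By Theorem \ref{DOwithradcomp} (or equivalently by Theorem \ref{DOwithrad} applied to the $K$-representation $(\sigma_{\ell;n},V_\ell\otimes\underline{U})$), the operator $D_{\Omega,n}^{\sigma_\ell,\underline{\tau},\sigma_r}$ is characterised by $\textup{Res}^\flat(\Omega[n]f)=D_{\Omega,n;M}^{\sigma_\ell,\underline{\tau},\sigma_r}\textup{Res}^\flat(f)$, and by Proposition \ref{zeroproperty} it is determined by this property once we exhibit an explicit differential operator with the correct action. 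So first I would compute an infinitesimal Cartan decomposition of $\Omega$ relative to $a\in A_{\textup{reg}}$, i.e. rewrite $\Omega$ (or rather $\Omega^\prime=\Omega-\Omega_{\mathfrak m}$, since $\Omega_{\mathfrak m}\in U(\mathfrak m)$ already contributes the term $\sigma_r^*(\Omega_{\mathfrak m})$ via the right action) in the form $\sum_i f_i(a)\,\textup{Ad}_{a^{-1}}(u_i)\,h_i\,v_i$ with $u_i,v_i\in U(\mathfrak k)$, $h_i\in U(\mathfrak a)$, $f_i\in\mathcal R$. The key computational input is the decomposition \eqref{edecom} of the root vectors $e_\alpha$ in terms of $y_\alpha$ and $\textup{Ad}_{a^{-1}}(y_\alpha)$, together with the expression \eqref{Omega} of $\Omega$ and the dynamical factorisation in Proposition \ref{dynCasimirlemma}.

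The heart of the computation is to expand $\sum_{\lambda\in\Sigma}\frac{\Omega_\lambda}{1-\xi_{2\lambda}}=\sum_{\lambda\in\Sigma}\frac{1}{1-\xi_{2\lambda}}\sum_{\alpha\in R_\lambda}e_{-\alpha}e_\alpha$ using \eqref{edecom}. Substituting $e_\alpha=\frac{a^\lambda\textup{Ad}_{a^{-1}}(y_\alpha)-a^{2\lambda}y_\alpha}{1-a^{2\lambda}}$ and the analogous expression for $e_{-\alpha}$ (with $\lambda$ replaced by $-\lambda$), the product $e_{-\alpha}e_\alpha$ becomes a sum of four terms of the shape $\textup{Ad}_{a^{-1}}(y_{-\alpha})\textup{Ad}_{a^{-1}}(y_\alpha)$, $\textup{Ad}_{a^{-1}}(y_{-\alpha})y_\alpha$, $y_{-\alpha}\textup{Ad}_{a^{-1}}(y_\alpha)$ and $y_{-\alpha}y_\alpha$, with scalar coefficients in $\mathcal R$. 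The first type, after acting by $\sigma_{\ell;n}$ on the $\textup{Ad}_{a^{-1}}(\cdot)$-factors and noting $\textup{Ad}_{a^{-1}}(y_{-\alpha})\textup{Ad}_{a^{-1}}(y_\alpha)=\textup{Ad}_{a^{-1}}(y_{-\alpha}y_\alpha)$, gives the $\sigma_{\ell;n}(\Upsilon_\lambda)$-term; the last type gives the $\sigma_r^*(\Upsilon_\lambda)$-term (via the right action and $S(\Upsilon_\lambda)=\Upsilon_\lambda$); the two mixed types combine to give the $(\sigma_{\ell;n}\otimes\sigma_r^*)(\upsilon_\lambda)$-term. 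One must also keep track of the derivative terms: the $\mathfrak a$-part of $h_\alpha$ contributes $\partial_{t_\lambda}$ after using Lemma \ref{elh}(2) and Corollary \ref{lempropertiescor}, producing the first-order coefficient $\frac12\textup{mtp}(\lambda)\frac{\xi_\lambda+\xi_{-\lambda}}{\xi_\lambda-\xi_{-\lambda}}$, while the $\mathfrak t$-part $z_\lambda$ contributes the $\sigma_r^*(z_\lambda)$-term; the leading $\sum_{j=1}^r x_j^2$ from $\Omega^\prime$ gives $\sum_j\partial_{x_j}^2$. Summing over $\lambda\in\Sigma$, the "commutator corrections" coming from moving $\textup{Ad}_{a^{-1}}(\cdot)$-factors to the left (which produce terms involving $[y_{-\alpha},y_\alpha]$, hence $h_\alpha$ and $z_\lambda$) must be reconciled using Lemma \ref{elh} and \eqref{hdyn}; this bookkeeping is precisely what makes Proposition \ref{dynCasimirlemma} the right intermediate statement to lean on.

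After assembling all contributions one obtains $\Pi(\Omega^\prime)$ explicitly as an element of $\mathcal R\otimes U(\mathfrak a)\otimes(U(\mathfrak k)\otimes_{U(\mathfrak m)}U(\mathfrak k))$; applying $\zeta_{\sigma_{\ell;n},\sigma_r}$ and adding the $\Omega_{\mathfrak m}$-contribution $L_{\Pi(\Omega_{\mathfrak m})}^{\sigma_{\ell;n},\sigma_r}=\sigma_r^*(\Omega_{\mathfrak m})$ (which holds because $\Omega_{\mathfrak m}\in U(\mathfrak m)$, so $\textup{Ad}_{a^{-1}}$ acts trivially and $\zeta$ produces $\sigma_r^*(S(\Omega_{\mathfrak m}))=\sigma_r^*(\Omega_{\mathfrak m})$ using that $\Omega_{\mathfrak m}$ is central and $S$ fixes degree-two central elements up to lower order — more carefully, one uses $\Omega_{\mathfrak m}=S(\Omega_{\mathfrak m})$ which follows since $\varpi_{\mathfrak m}$ is symmetric and $\Omega_{\mathfrak m}$ central) matches the displayed formula for $H^{\sigma_\ell,\underline\tau,\sigma_r}$ term by term, using $\xi_\lambda-\xi_{-\lambda}$ versus $1-\xi_{2\lambda}$ and the identity $\frac{1+\xi_{2\lambda}}{1-\xi_{2\lambda}}=-\frac{\xi_\lambda+\xi_{-\lambda}}{\xi_\lambda-\xi_{-\lambda}}$ (up to sign conventions). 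Finally I restrict to $M$-invariants via \eqref{redequation} to get \eqref{eee}. The main obstacle is the careful tracking of signs and of the non-commutativity of $U(\mathfrak k)$ in the mixed terms — in particular showing that the cross terms $\textup{Ad}_{a^{-1}}(y_{-\alpha})y_\alpha+y_{-\alpha}\textup{Ad}_{a^{-1}}(y_\alpha)$, summed over $\alpha\in R_\lambda$ with their $\mathcal R$-coefficients, reorganise into exactly $(\xi_\lambda+\xi_{-\lambda})(\sigma_{\ell;n}\otimes\sigma_r^*)(\upsilon_\lambda)/(\xi_\lambda-\xi_{-\lambda})^2$ after passing through $\zeta_{\sigma_{\ell;n},\sigma_r}$, and that all the residual first-order and zeroth-order commutator terms cancel against the $z_\lambda$- and $t_\lambda$-contributions as predicted by Proposition \ref{dynCasimirlemma}.
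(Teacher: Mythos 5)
Your route is essentially the paper's: compute the infinitesimal Cartan decomposition of $\Omega^\prime=\Omega-\Omega_{\mathfrak m}$ relative to $a\in A_{\textup{reg}}$ by substituting \eqref{edecom} into $\sum_{\alpha\in R_\lambda}e_{-\alpha}e_\alpha$, sort the four types of products, apply $\zeta_{\sigma_{\ell;n},\sigma_r}$, add $\sigma_r^*(\Omega_{\mathfrak m})$ (your argument that $S(\Omega_{\mathfrak m})=\Omega_{\mathfrak m}$ is fine), and this matches the paper's computation almost step for step. Two comments. The minor one: the asserted identity lives in $\mathbb{D}_M((V_\ell\otimes\underline{U}\otimes V_r^*)^{\mathfrak m})^W$, i.e. it is an identity of operators on functions valued in the $\mathfrak m$-invariants, so the uniqueness statement of Proposition \ref{zeroproperty} (which only sees the $M$-restriction) cannot be used to characterise the operator here; fortunately your actual plan is the direct computation of $\Pi(\Omega^\prime)$, which is what is needed, and the detour through Proposition \ref{dynCasimirlemma} is harmless but unnecessary — the paper works straight from \eqref{Omegam}.

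The genuine gap is in how you dispose of the residual commutator terms. When the $\textup{Ad}_{a^{-1}}$-factors are moved to the left in the mixed products, the commutators $[y_\alpha,\textup{Ad}_{a^{-1}}(y_{-\alpha})]$ do not only produce Cartan-type pieces: writing $y_{\pm\alpha}=e_{\pm\alpha}+\theta(e_{\pm\alpha})$ one gets, besides $(a^\lambda+a^{-\lambda})\textup{pr}_{\mathfrak k}(h_\alpha)$ and $(a^\lambda-a^{-\lambda})t_\lambda$, also the terms $a^\lambda[\theta(e_\alpha),e_{-\alpha}]+a^{-\lambda}[e_\alpha,\theta(e_{-\alpha})]$ as in \eqref{tttt}, which lie in $\mathfrak g^{\mp2\lambda}$ and are not in Cartan-decomposed form. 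These cannot "cancel against the $z_\lambda$- and $t_\lambda$-contributions as predicted by Proposition \ref{dynCasimirlemma}": that proposition involves only $\Omega_\lambda$, $z_\lambda$, $t_\lambda$ and the $x_j$, and carries no information about $\mathfrak g^{\pm2\lambda}$-valued pieces, so it cannot make them go away, and leaving them would force you to re-expand them via \eqref{edecom} and the bookkeeping would not close. The missing idea is the symmetry fact of Remark \ref{ONBexpression}: $\sum_{\alpha\in R_\lambda}e_\alpha\otimes\theta(e_{-\alpha})\in S^2(\mathfrak g^\lambda)$ (equivalently the orthonormal-basis expression \eqref{HAexpression}), which together with the antisymmetry of the bracket makes each of these sums vanish for every fixed $\lambda$ — this is exactly the step \eqref{todoH2} in the paper's proof. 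With that ingredient added, your outline goes through.
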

\begin{proof}
By Corollary \ref{Mcor} the differential operator
$D_{\Omega,n}^{\sigma_\ell,\underline{\tau},\sigma_r}$ lies in
$\mathbb{D}_M((V_\ell\otimes\underline{U}\otimes V_r^*)^{\mathfrak{m}})^W$.
Furthermore,
\[
D_{\Omega,n}^{\sigma_\ell,\underline{\tau},\sigma_r}=L_{\Pi(\Omega)}^{\sigma_{\ell;n},\sigma_r}=
L_{\Pi(\Omega^\prime)}^{\sigma_{\ell;n},\sigma_r}+\sigma_r^*(\Omega_{\mathfrak{m}}).
\]
Hence it suffices to show that $L_{\Pi(\Omega^\prime)}^{\sigma_{\ell;n},\sigma_r}$ is
the restriction to $C^{\infty}(A_{\textup{reg}};(V_\ell\otimes\underline{U}\otimes V_r^*)^{\mathfrak{m}})$
of the differential operator
\begin{equation*}
\sum_{j=1}^r\partial_{x_j}^2+\frac{1}{2}\sum_{\lambda\in\Sigma}\textup{mtp}(\lambda)\Bigl(\frac{\xi_\lambda+
\xi_{-\lambda}}{\xi_\lambda-\xi_{-\lambda}}\Bigr)\partial_{t_\lambda}
-\sum_{\lambda\in\Sigma}\frac{\sigma_{\ell;n}(\Upsilon_\lambda)+
\sigma_r^*(\Upsilon_\lambda)+
(\xi_\lambda+\xi_{-\lambda})(\sigma_{\ell;n}\otimes\sigma_r^*)(\upsilon_\lambda)}{(\xi_\lambda-\xi_{-\lambda})^2}.
\end{equation*}
For this it suffices to prove that
\begin{equation*}
\begin{split}
\Pi(\Omega^\prime)&=\sum_{j=1}^r1\otimes x_j^2\otimes (1\otimes_{U(\mathfrak{m})}1)+
\frac{1}{2}\sum_{\lambda\in\Sigma}\textup{mtp}(\lambda)\Bigl(\frac{\xi_\lambda+\xi_{-\lambda}}
{\xi_\lambda-\xi_{-\lambda}}\Bigr)\otimes t_\lambda\otimes (1\otimes_{U(\mathfrak{m})}1)\\
&
+\sum_{\lambda\in\Sigma}\frac{(\xi_\lambda+\xi_{-\lambda})}{(\xi_\lambda-\xi_{-\lambda})^2}
\otimes 1\otimes\upsilon_\lambda
-\sum_{\lambda\in\Sigma}\frac{1}{(\xi_\lambda-\xi_{-\lambda})^2}\otimes 1
\otimes \Bigl(\Upsilon_\lambda\otimes_{U(\mathfrak{m})}1+1\otimes_{U(\mathfrak{m})}
\Upsilon_\lambda\Bigr),
\end{split}
\end{equation*}
where, in the fourth term, $\upsilon_\lambda\in\mathfrak{q}\otimes\mathfrak{q}$ is viewed as 
element in $U(\mathfrak{k})\otimes_{U(\mathfrak{m})} U(\mathfrak{k})$ via the canonical injection
$\mathfrak{q}\otimes\mathfrak{q}\hookrightarrow U(\mathfrak{k})\otimes_{U(\mathfrak{m})} U(\mathfrak{k})$, $x\otimes y\mapsto x\otimes_{U(\mathfrak{m})}y$. 

This in turn will follow from the
identity
\begin{equation*}
\begin{split}
\Omega^\prime&=\sum_{j=1}^rx_j^2+\frac{1}{2}\sum_{\lambda\in\Sigma}\Bigl(\frac{a^\lambda+a^{-\lambda}}{a^\lambda-a^{-\lambda}}\Bigr)\textup{mtp}(\lambda)t_\lambda\\
&\quad+\sum_{\lambda\in\Sigma}\frac{(a^\lambda+a^{-\lambda})\mu((\textup{Ad}_{a^{-1}}\otimes\textup{Id})
(\upsilon_\lambda))-\textup{Ad}_{a^{-1}}(\Upsilon_\lambda)-
\Upsilon_\lambda}{(a^\lambda-a^{-\lambda})^2}
\end{split}
\end{equation*}
in $U(\mathfrak{g})$ for all $a\in A_{\textup{reg}}$. 

Fix $a\in A_{\textup{reg}}$ and $\lambda\in\Sigma$. By applying \eqref{edecom} to both $e_{-\alpha}$ and $e_\alpha$ and using Lemma \ref{elemproperties}(1) we get
\begin{equation*}
\begin{split}
\Omega_\lambda=\sum_{\alpha\in R_\lambda}e_{-\alpha}e_\alpha&=
\frac{(a^\lambda+a^{-\lambda})\mu(\textup{Ad}_{a^{-1}}\otimes\textup{Id})(\upsilon_\lambda)-
\textup{Ad}_{a^{-1}}(\Upsilon_\lambda)-\Upsilon_\lambda}{(a^\lambda-a^{-\lambda})^2}\\
&+
\frac{a^{-\lambda}}{(a^\lambda-a^{-\lambda})^2}\sum_{\alpha\in R_\lambda}[y_{-\alpha},\textup{Ad}_{a^{-1}}(y_{\alpha})].
\end{split}
\end{equation*}
In view of \eqref{Omegam} it thus suffices to show that 
 \begin{equation}\label{todoH}
 \sum_{\lambda\in\Sigma}\frac{a^{-\lambda}}{(a^\lambda-a^{-\lambda})^2}
 \sum_{\alpha\in R_\lambda}[y_{-\alpha},\textup{Ad}_{a^{-1}}(y_{\alpha})]=\frac{1}{2}\sum_{\lambda\in\Sigma}\Bigl(\frac{a^\lambda+a^{-\lambda}}{a^\lambda-a^{-\lambda}}\Bigr)\textup{mtp}(\lambda)t_\lambda.
 \end{equation}
Note that for $\alpha\in R_\lambda$,
\begin{equation}\label{tttt}
\begin{split}
[y_{-\alpha},&\textup{Ad}_{a^{-1}}(y_{\alpha})]=
-a^{-\lambda}h_\alpha-a^{\lambda}\theta(h_\alpha)+a^{-\lambda}[\theta(e_{-\alpha}),e_{\alpha}]
+a^{\lambda}[e_{-\alpha},\theta(e_{\alpha})]\\
&=-(a^\lambda+a^{-\lambda})\textup{pr}_{\mathfrak{k}}(h_\alpha)+(a^\lambda-a^{-\lambda})t_\lambda+
a^{-\lambda}[\theta(e_{-\alpha}),e_{\alpha}]
+a^{\lambda}[e_{-\alpha},\theta(e_{\alpha})].
\end{split}
\end{equation}
When substituting into the left-hand side of \eqref{todoH}, the first term in the second line of \eqref{tttt} does not contribute since
$\textup{pr}_{\mathfrak{k}}(\sum_{\alpha\in R_\lambda}h_\alpha)=0$ by (the proof of) Corollary \ref{lempropertiescor}. 
The second term in the second line of \eqref{tttt} contributes
\begin{equation*}
\begin{split}
\sum_{\lambda\in\Sigma}\textup{mtp}(\lambda)\frac{a^{-\lambda}(a^\lambda-a^{-\lambda})}{(a^\lambda-a^{-\lambda})^2}
t_\lambda&=\sum_{\lambda\in\Sigma^+}\textup{mtp}(\lambda)
\Bigl(\frac{1}{a^{2\lambda}-1}-\frac{1}{a^{-2\lambda}-1}\Bigr)t_\lambda\\
&=\sum_{\lambda\in\Sigma^+}\textup{mtp}(\lambda)\Bigl(\frac{a^{\lambda}+a^{-\lambda}}{a^{\lambda}-a^{-\lambda}}\Bigr)t_\lambda=
\frac{1}{2}\sum_{\lambda\in\Sigma}\textup{mtp}(\lambda)
\Bigl(\frac{a^{\lambda}+a^{-\lambda}}{a^{\lambda}-a^{-\lambda}}\Bigr)t_\lambda
\end{split}
\end{equation*}
since $t_{-\lambda}=-t_\lambda$ and
$\textup{mtp}(-\lambda)=\textup{mtp}(\lambda)$. It thus suffices to show that
\begin{equation}\label{todoH2}
 \sum_{\lambda\in\Sigma}\frac{a^{-\lambda}}{(a^\lambda-a^{-\lambda})^2}
 \sum_{\alpha\in R_\lambda}\bigl(a^{-\lambda}[\theta(e_{-\alpha}),e_{\alpha}]+a^{\lambda}[e_{-\alpha},\theta(e_{\alpha})]\bigr)
 =0,
\end{equation}
which in turn will follow from $\sum_{\alpha\in R_\lambda}[\theta(e_{-\alpha}),e_\alpha]=0$.
This though follows from the anti-symmetry of the Lie bracket and the fact that
\[
\sum_{\alpha\in R_\lambda}e_\alpha\otimes\theta(e_{-\alpha})\in S^2(\mathfrak{g}^\lambda),
\]
which was observed in Remark \ref{ONBexpression}.
\end{proof}
\begin{rema} 
In the real split case one can choose the root vectors $e_\alpha$ in such a way that $\theta(e_\alpha)=-e_{-\alpha}$ for $\alpha\in R$. In that case one has $\upsilon_\alpha=-y_\alpha\otimes y_\alpha$ and $\Upsilon_\alpha=-y_\alpha^2$ for $\alpha\in R=\Sigma$, and Theorem \ref{Hexplicit} reduces to \cite[Cor. 3.5]{SR}.
\end{rema}
\subsection{The Schr{\"o}dinger operator}
The following explicit gauge will turn $H^{\sigma_\ell,\underline{\tau},\sigma_r}$
into a Schr{\"o}dinger operator.

\begin{defi}\label{gaugeDef}
We write $D\mapsto\widetilde{D}$ for the $W$-equivariant $\mathcal{R}\otimes\textup{End}_M((V_\ell\otimes\underline{U}\otimes V_r^*)^{\mathfrak{m}})$-linear algebra automorphism of
$\mathbb{D}_M((V_\ell\otimes\underline{U}\otimes V_r^*)^{\mathfrak{m}})$ satisfying
\[
\widetilde{\partial_h}=\frac{1}{4}\sum_{\lambda\in\Sigma}\lambda(h)\textup{mtp}(\lambda)
\Bigl(\frac{1+\xi_{2\lambda}}{1-\xi_{2\lambda}}\Bigr)+\partial_h\qquad \forall\, h\in\mathfrak{a}_{\mathbb{R}}.
\]
\end{defi}
\begin{rema}
Consider the positive Weyl chamber
\[
A^+:=\{a\in A \,\,\, | \,\,\, a^\lambda>1\quad \forall\, \lambda\in\Sigma^+\}
\]
and define an analytic function $\delta$ on $A^+$ by
\begin{equation}\label{deltam}
\delta(a):=a^{\rho}\prod_{\lambda\in\Sigma^+}(1-a^{-2\lambda})^{\frac{\textup{mtp}(\lambda)}{2}},
\qquad \rho:=\frac{1}{2}\sum_{\lambda\in\Sigma^+}\textup{mtp}(\lambda)\lambda.
\end{equation}
A straightforward computation then shows that 
\[
\bigl(\widetilde{D}(\delta f)\bigr)(a)=
\delta(a)\bigl(Df\bigr)(a)
\] 
for $D\in\mathbb{D}_M((V_\ell\otimes\underline{U}\otimes V_r^*)^{\mathfrak{m}})$, 
$f\in C^\infty(A^+;(V_\ell\otimes\underline{U}\otimes V_r^*)^{\mathfrak{m}})$ and $a\in A^+$.
\end{rema}

Define the potential $V^{\sigma_\ell,\underline{\tau},\sigma_r}\in\bigl(\mathcal{R}\otimes\textup{End}_M((V_\ell\otimes\underline{U}\otimes V_r^*)^{\mathfrak{m}})\bigr)^W$
by
\begin{equation}\label{potential}
V^{\sigma_\ell,\underline{\tau},\sigma_r}:=-\sum_{\lambda\in\Sigma}\frac{k(\lambda)}{(\xi_\lambda-\xi_{-\lambda})^2}
-\sum_{\lambda\in\Sigma}\frac{
\sigma_{\ell;n}(\Upsilon_\lambda)+
\sigma_r^*(\Upsilon_\lambda)+
(\xi_\lambda+\xi_{-\lambda})(\sigma_{\ell;n}\otimes\sigma_r^*)(\upsilon_\lambda)}{(\xi_\lambda-\xi_{-\lambda})^2}
\end{equation}
with $k(\lambda):=\textup{mtp}(\lambda)\bigl(\textup{mtp}(2\lambda)+\frac{\textup{mtp}(\lambda)}{2}-1\bigr)(t_\lambda,t_\lambda)$.
\begin{prop}\label{Sprop}
The gauged differential operator $\widetilde{H}^{\sigma_\ell,\underline{\tau},\sigma_r}\in\mathbb{D}_M((V_\ell\otimes\underline{U}\otimes V_r^*)^{\mathfrak{m}})^W$ is explicitly given by
\begin{equation}\label{tildeH}
\widetilde{H}^{\sigma_\ell,\underline{\tau},\sigma_r}=
\sum_{j=1}^r\partial_{x_j}^2+V^{\sigma_\ell,\underline{\tau},\sigma_r}+\sigma_r^*(\Omega_{\mathfrak{m}})-(t_\rho,t_\rho).
\end{equation}
\end{prop}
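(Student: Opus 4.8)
The plan is to apply the gauge of Definition \ref{gaugeDef} termwise to $H^{\sigma_\ell,\underline{\tau},\sigma_r}$ and to reduce the identity to the classical computation of the radial part of the Laplacian. First, $\widetilde{H}^{\sigma_\ell,\underline{\tau},\sigma_r}$ lies in $\mathbb{D}_M((V_\ell\otimes\underline{U}\otimes V_r^*)^{\mathfrak{m}})^W$ because $H^{\sigma_\ell,\underline{\tau},\sigma_r}$ does and $D\mapsto\widetilde{D}$ is a $W$-equivariant algebra automorphism of $\mathbb{D}_M((V_\ell\otimes\underline{U}\otimes V_r^*)^{\mathfrak{m}})$. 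Since that automorphism is $\mathcal{R}\otimes\textup{End}_M((V_\ell\otimes\underline{U}\otimes V_r^*)^{\mathfrak{m}})$-linear, it fixes the three zeroth-order summands of $H^{\sigma_\ell,\underline{\tau},\sigma_r}$, namely $\sigma_r^*(\Omega_{\mathfrak{m}})$, the term $\tfrac12\sum_{\lambda\in\Sigma}\tfrac{\xi_\lambda+\xi_{-\lambda}}{\xi_\lambda-\xi_{-\lambda}}\sigma_r^*(z_\lambda)$, and the final sum in the definition of $H^{\sigma_\ell,\underline{\tau},\sigma_r}$; these are precisely the summands of \eqref{tildeH} other than $-\sum_{\lambda\in\Sigma}k(\lambda)(\xi_\lambda-\xi_{-\lambda})^{-2}$ and $-(t_\rho,t_\rho)$. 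Hence the proposition reduces to the scalar identity
\[
\widetilde{L}=\sum_{j=1}^r\partial_{x_j}^2-\sum_{\lambda\in\Sigma}\frac{k(\lambda)}{(\xi_\lambda-\xi_{-\lambda})^2}-(t_\rho,t_\rho),\qquad L:=\sum_{j=1}^r\partial_{x_j}^2+\tfrac12\sum_{\lambda\in\Sigma}\textup{mtp}(\lambda)\tfrac{\xi_\lambda+\xi_{-\lambda}}{\xi_\lambda-\xi_{-\lambda}}\,\partial_{t_\lambda},
\]
$L$ being the scalar second-order part of $H^{\sigma_\ell,\underline{\tau},\sigma_r}$; as all coefficients are real-analytic on $A_{\textup{reg}}$, it suffices to verify this on the chamber $A^+$ of \eqref{deltam}.

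The second step rewrites $L$ through the function $\delta$ of \eqref{deltam}. On $A^+$ one has $\log\delta=\rho\circ\log+\sum_{\lambda\in\Sigma^+}\tfrac{\textup{mtp}(\lambda)}{2}\log(1-\xi_{-2\lambda})$; differentiating, and using $\rho=\tfrac12\sum_{\lambda\in\Sigma^+}\textup{mtp}(\lambda)\lambda$, the elementary identities $\tfrac{1+\xi_{-2\lambda}}{1-\xi_{-2\lambda}}=-\tfrac{1+\xi_{2\lambda}}{1-\xi_{2\lambda}}=\tfrac{\xi_\lambda+\xi_{-\lambda}}{\xi_\lambda-\xi_{-\lambda}}$ and $\partial_{t_\lambda}=\sum_j\lambda(x_j)\partial_{x_j}$ (which holds because $\{x_j\}$ is orthonormal and $\theta=-\textup{id}$ on $\mathfrak{a}$), one obtains $\partial_h\log\delta=\tfrac14\sum_{\lambda\in\Sigma}\lambda(h)\,\textup{mtp}(\lambda)\tfrac{\xi_\lambda+\xi_{-\lambda}}{\xi_\lambda-\xi_{-\lambda}}$ for $h\in\mathfrak{a}_{\mathbb{R}}$. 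Comparing with Definition \ref{gaugeDef}, this says $\widetilde{\partial_h}=\partial_h-\partial_h\log\delta$, i.e.\ $\widetilde{D}=M_\delta\circ D\circ M_\delta^{-1}$ with $M_\delta$ multiplication by $\delta$ (consistent with the Remark after Definition \ref{gaugeDef}), and the same identities show that the first-order part of $L$ equals $2\sum_j(\partial_{x_j}\log\delta)\partial_{x_j}$, so $L=\sum_j\partial_{x_j}^2+2\sum_j(\partial_{x_j}\log\delta)\partial_{x_j}$. A one-line computation of $M_\delta\circ\bigl(\sum_j\partial_{x_j}^2+2\sum_j(\partial_{x_j}\log\delta)\partial_{x_j}\bigr)\circ M_\delta^{-1}$ — in which the first-order terms cancel, which is the whole point of the gauge — then gives
\[
\widetilde{L}=\sum_{j=1}^r\partial_{x_j}^2-\delta^{-1}\Bigl(\sum_{j=1}^r\partial_{x_j}^2\Bigr)(\delta),
\]
the subtracted term being multiplication by the function $\delta^{-1}\sum_j\partial_{x_j}^2(\delta)$.

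It then remains to prove $\delta^{-1}\bigl(\sum_{j=1}^r\partial_{x_j}^2\bigr)(\delta)=(t_\rho,t_\rho)+\sum_{\lambda\in\Sigma}k(\lambda)(\xi_\lambda-\xi_{-\lambda})^{-2}$, which is the classical Harish-Chandra radial-part computation (see, e.g., \cite[\S 9.1.2]{W} and \cite{HO}); I would reproduce its key steps. Expanding $\delta^{-1}\sum_j\partial_{x_j}^2(\delta)=\sum_j\partial_{x_j}^2\log\delta+\sum_j(\partial_{x_j}\log\delta)^2$, one finds $\sum_j\partial_{x_j}^2\log\delta=-2\sum_{\lambda\in\Sigma^+}\textup{mtp}(\lambda)(t_\lambda,t_\lambda)(\xi_\lambda-\xi_{-\lambda})^{-2}$ (using $(\xi_{2\lambda}-1)^2=\xi_{2\lambda}(\xi_\lambda-\xi_{-\lambda})^2$) and $\sum_j(\partial_{x_j}\log\delta)^2=(t_\rho,t_\rho)+2\sum_{\lambda\in\Sigma^+}\textup{mtp}(\lambda)(t_\rho,t_\lambda)(\xi_{2\lambda}-1)^{-1}+\sum_{\lambda,\mu\in\Sigma^+}\textup{mtp}(\lambda)\textup{mtp}(\mu)(t_\lambda,t_\mu)(\xi_{2\lambda}-1)^{-1}(\xi_{2\mu}-1)^{-1}$. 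Substituting $(\xi_{2\lambda}-1)^{-1}=\tfrac12\bigl(\tfrac{\xi_\lambda+\xi_{-\lambda}}{\xi_\lambda-\xi_{-\lambda}}-1\bigr)$ and $(\xi_\lambda-\xi_{-\lambda})^{-2}=\tfrac14\bigl(\bigl(\tfrac{\xi_\lambda+\xi_{-\lambda}}{\xi_\lambda-\xi_{-\lambda}}\bigr)^2-1\bigr)$, then using $\rho=\tfrac12\sum_{\mu\in\Sigma^+}\textup{mtp}(\mu)\mu$ together with the cotangent-type addition formula for $\tfrac{\xi_\lambda+\xi_{-\lambda}}{\xi_\lambda-\xi_{-\lambda}}\cdot\tfrac{\xi_\mu+\xi_{-\mu}}{\xi_\mu-\xi_{-\mu}}$, the contributions of non-proportional pairs $\{\lambda,\mu\}$ cancel, while the diagonal together with the proportional pairs $\{\lambda,2\lambda\}\subseteq\Sigma^+$ recombine into $(t_\rho,t_\rho)+2\sum_{\lambda\in\Sigma^+}k(\lambda)(\xi_\lambda-\xi_{-\lambda})^{-2}=(t_\rho,t_\rho)+\sum_{\lambda\in\Sigma}k(\lambda)(\xi_\lambda-\xi_{-\lambda})^{-2}$, with the precise constant $k(\lambda)=\textup{mtp}(\lambda)\bigl(\textup{mtp}(2\lambda)+\tfrac{\textup{mtp}(\lambda)}{2}-1\bigr)(t_\lambda,t_\lambda)$ emerging exactly from the interaction of the $\lambda$- and $2\lambda$-contributions. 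The main obstacle is precisely this last step: correctly bookkeeping the multipliable roots so that $\textup{mtp}(2\lambda)$ enters $k(\lambda)$ with the right coefficient; everything else is routine. Combining the three steps yields \eqref{tildeH}.
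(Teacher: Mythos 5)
Your proposal is correct and takes essentially the same route as the paper: the paper's proof just notes that the computation is "similar to the scalar case" and cites \cite[Thm. 2.1.1]{HS}, whereas you make the reduction explicit (the gauge is $\mathcal{R}\otimes\textup{End}_M((V_\ell\otimes\underline{U}\otimes V_r^*)^{\mathfrak{m}})$-linear, hence fixes the zeroth-order terms and coincides with conjugation by $\delta$ on $A^+$, giving $\widetilde{L}=\sum_j\partial_{x_j}^2-\delta^{-1}\bigl(\sum_j\partial_{x_j}^2\bigr)(\delta)$) and then sketch the classical Harish-Chandra computation of $\delta^{-1}\bigl(\sum_j\partial_{x_j}^2\bigr)(\delta)$, with the constant $k(\lambda)$ correctly produced by the $\lambda$--$2\lambda$ interaction. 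The one step you leave at the level of an assertion, the cancellation of the non-proportional cross terms, is exactly the classical lemma behind \cite[Thm. 2.1.1]{HS}, i.e.\ the same point to which the paper's own proof defers, so no genuine gap beyond what the paper itself omits.
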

\begin{proof}
The proof of the first statement is similar to the scalar case ($n=0$, $\sigma_\ell=\sigma_r$ the trivial representation), cf. the proof of \cite[Thm. 2.1.1]{HS}. 
\end{proof}
Write $\widetilde{A}^{\sigma_\ell,\underline{\tau},\sigma_r}$ and $\widetilde{B}^{\sigma_\ell,\underline{\tau},\sigma_r}$ for the images of the algebras $A^{\sigma_\ell,\underline{\tau},\sigma_r}$ and
$B^{\sigma_\ell,\underline{\tau},\sigma_r}$ (see \eqref{incalgebra}) 
under the isomorphism $D\mapsto\widetilde{D}$ of 
$\mathbb{D}((V_\ell\otimes\underline{U}\otimes V_r^*)^M)^W$. We have an inclusion of algebras 
\begin{equation}\label{incalgebratilde}
\widetilde{A}^{\sigma_\ell,\underline{\tau},\sigma_r}\subseteq
\widetilde{B}^{\sigma_\ell,\underline{\tau},\sigma_r}\subset\mathbb{D}((V_\ell\otimes\underline{U}
\otimes V_r^*)^M)^W,
\end{equation}
with $\widetilde{A}^{\sigma_\ell,\underline{\tau},\sigma_r}$ commutative and containing
$\widetilde{H}_M^{\sigma_\ell,\underline{\tau},\sigma_r}=\widetilde{D}_{\Omega,n;M}^{\sigma_\ell,\underline{\tau},\sigma_r}$.
\begin{defi}\label{Sdef}
The quantum Calogero-Moser spin chain is the quantum superintegrable system with
algebra of quantum Hamiltonians $\widetilde{A}^{\sigma_\ell,\underline{\tau},\sigma_r}$,
algebra of quantum integrals $\widetilde{B}^{\sigma_\ell,\underline{\tau},\sigma_r}$ and Schr{\"o}dinger operator 
$\widetilde{H}_M^{\sigma_\ell,\underline{\tau},\sigma_r}
\in\widetilde{A}^{\sigma_\ell,\underline{\tau},\sigma_r}$.
\end{defi}
The quantum Calogero-Moser spin chain is a mixture of a quantum spin Calogero-Moser system
of rank $r$ and a one-dimensional spin chain with $n$ internal sites and two reflecting boundaries. For the bosonic theory, the quantum state space is a suitable completion of the space $C^\infty(A_{\textup{reg}};(V_\ell\otimes\underline{U}\otimes V_r^*)^M)^W$. 
\begin{cor}\label{npointcor}
Let $f\in C_{\sigma_\ell,\underline{\tau},\sigma_r}^\infty(G^{\times (n+1)};\pmb{\chi})$ be an
$n$-point spherical function \textup{(}see Definition \ref{defnpoint}\textup{)}. Write
\[
\widetilde{f}^\flat\in C^\infty(A_{\textup{reg}};(V_\ell\otimes\underline{U}\otimes V_r^*)^M)^W
\]
for the $W$-invariant $(V_\ell\otimes\underline{U}\otimes V_r^*)^M$-valued function such that
\[
\widetilde{f}^\flat(a):=\delta(a)f^\flat(a)\qquad \forall\, a\in A^+.
\]
For all $\pmb{X}\in Z(U(\mathfrak{g}))^{\otimes (n+1)}$ we have 
\[
\widetilde{D}_{\pmb{X};M}^{\sigma_\ell,\underline{\tau},\sigma_r}\bigl(\widetilde{f}^\flat\bigr)=
\pmb{\chi}(\pmb{X})\widetilde{f}^\flat,
\]
with $\pmb{\chi}$ viewed as the character $\chi_0\otimes\cdots\otimes\chi_n$ of
$Z(U(\mathfrak{g}))^{\otimes (n+1)}$. In particular,
\[
\widetilde{H}_M^{\sigma_\ell,\underline{\tau},\sigma_r}\bigl(\widetilde{f}^\flat\bigr)=
\chi_n(\Omega)\widetilde{f}^\flat.
\]
\end{cor}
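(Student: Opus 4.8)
The plan is to read off the result from Theorem \ref{DOwithradcomp} together with the eigenvalue property \eqref{DOj}, and then to conjugate everything by the gauge factor $\delta$ of \eqref{deltam}.

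First I would fix $\pmb{X}=u_0\otimes\cdots\otimes u_n\in Z(U(\mathfrak{g}))^{\otimes(n+1)}$ and unwind $D_{\pmb{X};M}^{\sigma_\ell,\underline{\tau},\sigma_r}$ acting on $\textup{Res}^\flat(f)$. By the second part of Lemma \ref{qiaction} the function $u_i[i]u_{i+1}[i+1]\cdots u_n[n]f$ again lies in $C_{\sigma_\ell,\underline{\tau},\sigma_r}^\infty(G^{\times(n+1)})$ for every $i$, so Theorem \ref{DOwithradcomp} may be applied repeatedly; iterating it over $j=n,n-1,\dots,0$ gives
\[
\textup{Res}^\flat\bigl(u_0[0]u_1[1]\cdots u_n[n]f\bigr)(a)=\bigl(D_{u_0,0;M}^{\sigma_\ell,\underline{\tau},\sigma_r}\circ\cdots\circ D_{u_n,n;M}^{\sigma_\ell,\underline{\tau},\sigma_r}\,\textup{Res}^\flat(f)\bigr)(a)
\]
for $a\in A_{\textup{reg}}$, and the composition on the right equals $D_{\pmb{X};M}^{\sigma_\ell,\underline{\tau},\sigma_r}$ by the factorisation displayed after \eqref{incalgebra} (which in turn uses Lemma \ref{samelemma}). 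On the other hand, since $f$ is an $n$-point spherical function, applying the operators $u_j[j]$ one at a time and using \eqref{DOj} yields $u_0[0]u_1[1]\cdots u_n[n]f=\chi_0(u_0)\cdots\chi_n(u_n)f=\pmb{\chi}(\pmb{X})f$. Comparing, and recalling $\textup{Res}^\flat(f)=f^\flat\rvert_A$, I obtain
\[
\bigl(D_{\pmb{X};M}^{\sigma_\ell,\underline{\tau},\sigma_r}(f^\flat\rvert_A)\bigr)(a)=\pmb{\chi}(\pmb{X})\,f^\flat(a)\qquad(a\in A_{\textup{reg}}).
\]

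Next I would pass to the gauged operator on the positive chamber $A^+$. By the remark following Definition \ref{gaugeDef}, $\bigl(\widetilde{D}(\delta g)\bigr)(a)=\delta(a)(Dg)(a)$ for $a\in A^+$; applying this with $D=D_{\pmb{X};M}^{\sigma_\ell,\underline{\tau},\sigma_r}$ and $g=f^\flat\rvert_{A^+}$, and using $\widetilde{f}^\flat=\delta f^\flat$ on $A^+$, gives
\[
\bigl(\widetilde{D}_{\pmb{X};M}^{\sigma_\ell,\underline{\tau},\sigma_r}(\widetilde{f}^\flat)\bigr)(a)=\delta(a)\bigl(D_{\pmb{X};M}^{\sigma_\ell,\underline{\tau},\sigma_r}(f^\flat\rvert_A)\bigr)(a)=\pmb{\chi}(\pmb{X})\,\widetilde{f}^\flat(a)\qquad(a\in A^+).
\]
To finish I would observe that both $\widetilde{D}_{\pmb{X};M}^{\sigma_\ell,\underline{\tau},\sigma_r}(\widetilde{f}^\flat)$ and $\pmb{\chi}(\pmb{X})\widetilde{f}^\flat$ are $W$-invariant smooth $(V_\ell\otimes\underline{U}\otimes V_r^*)^M$-valued functions on $A_{\textup{reg}}$ (the first because $\widetilde{D}_{\pmb{X};M}^{\sigma_\ell,\underline{\tau},\sigma_r}\in\widetilde{A}^{\sigma_\ell,\underline{\tau},\sigma_r}\subseteq\mathbb{D}((V_\ell\otimes\underline{U}\otimes V_r^*)^M)^W$ and $\widetilde{f}^\flat$ is $W$-invariant by hypothesis); since $A_{\textup{reg}}$ is the disjoint union of the $W$-translates of $A^+$, two such functions agreeing on $A^+$ agree on all of $A_{\textup{reg}}$, which gives the asserted identity. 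The special case then follows by taking $\pmb{X}=1\otimes\cdots\otimes1\otimes\Omega$, since $D_{1,j;M}^{\sigma_\ell,\underline{\tau},\sigma_r}=\textup{id}$ for $j<n$ forces $D_{\pmb{X};M}^{\sigma_\ell,\underline{\tau},\sigma_r}=D_{\Omega,n;M}^{\sigma_\ell,\underline{\tau},\sigma_r}$, hence $\widetilde{D}_{\pmb{X};M}^{\sigma_\ell,\underline{\tau},\sigma_r}=\widetilde{D}_{\Omega,n;M}^{\sigma_\ell,\underline{\tau},\sigma_r}=\widetilde{H}_M^{\sigma_\ell,\underline{\tau},\sigma_r}$, while $\pmb{\chi}(\pmb{X})=\chi_n(\Omega)$.

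This corollary is essentially a bookkeeping exercise, so I do not expect a genuine obstacle; the only two points needing a little care are checking at each stage that $u_i[i]\cdots u_n[n]f$ still satisfies the equivariance of Definition \ref{sN} so that Theorem \ref{DOwithradcomp} can be reapplied (handled by Lemma \ref{qiaction}), and propagating the eigenvalue identity from the single chamber $A^+$, where the gauge relation is stated, to all of $A_{\textup{reg}}$ by $W$-invariance.
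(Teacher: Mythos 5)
Your proposal is correct and follows exactly the route the paper intends (the corollary is stated without proof as an immediate consequence): the identity $\textup{Res}^\flat(\pmb{X}\ast f)=D_{\pmb{X};M}^{\sigma_\ell,\underline{\tau},\sigma_r}\textup{Res}^\flat(f)$ from Theorem \ref{DOwithradcomp}/Theorem \ref{mainthm} combined with \eqref{DOj}, the $\delta$-gauge relation of Definition \ref{gaugeDef}, and $W$-invariance to pass from $A^+$ to $A_{\textup{reg}}$. Your two points of care (preservation of the equivariant space under central elements via Lemma \ref{qiaction}, and the chamber-to-$A_{\textup{reg}}$ extension) are exactly the right ones, and the special case via $\pmb{X}=1\otimes\cdots\otimes1\otimes\Omega$ matches $\widetilde{H}_M^{\sigma_\ell,\underline{\tau},\sigma_r}=\widetilde{D}_{\Omega,n;M}^{\sigma_\ell,\underline{\tau},\sigma_r}$.
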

See \cite{SR} for explicit examples of classes of $n$-point spherical functions when $G$ is real split.

\section{The asymptotic boundary KZB operators}\label{SectionabKZB}
Fix throughout this section finite-dimensional $K$-representations $(\sigma_\ell, V_\ell)$, 
$(\sigma_r,V_r)$ and finite-dimensional $G$-representations ($\tau_i,U_i)$ ($1\leq i\leq n$).

The quadratic Hamiltonians $\widetilde{D}_{\Omega,j;M}^{\sigma_\ell,\underline{\tau},\sigma_r}\in\widetilde{A}^{\sigma_\ell,\underline{\tau},\sigma_r}$ ($0\leq j\leq n$) of the quantum Calogero-Moser spin chain are related to the Schr{\"o}dinger
operator $\widetilde{H}_M^{\sigma_\ell,\underline{\tau},\sigma_r}$ by
\[
\widetilde{D}_{\Omega,j;M}^{\sigma_\ell,\underline{\tau},\sigma_r}=
\widetilde{H}_{M}^{\sigma_\ell,\underline{\tau},\sigma_r}-
2\sum_{i=j+1}^n\widetilde{\mathcal{D}}_{j;M}^{\sigma_\ell,\underline{\tau},\sigma_r}
\in\widetilde{A}^{\sigma_\ell,\underline{\tau},\sigma_r}
\qquad (0\leq j\leq n),
\]
with $\widetilde{\mathcal{D}}_{i;M}^{\sigma_\ell,\underline{\tau},\sigma_r}\in\widetilde{A}^{\sigma_\ell,\underline{\tau},\sigma_r}$ the quantum Hamiltonian obtained by gauging and $M$-restricting the differential operator
\[
\mathcal{D}_i^{\sigma_\ell,\underline{\tau},\sigma_r}:=
\frac{1}{2}\bigl(D_{\Omega,i}^{\sigma_\ell,\underline{\tau},\sigma_r}-
D_{\Omega,i-1}^{\sigma_\ell,\underline{\tau},\sigma_r}\bigr)
\in\mathbb{D}_M((V_\ell\otimes\underline{U}\otimes V_r^*)^{\mathfrak{m}})^W
\qquad (1\leq i\leq n).
\]
\begin{defi}
The quantum Hamiltonians $\widetilde{\mathcal{D}}_{i;M}^{\sigma_\ell,\underline{\tau},\sigma_r}
\in\widetilde{A}^{\sigma_\ell,\underline{\tau},\sigma_r}$
\textup{(}$1\leq i\leq n$\textup{)} are called the asymptotic boundary KZB operators.
\end{defi}
We now first derive explicit expressions for the asymptotic boundary KZB operators.

\subsection{The derivation of the explicit expression}\label{subsectionKZB}
The asymptotic boundary KZB operators turn out to be first-order differential operators, 
whose local terms we define now first. Write
\begin{equation*}
\begin{split}
r^+&:=-\varpi_{\mathfrak{m}}-\sum_{\lambda\in\Sigma}\frac{
2(\textup{pr}_{\mathfrak{k}}\otimes\textup{id})(\varpi_\lambda)}{1-\xi_{-2\lambda}}\in
(\mathcal{R}\otimes (\mathfrak{k}\otimes\mathfrak{g})^M)^W,\\
r^-&:=\sum_{j=1}^rx_j\otimes x_j+\sum_{\lambda\in\Sigma}
\frac{2(\textup{pr}_{\mathfrak{p}}\otimes\textup{id})(\varpi_\lambda)}{1-\xi_{-2\lambda}}\in (\mathcal{R}\otimes (\mathfrak{p}\otimes\mathfrak{g})^M)^W
\end{split}
\end{equation*}
(the $M$-invariance follows from Lemma \ref{Mfactors}, and the $W$-invariance from Lemma \ref{Nfactors}). Define $\kappa^{\textup{core}}\in (\mathcal{R}\otimes U(\mathfrak{g})^M)^W$ by
\begin{equation}\label{kappacore}
\kappa^{\textup{core}}:=\frac{1}{2}\Omega+\mu(r_{21}^+)=-\frac{1}{2}\Omega_{\mathfrak{m}}
+\frac{1}{2}\Omega^\prime-\sum_{\lambda\in\Sigma}\sum_{\alpha\in R_\lambda}
\frac{e_{-\alpha}y_\alpha}{1-\xi_{2\lambda}}
\end{equation}
and $\kappa\in (\mathcal{R}\otimes (U(\mathfrak{k})\otimes U(\mathfrak{g})\otimes U(\mathfrak{k}))^M)^W$ by
\[
\kappa:=1\otimes\kappa^{\textup{core}}\otimes 1
-\Bigl(\varpi_{\mathfrak{m}}+\sum_{\lambda\in\Sigma}\frac{2(\textup{pr}_{\mathfrak{k}}\otimes\textup{id})(\varpi_\lambda)}{1-\xi_{-2\lambda}}
\Bigr)\otimes 1
-1\otimes\Bigl(\sum_{\lambda\in\Sigma}\frac{2(\textup{id}\otimes\textup{pr}_{\mathfrak{k}})(\varpi_\lambda)}{\xi_{-\lambda}-\xi_\lambda}\Bigr).
\]
For $1\leq i<j\leq n$ and $1\leq s\leq n$ we write
\[
r_{ij}^{\pm}:=(\tau_i\otimes\tau_j)(r^{\pm}),\qquad 
\kappa_s:=(\sigma_\ell\otimes\tau_s\otimes\sigma_r^*)(\kappa),
\]
which we view as elements in 
$(\mathcal{R}\otimes\textup{End}_M((V_\ell\otimes\underline{U}\otimes V_r^*)^{\mathfrak{m}}))^W$.
\begin{defi}\label{Ddef}
Define first-order differential operators 
$\mathcal{D}_i^{\sigma_\ell,\underline{\tau},\sigma_r}\in \mathbb{D}_M((V_\ell\otimes\underline{U}\otimes V_r^*)^{\mathfrak{m}})^W$ \textup{(}$1\leq i\leq n$\textup{)} by
\[
\mathcal{D}_i^{\sigma_\ell,\underline{\tau},\sigma_r}=
\sum_{j=1}^r\tau_i(x_j)\partial_{x_j}-
\kappa_i-\sum_{k=1}^{i-1}r_{ki}^+-\sum_{k=i+1}^nr_{ik}^-.
\]
\end{defi}
The following theorem is the main result of this section.
\begin{thm}\label{mainbKZB}
For $1\leq i\leq n$ we have the identity
\begin{equation}\label{keyformula}
\frac{1}{2}\bigl(D_{\Omega,i}^{\sigma_\ell,\underline{\tau},\sigma_r}-D_{\Omega,i-1}^{\sigma_\ell,\underline{\tau},\sigma_r}\bigr)=\mathcal{D}_{i}^{\sigma_\ell,\underline{\tau},\sigma_r}
\end{equation}
in $\mathbb{D}_M\bigl((V_\ell\otimes\underline{U}\otimes V_r^*)^{\mathfrak{m}}\bigr)^W$. 
\end{thm}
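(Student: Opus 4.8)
The plan is to prove \eqref{keyformula} by computing both sides explicitly using the radial component formalism set up in Sections \ref{S4} and \ref{Further}, and the key is to exploit the telescoping structure built into the definitions $D_{u,j}^{\sigma_\ell,\underline{\tau},\sigma_r}$ together with the dynamical factorisation of $\Omega'$ from Proposition \ref{dynCasimirlemma}. Concretely: by Definition \ref{Dcoor}, $D_{\Omega,n}^{\sigma_\ell,\underline{\tau},\sigma_r}=L_{\Pi(\Omega)}^{\sigma_{\ell;n},\sigma_r}$, which by Theorem \ref{Hexplicit} equals the explicit second order operator $H^{\sigma_\ell,\underline{\tau},\sigma_r}$; for $j<n$ one has $D_{\Omega,j}^{\sigma_\ell,\underline{\tau},\sigma_r}=\sum_{(\Omega)}\tau_{j+1}(S(\Omega_{(1)}))\cdots\tau_n(S(\Omega_{(n-j)}))L_{\widetilde\Pi(\Omega_{(n-j+1)})}^{\sigma_{\ell;n},\sigma_r}$. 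So $D_{\Omega,i}^{\sigma_\ell,\underline{\tau},\sigma_r}-D_{\Omega,i-1}^{\sigma_\ell,\underline{\tau},\sigma_r}$ is a difference of two such sums differing by one tensor leg, and I expect the bulk of the work to be a careful Sweedler-calculus manipulation of $\Delta^{(k)}(\Omega)$ using $\varpi=\frac12(\Delta(\Omega)-\Omega\otimes 1-1\otimes\Omega)$, i.e. $\Delta(\Omega)=\Omega\otimes 1+1\otimes\Omega+2\varpi$, iterated.

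First I would reduce to the case $i=n$ by a shift/relabelling argument: the operator $D_{\Omega,i}^{\sigma_\ell,\underline{\tau},\sigma_r}-D_{\Omega,i-1}^{\sigma_\ell,\underline{\tau},\sigma_r}$ only involves the tensor legs $V_\ell\otimes U_1\otimes\cdots\otimes U_i$ nontrivially on the ``$S(\Omega_{(\cdot)})$'' side and the leg $U_i$ via the new $\widetilde\Pi$ term, so after absorbing $U_{i+1}\otimes\cdots\otimes U_n$ into a passive spectator it should suffice to prove the identity for $i=n$, namely
\[
\tfrac12\bigl(L_{\Pi(\Omega)}^{\sigma_{\ell;n},\sigma_r}-\textstyle\sum_{(\Omega)}\tau_n(S(\Omega_{(1)}))L_{\widetilde\Pi(\Omega_{(2)})}^{\sigma_{\ell;n},\sigma_r}\bigr)=\sum_{j=1}^r\tau_n(x_j)\partial_{x_j}-\kappa_n-\sum_{k=1}^{n-1}r_{kn}^+.
\]
Using $\Delta(\Omega)=\Omega\otimes 1+1\otimes\Omega+2\varpi$ and $S(\Omega)=\Omega$, the sum $\sum_{(\Omega)}\tau_n(S(\Omega_{(1)}))L_{\widetilde\Pi(\Omega_{(2)})}^{\sigma_{\ell;n},\sigma_r}$ becomes $\tau_n(\Omega)L_{\widetilde\Pi(1)}^{\sigma_{\ell;n},\sigma_r}+L_{\widetilde\Pi(\Omega)}^{\sigma_{\ell;n},\sigma_r}+2\sum_s\tau_n(b_s')L_{\widetilde\Pi(b_s)}^{\sigma_{\ell;n},\sigma_r}$ (writing $\varpi=\sum_s b_s\otimes b_s'$, and noting $S$ on degree-one elements gives the sign that I will need to track carefully). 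Since $L_{\widetilde\Pi(1)}^{\sigma_{\ell;n},\sigma_r}=\mathrm{id}$ and, by \eqref{lrsame}, $L_{\widetilde\Pi(\Omega)}^{\sigma_{\ell;n},\sigma_r}=L_{\Pi(\Omega)}^{\sigma_{\ell;n},\sigma_r}=D_{\Omega,n}^{\sigma_\ell,\underline{\tau},\sigma_r}$ because $\Omega\in Z(U(\mathfrak g))\subseteq U(\mathfrak g)^A$, the two $L_{\cdot}(\Omega)$-terms cancel in the difference, leaving
\[
\mathcal D_n^{\sigma_\ell,\underline{\tau},\sigma_r}=-\tfrac12\tau_n(\Omega)-\sum_s\tau_n(b_s')L_{\widetilde\Pi(b_s)}^{\sigma_{\ell;n},\sigma_r}.
\]
It then remains to compute $\sum_s\tau_n(b_s')L_{\widetilde\Pi(b_s)}^{\sigma_{\ell;n},\sigma_r}$ explicitly using the root-space-adapted basis of $\mathfrak g$ from \eqref{varpi}: split the sum over $\{z_j\}\cup\{e_{-\alpha}\}$. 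The $\mathfrak h$-part contributes $\sum_{j\le r}\tau_n(x_j)\partial_{x_j}$ plus $\mathfrak t$-corrections, while the root-vector part, via the explicit $\widetilde\Pi(e_\alpha)$ from \eqref{Pie} together with the identities $\mathrm{pr}_{\mathfrak k}=\frac12(\mathrm{Id}+\theta)$, $\mathrm{pr}_{\mathfrak p}=\frac12(\mathrm{Id}-\theta)$, should reassemble into $-\kappa_n-\sum_{k<n}r_{kn}^+$ after recognising $\kappa^{\mathrm{core}}$ from \eqref{kappacore} and using that $\sigma_{\ell;n}$ acts diagonally so the $V_\ell\otimes U_k$-legs ($k<n$) produce exactly the $r_{kn}^+$ terms.

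The main obstacle will be the bookkeeping in this last step: correctly distributing the diagonal action $\sigma_{\ell;n}(u)=\sigma_\ell(u)\otimes\tau_1(u)\otimes\cdots\otimes\tau_n(u)$ across the $\zeta_{\sigma_{\ell;n},\sigma_r}$ maps so that the ``core'' piece landing on the $U_n$-leg is $\kappa^{\mathrm{core}}$, the ``left'' piece pairing $V_\ell$ with $U_n$ reproduces the $\varpi_{\mathfrak m}$- and $\mathrm{pr}_{\mathfrak k}$-terms in $\kappa$, the ``right'' piece pairing $V_r^*$ with $U_n$ reproduces the $\xi_{-\lambda}-\xi_\lambda$ denominator term in $\kappa$, and the cross terms between $U_k$ ($k<n$) and $U_n$ give $r_{kn}^+=(\tau_k\otimes\tau_n)(r^+)$ — all while keeping track of the $S$-signs and the distinction between $\mathfrak g^\lambda\otimes\mathfrak g^{-\lambda}$ versus $\mathfrak g^{-\lambda}\otimes\mathfrak g^\lambda$ bookkeeping in $\varpi_\lambda$. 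I would organise this by first proving an intermediate lemma computing $\sum_s u\,\square\,\widetilde\Pi(b_s)$ as an element of $\mathcal R\otimes U(\mathfrak a)\otimes(U(\mathfrak k)\otimes_{U(\mathfrak m)}U(\mathfrak k))$ before applying $\zeta$, mirroring the structure of the proof of Theorem \ref{Hexplicit}, and only then specialising to the representations; the equalities in $\mathbb D_M$ are then forced by Proposition \ref{zeroproperty} applied to $(\sigma_{\ell;n},\sigma_r)$, exactly as in the proof of Theorem \ref{mainthm}.
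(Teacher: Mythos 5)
Your overall strategy (expand $\Delta^{(n-j)}(\Omega)$ via $\Delta(\Omega)=\Omega\otimes 1+1\otimes\Omega+2\varpi$, cancel the two $L_{\Pi(\Omega)}=L_{\widetilde\Pi(\Omega)}$ terms using \eqref{lrsame}, then evaluate $L_{\widetilde\Pi}$ on the basis $\{z_j\}\cup\{e_\alpha\}$ via \eqref{Pie} and regroup the diagonal $\sigma_{\ell;n}$-action into the local factors $\kappa$, $r^{\pm}$) is exactly the paper's route (Lemma \ref{DeltaOmega}, Corollary \ref{corgen}, Lemma \ref{technicallem} and the concluding regrouping). However, there is a genuine gap: the proposed reduction to the case $i=n$ is invalid. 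You claim the difference $D_{\Omega,i}^{\sigma_\ell,\underline{\tau},\sigma_r}-D_{\Omega,i-1}^{\sigma_\ell,\underline{\tau},\sigma_r}$ touches only the legs $V_\ell\otimes U_1\otimes\cdots\otimes U_i$, so that $U_{i+1}\otimes\cdots\otimes U_n$ can be treated as a passive spectator. This is false on two counts. First, the Sweedler legs $\tau_{j+1}(S(\Omega_{(1)}))\cdots\tau_n(S(\Omega_{(n-j)}))$ sit on the \emph{right-hand} legs $U_{j+1},\ldots,U_n$, and the cross terms of $\Delta^{(n-i)}(\Omega)$ leave a contribution $-\sum_{k=i+1}^n(\tau_i\otimes\tau_k)(\varpi)$ in the difference. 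Second, the radial components $L_{\widetilde\Pi(e_\alpha)}^{\sigma_{\ell;n},\sigma_r}$ act through the diagonal representation $\sigma_{\ell;n}$, hence nontrivially on every $U_k$, including $k>i$. These two contributions do not cancel; they combine (via the identity $\varpi'-\sum_{\lambda\in\Sigma}\frac{1}{1-\xi_{2\lambda}}\sum_{\alpha\in R_\lambda}e_{-\alpha}\otimes y_\alpha=r^-$, which needs Lemma \ref{clem}) into precisely the terms $-\sum_{k=i+1}^n r_{ik}^-$ of $\mathcal{D}_i^{\sigma_\ell,\underline{\tau},\sigma_r}$. Your reduced identity for ``$i=n$ with spectators'' has no $r^-$ terms at all, so it cannot imply \eqref{keyformula} for $i<n$; the legs $U_{k}$, $k>i$, are genuinely coupled to $U_i$ through $r^-$, whose $\mathfrak p$-projections and $\sum_j x_j\otimes x_j$ part also cannot be absorbed into the right boundary factor of $\kappa$ (which only involves $\textup{pr}_{\mathfrak k}$-projections with a different dynamical coefficient).

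The fix is simply to run your $i=n$ computation for general $i$ without the reduction: the only new ingredients are the cross terms $-\sum_{k>i}(\tau_i\otimes\tau_k)(\varpi)$ and the pieces of $(\tau_i\otimes\sigma_{\ell;n})$-type terms landing on $U_k$ with $k>i$, and matching these against $r_{ik}^-$ is the one genuinely new verification beyond what you outlined; this is exactly how the paper proceeds. Two smaller points: (i) in your Sweedler expansion the antipode on degree-one elements gives $S(b_s)=-b_s$, so the correct expression is $\tau_n(\Omega)+L_{\widetilde\Pi(\Omega)}-2\sum_s\tau_n(b_s)L_{\widetilde\Pi(b_s')}$, and consequently $\frac12(D_{\Omega,n}-D_{\Omega,n-1})=-\frac12\tau_n(\Omega)+\sum_s\tau_n(b_s)L_{\widetilde\Pi(b_s')}$ (with your sign the $\mathfrak a$-part would come out as $-\sum_j\tau_i(x_j)\partial_{x_j}$, contradicting the target formula) — you flagged this but it must be fixed for the regrouping to close; (ii) the final appeal to Proposition \ref{zeroproperty} is unnecessary here: both sides are explicit elements of $\mathbb{D}_M((V_\ell\otimes\underline{U}\otimes V_r^*)^{\mathfrak m})^W$ and the identity is proved by direct algebraic comparison of coefficients, not by testing against spherical functions.
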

Before discussing the proof of Theorem \ref{mainbKZB}, we note some immediate consequences first.
\begin{cor}\label{mainbKZBcor}
\textup{(1)} The $M$-restricted gauged differential operators
$\widetilde{\mathcal{D}}_{i;M}^{\sigma_\ell,\underline{\tau},\sigma_r}\in\widetilde{A}^{\sigma_\ell,\underline{\tau},\sigma_r}$ are first-order quantum Hamiltonians for the quantum Calogero-Moser spin chain. In particular, 
\begin{equation}\label{commutativity}
\lbrack \widetilde{\mathcal{D}}_{i;M}^{\sigma_\ell,\underline{\tau},\sigma_r},
\widetilde{\mathcal{D}}_{j;M}^{\sigma_\ell,\underline{\tau},\sigma_r}\rbrack=0,
\qquad \lbrack\widetilde{\mathcal{D}}_{i;M}^{\sigma_\ell,\underline{\tau},\sigma_r},
\widetilde{H}_M^{\sigma_\ell,\underline{\tau},\sigma_r}\rbrack=0
\end{equation}
for $1\leq i,j\leq n$.\\
\textup{(2)} The quadratic Hamiltonian $\widetilde{D}_{\Omega,j;M}^{\sigma_\ell,\underline{\tau},\sigma_r}\in\widetilde{A}^{\sigma_\ell,\underline{\tau},\sigma_r}$ for $j\in\{0,\ldots,n\}$ is explicitly given by
\[
\widetilde{D}_{\Omega,j;M}^{\sigma_\ell,\underline{\tau},\sigma_r}=
\widetilde{H}_{M}^{\sigma_\ell,\underline{\tau},\sigma_r}-
2\sum_{i=j+1}^n\widetilde{\mathcal{D}}_{j;M}^{\sigma_\ell,\underline{\tau},\sigma_r}.
\]
\textup{(3)} We have
\[
\widetilde{\mathcal{D}}_{i;M}^{\sigma_\ell,\underline{\tau},\sigma_r}\bigl(\widetilde{f}^\flat\bigr)=
\frac{1}{2}\bigl(\chi_i(\Omega)-\chi_{i-1}(\Omega)\bigr)\widetilde{f}^\flat
\]
for $f\in C_{\sigma_\ell,\underline{\tau},\sigma_r}^{\infty}(G^{\times (n+1)};\pmb{\chi})$
and $1\leq i\leq n$.\\
\textup{(4)} We have in $\mathbb{D}_M((V_\ell\otimes\underline{U}\otimes V_r^*)^{\mathfrak{m}})^W$,
\begin{equation}\label{gauge}
\widetilde{\mathcal{D}}_i^{\sigma_\ell,\underline{\tau},\sigma_r}=
\sum_{j=1}^r\tau_i(x_j)\partial_{x_j}-
\widehat{\kappa}_i-\sum_{k=1}^{i-1}r_{ki}^+-\sum_{k=i+1}^nr_{ik}^-
\end{equation}
with 
\[
\widehat{\kappa}_i=\kappa_i-\frac{1}{4}\sum_{\lambda\in\Sigma}\Bigl(\frac{1+\xi_{2\lambda}}{1-\xi_{2\lambda}}\Bigr)\textup{mtp}(\lambda)\tau_i(t_\lambda).
\]
\end{cor}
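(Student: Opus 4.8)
The plan is to deduce Corollary \ref{mainbKZBcor} directly from Theorem \ref{mainbKZB} together with the machinery already assembled in Sections \ref{S4} and \ref{Further}; nothing new needs to be computed beyond one gauge identity. I organize the argument part by part.

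\textbf{Part (1).} First I would observe that $\mathcal{D}_i^{\sigma_\ell,\underline{\tau},\sigma_r}=\frac12(D_{\Omega,i}^{\sigma_\ell,\underline{\tau},\sigma_r}-D_{\Omega,i-1}^{\sigma_\ell,\underline{\tau},\sigma_r})$ by Theorem \ref{mainbKZB}, so its $M$-restriction $\mathcal{D}_{i;M}^{\sigma_\ell,\underline{\tau},\sigma_r}$ is a $\mathbb{C}$-linear combination of the operators $D_{\Omega,j;M}^{\sigma_\ell,\underline{\tau},\sigma_r}\in A^{\sigma_\ell,\underline{\tau},\sigma_r}$, hence lies in the commutative algebra $A^{\sigma_\ell,\underline{\tau},\sigma_r}\subseteq\mathbb{D}((V_\ell\otimes\underline{U}\otimes V_r^*)^M)^W$ (see \eqref{incalgebra}; the relevant elements of $Z(U(\mathfrak{g}))^{\otimes(n+1)}$ are $\Omega[j]-\Omega[j-1]$). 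Applying the gauge automorphism $D\mapsto\widetilde D$ of Definition \ref{gaugeDef}, which is an algebra automorphism, gives $\widetilde{\mathcal{D}}_{i;M}^{\sigma_\ell,\underline{\tau},\sigma_r}\in\widetilde A^{\sigma_\ell,\underline{\tau},\sigma_r}$. Since $\widetilde A^{\sigma_\ell,\underline{\tau},\sigma_r}$ is commutative (it is the image of the commutative $A^{\sigma_\ell,\underline{\tau},\sigma_r}$) and contains $\widetilde H_M^{\sigma_\ell,\underline{\tau},\sigma_r}=\widetilde D_{\Omega,n;M}^{\sigma_\ell,\underline{\tau},\sigma_r}$ (stated just after \eqref{incalgebratilde}), both commutation relations in \eqref{commutativity} follow immediately. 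The fact that these are first-order operators is visible from the explicit form in Definition \ref{Ddef}.

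\textbf{Parts (2) and (3).} For (2) I would just telescope: by Theorem \ref{mainbKZB},
\[
D_{\Omega,j}^{\sigma_\ell,\underline{\tau},\sigma_r}=D_{\Omega,n}^{\sigma_\ell,\underline{\tau},\sigma_r}-\sum_{i=j+1}^n\bigl(D_{\Omega,i}^{\sigma_\ell,\underline{\tau},\sigma_r}-D_{\Omega,i-1}^{\sigma_\ell,\underline{\tau},\sigma_r}\bigr)=D_{\Omega,n}^{\sigma_\ell,\underline{\tau},\sigma_r}-2\sum_{i=j+1}^n\mathcal{D}_i^{\sigma_\ell,\underline{\tau},\sigma_r};
\]
then $M$-restrict, gauge, and use Theorem \ref{Hexplicit} (which identifies $D_{\Omega,n}^{\sigma_\ell,\underline{\tau},\sigma_r}$ with $H^{\sigma_\ell,\underline{\tau},\sigma_r}_{M_\circ}$, so its gauged $M$-restriction is $\widetilde H_M^{\sigma_\ell,\underline{\tau},\sigma_r}$). (One must note the index in the displayed statement: the summand is $\widetilde{\mathcal D}_{j;M}$ as printed, but the telescoping produces $\sum_{i=j+1}^n\widetilde{\mathcal D}_{i;M}$; I will write the sum over $i$ with summand $\widetilde{\mathcal D}_{i;M}^{\sigma_\ell,\underline{\tau},\sigma_r}$ and flag this as the intended reading.) For (3): a $n$-point spherical function $f\in C_{\sigma_\ell,\underline{\tau},\sigma_r}^\infty(G^{\times(n+1)};\pmb\chi)$ satisfies $u[j]f=\chi_j(u)f$ (Definition \ref{defnpoint}); by Theorem \ref{DOwithradcomp}, $\mathrm{Res}^\flat(\Omega[i]f-\Omega[i-1]f)=(D_{\Omega,i;M}^{\sigma_\ell,\underline{\tau},\sigma_r}-D_{\Omega,i-1;M}^{\sigma_\ell,\underline{\tau},\sigma_r})\mathrm{Res}^\flat(f)$, so $2\mathcal{D}_{i;M}^{\sigma_\ell,\underline{\tau},\sigma_r}(f^\flat|_A)=(\chi_i(\Omega)-\chi_{i-1}(\Omega))f^\flat|_A$ on $A_{\mathrm{reg}}$; conjugating by $\delta$ as in the remark after Definition \ref{gaugeDef} (equivalently Corollary \ref{npointcor}) converts $f^\flat|_A$ into $\widetilde f^\flat$ and $\mathcal{D}_{i;M}$ into $\widetilde{\mathcal D}_{i;M}$, giving the claimed eigenvalue equation on $A^+$ and hence, by $W$-invariance and real-analyticity, on $A_{\mathrm{reg}}$.

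\textbf{Part (4).} This is the only genuine computation. By $\mathcal{R}\otimes\mathrm{End}_M$-linearity and $W$-equivariance of $D\mapsto\widetilde D$, applied to the first-order operator in Definition \ref{Ddef}, the zeroth-order terms $\kappa_i,r^\pm_{ki},r^\pm_{ik}$ are unchanged and only the derivative part picks up a correction: $\widetilde{\tau_i(x_j)\partial_{x_j}}=\tau_i(x_j)\widetilde{\partial_{x_j}}=\tau_i(x_j)\partial_{x_j}+\frac14\sum_{\lambda\in\Sigma}\lambda(x_j)\,\mathrm{mtp}(\lambda)\frac{1+\xi_{2\lambda}}{1-\xi_{2\lambda}}\,\tau_i(x_j)$. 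Summing over $j$ and using $\sum_{j=1}^r\lambda(x_j)x_j=t_\lambda$ in $\mathfrak{a}$ (the orthonormal-basis expansion of the element dual to $\lambda$, cf.\ the definition of $t_\lambda$ and that $\{x_j\}$ is orthonormal for $(\cdot,\cdot)$) collapses the correction to $\frac14\sum_{\lambda\in\Sigma}\frac{1+\xi_{2\lambda}}{1-\xi_{2\lambda}}\,\mathrm{mtp}(\lambda)\,\tau_i(t_\lambda)$, which is exactly $\kappa_i-\widehat\kappa_i$; rearranging yields \eqref{gauge}. The main obstacle here is bookkeeping: making sure the identification $\sum_j\lambda(x_j)x_j=t_\lambda$ is correct with the chosen normalization (orthonormal $\{x_j\}$ versus the Killing-form definition of $t_\lambda$) — these agree because on $\mathfrak a$ the form $(\cdot,\cdot)$ restricts to $K_{\mathfrak g}(\cdot,\cdot)|_{\mathfrak a\times\mathfrak a}$ up to the conventions already fixed in Section \ref{S2}, so $\lambda(x_j)=(t_\lambda,x_j)$ and the expansion is immediate — and that the gauge correction commutes correctly with the $\mathrm{End}_M$-valued coefficient $\tau_i(x_j)$, which it does since the scalar factor $\frac14\lambda(x_j)\mathrm{mtp}(\lambda)\frac{1+\xi_{2\lambda}}{1-\xi_{2\lambda}}$ is central.
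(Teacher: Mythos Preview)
Your proof is correct and follows exactly the route the paper intends: the corollary is stated immediately after Theorem \ref{mainbKZB} as ``immediate consequences'' with no separate proof, and your arguments for (1)--(4) are precisely the natural verifications (membership in the commutative $\widetilde A^{\sigma_\ell,\underline{\tau},\sigma_r}$, telescoping, the eigenvalue computation via Corollary \ref{npointcor}, and the one-line gauge calculation using $\sum_j\lambda(x_j)x_j=t_\lambda$). Your observation that the summand in (2) should read $\widetilde{\mathcal D}_{i;M}^{\sigma_\ell,\underline{\tau},\sigma_r}$ rather than $\widetilde{\mathcal D}_{j;M}^{\sigma_\ell,\underline{\tau},\sigma_r}$ is also correct---this is a typo in the displayed statement, consistent with the identical formula appearing just before the start of Section \ref{SectionabKZB}.
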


\begin{rema}
It follows from Corollary \ref{mainbKZBcor}(4) that $\widetilde{\mathcal{D}}_i^{\sigma_\ell,\underline{\tau},\sigma_r}\in \mathbb{D}_M((V_\ell\otimes\underline{U}\otimes V_r^*)^{\mathfrak{m}})^W$
is obtained from $\mathcal{D}_i^{\sigma_\ell,\underline{\tau},\sigma_r}$ by replacing
the core $\kappa^{\textup{core}}$ \eqref{kappacore} of $\kappa$ by 
\begin{equation}\label{kappacore2}
\widehat{\kappa}^{\textup{core}}:=\kappa^{\textup{core}}
-\frac{1}{4}\sum_{\lambda\in\Sigma}\Bigl(\frac{1+\xi_{2\lambda}}{1-\xi_{2\lambda}}\Bigr)\textup{mtp}(\lambda)t_\lambda.
\end{equation}
\end{rema}

\begin{rema}
If the conjecture in Remark \ref{SplitRemark2} is valid, then the differential operators
$\widetilde{H}^{\sigma_\ell,\underline{\tau},\sigma_r}$, $\widetilde{\mathcal{D}}_{i}^{\sigma_\ell,\underline{\tau},\sigma_r}$ 
($1\leq i\leq n$) already pairwise commute in $\mathbb{D}_M((V_\ell\otimes\underline{U}\otimes V_r^*)^{\mathfrak{m}})^W$.
This is indeed the case when $G$ is real split, see \cite[\S 6]{SR}.
\end{rema}
We now prove Theorem \ref{mainbKZB} in a couple of steps.
\begin{lem}\label{DeltaOmega}
For $k\in\mathbb{Z}_{\geq 0}$,
\[
\Delta^{(k)}(\Omega)=
\sum_{j=1}^{k+1}\Omega_j+
2\sum_{1\leq j<j^\prime\leq k+1}\sum_{s=1}^{\textup{dim}(\mathfrak{h})}(z_s)_j(z_s)_{j^\prime}
+2\sum_{1\leq j<j^\prime\leq k+1}\sum_{\alpha\in R}(e_{-\alpha})_j(e_\alpha)_{j^\prime}
\]
in $U(\mathfrak{g})^{\otimes (k+1)}$, with the subindices indicating in which tensor component
of $U(\mathfrak{g})^{\otimes (k+1)}$ the elements are placed.
\end{lem}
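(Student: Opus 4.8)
The plan is to prove the formula for $\Delta^{(k)}(\Omega)$ by induction on $k$, using the explicit expression \eqref{Omega} for $\Omega$ together with the recursive definition $\Delta^{(k)} = (\Delta \otimes \mathrm{id}^{\otimes(k-1)})\Delta^{(k-1)}$ and the primitivity of $z_s$, $e_{\pm\alpha}$ under $\Delta$.

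\begin{proof}
We argue by induction on $k$. For $k=0$ the statement reads $\Delta^{(0)}(\Omega) = \Omega$, which holds with the convention $\Delta^{(0)} = \mathrm{id}$ (the double sums being empty); for $k=1$ it is the well-known identity $\Delta(\Omega) = \Omega\otimes 1 + 1\otimes\Omega + 2\varpi$ together with \eqref{varpi}, which follows at once from \eqref{Omega} and the primitivity $\Delta(y) = y\otimes 1 + 1\otimes y$ for $y \in \mathfrak{g}$, since then $\Delta(y_1 y_2) = y_1 y_2\otimes 1 + y_1\otimes y_2 + y_2 \otimes y_1 + 1\otimes y_1 y_2$ for $y_1, y_2 \in \mathfrak{g}$.

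For the inductive step, assume the formula holds for some $k \ge 1$. Apply $\Delta\otimes\mathrm{id}_{U(\mathfrak{g})}^{\otimes(k-1)}$, which acts on the first tensor leg. For a term $\Omega_j$ with $j \ge 2$ this leg is untouched, and it becomes $\Omega_{j+1}$ in $U(\mathfrak{g})^{\otimes(k+2)}$; for $j=1$ it becomes $\Omega_1 + \Omega_2 + 2\sum_{s}(z_s)_1(z_s)_2 + 2\sum_{\alpha\in R}(e_{-\alpha})_1(e_\alpha)_2$ by the $k=1$ case. For a cross term $(z_s)_j(z_s)_{j'}$ with $j \ge 2$ both indices shift up by one; for $j=1$ we use $\Delta(z_s) = z_s\otimes 1 + 1\otimes z_s$ to get $(z_s)_1(z_s)_{j'+1} + (z_s)_2(z_s)_{j'+1}$, and similarly for the $(e_{-\alpha})_j(e_\alpha)_{j'}$ cross terms with $j=1$. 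Collecting: the single-leg terms $\Omega_1,\ldots,\Omega_{k+2}$ all appear exactly once; the diagonal Cartan cross terms $(z_s)_j(z_s)_{j'}$ for $1 \le j < j' \le k+2$ each appear exactly once (the new ones with $j=1$ coming from the old $\Omega_1$ and from the old $j=1$ cross terms); and likewise each root cross term $(e_{-\alpha})_j(e_\alpha)_{j'}$ for $1 \le j < j' \le k+2$ appears with coefficient $2$. This is precisely the asserted formula for $\Delta^{(k+1)}(\Omega)$, completing the induction.
\end{proof}

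The only subtlety, and the step I would be most careful with, is the bookkeeping in the inductive step: one must check that after splitting the $j=1$ terms no cross term is produced twice and none is omitted, in particular that the new cross terms with first index $1$ arising from $\Delta(\Omega_1)$ together with those from the split old cross terms exactly exhaust the pairs $(1,j')$ with $2 \le j' \le k+2$. Since $\Delta$ is cocommutative on the Casimir (as $\varpi$ is symmetric), there is no sign or ordering ambiguity, so this is a routine but careful index count rather than a genuine obstacle.
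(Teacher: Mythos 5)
Your proof is correct and follows exactly the route the paper takes: its proof of this lemma is precisely ``induction on $k$ using the expression \eqref{Omega} for $\Omega$'', and your base case and index bookkeeping in the inductive step are accurate. The only blemish is a harmless off-by-one typo: going from $\Delta^{(k)}$ to $\Delta^{(k+1)}$ you should apply $\Delta\otimes\textup{id}_{U(\mathfrak{g})}^{\otimes k}$ (not $\otimes(k-1)$), as your subsequent computation in $U(\mathfrak{g})^{\otimes(k+2)}$ in fact does.
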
 
\begin{proof}
This follows by induction to $k$ using the expression \eqref{Omega} for $\Omega$.
\end{proof}
\begin{cor}\label{corgen}
For $0\leq i\leq n$ we have
\begin{equation*}
\begin{split}
D_{\Omega,i}^{\sigma_\ell,\underline{\tau},\sigma_r}&=
L_{\widetilde{\Pi}(\Omega)}^{\sigma_{\ell;n},\sigma_r}-
2\sum_{j=i+1}^n\sum_{s=1}^{\textup{dim}(\mathfrak{h})}\tau_j(z_s)
L_{\widetilde{\Pi}(z_s)}^{\sigma_{\ell;n},\sigma_r}
-2\sum_{j=i+1}^n\sum_{\alpha\in R}\tau_j(e_{-\alpha})L_{\widetilde{\Pi}(e_\alpha)}^{\sigma_{\ell;n},\sigma_r}\\
&+\sum_{j=i+1}^n\tau_j(\Omega)+
2\sum_{i+1\leq j<j^\prime\leq n}(\tau_j\otimes\tau_{j^\prime})(\varpi),
\end{split}
\end{equation*}
viewed as differential operators in $\mathbb{D}\bigl((V_\ell\otimes\underline{U}\otimes V_r^*)^{\mathfrak{m}},V_\ell\otimes\underline{U}\otimes V_r^*\bigr)$. 
\end{cor}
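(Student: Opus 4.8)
The plan is to expand the defining expression of $D_{\Omega,i}^{\sigma_\ell,\underline{\tau},\sigma_r}$ from Definition \ref{Dcoor} by inserting the explicit formula for the iterated coproduct $\Delta^{(n-i)}(\Omega)$ provided by Lemma \ref{DeltaOmega}, and then to sort the resulting sum of operators according to which tensor legs of $\Delta^{(n-i)}(\Omega)$ carry a nontrivial factor. The case $i=n$ is immediate: since $\Omega\in Z(U(\mathfrak{g}))\subseteq U(\mathfrak{g})^A$, formula \eqref{equal} gives $\Pi(\Omega)=\widetilde{\Pi}(\Omega)$, hence $D_{\Omega,n}^{\sigma_\ell,\underline{\tau},\sigma_r}=L_{\Pi(\Omega)}^{\sigma_{\ell;n},\sigma_r}=L_{\widetilde{\Pi}(\Omega)}^{\sigma_{\ell;n},\sigma_r}$, which is the asserted identity with all sums over $j\in\{i+1,\dots,n\}$ empty.

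So assume $0\le i<n$. By Definition \ref{Dcoor},
\[
D_{\Omega,i}^{\sigma_\ell,\underline{\tau},\sigma_r}=\sum_{(\Omega)}\tau_{i+1}(S(\Omega_{(1)}))\cdots\tau_n(S(\Omega_{(n-i)}))\,L_{\widetilde{\Pi}(\Omega_{(n-i+1)})}^{\sigma_{\ell;n},\sigma_r},
\]
where the Sweedler sum is the expansion of $\Delta^{(n-i)}(\Omega)\in U(\mathfrak{g})^{\otimes(n-i+1)}$ into pure tensors. The assignment $w_1\otimes\cdots\otimes w_{n-i+1}\mapsto\tau_{i+1}(S(w_1))\cdots\tau_n(S(w_{n-i}))L_{\widetilde{\Pi}(w_{n-i+1})}^{\sigma_{\ell;n},\sigma_r}$ is multilinear, so I can substitute the explicit pure-tensor expansion of $\Delta^{(n-i)}(\Omega)$ from Lemma \ref{DeltaOmega} (with $k=n-i$) term by term. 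The only algebraic inputs are $S(y)=-y$ for $y\in\mathfrak{g}$, $S(\Omega)=\Omega$ (immediate from $\Omega=\mu(\varpi)$ and the symmetry of $\varpi$), $S(1)=1$, $\tau_j(1)=\textup{id}$, and $L_{\widetilde{\Pi}(1)}^{\sigma_{\ell;n},\sigma_r}=\textup{id}$, since $\widetilde{\Pi}(1)=1\otimes1\otimes(1\otimes_{U(\mathfrak{m})}1)$.

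It then remains to organise the bookkeeping. Label the legs of $\Delta^{(n-i)}(\Omega)$ by $m\in\{1,\dots,n-i+1\}$, so that leg $m\le n-i$ is fed to $\tau_{i+m}\circ S$ and leg $n-i+1$ to $L_{\widetilde{\Pi}(\cdot)}$. The diagonal terms $\Omega_m$ of Lemma \ref{DeltaOmega} contribute $\tau_{i+m}(\Omega)$ when $m\le n-i$ and $L_{\widetilde{\Pi}(\Omega)}^{\sigma_{\ell;n},\sigma_r}$ when $m=n-i+1$. A cross term $2(z_s)_m(z_s)_{m'}$ or $2(e_{-\alpha})_m(e_\alpha)_{m'}$ with $m<m'\le n-i$ contributes, after summing over $s$ (resp.\ $\alpha$) and absorbing the two sign changes coming from $S$, the operator $2(\tau_{i+m}\otimes\tau_{i+m'})(\varpi)$ in view of \eqref{varpi}; with $m\le n-i$ and $m'=n-i+1$ it contributes $-2\,\tau_{i+m}(z_s)L_{\widetilde{\Pi}(z_s)}^{\sigma_{\ell;n},\sigma_r}$ (resp.\ $-2\,\tau_{i+m}(e_{-\alpha})L_{\widetilde{\Pi}(e_\alpha)}^{\sigma_{\ell;n},\sigma_r}$) with a single minus sign. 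Summing all contributions and re-indexing $j=i+m$ (so that the leg-range $\{1,\dots,n-i\}$ becomes $\{i+1,\dots,n\}$) produces exactly the claimed expression.

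There is no real obstacle in this argument: everything reduces to Lemma \ref{DeltaOmega} together with elementary Hopf-algebra identities. The only point that demands care is tracking the $S$-signs (two cancelling minus signs for the $\tau\otimes\tau$ terms, one surviving minus sign for the $\tau$--$L$ terms) and matching the summation ranges after re-indexing.
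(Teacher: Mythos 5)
Your proposal is correct and follows essentially the same route as the paper: the case $i=n$ via $\Omega\in U(\mathfrak{g})^A$ and \eqref{lrsame}, and the case $i<n$ by substituting Lemma \ref{DeltaOmega} into Definition \ref{Dcoor}, using $S(\Omega)=\Omega$, $S(y)=-y$ and \eqref{varpi}, with the sign bookkeeping handled exactly as the paper intends.
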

\begin{proof}
For $0\leq i<n$ this follows directly from Definition \ref{Dcoor}, Lemma \ref{DeltaOmega}, the identity $S(\Omega)=\Omega$ and \eqref{varpi}. For $i=n$ it follows
from 
Definition \ref{Dcoor} and \eqref{lrsame}.
\end{proof}
Write for $\lambda\in\Sigma$,
\[
\varpi_\lambda^r:=2(\textup{id}\otimes\textup{pr}_{\mathfrak{k}})(\varpi_\lambda)=
\sum_{\alpha\in R_\lambda} e_{-\alpha}\otimes y_\alpha\in (\mathfrak{g}\otimes\mathfrak{k})^M.
\]
Furthermore, for $X=\sum_ta_t\otimes b_t\in\mathfrak{g}\otimes\mathfrak{k}$ we write
$(\tau_i\otimes\sigma_{\ell;n})(X):=\sum_t\tau_i(a_t)\sigma_{\ell;n}(b_t)$, viewed as endomorphism
of $V_\ell\otimes\underline{U}\otimes V_r^*$ (cf. Remark \ref{conv}). 
\begin{lem}\label{technicallem}
We have for $1\leq i\leq n$, 
\begin{equation}\label{tl1}
\begin{split}
\frac{1}{2}(D_{\Omega,i}^{\sigma_\ell,\underline{\tau},\sigma_r}-D_{\Omega,i-1}^{\sigma_\ell,
\underline{\tau},\sigma_r})
=&
\sum_{j=1}^r\tau_i(x_j)\partial_{x_j}-\frac{1}{2}\tau_i(\Omega)-
\sum_{k=i+1}^n(\tau_i\otimes\tau_k)(\varpi)\\
+&(\tau_i\otimes\sigma_{\ell;n})(\varpi_{\mathfrak{m}})+
\sum_{\lambda\in\Sigma}\Bigl(\frac{(\tau_i\otimes\sigma_{\ell;n})(\varpi_\lambda^r)+
\xi_\lambda (\tau_i\otimes\sigma_r^*)(\varpi_\lambda^r)}{1-\xi_{2\lambda}}\Bigr),
\end{split}
\end{equation}
viewed as differential operators
in $\mathbb{D}\bigl((V_\ell\otimes\underline{U}\otimes V_r^*)^{\mathfrak{m}},V_\ell\otimes\underline{U}\otimes V_r^*\bigr)$.
\end{lem}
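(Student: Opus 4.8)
The plan is to compute the difference $\tfrac{1}{2}(D_{\Omega,i}^{\sigma_\ell,\underline{\tau},\sigma_r}-D_{\Omega,i-1}^{\sigma_\ell,\underline{\tau},\sigma_r})$ directly from the formula in Corollary \ref{corgen}. Subtracting the two expressions for $D_{\Omega,i}$ and $D_{\Omega,i-1}$, all the terms involving $L_{\widetilde{\Pi}(\Omega)}^{\sigma_{\ell;n},\sigma_r}$ cancel, and the double sums over $i+1\leq j<j'\leq n$ telescope so that only the $j=i$ contributions survive. Concretely, the difference picks up $-\sum_{s}\tau_i(z_s)L_{\widetilde{\Pi}(z_s)}^{\sigma_{\ell;n},\sigma_r}-\sum_{\alpha\in R}\tau_i(e_{-\alpha})L_{\widetilde{\Pi}(e_\alpha)}^{\sigma_{\ell;n},\sigma_r}+\tfrac{1}{2}\tau_i(\Omega)+\sum_{k=i+1}^n(\tau_i\otimes\tau_k)(\varpi)$, after multiplying by $\tfrac12$ and rearranging signs. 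Wait — one must be careful with signs: in \eqref{tl1} the terms $\tfrac12\tau_i(\Omega)$ and $(\tau_i\otimes\tau_k)(\varpi)$ appear with a minus sign, so in fact the correct bookkeeping is that $D_{\Omega,i-1}$ contains these terms and $D_{\Omega,i}$ does not, producing $-\tfrac12\tau_i(\Omega)-\sum_{k>i}(\tau_i\otimes\tau_k)(\varpi)$ in the difference; I will track this carefully when expanding.

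The heart of the computation is then to evaluate $\sum_{s=1}^{\dim\mathfrak{h}}\tau_i(z_s)L_{\widetilde{\Pi}(z_s)}^{\sigma_{\ell;n},\sigma_r}$ and $\sum_{\alpha\in R}\tau_i(e_{-\alpha})L_{\widetilde{\Pi}(e_\alpha)}^{\sigma_{\ell;n},\sigma_r}$ explicitly. For the first sum, I split the basis $\{z_s\}$ of $\mathfrak{h}$ into the part $\{x_j\}_{j=1}^r$ spanning $\mathfrak{a}$ and the part spanning $\mathfrak{t}$. For $z_s=x_j\in\mathfrak{a}$ one has $\widetilde{\Pi}(x_j)=1\otimes x_j\otimes(1\otimes_{U(\mathfrak{m})}1)$ since $x_j$ already lies in $\mathfrak{a}$, giving $L_{\widetilde{\Pi}(x_j)}^{\sigma_{\ell;n},\sigma_r}=\partial_{x_j}$ and hence the term $\sum_j\tau_i(x_j)\partial_{x_j}$. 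For $z_s\in\mathfrak{t}\subseteq\mathfrak{m}\subseteq\mathfrak{k}$ one has $\widetilde{\Pi}(z_s)=1\otimes 1\otimes(z_s\otimes_{U(\mathfrak{m})}1)$, which after applying $\zeta_{\sigma_{\ell;n},\sigma_r}$ contributes $\tau_i(z_s)\sigma_{\ell;n}(z_s)$; summing over the $\mathfrak{t}$-part of the basis rebuilds exactly the $\mathfrak{t}$-portion of $(\tau_i\otimes\sigma_{\ell;n})(\varpi_{\mathfrak{m}})$. For the root-vector sum, I use that for $\alpha\in R_0$, $e_\alpha\in\mathfrak{m}\subseteq\mathfrak{k}$ so $\widetilde{\Pi}(e_\alpha)=1\otimes 1\otimes(e_\alpha\otimes_{U(\mathfrak{m})}1)$, which together with the $\mathfrak{t}$-part above assembles $(\tau_i\otimes\sigma_{\ell;n})(\varpi_{\mathfrak{m}})$ in full. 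For $\alpha\in R_\lambda$ with $\lambda\in\Sigma$, I feed in the explicit formula \eqref{Pie} for $\widetilde{\Pi}(e_\alpha)=\tfrac{1}{1-\xi_{2\lambda}}\otimes 1\otimes(y_\alpha\otimes_{U(\mathfrak{m})}1)-\tfrac{\xi_\lambda}{1-\xi_{2\lambda}}\otimes 1\otimes(1\otimes_{U(\mathfrak{m})}y_\alpha)$; applying $\zeta_{\sigma_{\ell;n},\sigma_r}$ turns the first piece into $\tfrac{1}{1-\xi_{2\lambda}}\sigma_{\ell;n}(y_\alpha)$ and the second into $-\tfrac{\xi_\lambda}{1-\xi_{2\lambda}}\sigma_r^*(S(y_\alpha))=\tfrac{\xi_\lambda}{1-\xi_{2\lambda}}\sigma_r^*(y_\alpha)$. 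Multiplying by $\tau_i(e_{-\alpha})$ and summing over $\alpha\in R_\lambda$ produces exactly $\tfrac{(\tau_i\otimes\sigma_{\ell;n})(\varpi_\lambda^r)+\xi_\lambda(\tau_i\otimes\sigma_r^*)(\varpi_\lambda^r)}{1-\xi_{2\lambda}}$, recalling $\varpi_\lambda^r=\sum_{\alpha\in R_\lambda}e_{-\alpha}\otimes y_\alpha$.

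Collecting all contributions and multiplying by $\tfrac12$ yields precisely the right-hand side of \eqref{tl1}. The one subtle point — and I expect it to be the main obstacle — is the bookkeeping of signs and of the antipode: in $D_{u,j}$ the radial component appears via $S(u_{(\cdot)})$, and the definition of $\zeta_{\sigma_\ell,\sigma_r}$ carries an $S(v)$ in the right leg, so one must keep careful track of $S(z_s)=-z_s$, $S(e_\alpha)=-e_\alpha$, $S(y_\alpha)=-y_\alpha$ and of the overall sign with which the terms $\tau_i(\Omega)$ and $(\tau_i\otimes\tau_k)(\varpi)$ enter. I would also need to justify that all the intermediate expressions genuinely lie in $\mathbb{D}((V_\ell\otimes\underline{U}\otimes V_r^*)^{\mathfrak{m}},V_\ell\otimes\underline{U}\otimes V_r^*)$ — this is immediate from Corollary \ref{corgen} together with the fact that each assembled $2$-tensor ($\varpi_{\mathfrak{m}}$, $\varpi_\lambda^r$, $\varpi$) is $\mathfrak{m}$-invariant, which has already been recorded. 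No genuinely new idea is needed beyond careful organization of the sum, so the proof is essentially a bounded calculation starting from Corollary \ref{corgen} and the explicit radial components \eqref{Pie}.
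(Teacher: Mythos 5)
Your proposal is correct and follows essentially the same route as the paper: subtract the two expressions from Corollary \ref{corgen} so only the $j=i$ terms survive, then substitute the explicit radial components $L_{\widetilde{\Pi}(x_j)}^{\sigma_{\ell;n},\sigma_r}=\partial_{x_j}$, $L_{\widetilde{\Pi}(z_s)}^{\sigma_{\ell;n},\sigma_r}=\sigma_{\ell;n}(z_s)$, $L_{\widetilde{\Pi}(e_\alpha)}^{\sigma_{\ell;n},\sigma_r}=\sigma_{\ell;n}(e_\alpha)$ for $\alpha\in R_0$, and $L_{\widetilde{\Pi}(e_\alpha)}^{\sigma_{\ell;n},\sigma_r}=(\sigma_{\ell;n}(y_\alpha)+\xi_\lambda\sigma_r^*(y_\alpha))/(1-\xi_{2\lambda})$ from \eqref{Pie}, and regroup. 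The sign hesitation in your first paragraph is resolved correctly (the $j=i$ terms of $D_{\Omega,i-1}$ enter the difference with reversed sign, giving $+\sum_s\tau_i(z_s)L_{\widetilde{\Pi}(z_s)}^{\sigma_{\ell;n},\sigma_r}+\sum_\alpha\tau_i(e_{-\alpha})L_{\widetilde{\Pi}(e_\alpha)}^{\sigma_{\ell;n},\sigma_r}-\tfrac12\tau_i(\Omega)-\sum_{k>i}(\tau_i\otimes\tau_k)(\varpi)$), which is exactly the paper's intermediate identity.
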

\begin{proof}
All identities below are in $\mathbb{D}((V_\ell\otimes\underline{U}\otimes V_r^*)^{\mathfrak{m}},V_\ell\otimes\underline{U}\otimes V_r^*)$, i.e., they hold true when acting on
functions $f\in C^\infty(A_{\textup{reg}};(V_\ell\otimes\underline{U}\otimes V_r^*)^{\mathfrak{m}})$.
By Corollary \ref{corgen},
\begin{equation}\label{Dinitial}
\begin{split}
\frac{1}{2}(D_{\Omega,i}^{\sigma_\ell,\underline{\tau},\sigma_r}-D_{\Omega,i-1}^{\sigma_\ell,
\underline{\tau},\sigma_r})
&=
\sum_{s=1}^{\textup{dim}(\mathfrak{h})}\tau_i(z_s)L_{\widetilde{\Pi}(z_s)}^{\sigma_{\ell;n},\sigma_r}+\sum_{\alpha\in R}\tau_i(e_{-\alpha})L_{\widetilde{\Pi}(e_\alpha)}^{\sigma_{\ell;n},\sigma_r}\\
&-\frac{1}{2}\tau_i(\Omega)-\sum_{k=i+1}^n(\tau_i\otimes\tau_k)(\varpi).
\end{split}
\end{equation}
Use that $L_{\widetilde{\Pi}(x_j)}^{\sigma_{\ell;n},\sigma_r}=\partial_{x_j}$
for $1\leq j\leq r$, $L_{\widetilde{\Pi}(z_s)}^{\sigma_{\ell;n},\sigma_r}=
\sigma_{\ell;n}(z_s)$ for $r+1\leq s\leq\textup{dim}(\mathfrak{h})$, 
$L_{\widetilde{\Pi}(e_\alpha)}^{\sigma_{\ell;n},\sigma_r}=
\sigma_{\ell;n}(e_\alpha)$ for $\alpha\in R_0$, and
\[
L_{\widetilde{\Pi}(e_\alpha)}^{\sigma_{\ell;n},\sigma_r}=
\frac{\sigma_{\ell;n}(y_\alpha)+\xi_\lambda\sigma_r^*(y_\alpha)}{1-\xi_{2\lambda}}
\]
for $\alpha\in R_\lambda$ with $\lambda\in\Sigma$ (the last formula is a consequence of \eqref{Pie}).
Substitute these equations in \eqref{Dinitial} and group the terms which act 
through $\tau_i$, through $\tau_i$ and $\tau_k$, through $\tau_i$ and $\sigma_{\ell;n}$, and through
$\tau_i$ and $\sigma_r^*$. This directly leads to the desired result. 
 \end{proof}
\noindent
{\bf Proof of Theorem \ref{mainbKZB}:} We write the degree zero terms of the differential operator on the right-hand side of \eqref{tl1} in terms of local factors.
By the fact that 
$\varpi_{\mathfrak{m}}\in S^2(\mathfrak{m})$, $\Omega_{\mathfrak{m}}=\mu(\varpi_{\mathfrak{m}})$ and the definition of
$\sigma_{\ell;n}$, we have 
\begin{equation*}
(\tau_i\otimes\sigma_{\ell;n})(\varpi_{\mathfrak{m}})=
(\sigma_\ell\otimes\tau_i)(\varpi_{\mathfrak{m}})+
\sum_{k=1}^{i-1}(\tau_k\otimes\tau_i)(\varpi_{\mathfrak{m}})+\tau_i(\Omega_{\mathfrak{m}})
+\sum_{k=i+1}^{n}(\tau_i\otimes\tau_k)(\varpi_{\mathfrak{m}})
\end{equation*}
in $\textup{End}(V_\ell\otimes\underline{U}\otimes V_r^*)$.
Similarly, we have for $\alpha\in R\setminus R_0$,
\begin{equation*}
\tau_i(e_{-\alpha})\sigma_{\ell;n}(y_\alpha)=
\sigma_\ell(y_\alpha)\tau_i(e_{-\alpha})
+\sum_{k=1}^{i-1}\tau_k(y_\alpha)\tau_i(e_{-\alpha})
+\tau_i(e_{-\alpha}y_\alpha)
+\sum_{k=i+1}^n\tau_i(e_{-\alpha})\tau_k(y_\alpha).
\end{equation*}
Substituting these identities into \eqref{tl1} and comparing the resulting expression with the explicit formula for $\mathcal{D}_i^{\sigma_\ell,\underline{\tau},\sigma_r}$
from Definition \ref{Ddef}, formula \eqref{keyformula} will follow from the identities
\begin{equation*}
\begin{split}
r_{ki}^+&=-(\tau_k\otimes\tau_i)(\varpi_{\mathfrak{m}})-
\sum_{\lambda\in\Sigma}\frac{1}{1-\xi_{2\lambda}}\sum_{\alpha\in R_\lambda}\tau_k(y_\alpha)\tau_i(e_{-\alpha}),\qquad 1\leq k<i,\\
r_{ik}^-&=(\tau_i\otimes\tau_k)(\varpi^\prime)-\sum_{\lambda\in\Sigma}
\frac{1}{1-\xi_{2\lambda}}\sum_{\alpha\in R_\lambda}\tau_i(e_{-\alpha})\tau_k(y_\alpha),
\qquad i<k\leq n,\\
\tau_i(\kappa^{\textup{core}})&=-\frac{1}{2}\tau_i(\Omega_{\mathfrak{m}})+\frac{1}{2}\tau_i(\Omega^\prime)-\sum_{\lambda\in\Sigma}
\frac{1}{1-\xi_{2\lambda}}\sum_{\alpha\in R_\lambda}\tau_i(e_{-\alpha}y_\alpha)
\end{split}
\end{equation*}
as endomorphisms of $V_\ell\otimes\underline{U}\otimes V_r^*$.
Only the second equality requires proof; it follows from the fact that
\begin{equation*}
\begin{split}
\varpi^\prime-\sum_{\lambda\in\Sigma}\frac{1}{1-\xi_{2\lambda}}\sum_{\alpha\in R_\lambda}
e_{-\alpha}\otimes y_\alpha&=\sum_{j=1}^rx_j\otimes x_j+\sum_{\lambda\in\Sigma}\sum_{\alpha\in R_\lambda}
e_{-\alpha}\otimes\Bigl(\frac{(1-\xi_{2\lambda})e_\alpha-y_\alpha}{1-\xi_{2\lambda}}\Bigr)\\
&=\sum_{j=1}^rx_j\otimes x_j-\sum_{\lambda\in\Sigma}\sum_{\alpha\in R_\lambda}
e_{-\alpha}\otimes\Bigl(\frac{\xi_{2\lambda}e_\alpha+c_\alpha e_{\theta\alpha}}{1-\xi_{2\lambda}}
\Bigr)\\
&=\sum_{j=1}^rx_j\otimes x_j+\sum_{\lambda\in\Sigma}\frac{1}{1-\xi_{-2\lambda}}
\sum_{\alpha\in R_\lambda}(e_{-\alpha}-c_\alpha^{-1}e_{-\theta\alpha})\otimes e_\alpha\\
&=r^-,
\end{split}
\end{equation*}
where we have used Lemma \ref{clem} for the third and fourth equality. It is clear that both sides of \eqref{keyformula} lie in $\mathbb{D}_M((V_\ell\otimes\underline{U}\otimes V_r^*)^{\mathfrak{m}})^W$. This completes the proof of the theorem. \qed

\vspace{.3cm}
The local factors $r^{\pm}$ and $\kappa^{\textup{core}}$ occurring in $\mathcal{D}_i^{\sigma_\ell,\underline{\tau},\sigma_r}\in \mathbb{D}_M((V_\ell\otimes\underline{U}\otimes V_r^*)^{\mathfrak{m}})^W$
($1\leq i\leq n$) decompose as follows.
\begin{prop}\label{folding}
We have
\begin{equation*}
\begin{split}
r^{\pm}&=\pm r+(\textup{id}\otimes\theta)r_{21},\\
\kappa^{\textup{core}}&=\mu\bigl((1\otimes\theta)r_{21})
+\frac{1}{4}\sum_{\lambda\in\Sigma}\Bigl(
\frac{1+\xi_{2\lambda}}{1-\xi_{2\lambda}}\Bigr)\textup{mtp}(\lambda)t_\lambda
\end{split}
\end{equation*}
with $r\in (\mathcal{R}\otimes (\mathfrak{g}\otimes\mathfrak{g})^M)^W$ given by
\eqref{rmatrix}.
Furthermore, $(\textup{id}\otimes\theta)r_{21}=(\theta\otimes\textup{id})r$.
\end{prop}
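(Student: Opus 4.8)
The plan is to prove Proposition~\ref{folding} by direct computation, expanding the defining expressions for $r^{\pm}$, $\kappa^{\textup{core}}$ and $r$ in terms of the basis-dependent data $\{x_j\}$, $\varpi_{\mathfrak{m}}$ and the $\varpi_\lambda$, and then matching terms. First I would record the two elementary projector identities $\textup{pr}_{\mathfrak{k}}=\tfrac12(\textup{Id}+\theta)$ and $\textup{pr}_{\mathfrak{p}}=\tfrac12(\textup{Id}-\theta)$ from Section~\ref{S2}, together with the fact (Remark~\ref{ONBexpression}) that $(\theta\otimes\textup{id})(\varpi_\lambda)=(\textup{id}\otimes\theta)(\varpi_{-\lambda})$ lies in $S^2(\mathfrak{g}^\lambda)$, so in particular $(\textup{id}\otimes\textup{pr}_{\mathfrak k})(\varpi_\lambda)$ and $(\textup{pr}_{\mathfrak k}\otimes\textup{id})(\varpi_\lambda)$ are related by the flip composed with $\theta$ on one leg.

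For the identity $r^{\pm}=\pm r+(\textup{id}\otimes\theta)r_{21}$, I would start from $r=-\tfrac12\varpi_{\mathfrak m}-\tfrac12\sum_j x_j\otimes x_j-\sum_{\lambda}\frac{\varpi_\lambda}{1-\xi_{-2\lambda}}$ and compute $(\textup{id}\otimes\theta)r_{21}$. Since $\varpi_{\mathfrak m}$ and $\sum_j x_j\otimes x_j$ are symmetric and killed/fixed appropriately by $\theta$ on the relevant leg (recall $\theta$ acts trivially on $\mathfrak m$ and as $-\textup{Id}$ on $\mathfrak a$), the constant parts combine to $-\tfrac12\varpi_{\mathfrak m}+\tfrac12\sum_j x_j\otimes x_j$; and $(\textup{id}\otimes\theta)(\varpi_\lambda)_{21}=(\textup{id}\otimes\theta)(\varpi_{-\lambda})$ up to the flip, which by Remark~\ref{ONBexpression} equals a symmetric tensor supported on $\mathfrak g^\lambda\otimes\mathfrak g^\lambda$ — the same object that appears after applying $\textup{pr}_{\mathfrak k}\otimes\textup{id}$ and $\textup{pr}_{\mathfrak p}\otimes\textup{id}$ to $\varpi_\lambda$ in the definitions of $r^{+}$ and $r^{-}$. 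Then adding $\pm r$ to $(\textup{id}\otimes\theta)r_{21}$ and using $\frac{1}{1-\xi_{-2\lambda}}\pm\frac{1}{1-\xi_{2\lambda}}$-type combinations (together with summing over $\lambda$ and $-\lambda$, as in the proof of Proposition~\ref{dynCasimirlemma}) should reproduce exactly $r^{+}=-\varpi_{\mathfrak m}-\sum_\lambda\frac{2(\textup{pr}_{\mathfrak k}\otimes\textup{id})(\varpi_\lambda)}{1-\xi_{-2\lambda}}$ and $r^{-}=\sum_j x_j\otimes x_j+\sum_\lambda\frac{2(\textup{pr}_{\mathfrak p}\otimes\textup{id})(\varpi_\lambda)}{1-\xi_{-2\lambda}}$. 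The auxiliary claim $(\textup{id}\otimes\theta)r_{21}=(\theta\otimes\textup{id})r$ is then immediate from the $S^2(\mathfrak g^\lambda)$-symmetry of $(\theta\otimes\textup{id})(\varpi_\lambda)$ and the $\theta$-invariance properties of $\varpi_{\mathfrak m}$ and $\sum_j x_j\otimes x_j$.

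For the $\kappa^{\textup{core}}$ identity I would apply the multiplication map $\mu$ to $(1\otimes\theta)r_{21}$, compare with the second expression for $\kappa^{\textup{core}}$ in \eqref{kappacore}, namely $-\tfrac12\Omega_{\mathfrak m}+\tfrac12\Omega'-\sum_{\lambda}\frac{1}{1-\xi_{2\lambda}}\sum_{\alpha\in R_\lambda}e_{-\alpha}y_\alpha$, and see that the discrepancy is precisely the $z_\lambda$- and $\textup{mtp}(\lambda)t_\lambda$-terms. Concretely, $\mu((1\otimes\theta)r_{21})$ produces $\sum_{\alpha}\theta(e_{-\alpha})e_{\alpha}$-type contributions weighted by $\frac{1}{1-\xi_{2\lambda}}$, and rewriting $\theta(e_{-\alpha})e_\alpha=y_{-\alpha}e_\alpha-e_{-\alpha}e_\alpha$ and using $[e_{-\alpha},e_\alpha]=-h_\alpha$ together with Corollary~\ref{lempropertiescor} (which gives $\sum_{\alpha\in R_\lambda}h_\alpha=z_\lambda+\textup{mtp}(\lambda)t_\lambda$) and the symmetrisation over $\pm\lambda$ converts the $\frac{1}{1-\xi_{2\lambda}}$ coefficients into the $\frac14\bigl(\frac{1+\xi_{2\lambda}}{1-\xi_{2\lambda}}\bigr)$ coefficients appearing in the statement; the remaining $\mu(\varpi_{\mathfrak m})$ and $\mu(\text{diagonal }\mathfrak a)$ contributions give $-\tfrac12\Omega_{\mathfrak m}$ and $\tfrac12\sum_j x_j^2$, matching the $\Omega'$ expansion \eqref{Omegam}. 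I expect the main obstacle to be purely bookkeeping: carefully tracking the $\theta$-action on each leg, the flip in passing from $r$ to $r_{21}$, and the $\pm\lambda$-symmetrisation of the trigonometric coefficients $\frac{1}{1-\xi_{\pm 2\lambda}}$ without sign or factor-of-two errors — the same kind of manipulation already carried out in Proposition~\ref{dynCasimirlemma} and Theorem~\ref{Hexplicit}, so no genuinely new idea is needed, only care.
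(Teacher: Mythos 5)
Your proposal is correct and follows essentially the same route as the paper: a direct term-by-term matching using $\textup{pr}_{\mathfrak{k}}=\tfrac12(\textup{Id}_{\mathfrak{g}}+\theta)$, $\textup{pr}_{\mathfrak{p}}=\tfrac12(\textup{Id}_{\mathfrak{g}}-\theta)$, the symmetry $(\theta\otimes\textup{id})(\varpi_\lambda)=(\textup{id}\otimes\theta)(\varpi_{-\lambda})$ from Remark \ref{ONBexpression}, and $\sum_{\alpha\in R_\lambda}h_\alpha=z_\lambda+\textup{mtp}(\lambda)t_\lambda$ from Corollary \ref{lempropertiescor}. The only cosmetic difference is that for $\kappa^{\textup{core}}$ you redo the $\pm\lambda$-symmetrisation by hand starting from \eqref{kappacore}, whereas the paper simply invokes Proposition \ref{dynCasimirlemma} to reach the intermediate expression \eqref{kappacorealt}; the content of the two computations is the same.
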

\begin{proof} 
The alternative expressions for $r^{\pm}$ follow by a straightforward computation (the derivation is similar to the last computation in the proof of Theorem \ref{mainbKZB}).
Proposition \ref{dynCasimirlemma} implies that
\begin{equation}\label{kappacorealt}
\kappa^{\textup{core}}=
-\frac{1}{2}\Omega_{\mathfrak{m}}\\
+\frac{1}{2}\sum_{j=1}^rx_j^2+\frac{1}{4}\sum_{\lambda\in\Sigma}\Bigl(
\frac{1+\xi_{2\lambda}}{1-\xi_{2\lambda}}\Bigr)\textup{mtp}(\lambda)t_\lambda
-\sum_{\lambda\in\Sigma}\frac{\mu((\textup{id}\otimes\theta)(\varpi_\lambda))}{1-\xi_{2\lambda}},
\end{equation}
from which the alternative expression for $\kappa^{\textup{core}}$ easily follows.
The symmetry $(\textup{id}\otimes\theta)r_{21}=(\theta\otimes\textup{id})r$ follows from the fact
 that $(\theta\otimes\textup{id})(\varpi_\lambda)\in S^2(\mathfrak{g})$ for
$\lambda\in\Sigma$, cf. Remark \ref{ONBexpression}.
\end{proof}
Combined with \eqref{kappacore2} we conclude that
\begin{equation}\label{kappacore3}
\widehat{\kappa}^{\textup{core}}=\mu((1\otimes\theta)r_{21}).
\end{equation}
\begin{rema}
For real split $G$ an explanation of
the folded structure of $r^{\pm}$ and $\kappa^{\textup{core}}$, as described by Proposition \ref{folding}, is given in \cite[\S 6]{SR} using quantum field theoretic arguments and the theory of $n$-point spherical functions. 
\end{rema}

\subsection{Coupled classical dynamical Yang-Baxter-reflection equations}
As a consequence of the commutativity of the $M$-restricted asymptotic boundary KZB operators
(cf. Corollary \ref{mainbKZBcor}) we obtain coupled classical dynamical Yang-Baxter-reflection equations for its local factors $r^{\pm}$ and $\widehat{\kappa}$. To formulate these equations, 
we define the classical dynamical reflection term $\mathcal{A}\in\mathcal{R}\otimes(U(\mathfrak{k})\otimes U(\mathfrak{g})^{\otimes 2}\otimes U(\mathfrak{k}))^M$ by
\begin{equation}\label{A}
\mathcal{A}:=[\widehat{\kappa}_1+r^-_{12},\widehat{\kappa}_2+r^+_{12}]-\sum_{j=1}^r\bigl((x_j)_1\partial_{x_j}(\widehat{\kappa}_2+r^+_{12})-(x_j)_2
\partial_{x_j}(\widehat{\kappa}_1+r^-_{12})\bigr),
\end{equation}
with the sublabels numbering the two $U(\mathfrak{g})$ tensor legs  
within $U(\mathfrak{k})\otimes U(\mathfrak{g})^{\otimes 2}\otimes U(\mathfrak{k})$,
and we define mixed classical dynamical Yang-Baxter terms 
$\textup{MYB}(s)\in\mathcal{R}\otimes (U(\mathfrak{g})^{\otimes 3})^M$ ($1\leq s\leq 3$) by
\begin{equation}\label{mCYB}
\begin{split}
\textup{MYB}(1)&:=[r_{12}^+,r_{13}^+]+[r_{12}^+,r_{23}^+]-[r_{13}^+,r_{23}^-]
-\sum_{j=1}^r\bigl((x_j)_2\partial_{x_j}(r_{13}^+)-(x_j)_3\partial_{x_j}(r_{12}^+)\bigr),\\
\textup{MYB}(2)&:=[r_{12}^-,r_{13}^+]+[r_{12}^-,r_{23}^+]+[r_{13}^-,r_{23}^+]
-\sum_{j=1}^r\bigl((x_j)_1\partial_{x_j}(r_{23}^+)-(x_j)_3\partial_{x_j}(r_{12}^-)\bigr),\\
\textup{MYB}(3)&:=-[r_{12}^+,r_{13}^-]+[r_{12}^-,r_{23}^-]+[r_{13}^-,r_{23}^-]
-\sum_{j=1}^r\bigl((x_j)_1\partial_{x_j}(r_{23}^-)-(x_j)_2\partial_{x_j}(r_{13}^-)\bigr).
\end{split}
\end{equation}
In the following we will write 
\begin{equation*}
\begin{split}
\mathcal{A}_{ij}&:=(\sigma_\ell\otimes\tau_i\otimes \tau_j\otimes\sigma_r^*)(\mathcal{A}),
\qquad\,\,\,\, 1\leq i<j\leq n,\\
\textup{MYB}(s)_{ijk}&:=(\tau_i\otimes\tau_j\otimes\tau_k)(\textup{MYB}(s)),\qquad
1\leq i<j<k\leq n,
\end{split}
\end{equation*}
which we view as element in $\mathcal{R}\otimes\textup{End}_M((V_\ell\otimes\underline{U}\otimes V_r^*)^{\mathfrak{m}})$ (the number $n$ of $G$-representations in the tensor product representation $\underline{U}$ will always be clear from the context).
\begin{thm}\label{inteq}
For $n\geq 2$ and $1\leq i<j\leq n$, 
\begin{equation}\label{cdYBReN}
\bigl(\mathcal{A}_{ij}+\sum_{k=1}^{i-1}\textup{MYB}(1)_{kij}+
\sum_{k=i+1}^{j-1}\textup{MYB}(2)_{ikj}+\sum_{k=j+1}^n\textup{MYB}(3)_{ijk}\bigr)
\rvert_{(V_\ell\otimes\underline{U}\otimes V_r^*)^M}=0.
\end{equation}
The same holds true when 
$\widehat{\kappa}$ in $\mathcal{A}$ is replaced by $\kappa$.
\end{thm}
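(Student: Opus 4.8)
The plan is to derive the integrability equations \eqref{cdYBReN} directly from the commutativity of the first-order operators $\mathcal{D}_i^{\sigma_\ell,\underline{\tau},\sigma_r}$ obtained in Theorem \ref{mainbKZB} together with Corollary \ref{commcoord}. Concretely, by Theorem \ref{mainbKZB} we have $\mathcal{D}_i^{\sigma_\ell,\underline{\tau},\sigma_r}=\tfrac12(D_{\Omega,i}^{\sigma_\ell,\underline{\tau},\sigma_r}-D_{\Omega,i-1}^{\sigma_\ell,\underline{\tau},\sigma_r})$, so the $M$-restrictions $\mathcal{D}_{i;M}^{\sigma_\ell,\underline{\tau},\sigma_r}$ pairwise commute in $\mathbb{D}((V_\ell\otimes\underline{U}\otimes V_r^*)^M)^W$ by Corollary \ref{commcoord}. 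First I would compute the commutator $[\mathcal{D}_i^{\sigma_\ell,\underline{\tau},\sigma_r},\mathcal{D}_j^{\sigma_\ell,\underline{\tau},\sigma_r}]$ for $i<j$ using the explicit expression in Definition \ref{Ddef}: writing $\mathcal{D}_i=\sum_m\tau_i(x_m)\partial_{x_m}-P_i$ with $P_i=\kappa_i+\sum_{k<i}r_{ki}^++\sum_{k>i}r_{ik}^-$, the commutator splits into a second-order part (which vanishes since $\sum_m\tau_i(x_m)\tau_j(x_m)$ is symmetric and acts on different tensor legs), a first-order part $\sum_m\bigl(\tau_j(x_m)\partial_{x_m}P_i-\tau_i(x_m)\partial_{x_m}P_j\bigr)$ acting on the derivative, and a zeroth-order part $[P_i,P_j]$ plus the derivative terms. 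Since the $\mathcal{D}_{i;M}$ commute and the first-order-in-$\partial$ part must vanish separately (the $\partial_{x_m}$ are linearly independent over the coefficient ring after $M$-restriction), one extracts a purely algebraic identity in $\mathcal{R}\otimes\textup{End}_M((V_\ell\otimes\underline{U}\otimes V_r^*)^{\mathfrak{m}})$ restricted to $(V_\ell\otimes\underline{U}\otimes V_r^*)^M$.

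The next step is bookkeeping: organize $[P_i,P_j]-\sum_m\bigl((x_m)_j\partial_{x_m}P_i-(x_m)_i\partial_{x_m}P_j\bigr)$ according to which tensor legs the terms act on. A term acting only on legs $i,j$ (and possibly $V_\ell,V_r^*$ via $\kappa$) assembles precisely into $\mathcal{A}_{ij}$ as defined in \eqref{A}, using $\widehat{\kappa}_i=\kappa_i-\tfrac14\sum_\lambda\bigl(\tfrac{1+\xi_{2\lambda}}{1-\xi_{2\lambda}}\bigr)\textup{mtp}(\lambda)\tau_i(t_\lambda)$ from Corollary \ref{mainbKZBcor}(4) — the $t_\lambda$-shift cancels in the commutator and derivative terms because $t_\lambda\in\mathfrak{a}$ is central in the relevant bracket and the shift is a scalar function, so $\mathcal{A}$ with $\widehat\kappa$ and with $\kappa$ coincide, proving the last sentence of the theorem. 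A term involving a third leg $k$ yields, depending on the position of $k$ relative to $i<j$, exactly $\textup{MYB}(1)_{kij}$ if $k<i$, $\textup{MYB}(2)_{ikj}$ if $i<k<j$, and $\textup{MYB}(3)_{ijk}$ if $k>j$; these match \eqref{mCYB} after collecting the $[r^\pm,r^\pm]$-brackets and the dynamical derivative terms, noting $\partial_{x_m}r_{ab}^\pm$ only acts on legs $a,b$ so the leg on which the extra $(x_m)$ sits is forced. Summing over all $k$ gives precisely \eqref{cdYBReN}.

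The main obstacle I expect is the careful sign and leg-placement tracking in identifying the three-leg contributions with the three specific $\textup{MYB}(s)$ expressions — in particular matching the asymmetric bracket structure in \eqref{mCYB} (e.g. $[r_{12}^+,r_{13}^+]+[r_{12}^+,r_{23}^+]-[r_{13}^+,r_{23}^-]$ for $\textup{MYB}(1)$) with what comes out of $[r_{ki}^++\cdots,r_{kj}^-+\cdots]$ etc. One must be attentive that in $P_j$ the leg $k<i<j$ contributes $r_{kj}^+$ while in $P_i$ it contributes $r_{ki}^+$, and the leg $i$ appears in $P_j$ as $r_{ij}^-$ (since $i<j$), so the three-body residue from legs $k,i,j$ genuinely mixes $r^+$ and $r^-$ in the pattern claimed; the $\theta$-symmetry relations from Proposition \ref{folding} and Lemma \ref{clem} are what make the bracket terms close up correctly. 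A secondary point requiring care is justifying that the coefficient of each $\partial_{x_m}$ vanishes independently: this uses that after $M$-restriction the operators lie in $\mathbb{D}((V_\ell\otimes\underline{U}\otimes V_r^*)^M)^W$ and that $\mathcal{R}$-valued coefficients of distinct constant-coefficient derivatives in a differential operator are uniquely determined, so commutativity forces both the first-order and zeroth-order parts to vanish, and the zeroth-order part restricted to $(V_\ell\otimes\underline{U}\otimes V_r^*)^M$ is exactly the left-hand side of \eqref{cdYBReN}.
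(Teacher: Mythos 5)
Your overall strategy coincides with the paper's proof of Theorem \ref{inteq}: the paper likewise computes the commutator of the explicit first-order operators (its formula \eqref{commutator}), identifies the zeroth-order part with $\mathcal{A}_{ij}+\sum_{k<i}\textup{MYB}(1)_{kij}+\sum_{i<k<j}\textup{MYB}(2)_{ikj}+\sum_{k>j}\textup{MYB}(3)_{ijk}$, and concludes by the commutativity of the $M$-restricted operators coming from the coordinate-wise Casimir actions. The only organisational differences are that the paper works with the gauged operators $\widetilde{\mathcal{D}}_i$ (so $\widehat{\kappa}$ appears directly, by Corollary \ref{mainbKZBcor}(4)) and kills the first-order term identically via the identity $[1\otimes h,r^-]=[h\otimes 1,r^+]$ for $h\in\mathfrak{a}$, whereas you work with the ungauged $\mathcal{D}_i$ and separate coefficients after $M$-restriction; both of these choices are legitimate.

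The one step you should repair is your justification that ``$\mathcal{A}$ with $\widehat{\kappa}$ and with $\kappa$ coincide because $t_\lambda\in\mathfrak{a}$ is central in the relevant bracket and the shift is a scalar function.'' Writing $\widehat{\kappa}_a=\kappa_a-c_a$ with $c_a=\tfrac14\sum_\lambda \textup{mtp}(\lambda)\bigl(\tfrac{1+\xi_{2\lambda}}{1-\xi_{2\lambda}}\bigr)(t_\lambda)_a$, the scalar-function remark only disposes of $[c_1,c_2]$ and $[c_a,\kappa_b]$; the brackets $[(t_\lambda)_1,r^+_{12}]$ and $[(t_\lambda)_2,r^-_{12}]$ are individually \emph{nonzero}, since $\mathfrak{a}$ does not commute with the $\mathfrak{k}$-leg of $r^+$ nor with the $\mathfrak{g}$-leg of $r^-$. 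One finds $\mathcal{A}^{\widehat{\kappa}}-\mathcal{A}^{\kappa}=-\tfrac14\sum_\lambda \textup{mtp}(\lambda)\bigl(\tfrac{1+\xi_{2\lambda}}{1-\xi_{2\lambda}}\bigr)\bigl([(t_\lambda)_1,r^+_{12}]-[(t_\lambda)_2,r^-_{12}]\bigr)$ plus the derivative-shift terms, and this vanishes only because of the nontrivial identity $[h\otimes 1,r^+]=[1\otimes h,r^-]$ (the same identity the paper uses to kill the first-order term) together with the observation that $\partial_{x_m}$ of the shift coefficient is proportional to $\lambda(x_m)$ and $\sum_m\lambda(x_m)x_m=t_\lambda$, which makes the derivative terms symmetric in the two legs and hence cancel. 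So your claimed coincidence is in fact true, but it needs this computation rather than a centrality argument. Alternatively, you can bypass the comparison entirely, as the paper does: the gauged operators $\widetilde{\mathcal{D}}_{i;M}$ also pairwise commute, their explicit form is that of $\mathcal{D}_i$ with $\kappa_i$ replaced by $\widehat{\kappa}_i$, and running your commutator computation verbatim for them yields the $\widehat{\kappa}$-version of \eqref{cdYBReN}, while the ungauged computation you outlined yields the $\kappa$-version.
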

\begin{proof}
By a lengthy computation one shows for $1\leq i<j\leq n$,
\begin{equation}\label{commutator}
\begin{split}
[\widetilde{\mathcal{D}}_i^{\sigma_\ell,\underline{\tau},\sigma_r},
\widetilde{\mathcal{D}}_j^{\sigma_\ell,\underline{\tau},\sigma_r}]&=
\sum_{s=1}^r\bigl([\tau_j(x_s),r_{ij}^-]-[\tau_i(x_s),r_{ij}^+]\bigr)\partial_{x_s}\\
&+\mathcal{A}_{ij}+\sum_{k=1}^{i-1}\textup{MYB}(1)_{kij}+
\sum_{k=i+1}^{j-1}\textup{MYB}(2)_{ikj}+\sum_{k=j+1}^n\textup{MYB}(3)_{ijk}
\end{split}
\end{equation}
in $\mathbb{D}_M((V_\ell\otimes\underline{U}\otimes V_r^*)^{\mathfrak{m}})$. Using the explicit expression for $r^{\pm}$ one verifies that 
\[
[1\otimes h,r^{-}]=[h\otimes 1,r^{+}]\qquad
\forall\, h\in\mathfrak{a},
\] 
hence the first-order term in the right-hand side of \eqref{commutator} vanishes and we are left with
\begin{equation}\label{commutator2}
[\widetilde{\mathcal{D}}_i^{\sigma_\ell,\underline{\tau},\sigma_r},
\widetilde{\mathcal{D}}_j^{\sigma_\ell,\underline{\tau},\sigma_r}]=
\mathcal{A}_{ij}+\sum_{k=1}^{i-1}\textup{MYB}(1)_{kij}+
\sum_{k=i+1}^{j-1}\textup{MYB}(2)_{ikj}+\sum_{k=j+1}^n\textup{MYB}(3)_{ijk}
\end{equation}
in $\mathbb{D}_M((V_\ell\otimes\underline{U}\otimes V_r^*)^{\mathfrak{m}})$. The result now follows since the
left-hand side vanishes when acting on 
$C^\infty(A_{\textup{reg}};(V_\ell\otimes\underline{U}\otimes V_r^*)^M)$
by Corollary \ref{mainbKZBcor}.

Finally, replacing the role of $\widetilde{\mathcal{D}}_i^{\sigma_\ell,\underline{\tau},\sigma_r}$ by $\mathcal{D}_i^{\sigma_\ell,\underline{\tau},\sigma_r}$ gives the result when $\widehat{\kappa}$
in $\mathcal{A}$ is replaced by $\kappa$.
\end{proof}
For $n=2$ we obtain the following special case of Theorem \ref{inteq}.
\begin{cor}
The classical dynamical reflection term $\mathcal{A}$ \textup{(}see \eqref{A}\textup{)}
acts as zero on $(V_\ell\otimes U_1\otimes U_2\otimes V_r^*)^M$,
\begin{equation}\label{cdRe}
\mathcal{A}|_{(V_\ell\otimes U_1\otimes U_2\otimes V_r^*)^M}=0.
\end{equation}
\end{cor}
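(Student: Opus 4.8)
The plan is to obtain \eqref{cdRe} as the $n=2$ instance of the vanishing identity \eqref{cdYBReN} in Theorem \ref{inteq}. For $n=2$ and $(i,j)=(1,2)$ all three mixed classical dynamical Yang--Baxter sums in \eqref{cdYBReN} are empty: the sum over $k=1,\dots,i-1$ is empty since $i=1$, the sum over $k=i+1,\dots,j-1$ is empty since $j=i+1$, and the sum over $k=j+1,\dots,n$ is empty since $j=n=2$. Hence \eqref{cdYBReN} collapses to $\mathcal{A}_{12}\rvert_{(V_\ell\otimes U_1\otimes U_2\otimes V_r^*)^M}=0$, which, unwinding the definition $\mathcal{A}_{12}=(\sigma_\ell\otimes\tau_1\otimes\tau_2\otimes\sigma_r^*)(\mathcal{A})$, is exactly \eqref{cdRe} for the given representations $(\tau_1,U_1)$, $(\tau_2,U_2)$.

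Concretely, I would proceed as follows. First I would note that Theorem \ref{inteq} applies verbatim with $n=2$ (the hypothesis is $n\geq 2$). Second, I would specialise the index pair to the only admissible one, $1\leq i<j\leq 2$, i.e. $i=1$, $j=2$, and observe that each of the three index ranges in \eqref{cdYBReN} is vacuous, as just argued. Third, I would conclude that \eqref{cdYBReN} reduces to $\mathcal{A}_{12}\rvert_{(V_\ell\otimes\underline{U}\otimes V_r^*)^M}=0$ with $\underline{U}=U_1\otimes U_2$, which is \eqref{cdRe}. Since $\tau_1,\tau_2$ were arbitrary finite dimensional $G$-representations and $\sigma_\ell,\sigma_r$ arbitrary finite dimensional $K$-representations (fixed throughout Section \ref{SectionabKZB}), the identity holds as stated.

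There is essentially no obstacle here: the corollary is a purely bookkeeping specialisation of Theorem \ref{inteq}, whose own proof (via \eqref{commutator2} and the commutativity of the $M$-restricted asymptotic boundary KZB operators from Corollary \ref{mainbKZBcor}) already contains all the analytic content. The only point worth stating explicitly in the write-up is that the three $\textup{MYB}$-sums drop out because their summation ranges are empty when $n=2$; everything else is immediate. If one wished, one could alternatively give a self-contained derivation by taking $n=2$ directly in \eqref{commutator2}, which reads $[\widetilde{\mathcal{D}}_1^{\sigma_\ell,\underline{\tau},\sigma_r},\widetilde{\mathcal{D}}_2^{\sigma_\ell,\underline{\tau},\sigma_r}]=\mathcal{A}_{12}$ in $\mathbb{D}_M((V_\ell\otimes U_1\otimes U_2\otimes V_r^*)^{\mathfrak{m}})$, and then invoking the commutativity \eqref{commutativity} together with Proposition \ref{zeroproperty}-type injectivity to restrict to $M$-invariants; but the shortest route is simply to quote Theorem \ref{inteq}.
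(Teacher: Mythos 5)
Your proposal is correct and coincides with the paper's own argument: the corollary is stated there precisely as the $n=2$ specialisation of Theorem \ref{inteq}, where the three $\textup{MYB}$-sums are vacuous and only $\mathcal{A}_{12}$ survives. Nothing further is needed.
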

The equation \eqref{cdRe} is called the classical dynamical reflection equation for 
$\widehat{\kappa}$ relative to the pair $(r^+,r^-)$. 

For $n=3$ we have
\begin{cor}
The classical dynamical reflection term $\mathcal{A}$ \textup{(}see \eqref{A}\textup{)} and the mixed classical dynamical Yang-Baxter terms $\textup{MYB}(s)\in\mathcal{R}\otimes (U(\mathfrak{g})^{\otimes 3})^M$ \textup{(}see \eqref{mCYB}\textup{)}
satisfy for $1\leq i<j\leq 3$, 
\begin{equation}\label{cdYBRe}
\bigl(\mathcal{A}_{ij}+\textup{MYB}(k)\bigr)\vert_{(V_\ell\otimes U_1\otimes U_2\otimes U_3\otimes
V_r^*)^M}=0
\end{equation}
where $\{k\}=\{1,2,3\}\setminus\{i,j\}$ and $\textup{MYB}(k)$ acts in the natural way on the tensor components $U_1\otimes U_2\otimes U_3$ within $(V_\ell\otimes U_1\otimes U_2\otimes U_3\otimes V_r^*)^M$. 
\end{cor}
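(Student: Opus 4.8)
The plan is to obtain this corollary as the special case $n=3$ of Theorem~\ref{inteq}, the only work being to unwind the three summations in \eqref{cdYBReN} for each admissible pair $(i,j)$ with $1\le i<j\le 3$. Since the ranges $\{1,\dots,i-1\}$, $\{i+1,\dots,j-1\}$, $\{j+1,\dots,3\}$ are short, in each case exactly one of the three families of mixed Yang--Baxter terms contributes a single summand, and that summand is $\textup{MYB}(k)_{123}$ with $k$ the index complementary to $\{i,j\}$ in $\{1,2,3\}$; by definition $\textup{MYB}(k)_{123}=(\tau_1\otimes\tau_2\otimes\tau_3)(\textup{MYB}(k))$ acts on $U_1\otimes U_2\otimes U_3$ exactly as $\textup{MYB}(k)$ does in the statement of \eqref{cdYBRe}.

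Concretely, I would check the three cases separately. For $(i,j)=(1,2)$ the sums $\sum_{k=1}^{0}\textup{MYB}(1)_{k12}$ and $\sum_{k=2}^{1}\textup{MYB}(2)_{1k2}$ are empty while $\sum_{k=3}^{3}\textup{MYB}(3)_{12k}=\textup{MYB}(3)_{123}$, so \eqref{cdYBReN} reads $(\mathcal{A}_{12}+\textup{MYB}(3))\rvert_{(V_\ell\otimes U_1\otimes U_2\otimes U_3\otimes V_r^*)^M}=0$, that is, \eqref{cdYBRe} with $\{k\}=\{3\}$. For $(i,j)=(1,3)$ the first and third sums are empty and the middle one is $\textup{MYB}(2)_{123}$, giving \eqref{cdYBRe} with $\{k\}=\{2\}$; for $(i,j)=(2,3)$ the middle and last sums are empty and the first is $\textup{MYB}(1)_{123}$, giving \eqref{cdYBRe} with $\{k\}=\{1\}$. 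In all three cases $\mathcal{A}_{ij}=(\sigma_\ell\otimes\tau_i\otimes\tau_j\otimes\sigma_r^*)(\mathcal{A})$ acts on the factors $V_\ell,U_i,U_j,V_r^*$ and trivially on the remaining $U_k$, and the $M$-restricted domain $(V_\ell\otimes\underline{U}\otimes V_r^*)^M$ for $n=3$ is precisely $(V_\ell\otimes U_1\otimes U_2\otimes U_3\otimes V_r^*)^M$, so the matching is exact.

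There is no real obstacle here: the statement is a transcription of Theorem~\ref{inteq} at $n=3$, and the only thing requiring attention is the index bookkeeping — confirming that the empty sums really are empty and that the surviving term carries the complementary index $k$. Alternatively one could bypass Theorem~\ref{inteq} and argue directly from \eqref{commutator2} with $n=3$ together with the commutativity $[\widetilde{\mathcal{D}}_i^{\sigma_\ell,\underline{\tau},\sigma_r},\widetilde{\mathcal{D}}_j^{\sigma_\ell,\underline{\tau},\sigma_r}]=0$ on $C^\infty(A_{\textup{reg}};(V_\ell\otimes\underline{U}\otimes V_r^*)^M)$ supplied by Corollary~\ref{mainbKZBcor}, but invoking the already-established Theorem~\ref{inteq} is the cleanest route.
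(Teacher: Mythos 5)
Your proposal is correct and coincides with the paper's treatment: the corollary is stated as the immediate specialization of Theorem \ref{inteq} to $n=3$, and your case-by-case index check for $(i,j)=(1,2),(1,3),(2,3)$ — each time with exactly one surviving term $\textup{MYB}(k)_{123}$ for the complementary index $k$ — is precisely the bookkeeping the paper leaves implicit.
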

The three equations \eqref{cdYBRe} are called the coupled classical dynamical Yang-Baxter-reflection equations for the triple $(r^+,r^-,\widehat{\kappa})$. 

The classical dynamical reflection type equation \eqref{cdRe} does not guarantee that $\mathcal{A}_{ij}$ in \eqref{cdYBRe} vanishes when acting on $(V_\ell\otimes U_1\otimes U_2\otimes U_3\otimes V_r^*)^M$. Hence in general the reflection term in the coupled classical dynamical Yang-Baxter-reflection equations \eqref{cdYBRe} cannot be decoupled from the Yang-Baxter type terms.

\begin{defi}
We call the equations \eqref{cdYBReN} 
for $n>3$ the {\it higher-order coupled classical dynamical Yang-Baxter-reflection equations}
for the triple $(r^+,r^-,\widehat{\kappa})$.
\end{defi}
\begin{rema}\label{SSScase}
We expect that more generally,
\begin{equation}\label{cdYBReNinf}
\mathcal{A}_{ij}+\sum_{k=1}^{i-1}\textup{MYB}(1)_{kij}+
\sum_{k=i+1}^{j-1}\textup{MYB}(2)_{ikj}+\sum_{k=j+1}^n\textup{MYB}(3)_{ijk}
\end{equation}
vanishes when acting on $(V_\ell\otimes\underline{U}\otimes V_r^*)^{\mathfrak{m}}$ 
for all $n\geq 2$.
This is the case when $G$ is real split, see \cite[Thm. 6.26]{SR} and Remark \ref{SplitRemark2}.
In the real split case we have $\mathfrak{m}=0$ and hence we conclude that in this case \eqref{cdYBReNinf} vanishes identically on
$V_\ell\otimes\underline{U}\otimes V_r^*$ for all $n\geq 2$. These equations decouple, and are equivalent to the equations
\begin{equation}
\begin{split}
\mathcal{A}_{12}&=0\qquad \hbox{ in }\,\,
\mathcal{R}\otimes\textup{End}(V_\ell\otimes U_1\otimes U_2\otimes V_r^*),\\
\textup{MYB}(k)&=0\qquad \hbox{ in }\,\, \mathcal{R}\otimes\textup{End}(V_\ell\otimes
U_1\otimes U_2\otimes U_3\otimes V_r^*)\,\,\, \hbox{ for }\,\,\, k=1,2,3,
\end{split}
\end{equation}
which are the classical dynamical reflection equation for $\widehat{\kappa}$ relative
to $(r^+,r^-)$ and the mixed classical dynamical Yang-Baxter equation for $(r^+,r^-)$ from
\cite[Thm. 6.31]{SR}.
\end{rema}

\section{Example: $\textup{SU}(p,r)$.}\label{SUrp}
As an example, we work out some of the results in more detail for the real simple Lie group $\textup{SU}(p,r)$.

Write $e_{ij}^{(m)}\in\textup{GL}(m;\mathbb{C})$ for the matrix unit with a one at entry $(i,j)$ and zeros everywhere else. Write $I_m\in\textup{GL}(m;\mathbb{C})$ for the unit matrix, and $I_m^-:=\sum_{i=1}^me_{i,m+1-i}^{(m)}\in\textup{GL}(m;\mathbb{C})$ for the antidiagonal $m\times m$-matrix with ones on the antidiagonal. Fix $1\leq r\leq p$ and write
\[
C:=\left(\begin{matrix} \frac{1}{\sqrt{2}}I_r& 0 & -\frac{1}{\sqrt{2}}I_r^-\\
0 & I_{p-r} & 0\\
 \frac{1}{\sqrt{2}}I_r^- & 0 & \frac{1}{\sqrt{2}}I_r
 \end{matrix}\right)\in
 \textup{GL}(p+r;\mathbb{C})
\]
with $0$ the zero matrix of the appropriate size, and consider $G:=\{C^{-1}gC\,\, | \,\, g\in\textup{SU}(p,r)\}$. Then
\[
\mathfrak{g}_{\mathbb{R}}=\{X\in\mathfrak{sl}(p+r;\mathbb{C}) \,\, | \,\, 
X^\ast J_{p,r}+J_{p,r}X=0\}
\]
with $X^*=\overline{X}^T$ the adjoint of $X$ and
\[
J_{p,r}:=\left(\begin{matrix} 0 & 0 & -I_r^-\\
0 & I_{p-r} & 0\\
-I_r^- & 0 & 0\end{matrix}\right).
\]
The formula $\theta_{\mathbb{R}}(X):=J_{p,r}XJ_{p,r}$ ($X\in\mathfrak{g}_{\mathbb{R}}$) defines a 
Cartan involution of $\mathfrak{g}_{\mathbb{R}}$, and the diagonal matrices in $\mathfrak{g}_{\mathbb{R}}$ form 
a $\theta_{\mathbb{R}}$-stable maximally nonabelian Cartan subalgebra $\mathfrak{h}_{\mathbb{R}}$ of $\mathfrak{g}_{\mathbb{R}}$ with associated 
$\mathfrak{p}_{\mathbb{R}}$-component
\begin{equation*}
\mathfrak{a}_{\mathbb{R}}=\bigoplus_{j=1}^r\mathbb{R}(e_{jj}^{(p+r)}-e_{j^\prime j^\prime}^{(p+r)})
\quad \hbox{ with }\quad j^\prime:=p+r+1-j.
\end{equation*}
The root system $R\subset\mathfrak{h}^*$ of $\mathfrak{g}=\mathfrak{sl}(p+r;\mathbb{C})$ 
is $R=\{\epsilon_i-\epsilon_j\,\, | \,\, 1\leq i\not=j\leq p+r\}$ with $\epsilon_i(h):=\lambda_i$
for $h=\sum_{j=1}^{p+r}\lambda_je_{jj}^{(p+r)}\in\mathfrak{h}$. Take 
$R^+=\{\epsilon_i-\epsilon_j\,\, | \,\, 1\leq i<j\leq p+r\}$ as the set of positive roots. We will use the shorthand notation $\alpha_{ij}:=\epsilon_i-\epsilon_j$ for the roots. Write $\alpha_i:=\alpha_{i,i+1}$ ($1\leq i\leq p+q-1$) for the associated basis elements of $R$,
then the involution ${}^t\theta\in\textup{Aut}(R)$ is determined by
\[
\theta(\alpha_i)=
\begin{cases}
-\alpha_{i^\prime}\qquad &(1\leq i\leq r),\\
\alpha_i\qquad &(r+1\leq i<p),\\
-\alpha_{i^\prime}\qquad &(p\leq i<p+r-1),
\end{cases}
\]
which in turn determines the Satake diagram of $G$.

Write $\mathfrak{a}_{\mathbb{R}}^*=\bigoplus_{j=1}^r\mathbb{R}f_j$ with 
$f_j\in\mathfrak{a}_{\mathbb{R}}^*$ defined by $f_j(e_{ii}^{(p+r)}-e_{i^\prime i^\prime}^{(p+r)}):=\delta_{i,j}$. The restricted root system $\Sigma$ is a root system of type $\textup{BC}_r$ with the positive
restricted roots given by
\[
\Sigma^+=\{f_i\pm f_j\}_{1\leq i<j\leq r}\cup\{f_j,2f_j\}_{1\leq j\leq r}.
\]
In fact, for $1\leq i<j\leq r$ and $1\leq k\leq r$ we have
\begin{equation*}
\begin{split}
R_0&=\{\alpha_{\ell m}\}_{r+1\leq \ell\not=m\leq p},\\
R_{f_i-f_j}&=\{\alpha_{ij},\alpha_{j^\prime i^\prime}\},\qquad\quad
R_{f_i+f_j}=\{\alpha_{ij^\prime},\alpha_{ji^\prime}\},\\
R_{f_k}&=\{\alpha_{k\ell}, \alpha_{\ell k^\prime}\}_{r+1\leq \ell\leq p},\qquad
R_{2f_k}=\{\alpha_{kk^\prime}\},
\end{split}
\end{equation*}
(for $r=1$ there are no restricted roots of the form $f_i\pm f_j$ ($i\not=j$), and hence the middle line drops out). Write $\xi_i:=\xi_{f_i}$ for the multiplicative character of $A$ associated to $f_i\in\mathfrak{a}_{\mathbb{R}}^*$. The dependence on $A$ of the local factors $r^{\pm},\kappa$ of the asymptotic boundary KZB operators $\widetilde{\mathcal{D}}_{i;M}^{\sigma_\ell,\underline{\tau},\sigma_r}$  ($1\leq i\leq n$), as well as of the potential $V^{\sigma_\ell,\underline{\tau},\sigma_r}$ of the Schr{\"o}dinger operator $\widetilde{H}_M^{\sigma_\ell,\underline{\tau},\sigma_r}$, 
are then described in terms of the multiplicative characters $\xi_i\xi_j^{\pm 1}$ ($1\leq i<j\leq r$)
and $\xi_k$ ($1\leq k\leq r$), which are the characters associated to the indivisible positive restricted roots of $\Sigma$. The equivariance of the operators is with respect to the hyperoctahedral group $W\simeq S_r\ltimes (\pm 1)^r$. 

Recall that $\mathfrak{k}=\mathfrak{m}\oplus\mathfrak{q}$
with $\mathfrak{q}=\bigoplus_{\alpha\in R^+\setminus R_0^+}\mathbb{C}y_\alpha$. To further describe the local factors of $\widetilde{\mathcal{D}}_i^{\sigma_\ell,\underline{\tau},\sigma_r}$ and the
potential $V^{\sigma_\ell,\underline{\tau},\sigma_r}$ of $\widetilde{H}^{\sigma_\ell,\underline{\tau},\sigma_r}$ one needs the explicit description of $\mathfrak{m}$ and of the standard basis
elements $y_\alpha$ ($\alpha\in R^+\setminus R_0^+$) of $\mathfrak{q}$.
We have
\[
\mathfrak{m}=\mathfrak{t}\oplus\bigoplus_{r+1\leq i\not=j\leq p}\mathbb{C}e_{ij}^{(p+r)}
\]
with $\mathfrak{t}$
the $(+1)$-eigenspace of $\theta|_{\mathfrak{h}}$, which is explicitly described by
\[
\mathfrak{t}=\Bigl\{\sum_{i=1}^r\lambda_i(e_{ii}^{(p+r)}+e_{i^\prime i^\prime}^{(p+r)})
+\sum_{j=r+1}^p\mu_je_{jj}^{(p+r)} \,\, | \,\, 2\sum_{i=1}^r\lambda_i+\sum_{j=r+1}^p\mu_j=0\Bigr\}.
\]
The elements $y_\alpha$ ($\alpha\in R^+\setminus R_0^+$) are explicitly given by
\begin{equation*}
\begin{split}
y_{\alpha_{ij}}&=e_{ij}^{(p+r)}+e_{i^\prime j^\prime}^{(p+r)},\qquad
y_{\alpha_{j^\prime i^\prime}}=e_{ji}^{(p+r)}+e_{j^\prime i^\prime}^{(p+r)},\\
y_{\alpha_{ij^\prime}}&=e_{ij^\prime}^{(p+r)}+e_{i^\prime j}^{(p+r)},\qquad
y_{\alpha_{ji^\prime}}=e_{ji^\prime}^{(p+r)}+e_{j^\prime i}^{(p+r)},\\
y_{\alpha_{k\ell}}&=e_{k\ell}^{(p+r)}-e_{k^\prime\ell}^{(p+r)},\qquad
y_{\alpha_{\ell k^\prime}}=e_{\ell k^\prime}^{(p+r)}-e_{\ell k}^{(p+r)},\\
y_{\alpha_{kk^\prime}}&=e_{kk^\prime}^{(p+r)}+e_{k^\prime k}^{(p+r)}
\end{split}
\end{equation*}
for $1\leq i<j\leq r$, $1\leq k\leq r$ and $r+1\leq \ell\leq p$
(the $1^{\textup{st}}$-$4^{\textup{th}}$ line describes $y_\alpha$ for the roots $\alpha\in R_{f_i-f_j}^+$, $\alpha\in R_{f_i+f_j}^+$, $\alpha\in R_{f_k}^+$ and $\alpha\in R_{2f_k}^+$ respectively).
 
Finally, since $\mathfrak{s}(\mathfrak{gl}(p)\times\mathfrak{gl}(r))=\textup{Ad}(C)\mathfrak{k}$, the basis elements of $\mathfrak{k}$ can be expressed in terms of the standard basis elements
of $\mathfrak{s}(\mathfrak{gl}(p)\times\mathfrak{gl}(r))$, and vice versa. For instance, the center
of $\mathfrak{k}$ is spanned by 
\[
\textup{Ad}(C^{-1})\left(\begin{matrix} \frac{1}{p}I_p & 0\\ 0 & -\frac{1}{r}I_r\end{matrix}\right)=
\left(\begin{matrix} \bigl(\frac{1}{2p}-\frac{1}{2r}\bigr)I_r
 & 0 & -\bigl(\frac{1}{2p}+\frac{1}{2r}\bigr)I_r^-\\
0 & \frac{1}{p}I_{p-r} & 0\\
-\bigl(\frac{1}{2p}+\frac{1}{2r}\bigr)I_r^- & 0 & \bigl(\frac{1}{2p}-\frac{1}{2r}\bigr)I_r\end{matrix}
\right),
\]
which can be expressed as
\[
\left(\bigl(\frac{1}{2p}-\frac{1}{2r}\bigr)\sum_{i=1}^r(e_{ii}^{(p+r)}+e_{i^\prime i^\prime}^{(p+r)})+
\frac{1}{p}\sum_{j=r+1}^pe_{jj}^{(p+r)}\right)-\Bigl(\frac{1}{2p}+\frac{1}{2r}\Bigr)\sum_{i=1}^ry_{\alpha_{ii^\prime}},
\]
where the first term lies in $\mathfrak{t}$ and the second term lies in $\mathfrak{q}$.

\bibliographystyle{amsplain}

\end{document}